%% file: main.tex
\documentclass[11pt,reqno]{amsart}

\usepackage[T1]{fontenc}
\usepackage[utf8]{inputenc}
\usepackage{lmodern}
\usepackage{microtype}
\usepackage{booktabs}
\usepackage[margin=1.12in]{geometry}
\usepackage{setspace}
\usepackage{enumitem}

\usepackage{amsmath,amssymb,amsfonts,amsthm,mathtools}
\usepackage{mathrsfs}
\usepackage{bbm}
\usepackage{bm}

\numberwithin{equation}{section}
\allowdisplaybreaks

\usepackage{tikz}
\usetikzlibrary{arrows.meta,positioning,calc}

\usepackage[numbers,sort&compress]{natbib}

\usepackage[
    colorlinks=true,
    linkcolor=blue,
    citecolor=blue,
    urlcolor=blue
]{hyperref}

\usepackage[nameinlink,capitalize,noabbrev]{cleveref}

\newif\ifdraftversion
\draftversionfalse

\ifdraftversion
    \usepackage[pagewise]{lineno}
    \linenumbers
\fi

\newtheoremstyle{greenstyle}
  {3pt}                     
  {3pt}                     
  {\color{green!50!black!100!}}           
  {}                        
  {\bfseries\color{green!50!black!100!}}  
  {.}                       
  {.5em}                    
  {}                        

\theoremstyle{plain}
\newtheorem{theorem}{Theorem}[section]
\newtheorem{mainthm}{Main Theorem}[section]
\newtheorem{proposition}{Proposition}[section]
\newtheorem{corollary}{Corollary}[section]
\newtheorem{lemma}{Lemma}[section]

\newtheorem{definition}{Definition}[section]
\newtheorem{assumption}{Assumption}[section]
\newtheorem{example}{Example}[section]

\theoremstyle{remark}
\newtheorem{remark}{Remark}[section]
\newtheorem{notation}{Notation}[section]

\newcommand{\inputfile}[2][]{%
    \IfFileExists{#2}{%
        \input{#2}%
    }{%
        \ifx&#1&%
            \typeout{*** Missing file: #2 ***}%
        \else%
            \section*{#1}%
            \addcontentsline{toc}{section}{#1}%
            \noindent\textbf{Placeholder.}
            The file \texttt{#2} will be inserted here in the expanded manuscript.
            \typeout{*** Missing file: #2 ***}%
        \fi%
    }%
}

\title[Macroscopic Cycles and Poisson--Kingman Universality]%
{Macroscopic Feynman Cycles and Poisson--Kingman Universality in Bose Condensation}

\author{Wen SUN}
\address{School of Mathematical Sciences, University of Science and Technology of China, Jinzhai 96, 230026 Hefei}
\email{wensun.ustc@gmail.com}

\date{\today}

\subjclass[2020]{82B10, 82B20, 60G55, 60G57, 60J65, 60F05}

\keywords{
Ideal Bose gas,
Feynman--Kac representation,
permutation cycles,
Bose--Einstein condensation,
Gamma bridge,
Brownian loop,
Poisson--Dirichlet distribution,
marked point process
}

\begin{document}

\maketitle

\begin{abstract}
We prove a canonical limit theorem for the macroscopic Feynman cycles
of finite-volume ideal Bose gases.  Cycles carry marks in a general
Polish space $\mathsf{M}$, encoding spatial, geometric, spectral, or internal
data.  After removing a deterministic background density
$\rho_{\mathrm{bg}}$, the marked macroscopic cycle process converges
in the canonical ensemble to a marked Poisson--Kingman bridge of total
mass $\rho - \rho_{\mathrm{bg}}$.  The bridge is constructed from
a marked Poisson point process with intensity
$x^{-1}\eta_x(dm)\,dx$, conditioned on total mass~$\rho - \rho_{\mathrm{bg}}$, where the
kernel $x \mapsto \eta_x$ and its total-mass profile
$\phi(x) = \eta_x(\mathsf{M})$ are determined by the low-energy
spectral data visible on the scale $j \sim V_L$.

When $\phi$ is constant, the bridge reduces to a Gamma bridge and the
ranked cycle lengths follow the Poisson--Dirichlet law.  We verify
this for the ideal Bose gas in dimension $d > 2$ under periodic,
Dirichlet, and Neumann boundary conditions: in all three cases
$\phi \equiv 1$ and the ranked lengths converge to
$\mathrm{PD}(0,1)$, while the mark kernels distinguish the three
models through their winding, killed-bridge, and reflected-bridge
geometry.  When $\phi$ is not constant, the bridge is no longer Gamma
and the ranked lengths are not Poisson--Dirichlet.  As a concrete
example, a critical double-well potential whose tunnelling splitting
satisfies $V_L \Delta_L \to \gamma$ gives
$\phi_\gamma(x) = 1 + e^{-\beta\gamma x}$; more generally, a
finite-type visible spectrum with $Q$ components yields
$\phi(x) = \sum_{r=1}^{Q} \theta_r e^{-\beta\lambda_r x}$.  These
results identify Poisson--Kingman bridges as the canonical
universality class for marked macroscopic Bose cycles, with the
visible low-energy spectrum selecting the particular bridge.
\end{abstract}



\input{01-introduction.tex}

\input{02-model-cycle-representation-path-marks.tex}

\input{03-limit-objects-main-results.tex}

\input{04-poissonization-mode-decomposition.tex}

\input{05-brownian-loop-shape-winding.tex}

\input{06-proofs}



%






\section*{Acknowledgments}
 This work is supported by the National Key R\&D Program of China (No. 2022YFA1006500) and by the National Natural Science Foundation of China (No. 12401171).

\bibliographystyle{amsplain}
\bibliography{ref}
\bigskip
\end{document}

%% file: 01-introduction.tex
\section{Introduction}

\subsection{Bose--Einstein condensation and the Feynman cycle picture}
\label{subsec:intro-BEC-cycles}

Bose--Einstein condensation (BEC), predicted by Einstein in 1924--1925 following Bose's
quantum statistics~\cite{Bose1924,Einstein1924,Einstein1925}, is a macroscopic manifestation
of quantum indistinguishability.  Two complementary mathematical descriptions have played a
central role.  The first is spectral.  In the Penrose--Onsager
formulation~\cite{PenroseOnsager1956}, condensation is detected by a macroscopic eigenvalue
of the one-particle reduced density matrix; in homogeneous systems this is closely related to
off-diagonal long-range order in the sense of Yang~\cite{Yang1962}.  Rigorous accounts of the
spectral viewpoint, especially for dilute and trapped Bose gases, can be found in the work of
Lieb, Seiringer, Yngvason and
collaborators~\cite{LiebSeiringer2002,LiebSeiringerSolovejYngvason2005}.

The second description is geometric and goes back to Feynman's path-integral
picture~\cite{Feynman1953}.  The symmetrisation of the bosonic partition function can be
represented by particle-exchange cycles: a cycle of length~$j$ corresponds to a Brownian loop
of imaginary-time length~$\beta j$.  Short cycles describe thermal excitations, whereas cycles
whose lengths are comparable with the volume~$V_L=|\Lambda_L|$ carry a macroscopic number of
particles and provide the cycle-level signature of condensation.

This Feynman cycle picture has been made rigorous in a series of works.
S\"ut\H{o}~\cite{SutoPercolation1993,SutoPercolation2002} established the connection between
BEC and a percolation transition of permutation cycles in the ideal Bose gas.
Ueltschi~\cite{Ueltschi2006} further developed the rigorous cycle representation and clarified
the role of long cycles as a geometric manifestation of condensation.  Betz and Ueltschi
introduced spatial random permutations as a probabilistic model class inspired by the Bose gas
and analysed the occurrence of infinite cycles in regimes corresponding to
condensation~\cite{BetzUeltschi2009}.  Together, these works show that long or infinite cycles
form a natural geometric order parameter for BEC, complementary to the spectral
Penrose--Onsager criterion.

The present paper is motivated by the interface between these spectral and geometric
descriptions.  The standard cycle expansion provides a formal bridge between them, since the
one-particle Hamiltonian contributes to the weight of each exchange cycle.  However, this
connection by itself does not identify the joint scaling limit of the long cycles, nor does it
explain how the low-energy spectral structure is encoded by the macroscopic exchange loops.
We therefore ask a question finer than whether macroscopic or infinite cycles appear: what
random object do they form, what information beyond cycle length survives in the macroscopic
limit and how does the visible low-energy
spectrum select the limiting cycle process together with its marks?

The purpose of this paper is to identify the canonical limiting law of these macroscopic marked cycles and to show how it is selected by the visible low-energy spectral data.

\subsection{The classical Poisson--Dirichlet law and the spectral assumption behind it}
\label{subsec:intro-PD-spectral-assumption}

We now recall the classical Poisson--Dirichlet limit for macroscopic Feynman cycles and
reinterpret it in the spectral language used throughout this paper.  The main point is that the
usual Poisson--Dirichlet law is not merely a consequence of the existence of a condensate; it
also reflects a particular low-energy spectral regime.

In the homogeneous periodic ideal Bose gas in dimension~$d>2$, above the critical density,
Betz and Ueltschi~\cite{BetzUeltschiPD2011} proved the Poisson--Dirichlet limit in the
spatial-random-permutation framework.  If~$N_L$ particles live in the
torus~$\mathbb T_L^d$, $V_L=L^d$, and $N_L/V_L\to\rho>\rho_c$, then the ranked cycle
lengths $\ell_1^L\geq \ell_2^L\geq\cdots$ satisfy
\[
        \left(\frac{\ell^L_1}{V_L},\frac{\ell^L_2}{V_L},\ldots\right)
        \Longrightarrow
        (\rho-\rho_c)(P_1,P_2,\ldots),
        \qquad
        (P_i)_{i\geq1}\sim \mathrm{PD}(0,1).
\]

Betz and Ueltschi prove this result by reformulating the annealed spatial model in Fourier
space.  They introduce occupation numbers~$n_k$ and, conditional on these occupations, independent
non-spatial weighted permutations~$\pi_k$ within the individual Fourier modes.  Their
occupation-number estimates show that the zero mode carries the macroscopic fraction
$\nu=1-\rho_c/\rho$, while no nonzero mode contributes a macroscopic cycle.  The largest
spatial cycles therefore have the same limit as the largest cycles of a non-spatial weighted
permutation on~$n_0$ points.  They then apply the asymptotic cycle law for these non-spatial
weighted permutations and rank the resulting cycle lengths: under their hypotheses,
$\alpha_j\to\alpha$ gives
$\mathrm{PD}(0,e^{-\alpha})$.  In the ideal Bose gas $\alpha=0$, hence
$\mathrm{PD}(0,1)$.

More recently, K\"onig, Vogel and Zass~\cite{KonigVogelZass2025} proved a
$\mathrm{PD}(0,1)$ limit for the ranked long-loop lengths of the canonical free Bose gas under
periodic and diffusive boundary conditions.  Bai, K\"onig and
Vogel~\cite{BaiKonigVogel2025} obtained a related Poisson--Dirichlet limit for a
non-interacting mean-field trapped gas.  These results extend the classical one-component
Poisson--Dirichlet picture to canonical loop representations in a broader range of settings.

The purpose here is to isolate the spectral condition behind this classical picture and to
determine what replaces it when more than one low-energy component remains visible.  The
organising mechanism is the scale of the one-particle spectral gaps seen by cycles with
$j\asymp V_L$.
Concretely, we start from the canonical Feynman-cycle expansion and identify the part of the
one-particle spectrum that is visible to cycles with $j\sim V_L$.  Let $K_L$ be the
finite-volume one-particle Hamiltonian, shifted so that its ground-state energy is zero, and set
\[
        q_{L,j}=\operatorname{Tr} e^{-\beta jK_L}.
\]
The canonical partition function has the cycle expansion
\[
        Z_{L,N}
        =
        \sum_{\substack{(n_j)_{j\geq1}\\ \sum_{j\geq1} j n_j=N}}
        \prod_{j\geq1}
        \frac{1}{n_j!}\left(\frac{q_{L,j}}{j}\right)^{n_j}.
\]
Here $1/j$ is the universal cycle-combinatorial factor, while the one-cycle trace $q_{L,j}$
contains the model-dependent spectral information.  Thus the connection between the spectral
theory of BEC and the macroscopic Feynman-cycle process is already encoded in the
weight~$q_{L,j}/j$.

On the macroscopic scale $j/V_L\to x\in(0,\infty)$, only spectral gaps of
order~$V_L^{-1}$ or smaller can contribute nontrivially.  Suppose, for instance, that the
effective low-energy contribution has the form
\[
        q^{\mathrm{eff}}_{L,j}
        =
        \sum_r \theta_r \exp\{-\beta j\varepsilon_{r,L}\},
        \qquad
        V_L\varepsilon_{r,L}\longrightarrow \lambda_r\in[0,\infty],
\]
where
\[
        0=\varepsilon_{0,L}\leq \varepsilon_{1,L}\leq\varepsilon_{2,L}\leq\cdots
\]
are the low-lying eigenvalues of $K_L$, and where $\theta_r$ may encode degeneracy, an
internal weight, or a mark multiplicity.  Then the modes with finite $\lambda_r$ remain visible
to cycles of length $j\sim V_L$, whereas modes with $\lambda_r=\infty$ disappear on this
scale.  Consequently,
\[
        q^{\mathrm{eff}}_{L,j}
        \longrightarrow
        \phi(x)
        :=
        \sum_{\lambda_r<\infty} \theta_r e^{-\beta\lambda_r x}.
\]
The function~$\phi$ is the visible spectral profile.  It is the scalar datum that determines
the length-level universality class of the macroscopic condensate cycles; the corresponding
eigenvectors or low-energy components determine the mark kernels.

For the ordinary periodic ideal Bose gas in dimension $d>2$, this visible profile is constant.
Indeed, the gap above the zero-momentum ground state is of order~$L^{-2}$, while $V_L=L^d$,
and hence
\[
        V_L\varepsilon_{1,L}\asymp L^{d-2}\longrightarrow\infty.
\]
All excited modes are therefore invisible to cycles with $j\sim V_L$, and only the ground
state remains.  Thus
\[
        \phi(x)\equiv1.
\]
At the unmarked length level, the macroscopic cycle weights are then governed by the
logarithmic intensity $dx/x$.  After imposing the canonical condensate-mass constraint
$\sum_i x_i=\rho-\rho_c$, one obtains the Gamma bridge of total mass $\rho-\rho_c$, whose
ranked jumps are distributed as $(\rho-\rho_c)\,\mathrm{PD}(0,1)$.  Having identified the
spectral assumption behind this classical law, we now state the general result.

\subsection{Main result: marked Poisson--Kingman bridges}
\label{subsec:intro-main-result}

The main theorem turns the spectral principle described above into a canonical limit theorem
for macroscopic Feynman cycles.  As in the preceding subsection, the finite-volume cycle
process separates into an effective part, governed by the visible low-energy modes, and a
background part.  The background may carry a positive particle
density~$\rho_{\mathrm{bg}}$ (equal to the usual critical density~$\rho_c$ in the standard
examples), but it produces no atoms in any fixed macroscopic length window.

Cycles may carry marks in a Polish space~$\mathsf M$, encoding spatial, geometric, spectral,
or internal data.  Let
\[
  \Xi_{L,N_L}
  =
  \sum_c \delta_{(U_c,\,j_c/V_L,\,m_c)}
\]
be the marked point process of cycles in the canonical ensemble, where $j_c$ is the cycle
length, $m_c$ its mark, and $U_c\in[0,1]$ an auxiliary coordinate used only to separate
atoms.  The visible spectral data are encoded by a limiting marked kernel
$x\mapsto\eta_x$, $x>0$, whose total mass
\[
  \phi(x)=\eta_x(\mathsf M)
\]
is the visible length profile.

The limiting object is constructed as follows.  Consider the marked Poisson point
process~$\Pi^{(\kappa)}$ with intensity
\[
  du\,\frac{e^{-\kappa x}}{x}\,\eta_x(dm)\,dx,
  \qquad \kappa>0,
\]
and let $T^{(\kappa)}$ denote the sum of the $x$-coordinates of all atoms.  If
$N_L/V_L\to\rho>\rho_{\mathrm{bg}}$, then the density available to macroscopic visible
cycles is $\rho_{\mathrm{eff}}=\rho-\rho_{\mathrm{bg}}$, and we define the Poisson--Kingman
bridge
\[
  \Pi^{\mathrm{br}}_{\rho_{\mathrm{eff}}}
  =
  \mathcal L\bigl(\Pi^{(\kappa)}\,\big|\,
  T^{(\kappa)}=\rho_{\mathrm{eff}}\bigr).
\]
The conditioning is understood through disintegration; the auxiliary parameter~$\kappa$
disappears after conditioning, so the bridge law does not depend on its value.  The name comes
from the classical Poisson--Kingman construction of random
partitions~\cite{PermanPitmanYor1992,Pitman2003PoissonKingman,PitmanYor1997}: one takes the jumps of a
subordinator, conditions on their total mass, and ranks the result to obtain a
Poisson--Kingman partition.  Here we keep the full marked point process and condition on the
total visible mass fixed by the canonical ensemble; the word ``bridge'' refers to this endpoint
conditioning, in analogy with Gamma and stable bridges. 

The main theorem, \cref{thm:main-canonical-bridge-limit}, states that under the assumptions
formulated in \cref{sec:limit-objects-main-results},
\[
  \Xi_{L,N_L}
  \;\Longrightarrow\;
  \Pi^{\mathrm{br}}_{\rho_{\mathrm{eff}}}
\]
in the length-bounded topology, which observes all cycles with positive rescaled length and
ignores only those whose rescaled lengths vanish.  The background carries asymptotically the
deterministic density~$\rho_{\mathrm{bg}}$, while the remaining
density~$\rho_{\mathrm{eff}}$ is carried by the effective part.  All random macroscopic atoms
in the limit come from the visible spectrum.

Two pieces of limiting data play distinct roles in this theorem.  The scalar
profile~$\phi$ alone determines the macroscopic length law.  If $\phi\equiv\gamma$, the bridge
is a Gamma bridge and the ranked normalised jumps have law $\mathrm{PD}(0,\gamma)$; in
particular, $\phi\equiv1$ gives the classical $\mathrm{PD}(0,1)$ law.  A non-constant visible
profile, arising when low-energy spectral splittings of order~$V_L^{-1}$ survive the
thermodynamic limit, produces a Poisson--Kingman bridge whose ranked lengths are generally not
Poisson--Dirichlet.  The full marked kernel~$x\mapsto\eta_x$ describes the additional geometric,
boundary, spectral, or metastable information carried by macroscopic cycles; it distinguishes
models that share the same scalar profile but differ in their spatial or internal structure.

In this sense, the \(\mathrm{PD}(0,1)\) limit is universal for the ordinary ideal Bose gas:
as shown in \cref{sec:three-bc-ideal-bose}, the Weyl law makes the boundary-dependent
spectral details invisible on the macroscopic cycle scale for all standard boundary conditions,
collapsing the visible profile to $\phi\equiv 1$.  Non-Gamma Poisson--Kingman limits arise
only when the low-energy spectrum contains additional structure visible at
scale~$V_L^{-1}$; models of this type, including double-well loop marks and their finite-type
extensions, are discussed in \cref{sec:double-well-loop-marks}.

\subsection{Examples: Weyl-law universality, double well, and finite-type band model}
\label{subsec:intro-examples}

We illustrate the abstract framework through two complementary families of examples, treated
in \cref{sec:three-bc-ideal-bose,sec:double-well-loop-marks}.

Consider first the ideal Bose gas in a box $\Lambda_L\subset\mathbb R^d$, $d>2$, at inverse
temperature~$\beta$, with periodic, Dirichlet, or Neumann boundary conditions.  Let
$N_L/V_L\to\rho$.  For all three boundary conditions, the background density is the usual
critical density $\rho_{\mathrm{bg}}=\rho_c(\beta)$, and, after the ground-state shift, the
Weyl law collapses the part of the spectrum seen by cycles with $j\asymp V_L$ to the constant
profile $\phi(x)\equiv 1$.  Thus, for $\rho>\rho_c(\beta)$, the unmarked length bridge is the
Gamma bridge of total mass $\rho-\rho_c(\beta)$.  If
$\ell_1^L\geq \ell_2^L\geq\cdots$ are the ranked cycle lengths, then
\[
  \left(
    \frac{\ell_1^L}{V_L},
    \frac{\ell_2^L}{V_L},
    \ldots
  \right)
  \Longrightarrow
  (\rho-\rho_c(\beta))\,(P_1^\downarrow,P_2^\downarrow,\ldots),
  \qquad
  (P_i^\downarrow)_{i\geq1}\sim \mathrm{PD}(0,1),
\]
independently of the choice of boundary condition.  The discrete random-walk variant discussed
in \cref{sec:three-bc-ideal-bose} belongs to the same class.

The boundary condition does, however, change the marked limit.  For periodic boundary
conditions, macroscopic cycles carry winding numbers and Brownian-bridge geometry
(\cref{cor:periodic-marked-winding-bridge-limit}).  For Dirichlet boundary conditions, the
ground-state transform produces killed-bridge, or taboo-process, marks
(\cref{prop:dirichlet-empirical-local-process-limit,%
cor:dirichlet-empirical-marked-bridge}).  For Neumann boundary conditions, the analogous marks
converge to reflected Brownian motion
(\cref{prop:neumann-empirical-local-process-limit,%
cor:neumann-empirical-marked-bridge}).  The three models therefore share the same unmarked
macroscopic length law but have different marked condensates.

The second family of examples arises when low-energy spectral splittings are not washed out at
the $V_L^{-1}$ scale.  The simplest instance is the critical symmetric double well.  Let
$\Delta_L$ be the splitting between the two lowest one-particle energies and assume
$V_L\Delta_L\to\gamma\in(0,\infty)$.  After subtracting the ground-state energy, both levels
contribute to cycles of length $j\asymp V_L$, and the scalar profile becomes
\[
  \phi_\gamma(x)=1+e^{-\beta\gamma x}.
\]
Since $\phi_\gamma$ is not constant, the unmarked bridge is not Gamma and the ranked lengths
are not governed by $\mathrm{PD}(0,1)$.

Different choices of marks on the same double-well model answer different questions.  If the
aim is to distinguish the two energy levels, one uses the spectral label mark on $\{0,1\}$,
with limiting kernel $\eta_x^{\mathrm{sp}}=\delta_0+e^{-\beta\gamma x}\,\delta_1$.  If the
aim is to study metastable tunnelling between the two wells, one instead uses a closed
two-state well-loop mark describing the effective inter-well motion on the rescaled time
interval.  The two marked models share the same scalar profile~$\phi_\gamma$ but have different
mark kernels.  The well-loop bridge is proved in \cref{cor:double-well-well-loop-pk-limit},
while the spectral-label version is a special case of the finite-type result below.

More generally, the framework applies whenever a finite number of low-energy modes remain
visible, as in a multi-well potential or a system with finitely many internal states.  The
finite-type band extension replaces the doublet by $Q$ visible types with
multiplicities~$\theta_r>0$ and rescaled energies~$\lambda_r$.  For the spectral-label mark
$r\in\{1,\ldots,Q\}$, the limiting kernel and scalar profile are
\[
  \eta_x^{\mathrm{ft}}
  =
  \sum_{r=1}^Q
  \theta_r e^{-\beta\lambda_r x}\,\delta_r,
  \qquad
  \phi_{\mathrm{ft}}(x)
  =
  \sum_{r=1}^Q
  \theta_r e^{-\beta\lambda_r x}.
\]
Given a cycle of macroscopic length~$x$, the limiting distribution of its label is
\[
  \mathbb P(r\mid x)
  =
  \frac{\theta_r e^{-\beta\lambda_r x}}
       {\phi_{\mathrm{ft}}(x)}.
\]
The canonical marked bridge is proved in \cref{cor:finite-type-spectral-label-pk-limit}.  When
$\phi_{\mathrm{ft}}$ is constant, the length marginal reduces to a Gamma bridge; otherwise it
is a Poisson--Kingman bridge whose ranked lengths are generally not Poisson--Dirichlet.

\subsection{Related work on ODLRO and loop configurations}
\label{subsec:intro-related-work}

The functional-integral expansion underlying the cycle picture was developed systematically by
Ginibre~\cite{Ginibre1971} and supplies the Brownian-loop and cycle-weight representations used
below.  The relation between long cycles and the spectral order parameter is subtle rather than
automatic.  Ueltschi~\cite{UeltschiODLRO2006} related the density in infinite cycles to ODLRO
and emphasised that the two need not coincide without additional information on the long-loop
kernel.  For the ideal gas, Chevallier and Krauth~\cite{ChevallierKrauth2007} identified the
ODLRO parameter with the mass in cycles whose length is large compared with the diffusive
scale.  Benfatto, Cassandro, Merola and Presutti~\cite{BenfattoCassandroMerolaPresutti2005}
obtained precise asymptotics for canonical loop occupations in a mean-field Bose gas and
identified the mass in infinite loops with the condensed mass.

The closest probabilistic starting points for the present analysis are the works of K\"onig,
Vogel and Zass~\cite{KonigVogelZass2025} and of Bai, K\"onig and
Vogel~\cite{BaiKonigVogel2025}.  For the canonical free Bose gas, K\"onig, Vogel and Zass
developed a Feynman--Kac and Poisson-point-process proof of ODLRO under periodic and diffusive
boundary conditions.  Above the critical density, they established strong concentration of the
short-loop particle density, identified the complementary long-loop mass with the condensate
mass, and proved the $\mathrm{PD}(0,1)$ limit for the ranked long-loop lengths.  Bai, K\"onig
and Vogel adapted this approach to a non-interacting mean-field trapped gas with kinetic
prefactor~$a_N$.  In terms of $\chi=\lim_{N\to\infty}Na_N^{d/2}$ and the trap-dependent
threshold~$\rho_w$, they proved ODLRO for $\chi>\rho_w$ and its absence for $\chi<\rho_w$;
in the former regime they also identified the condensate fraction and the
$\mathrm{PD}(0,1)$ law of the normalised long-loop lengths.

A complementary line of research studies infinite-volume and interacting loop configurations.
Adams, Collevecchio and K\"onig~\cite{AdamsCollevecchioKonig2011} represented the interacting
many-particle system by an ensemble of Brownian bridges organised into cycles and derived a
variational formula via a marked Poisson point process.  Adams and
Vogel~\cite{AdamsVogel2020} introduced Bosonic loop measures and their space--time random-walk
approximation, while Armend\'ariz, Ferrari and
Yuhjtman~\cite{ArmendarizFerrariYuhjtman2021} constructed an infinite-volume Gaussian random
permutation from a Gaussian loop soup and, above criticality, Gaussian random interlacements;
its point marginal is the boson point process.  Vogel~\cite{Vogel2023Interlacements} proved that a supercritical
finite-volume Bose soup conditioned on its density converges locally to the superposition of
the critical loop soup and random interlacements with intensity equal to the excess density.
Dickson and Vogel~\cite{DicksonVogel2024} identified an interacting limiting loop measure with
a random-interlacement component above a shifted critical density, and Bai and
Vogel~\cite{BaiVogel2025} proved the existence of Gibbs measures for the Feynman representation
with non-negative interaction.  At criticality, Vogel~\cite{Vogel2026Critical} showed that the scale and
occurrence of large loops depend more delicately on the geometry and on boundary heat-kernel
coefficients.

The present paper develops this probabilistic cycle framework in a complementary direction.
The results of K\"onig, Vogel and Zass~\cite{KonigVogelZass2025} and Bai, K\"onig and
Vogel~\cite{BaiKonigVogel2025} provide the essential one-component benchmarks: they determine
ODLRO, the total long-loop mass and the ranked unmarked length law in their respective models.
Here the limiting object is instead the entire canonical point process of cycles with
$j\asymp V_L$, together with marks in a general Polish space.  The marked kernel records
spatial, geometric or spectral information that is lost after one retains only the ranked
lengths, while the visible spectral profile allows several low-energy components to survive on
the $V_L^{-1}$ scale.

In the one-visible-level box examples also covered by K\"onig, Vogel and
Zass~\cite{KonigVogelZass2025}, forgetting the marks gives the same classical
$\mathrm{PD}(0,1)$ limit, providing a consistency check on the abstract theorem.  Keeping the
marks distinguishes winding, killed-bridge and reflected-bridge
geometries even when the length marginal is unchanged.  With several visible spectral
components, the framework further yields length-dependent type mixtures and genuinely
non-Gamma bridges; the critical double-well model is the simplest example.  Thus the
contribution is a joint marked-process limit and a spectral classification of its universality
classes, extending the classical unmarked picture without replacing the ODLRO and long-loop
results on which it builds.

We use the term ``Poisson--Kingman bridge'' for the resulting conditioned marked Poisson
point process because ranking its conditioned jumps gives a Poisson--Kingman mass partition.
In the one-level case it reduces to the familiar Gamma bridge and $\mathrm{PD}(0,1)$ law,
whereas distinct visible spectral rates lead to the non-Gamma bridges described above.
\subsection*{Outline of the paper}

In \cref{sec:finite-volume-model} we introduce the finite-volume cycle representation,
including the canonical cycle expansion, the effective and background decomposition, the
one-particle Hamiltonian, and the marked canonical point process.  In
\cref{sec:limit-objects-main-results} we state the abstract assumptions and the main results,
culminating in the canonical marked Poisson--Kingman bridge limit and its basic consequences.
In \cref{sec:three-bc-ideal-bose} we apply the general theorem to the ideal Bose gas with
periodic, Dirichlet, and Neumann boundary conditions, showing that the same
Poisson--Dirichlet length law coexists with different marked limits depending on the boundary
condition.  In \cref{sec:double-well-loop-marks} we study visible finite-type extensions and
double-well loop marks, giving explicit examples with a non-constant visible profile and a
Poisson--Kingman, rather than Gamma, bridge.  Finally, \cref{sec:proofs-canonical-framework}
contains the proofs of the abstract canonical framework, including the Poisson representation,
effective and background splitting, unconditioned convergence, local limit estimates, and
bridge convergence.

%% file: 02-model-cycle-representation-path-marks.tex
\section{Finite-volume cycle representation and marked point processes}
\label{sec:finite-volume-model}

This section defines the finite-volume model used throughout the paper.  We begin by
describing the spatial domain, Hamiltonian, and spectral quantities of a single particle and recall the canonical cycle
expansion of the ideal Bose gas.  The cycle-count law is then lifted to a
marked point process by realizing each counted cycle as an abstract atom with a
macroscopic length, a mark, and an auxiliary time coordinate.  Finally, we introduce
an effective/background decomposition of the one-cycle measure; this separates
the part that will remain visible in the macroscopic marked limit from the
part that contributes only through a background density.

\subsection{Finite-volume Bose gas model}
\label{subsec:finite-volume-one-particle-data}

Let \(\Lambda_L\) be a finite-volume spatial domain and let
\[
    V_L := |\Lambda_L|
\]
denote its volume.  In continuum models, \(\Lambda_L\) may be a scaled bounded
domain or a flat torus; in lattice models, \(\Lambda_L\) is a finite set and
\(V_L\) denotes the number of sites.  We assume that \(V_L \to \infty\) as
\(L \to \infty\).
The inverse temperature is fixed and denoted by \(\beta > 0\).  The canonical
particle number is \(N_L \in \mathbb{N}\), and
\[
    \rho_L := \frac{N_L}{V_L}
\]
is the finite-volume particle density.

The one-particle Hilbert space is denoted by \(\mathcal{H}_L\).  Let \(H_L\)
be a self-adjoint one-particle Hamiltonian on \(\mathcal{H}_L\).  We assume
that \(H_L\) is bounded from below, has purely discrete spectrum, and that
\(e^{-tH_L}\) is trace class for every \(t>0\).  Its eigenvalues, counted with
multiplicity, are written as
\[
    E_{0,L} \leq E_{1,L} \leq E_{2,L} \leq \cdots .
\]

We shift the ground-state energy to zero by setting
\[
    K_L := H_L - E_{0,L}.
\]
This shift only multiplies the \(N\)-particle partition function by a common
factor and hence does not change the canonical probability law.  The
eigenvalues of \(K_L\) are
\[
    \varepsilon_{i,L} := E_{i,L} - E_{0,L},
    \qquad i \geq 0,
\]
so that \(\varepsilon_{0,L}=0\).  We also use the rescaled eigenvalues
\[
    \lambda_{i,L} := V_L \varepsilon_{i,L}.
\]
The shifted heat semigroup \(e^{-tK_L}\) remains trace class for every
\(t>0\).

\subsection{Canonical cycle expansion}
\label{subsec:canonical-cycle-expansion}

For each cycle length \(j \geq 1\), define the finite-volume one-cycle trace
\[
    q_{L,j}
    :=
    \operatorname{Tr}_{\mathcal{H}_L}\!\left(e^{-\beta j K_L}\right).
\]
Hence,
\[
    q_{L,j}
    =
    \sum_{i \geq 0} e^{-\beta j \varepsilon_{i,L}}
    =
    \sum_{i \geq 0} e^{-\beta (j/V_L)\lambda_{i,L}}.
\]
For notational convenience, set
\[
    a_{L,j} := \frac{q_{L,j}}{j}.
\]

For \(N \in \mathbb{N}\), the canonical \(N\)-particle partition function
associated with the shifted Hamiltonian \(K_L\) is
\[
    Z_{L,N}
    =
    \sum_{\substack{(n_j)_{j \geq 1} \\
                    n_j \in \mathbb{N},\;
                    \sum_{j \geq 1} j n_j = N}}
    \prod_{j \geq 1}
    \frac{a_{L,j}^{\,n_j}}{n_j!}.
\]
Here \(n_j\) is the number of cycles of length \(j\), and the constraint
\(\sum_{j\geq1} j n_j=N\) fixes the total particle number.

The corresponding canonical law on cycle-count configurations is
\begin{equation}\label{pcan}
    \mathbb{P}_{L,N}^{\mathrm{can}}
    \bigl((n_j)_{j \geq 1}\bigr)
    =
    \frac{1}{Z_{L,N}}
    \prod_{j \geq 1}
    \frac{a_{L,j}^{\,n_j}}{n_j!},
\end{equation}
for all non-negative integer sequences satisfying
\(\sum_{j\geq1} j n_j=N\).  When \(N=N_L\), we write
\[
    \mathbb{P}_{L}^{\mathrm{can}}
    :=
    \mathbb{P}_{L,N_L}^{\mathrm{can}}.
\]

We shall often speak about individual cycles rather than only their counts.
 Let \({S}_N\) be the symmetric group on
\(\{1,\ldots,N\}\).  For \(\pi\in{S}_N\), denote by
\(\mathcal{C}(\pi)\) the set of cycles in the disjoint-cycle decomposition of
\(\pi\), and write \(|c|\) for the length of a cycle
\(c\in\mathcal{C}(\pi)\).  The cycle counts associated with \(\pi\) are
\[
    n_j(\pi)
    :=
    \#\{c\in\mathcal{C}(\pi): |c|=j\},
    \qquad j\geq1.
\]

The cycle-count law above is the push-forward of the following probability
measure on \({S}_N\):
\begin{equation}\label{probperm}
    \mathbb{P}_{L,N}^{\mathrm{perm}}(\pi)
    =
    \frac{1}{N!\,Z_{L,N}}
    \prod_{c\in\mathcal{C}(\pi)} q_{L,|c|}.
\end{equation}
Indeed, the number of permutations with cycle counts
\((n_j)_{j\geq1}\) is
\[
    \frac{N!}{\prod_{j\geq1} j^{n_j} n_j!},
\]
and hence the induced law of \((n_j(\pi))_{j\geq1}\) is exactly $ \mathbb{P}_{L,N}^{\mathrm{can}}$~\eqref{pcan}.
Thus, whenever we refer to an individual cycle below, we mean a cycle
\(c\in\mathcal{C}(\pi)\) for a permutation \(\pi\) sampled from
\(\mathbb{P}_{L,N}^{\mathrm{perm}}\).  Passing from permutations to cycle
counts forgets the particle labels and keeps only the lengths of these cycles.

\subsection{Marked cycle measures and canonical point process}
\label{subsec:marked-canonical-process}

We now attach marks to the abstract cycles.  Let \(\mathsf{M}\) be a Polish
space.  For each \(L\) and \(j\geq1\), let \(\mu_{L,j}\) be a finite positive
Borel measure on \(\mathsf{M}\) with total mass
\[
    \mu_{L,j}(\mathsf{M}) = q_{L,j}.
\]
We call \(\mu_{L,j}\) the marked one-cycle measure of length \(j\).  If
\(q_{L,j}>0\), write
\[
    J_{L,j}(dm) := \frac{\mu_{L,j}(dm)}{q_{L,j}}
\]
for the normalized mark law; if \(q_{L,j}=0\), the choice of \(J_{L,j}\) is
irrelevant.
The mark space \(\mathsf{M}\) is model-dependent.  It may encode path-valued
information, winding numbers, boundary data, spectral labels, or well
histories.  The examples in~\cref{sec:three-bc-ideal-bose} and~\cref{sec:double-well-loop-marks} specify
\(\mathsf{M}\) and \(\mu_{L,j}\) concretely.  At this stage only
the normalization \(\mu_{L,j}(\mathsf{M})=q_{L,j}\) is used.

We next define the canonical marked cycle point process.  Fix
\(N\in\mathbb{N}\), and sample a permutation
\(\pi\in{S}_N\) with law~\eqref{probperm}.
Conditionally on \(\pi\), each cycle \(c\in\mathcal{C}(\pi)\) receives an
independent mark
\[
    m_c \sim J_{L,|c|}.
\]
We also attach to each cycle an independent auxiliary coordinate
\[
    U_c \sim \operatorname{Unif}[0,1].
\]
This coordinate has no physical meaning; it is only a device for separating
cycles which may have the same length and the same mark when they are
represented as atoms of a point process.
Set
\[
    \mathsf{E}:=[0,1]\times(0,\infty)\times\mathsf{M}.
\]
The canonical marked cycle point process is the random point measure on
\(\mathsf{E}\) defined by
\[
    \Xi_{L,N}
    :=
    \sum_{c\in\mathcal{C}(\pi)}
    \delta_{\bigl(U_c,\; |c|/V_L,\; m_c\bigr)}.
\]
Thus each permutation cycle contributes one atom whose second coordinate is
the macroscopic cycle length \(|c|/V_L\).
If
$   n_j=n_j(\pi),$
then the cycles of length \(j\) may be enumerated, purely for notational
convenience, as $   c_{j,1},\ldots,c_{j,n_j}$.
With $   x_{L,j}:=\frac{j}{V_L}$,
the same point process can be written as
\[
    \Xi_{L,N}
    =
    \sum_{j\geq1}\sum_{\ell=1}^{n_j}
    \delta_{\bigl(U_{j,\ell},\; x_{L,j},\; m_{j,\ell}\bigr)},
\]
where
$   U_{j,\ell}\sim \operatorname{Unif}[0,1]$ and
   $ m_{j,\ell}\sim J_{L,j}$
independently over all enumerated cycles.  

For a non-negative measurable function
\(F:\mathsf{E}\to[0,\infty)\), write
\[
    \langle F,\Xi_{L,N}\rangle
    :=
    \int_{\mathsf{E}} F(u,x,m)\,\Xi_{L,N}(du,dx,dm).
\]
Since the cycles of a permutation of \(N\) labels partition
\(\{1,\ldots,N\}\), one has
$   \sum_{c\in\mathcal{C}(\pi)} |c| = N$.
Therefore the canonical particle-number constraint becomes
\[
    V_L \int_{\mathsf{E}} x\,\Xi_{L,N}(du,dx,dm)=N
\]
almost surely.  For the prescribed particle number \(N_L\), we write
$   \Xi_L := \Xi_{L,N_L}$.
By a harmless abuse of notation, we use
\(\mathbb{P}_{L,N}^{\mathrm{can}}\) also for the enlarged law that includes
the sampled permutation, the marks, and the auxiliary coordinates, whenever
only the resulting marked cycle process is relevant.

\subsection{Effective and background parts}
\label{subsec:finite-volume-effective-background}

The preceding construction treats all cycles in the same
way.  In the thermodynamic limit, however, different parts of the one-cycle
measure $\mu_{L,\cdot}$ may play different roles.  The part we call effective is the part
whose marked macroscopic cycles are retained in the limiting point process;
the background part is allowed to carry a non-negligible amount of mass, but
its contribution will  be controlled only through its total particle
density.  This distinction is useful, for example, when a low-energy or
visible part carries the macroscopic random atoms, while the remaining modes
produce a deterministic density shift. See the examples in~\cref{sec:three-bc-ideal-bose} and~\cref{sec:double-well-loop-marks} for the motivation of this decomposition.

For each \(L\) and \(j \geq 1\), assume that the marked
one-cycle measure admits a decomposition into positive measures
\[
    \mu_{L,j}
    =
    \mu_{L,j}^{\mathrm{eff}}
    +
    \mu_{L,j}^{\mathrm{bg}},
\]
where both terms are finite positive Borel measures on \(\mathsf{M}\).
Define the corresponding total masses by
$   q_{L,j}^{\mathrm{eff}}
    :=
    \mu_{L,j}^{\mathrm{eff}}(\mathsf{M})$,
$    q_{L,j}^{\mathrm{bg}}
    :=
    \mu_{L,j}^{\mathrm{bg}}(\mathsf{M})$.
    Then \(q_{L,j} = q_{L,j}^{\mathrm{eff}} + q_{L,j}^{\mathrm{bg}}\). No mutual singularity between \(\mu_{L,j}^{\mathrm{eff}}\) and \(\mu_{L,j}^{\mathrm{bg}}\) is assumed. The required assumptions will be stated in~\cref{subsec:main-assumptions}.
    
When \(q_{L,j}^{\mathrm{eff}} > 0\), define
\[
    J_{L,j}^{\mathrm{eff}}(dm)
    :=
    \frac{\mu_{L,j}^{\mathrm{eff}}(dm)}{q_{L,j}^{\mathrm{eff}}}\,,
\]
and when \(q_{L,j}^{\mathrm{bg}} > 0\), define
\[
    J_{L,j}^{\mathrm{bg}}(dm)
    :=
    \frac{\mu_{L,j}^{\mathrm{bg}}(dm)}{q_{L,j}^{\mathrm{bg}}}\,.
\]
If the corresponding mass is zero, the choice of the kernel is irrelevant.

The total mark kernel decomposes as the mixture
\[
    J_{L,j}
    =
    \frac{q_{L,j}^{\mathrm{eff}}}{q_{L,j}}\,
    J_{L,j}^{\mathrm{eff}}
    \;+\;
    \frac{q_{L,j}^{\mathrm{bg}}}{q_{L,j}}\,
    J_{L,j}^{\mathrm{bg}},
    \qquad q_{L,j} > 0.
\]
Thus the canonical marked process may be realized by first assigning to each
cycle of length \(j\) a part label
\(\sigma \in \{\mathrm{eff}, \mathrm{bg}\}\)
with probabilities
\[
    \mathbb{P}(\sigma = \mathrm{eff})
    =
    \frac{q_{L,j}^{\mathrm{eff}}}{q_{L,j}}\,,
    \qquad
    \mathbb{P}(\sigma = \mathrm{bg})
    =
    \frac{q_{L,j}^{\mathrm{bg}}}{q_{L,j}}\,,
\]
and then sampling the mark from \(J_{L,j}^{\mathrm{eff}}\) or
\(J_{L,j}^{\mathrm{bg}}\) according to the assigned part.  After forgetting
the part label, the marginal mark law is again \(J_{L,j}\).

The canonical marked point process therefore decomposes as
\[
    \Xi_{L,N}
    =
    \Xi_{L,N}^{\mathrm{eff}}
    +
    \Xi_{L,N}^{\mathrm{bg}},
\]
where
\[
    \Xi_{L,N}^{\mathrm{eff}}
    :=
    \sum_{c:\,\sigma(c)=\mathrm{eff}}
    \delta_{\bigl(U_c,\; |c|/V_L,\; m(c)\bigr)}
\]
and
\[
    \Xi_{L,N}^{\mathrm{bg}}
    :=
    \sum_{c:\,\sigma(c)=\mathrm{bg}}
    \delta_{\bigl(U_c,\; |c|/V_L,\; m(c)\bigr)}.
\]

For the prescribed particle number \(N_L\), we write
$   \Xi_{L,\mathrm{eff}} := \Xi_{L,N_L}^{\mathrm{eff}}$,
 $   \Xi_{L,\mathrm{bg}} := \Xi_{L,N_L}^{\mathrm{bg}}$.
The corresponding effective and background particle numbers are
\[
    G_L
    :=
    V_L \int_{\mathsf{E}} x\, \Xi_{L,\mathrm{eff}}(du,dx,dm),
    \qquad
    B_L
    :=
    V_L \int_{\mathsf{E}} x\, \Xi_{L,\mathrm{bg}}(du,dx,dm).
\]
Under the canonical law at particle number \(N_L\),
\(G_L + B_L = N_L\) almost surely.

%% file: 03-limit-objects-main-results.tex

\section{Assumptions, limiting bridge, and main results}
\label{sec:limit-objects-main-results}
This section formulates the precise assumptions on the finite-volume model and
states the main convergence theorem.  We begin by introducing the
length-bounded topology on the space of point measures, which is the natural
framework for processes whose atoms may accumulate near zero macroscopic
length.  We then define the limiting effective kernel and the associated
limiting marked Poisson point process.  The canonical limit is obtained by conditioning this Poisson point process on
its total macroscopic mass; we refer to the resulting conditional law as the
marked Poisson--Kingman bridge. This limit is related to a Poisson--Kingman-type mass partition, see~\cite{PermanPitmanYor1992,Pitman2003PoissonKingman,PitmanYor1997}.
Next, we collect the assumptions on the finite-volume effective and background
parts: the effective part is required to converge, in a marked sense, to
the limiting kernel, while the background part concentrates on a deterministic
density and remains invisible at macroscopic scales.  Finally, we state the
main theorem, which asserts that the canonical marked cycle point process
converges weakly to this marked Poisson--Kingman bridge with total mass equal
to the effective particle density.
\subsection{Length-bounded topology}
\label{subsec:length-bounded-topology}

Let
$$
    \mathsf{E} := [0,1] \times (0,\infty) \times \mathsf{M},
$$
where $\mathsf{M}$ is a Polish mark space.  The second coordinate is always
interpreted as the macroscopic cycle length.  Since the canonical cycle process
may have infinitely many atoms with lengths tending to zero, we do not equip
the space of point measures on $\mathsf{E}$ with the usual vague or weak
topology.  Instead, we use a topology that tests the process only on length
windows bounded away from both zero and infinity.

For $0 < \delta < R < \infty$, set
$$
    \mathsf{E}_{\delta,R} := [0,1] \times [\delta, R] \times \mathsf{M}\,.
$$
Let $\mathcal{N}_\ell(\mathsf{E})$ be the space of Borel point measures
$\xi$ on $\mathsf{E}$ such that
$$
    \xi(\mathsf{E}_{\delta,R}) < \infty
    \qquad \text{for all } 0 < \delta < R < \infty.
$$
We call such measures \emph{length-boundedly finite}.  A measure
$\xi \in \mathcal{N}_\ell(\mathsf{E})$ may have infinitely many atoms with
lengths tending to zero, but it has only finitely many atoms in every fixed
macroscopic length window.

Let $C_b^\ell(\mathsf{E})$ denote the class of bounded continuous functions
$f : \mathsf{E} \to \mathbb{R}$ for which there exist
$0 < \delta < R < \infty$ such that
$$
    f(u,x,m) = 0
    \qquad \text{whenever } x \notin [\delta, R].
$$
The \emph{length-bounded topology} on $\mathcal{N}_\ell(\mathsf{E})$ is the
coarsest topology making all maps
$$
    \xi \longmapsto \langle f, \xi \rangle
    := \int_{\mathsf{E}} f\, d\xi,
    \qquad f \in C_b^\ell(\mathsf{E}),
$$
continuous.  Equivalently, $\xi_n \to \xi$ in
$\mathcal{N}_\ell(\mathsf{E})$ if and only if
$$
    \int_{\mathsf{E}} f\, d\xi_n
    \longrightarrow
    \int_{\mathsf{E}} f\, d\xi
    \qquad \text{for every } f \in C_b^\ell(\mathsf{E}).
$$

This is the boundedly finite random-measure topology associated with the
bornology generated by the length windows $\mathsf{E}_{\delta,R}$.  It is
the natural topology for the present problem because the limiting Poisson
point process is locally finite on each such window, whereas atoms with
microscopic lengths may accumulate near $x = 0$.  We write
$$
    \Xi_L \Longrightarrow \Xi
    \qquad \text{in } \mathcal{N}_\ell(\mathsf{E})
$$
for weak convergence with respect to this topology.  We use the standard
terminology and convergence criteria for random measures and point processes as
in Kallenberg~\cite{Kallenberg2017} and
Daley--Vere-Jones~\cite{DaleyVereJones2008}.

Let $\mathcal{H}$ denote the non-negative cone of $C_b^\ell(\mathsf{E})$:
the class of functions $h : \mathsf{E} \to [0,\infty)$ that are bounded,
continuous, and supported in some length window
$[0,1] \times [\delta_h, R_h] \times \mathsf{M}$.  For
$\xi \in \mathcal{N}_\ell(\mathsf{E})$ and $h \in \mathcal{H}$, write
$$
    \langle h, \xi \rangle
    :=
    \int_{\mathsf{E}} h(u,x,m)\, \xi(du,dx,dm).
$$
Since $h$ is supported on a fixed length window, this integral is finite for
every $\xi \in \mathcal{N}_\ell(\mathsf{E})$.

Convergence of point processes in $\mathcal{N}_\ell(\mathsf{E})$ will be
formulated through convergence of the corresponding laws on this space.  In
particular, Laplace functionals of the form
$$
    \mathbf{E}\,\exp\!\bigl\{-\langle h, \Xi \rangle\bigr\},
    \qquad h \in \mathcal{H},
$$
will be used in the proofs in~\cref{sec:proofs-canonical-framework} to identify the limiting point processes.

\subsection{Limiting effective kernel and Poisson--Kingman bridge}
\label{subsec:limiting-effective-bridge}
We introduce the limit of the marked Point process~$\Xi_L$.
We first define a marked Poisson point process with basic assumptions.
The canonical bridge is obtained by conditioning this process on its total
macroscopic mass.  We use the term marked Poisson--Kingman bridge for this
conditional law.

The limiting
effective part is encoded by a family of finite positive Borel measures
$ \{   \eta_x, x > 0\},$
on the mark space $\mathsf{M}$.  This family gives the limiting marked
one-cycle law at macroscopic length $x$, before canonical conditioning.

\begin{assumption}[Limiting effective kernel]
\label{ass:limiting-effective-kernel}
The family $x \mapsto \eta_x$ satisfies the following conditions.

\begin{enumerate}
\item The map $x \mapsto \eta_x$ is weakly continuous as a map from
$(0,\infty)$ into the space of finite positive Borel measures on
$\mathsf{M}$.  That is, for every $f \in C_b(\mathsf{M})$,
$$
    x \longmapsto \eta_x(f)
    := \int_{\mathsf{M}} f(m)\, \eta_x(dm)
$$
is continuous.

\item Setting
$$
    \phi(x) := \eta_x(\mathsf{M}), \qquad x > 0,
$$
there exists a finite positive measure $\Sigma(d\lambda)$ on
$[0,\infty)$ such that
$$
    \phi(x)
    =
    \int_{[0,\infty)} e^{-\beta x \lambda}\, \Sigma(d\lambda),
    \qquad x > 0.
$$

\item For every $\kappa > 0$,
$$
    \int_0^\infty (1 \wedge x)\, e^{-\kappa x}\,
    \frac{\phi(x)}{x}\, dx < \infty.
$$
\end{enumerate}
\end{assumption}

For $\kappa > 0$, define a measure $\nu^{(\kappa)}$ on $\mathsf{E}$ by
\begin{equation}\label{defnu}
    \nu^{(\kappa)}(du,dx,dm)
    :=
    du\; e^{-\kappa x}\, \frac{dx}{x}\; \eta_x(dm),
\end{equation}
where $du$ denotes Lebesgue measure on $[0,1]$.  For every length window
$\mathsf{E}_{\delta,R}$,
$\nu^{(\kappa)}(\mathsf{E}_{\delta,R})   < \infty.$
Hence \(\nu^{(\kappa)}\) defines a length-boundedly finite intensity measure.

Let
\[
    \Pi^{(\kappa)}
    \sim
    \operatorname{PPP}\bigl(\nu^{(\kappa)}\bigr)
\]
be the marked Poisson point process on \(\mathsf{E}\) with intensity
\(\nu^{(\kappa)}\).  It is an
\(\mathcal{N}_\ell(\mathsf{E})\)-valued random measure.  Its total effective
mass is defined by
\[
    T^{(\kappa)}
    :=
    \int_{\mathsf{E}} x\, \Pi^{(\kappa)}(du,dx,dm).
\]
The integrability condition in~\cref{ass:limiting-effective-kernel}
ensures that \(T^{(\kappa)} < \infty\) almost surely.

We shall condition this limiting Poisson process on the value of its total
mass.  Since this is a conditioning on a continuous random variable, it is
understood through density disintegration.

\begin{assumption}[Density for the limiting effective mass]
\label{ass:limiting-effective-density}
For every \(\kappa > 0\), the random variable \(T^{(\kappa)}\) admits a
continuous density on \((0,\infty)\).  We denote this density by
\(f_0^{(\kappa)}\).  The bridge at a value \(a > 0\) will be used only when
\(f_0^{(\kappa)}(a) > 0\).
\end{assumption}

For \(a > 0\) such that \(f_0^{(\kappa)}(a) > 0\), the \emph{marked
Poisson--Kingman bridge with total mass \(a\)} is the conditional law
\[
    \Pi_a^{\mathrm{br}}
    :=
    \mathcal{L}\!\left(
        \Pi^{(\kappa)}
        \;\middle|\;
        T^{(\kappa)} = a
    \right),
\]
where the conditioning is interpreted through density disintegration of
\(T^{(\kappa)}\).  Equivalently, for every bounded measurable functional
\(\Phi : \mathcal{N}_\ell(\mathsf{E}) \to \mathbb{R}\) and every bounded
measurable function \(g : (0,\infty) \to \mathbb{R}\),
\[
    \mathbf{E}\!\left[
        \Phi(\Pi^{(\kappa)})\, g(T^{(\kappa)})
    \right]
    =
    \int_0^\infty
    g(a)\,
    \mathbf{E}\!\left[
        \Phi(\Pi_a^{\mathrm{br}})
    \right]
    f_0^{(\kappa)}(a)\, da.
\]
This identity defines the bridge as a regular conditional law at density
points of \(T^{(\kappa)}\).

\begin{remark}[Terminology: Poisson--Kingman bridge]
We call this conditional law a marked Poisson--Kingman bridge because, after
forgetting the marks and ranking the atom sizes, it gives the corresponding
Poisson--Kingman-type mass partition~\cite{Pitman2003PoissonKingman}.  In the ideal Bose gas case in~\cref{sec:three-bc-ideal-bose},
this bridge reduces to the classical Gamma bridge, and the ranked jumps have
the Poisson--Dirichlet law.
\end{remark}

\subsection{Assumptions on the finite-volume model}
\label{subsec:main-assumptions}

We now state the assumptions on the finite-volume effective and background
parts.  The effective part is required to converge, on every macroscopic
length window, to the limiting kernel \(x \mapsto \eta_x\).  The background
part may carry a non-zero particle density, but this density is assumed to
be deterministic in the limit and invisible on the macroscopic length scale.

\begin{assumption}[Effective marked trace convergence]
\label{ass:effective-marked-trace-convergence}
For every \(0 < \delta < R < \infty\) and every
\(F \in C_b\bigl([0,1] \times [\delta,R] \times \mathsf{M}\bigr)\),
\[
    \sup_{x \in [\delta,R]}
    \left|
    \int_0^1 \!\int_{\mathsf{M}}
    F(u,x,m)\, \mu_{L,\lfloor xV_L\rfloor}^{\mathrm{eff}}(dm)\, du
    \;-\;
    \int_0^1 \!\int_{\mathsf{M}}
    F(u,x,m)\, \eta_x(dm)\, du
    \right|
    \longrightarrow 0.
\]
Moreover, the effective one-cycle masses are uniformly bounded:
\[
    C_{\mathrm{eff}}
    :=
    \sup_{L \geq 1}\, \sup_{j \geq 1}\,
    q_{L,j}^{\mathrm{eff}}
    < \infty.
\]
\end{assumption}

Taking \(F \equiv 1\) in~\cref{ass:effective-marked-trace-convergence} yields the scalar
effective trace convergence
\begin{equation}\label{cvgq}
    q_{L,\lfloor xV_L\rfloor}^{\mathrm{eff}}
    \longrightarrow
    \phi(x) := \eta_x(\mathsf{M})
\end{equation}
locally uniformly for \(x \in (0,\infty)\).

The next assumption is a verifiable spectral criterion used to derive the
effective local limit theorem in~\cref{sec:proofs-canonical-framework}.

\begin{assumption}[Effective spectral local-limit criterion]
\label{ass:effective-spectral-LLT-criterion}
At least one of the following two conditions holds.

\begin{enumerate}
\item[\(\mathrm{(A)}\)] \textbf{Absolute case.}
There exist finite positive measures
\(\Sigma_L(d\lambda)\), \(L \geq 1\), on \([0,\infty)\) such that, for every
\(L\) and every \(j \geq 1\),
\[
    q_{L,j}^{\mathrm{eff}}
    =
    \int_{[0,\infty)} e^{-\beta(j/V_L)\lambda}\, \Sigma_L(d\lambda).
\]
Write \(\Theta_L := \Sigma_L\bigl([0,\infty)\bigr)\).  We require:
\begin{enumerate}
\item[(i)] \(\sup_{L \geq 1} \Theta_L < \infty\).
\item[(ii)] For every \(0 < \delta < R < \infty\),
\[
    \sup_{x \in [\delta,R]}
    \left|
    \int_{[0,\infty)}
    e^{-\beta(\lfloor xV_L\rfloor/V_L)\lambda}\, \Sigma_L(d\lambda)
    -
    \int_{[0,\infty)}
    e^{-\beta x\lambda}\, \Sigma(d\lambda)
    \right|
    \longrightarrow 0,
\]
where \(\Sigma\) is the measure appearing in~\cref{ass:limiting-effective-kernel}.
\item[(iii)] There exists \(\Theta_* > 1\) such that, for all sufficiently
large \(L\), \(\Theta_L \geq \Theta_*\).
\item[(iv)] For every \(\kappa > 0\),
\[
    \sup_L
    \int_{[0,\infty)}
    \log(1 + \kappa + \beta\lambda)\, \Sigma_L(d\lambda)
    < \infty.
\]
\end{enumerate}

\item[\(\mathrm{(B)}\)] \textbf{Critical finite-type case.}
There exist an integer \(Q \geq 1\), weights
\(\theta_1, \ldots, \theta_Q > 0\), and parameters
\[
    \lambda_{L,r}:=V_L\varepsilon_{L,r} \longrightarrow \lambda_r \in [0,\infty),
    \qquad r = 1, \ldots, Q,
\]
such that, for every \(L\) and every \(j \geq 1\),
\[
    q_{L,j}^{\mathrm{eff}}
    =
    \sum_{r=1}^Q \theta_r\, e^{-\beta \lambda_{L,r}\, j/V_L}.
\]
Set \(\Theta := \sum_{r=1}^Q \theta_r\).  We assume the critical condition
\(\Theta = 1\).  In this case the limiting scalar profile is
\[
    \phi(x)
    =
    \sum_{r=1}^Q \theta_r\, e^{-\beta \lambda_r x}.
\]
\end{enumerate}
\end{assumption}

\begin{remark}[Finite type with \(\Theta > 1\)]
\label{rem:finite-type-theta-greater-than-one-implies-absolute}
The finite-type case with
\(\Theta = \sum_{r=1}^Q \theta_r > 1\)
is already covered by condition~\(\mathrm{(A)}\).  Indeed, set
\[
    \Sigma_L := \sum_{r=1}^Q \theta_r\, \delta_{\lambda_{L,r}},
    \qquad
    \Sigma := \sum_{r=1}^Q \theta_r\, \delta_{\lambda_r}.
\]
Then, for every \(j \geq 1\),
\[
    \sum_{r=1}^Q \theta_r e^{-\beta \lambda_{L,r} j/V_L}
    =
    \int_{[0,\infty)} e^{-\beta(j/V_L)\lambda}\,\Sigma_L(d\lambda).
\]
Moreover,
\(\Theta_L=\Sigma_L([0,\infty))=\Theta<\infty\), and since
\(\Theta>1\), one may choose \(1<\Theta_*<\Theta\).  Thus
conditions~\(\mathrm{(i)}\) and~\(\mathrm{(iii)}\) hold immediately.

It remains to check~\(\mathrm{(ii)}\) and~\(\mathrm{(iv)}\).  Fix
\(0<\delta<R<\infty\) and put
\[
    a_L(x):=\frac{\lfloor xV_L\rfloor}{V_L}.
\]
Then \(\sup_{x\in[\delta,R]}|a_L(x)-x|\leq V_L^{-1}\).  Hence
\[
\begin{aligned}
&\sup_{x\in[\delta,R]}
\left|
    \int e^{-\beta a_L(x)\lambda}\,\Sigma_L(d\lambda)
    -
    \int e^{-\beta x\lambda}\,\Sigma(d\lambda)
\right|  \\
&\qquad \leq
\sum_{r=1}^Q \theta_r
\sup_{x\in[\delta,R]}
\left|
    e^{-\beta a_L(x)\lambda_{L,r}}
    -
    e^{-\beta x\lambda_r}
\right|.
\end{aligned}
\]
Since \(\lambda_{L,r}\to\lambda_r\), each sequence
\((\lambda_{L,r})_L\) is bounded; write
\(M_r:=\sup_L\lambda_{L,r}<\infty\).  Using
\(|e^{-u}-e^{-v}|\leq |u-v|\) for \(u,v\geq0\),
\[
\begin{aligned}
\sup_{x\in[\delta,R]}
\left|
    e^{-\beta a_L(x)\lambda_{L,r}}
    -
    e^{-\beta x\lambda_r}
\right|
&\leq
\beta \sup_{x\in[\delta,R]}
\left|a_L(x)\lambda_{L,r}-x\lambda_r\right|  \\
&\leq
\beta\left(\frac{M_r}{V_L}
+
R|\lambda_{L,r}-\lambda_r|\right)
\longrightarrow 0 .
\end{aligned}
\]
Since \(Q<\infty\), summing over \(r\) gives condition~\(\mathrm{(ii)}\).

Finally, for every \(\kappa>0\),
\[
\begin{aligned}
\int_{[0,\infty)}
\log(1+\kappa+\beta\lambda)\,\Sigma_L(d\lambda)
&=
\sum_{r=1}^Q
\theta_r
\log(1+\kappa+\beta\lambda_{L,r})  \\
&\leq
\sum_{r=1}^Q
\theta_r
\log(1+\kappa+\beta M_r)
<\infty .
\end{aligned}
\]
The bound is independent of \(L\), so condition~\(\mathrm{(iv)}\) follows.
\end{remark}

We next state the assumption on the background part, formulated solely in
terms of the finite-volume background traces \(q_{L,j}^{\mathrm{bg}}\).

\begin{assumption}[Background density concentration]
\label{ass:background-density-concentration}
There exists a constant \(\rho_{\mathrm{bg}} \in [0,\infty)\)
and some \(\kappa > 0\), such that
\[
    m_{L,\mathrm{bg}}^{(\kappa)}
    :=
    \frac{1}{V_L}
    \sum_{j \geq 1} e^{-\kappa j/V_L}\, q_{L,j}^{\mathrm{bg}}
    \longrightarrow \rho_{\mathrm{bg}},
\]
and
\[
    v_{L,\mathrm{bg}}^{(\kappa)}
    :=
    \frac{1}{V_L^2}
    \sum_{j \geq 1} j\, e^{-\kappa j/V_L}\, q_{L,j}^{\mathrm{bg}}
    \longrightarrow 0.
\]
\end{assumption}
Finally, we specify the canonical density regime.

\begin{assumption}[Canonical density regime]
\label{ass:canonical-density-regime}
The canonical particle numbers satisfy
\[
    \frac{N_L}{V_L} \longrightarrow \rho
\]
for some \(\rho \in (0,\infty)\).  We assume that
\(\rho > \rho_{\mathrm{bg}}\).  Define the effective density by
\[
    \rho_{\mathrm{eff}} := \rho - \rho_{\mathrm{bg}} > 0.
\]
For the chosen \(\kappa > 0\), we also assume
\(f_0^{(\kappa)}(\rho_{\mathrm{eff}}) > 0\), where \(f_0^{(\kappa)}\) is the
density of the limiting effective mass \(T^{(\kappa)}\) from~\cref{ass:limiting-effective-density}.
\end{assumption}

\subsection{Main theorem}
\label{subsec:main-theorem}

We now state the canonical bridge limit.  The theorem asserts that, once the
deterministic background density is removed, the macroscopic effective cycles
converge to a marked Poisson--Kingman bridge.

\begin{theorem}[Canonical marked Poisson--Kingman bridge limit]
\label{thm:main-canonical-bridge-limit}
  Assume~\cref{ass:limiting-effective-kernel},
\cref{ass:limiting-effective-density},
\cref{ass:effective-marked-trace-convergence},
\cref{ass:effective-spectral-LLT-criterion}, and
\cref{ass:canonical-density-regime}. Fix \(\kappa > 0\) satisfies~\cref{ass:background-density-concentration}.  Let
\(\Xi_L = \Xi_{L,N_L}\)
be the canonical marked cycle point process at particle number \(N_L\).  Then,
under \(\mathbb{P}_{L,N_L}^{\mathrm{can}}\),
\[
    \Xi_L
    \Longrightarrow
    \Pi_{\rho_{\mathrm{eff}}}^{\mathrm{br}}
    \qquad
    \text{in } \mathcal{N}_\ell(\mathsf{E}).
\]
Here
\[
    \Pi_{\rho_{\mathrm{eff}}}^{\mathrm{br}}
    =
    \mathcal{L}\!\left(
        \Pi^{(\kappa)}
        \;\middle|\;
        T^{(\kappa)} = \rho_{\mathrm{eff}}
    \right)
\]
is the marked Poisson--Kingman bridge defined
in~\cref{subsec:limiting-effective-bridge}.

Moreover, the background part is invisible in every fixed macroscopic length
window: for every \(0 < \delta < R < \infty\),
\[
    \mathbb{P}_{L,N_L}^{\mathrm{can}}\!\left(
        \Xi_{L,\mathrm{bg}}
        \bigl([0,1] \times [\delta,R] \times \mathsf{M}\bigr) > 0
    \right)
    \longrightarrow 0.
\]
Thus the background contributes the deterministic density
\(\rho_{\mathrm{bg}}\), while the effective part carries the remaining
density \(\rho_{\mathrm{eff}}\): under
\(\mathbb{P}_{L,N_L}^{\mathrm{can}}\),
\[
    \left(
        \frac{B_L}{V_L},\;
        \frac{G_L}{V_L}
    \right)
    \longrightarrow
    \left(
        \rho_{\mathrm{bg}},\;
        \rho_{\mathrm{eff}}
    \right)
    \qquad
    \text{in probability}.
\]
\end{theorem}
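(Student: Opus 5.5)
The strategy is Poissonization followed by de-Poissonization through a local limit theorem. For the fixed $\kappa$ of \cref{ass:background-density-concentration}, introduce the grand-canonical (Poissonized) marked process $\Xi_L^{(\kappa)}$. In it the counts $n_j$ are independent Poisson variables with means $e^{-\kappa j/V_L}a_{L,j}$, the marks are i.i.d.\ from $J_{L,j}$, and the coordinates $U$ are uniform. The cycle-count form \eqref{pcan} then yields the exact identity
\[
\mathcal L\bigl(\Xi_{L,N}\bigr)=\mathcal L\bigl(\Xi^{(\kappa)}_L\,\big|\,V_L\textstyle\int x\,\Xi^{(\kappa)}_L=N\bigr),
\]
since the tilt $e^{-\kappa N/V_L}$ is constant on the constraint set.

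The labelled splitting into eff and bg parts makes $\Xi^{(\kappa)}_{L,\mathrm{eff}}$ and $\Xi^{(\kappa)}_{L,\mathrm{bg}}$ independent Poisson processes. Their intensities are $\sum_j e^{-\kappa j/V_L}j^{-1}\,du\,\delta_{j/V_L}\,\mu^{\sharp}_{L,j}$ for $\sharp\in\{\mathrm{eff},\mathrm{bg}\}$. With $G_L,B_L$ the corresponding particle numbers, for $h\in\mathcal H$ we get
\[
\mathbf E^{\mathrm{can}}e^{-\langle h,\Xi_L\rangle}
=\frac{\sum_{k}\mathbf E\bigl[e^{-\langle h,\Xi^{(\kappa)}_{L,\mathrm{eff}}\rangle};G_L=N_L-k\bigr]\,\mathbf P(B_L=k)\,\mathbf E[e^{-\langle h,\Xi^{(\kappa)}_{L,\mathrm{bg}}\rangle}\mid B_L=k]}{\sum_k \mathbf P(G_L=N_L-k)\mathbf P(B_L=k)}.
\]

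The steps are then as follows.

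First, unconditioned convergence. The Laplace functional of $\Xi^{(\kappa)}_{L,\mathrm{eff}}$ is $\exp\{-\sum_j e^{-\kappa j/V_L}j^{-1}\int(1-e^{-h})\,d\mu^{\mathrm{eff}}_{L,j}du\}$. This is a Riemann sum with $j^{-1}=V_L^{-1}x^{-1}$. On the support window of $h$, \cref{ass:effective-marked-trace-convergence} makes it converge to $\int(1-e^{-h})\,d\nu^{(\kappa)}$. Jointly, $G_L/V_L\Rightarrow T^{(\kappa)}$. For this we use the Laplace exponent $\sum_j j^{-1}e^{-\kappa j/V_L}(1-e^{-sj/V_L})q^{\mathrm{eff}}_{L,j}$. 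The small-$x$ part is controlled by $C_{\mathrm{eff}}$, since $\sum_{j\le\delta V_L}j^{-1}(sj/V_L)\le s\delta C_{\mathrm{eff}}$. The large-$x$ part is controlled by $e^{-\kappa x}$.

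Second, the background. \cref{ass:background-density-concentration} says exactly that $\mathbf E B_L/V_L\to\rho_{\mathrm{bg}}$ and $\mathrm{Var}(B_L)/V_L^2=v^{(\kappa)}_{L,\mathrm{bg}}\to0$. Hence $B_L/V_L\to\rho_{\mathrm{bg}}$ in probability. The expected number of bg atoms in $[\delta,R]$ is at most $\delta^{-1}\sum_{j\ge\delta V_L}(j/V_L^2)e^{-\kappa j/V_L}q^{\mathrm{bg}}_{L,j}\le\delta^{-2}v^{(\kappa)}_{L,\mathrm{bg}}\to0$, so the background is invisible in the grand-canonical ensemble.

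Third, the local limit theorem, which is the main obstacle. We need
\[
V_L\,\mathbf P(G_L=n)=f_0^{(\kappa)}(n/V_L)+o(1)
\]
uniformly for $n/V_L$ in compacts of $(0,\infty)$. We also need the conditional version
\[
V_L\,\mathbf E[e^{-\langle h,\Xi^{(\kappa)}_{L,\mathrm{eff}}\rangle};G_L=n]\to\mathbf E[e^{-\langle h,\Pi^{(\kappa)}\rangle};T^{(\kappa)}\in da]/da.
\]
The plan is Fourier inversion on the lattice:
\[
V_L\mathbf P(G_L=n)=\frac{1}{2\pi}\int_{-\pi V_L}^{\pi V_L}e^{-itn/V_L}\,\psi_L(t)\,dt,
\]
where $\psi_L$ is the characteristic function of $G_L/V_L$. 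Pointwise convergence $\psi_L\to\psi$ comes from the first step. Dominated convergence requires an integrable majorant $|\psi_L(t)|\le C(1+|t|)^{-(1+\epsilon)}$ uniformly in $L$ on $|t|\le\pi V_L$.

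In case (A) we write $\log|\psi_L(t)|=-\int\Sigma_L(d\lambda)\sum_j j^{-1}e^{-(\kappa+\beta\lambda)j/V_L}(1-\cos(tj/V_L))$. The inner sum is $\tfrac12\log\bigl(|1-e^{-(\kappa+\beta\lambda)/V_L+it/V_L}|^2/(1-e^{-(\kappa+\beta\lambda)/V_L})^2\bigr)$. This behaves like $\log\bigl(|t|/(\kappa+\beta\lambda)\bigr)$ once $|t|\gg\kappa+\beta\lambda$. Integrating against $\Sigma_L$ gives $|\psi_L(t)|\lesssim|t|^{-\Theta_L}\exp\{\int\log(1+\kappa+\beta\lambda)\,d\Sigma_L\}$. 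Conditions (iii) and (iv) then make this integrable, uniformly in $L$.

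In case (B) the bound only gives decay $|t|^{-1}$, which is borderline. Here I would use the explicit structure instead. Each type $r$ gives an independent negative-binomial-like variable, namely the geometric-weighted log series with exponent $\theta_r$. Its exact mass function can be handled directly, and the convolution of $Q$ such laws has $\sum\theta_r=1$. So I would prove the local limit theorem by a direct computation, comparing with the Gamma$(1)$-type density tilted by bounded factors. Alternatively, I would split off a small window of $t$ and use the $|t|^{-1}$ bound together with an oscillation/smoothing argument. The marked version of the local limit theorem follows the same route. The extra factor $e^{-\langle h,\cdot\rangle}$ only involves atoms in a window $[\delta,R]$, and removing them changes $G_L$ by a finite-atom shift. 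I would therefore condition on the atoms in the window and apply the unmarked local limit theorem to the remainder, which still satisfies the same criterion.

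Finally, combine the three steps. Since $B_L/V_L$ concentrates at $\rho_{\mathrm{bg}}$ and the local limit theorem holds uniformly near $\rho_{\mathrm{eff}}$, the ratio above converges to
\[
\frac{\mathbf E[e^{-\langle h,\Pi^{(\kappa)}\rangle};T^{(\kappa)}\in d\rho_{\mathrm{eff}}]}{f_0^{(\kappa)}(\rho_{\mathrm{eff}})}=\mathbf E e^{-\langle h,\Pi^{\mathrm{br}}_{\rho_{\mathrm{eff}}}\rangle}.
\]
The denominator is positive by \cref{ass:canonical-density-regime}, and the background conditional factor tends to $1$. For that factor I would bound $\mathbf P(\text{bg atom in window},B_L=k)$ by the grand-canonical invisibility and use the fact that $\mathbf P(G_L=\cdot)=O(V_L^{-1})$ uniformly. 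The same sandwich, applied with numerator events, transfers invisibility of the background and concentration of $B_L/V_L$ to the canonical ensemble. The statement for $G_L/V_L$ then follows from $G_L+B_L=N_L$. Convergence of Laplace functionals for $h\in\mathcal H$ then identifies the limit in $\mathcal N_\ell(\mathsf E)$.
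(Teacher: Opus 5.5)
Your overall architecture matches the paper's. The shared ingredients are the tilted Poissonization with the exact conditioning identity, the independent effective/background splitting, and Riemann-sum convergence of the Laplace exponent. You also share the Chebyshev and Markov bounds for the background, lattice Fourier inversion with the majorant $(1+|t|)^{-\Theta_*}$ under $\mathrm{(A)}$, and the negative-binomial computation for $h=0$ under $\mathrm{(B)}$. Finally, the ratio argument is the same, with the uniform bound $\sup_{L,n}V_L\mathbf P(G^{(\kappa)}_L=n)<\infty$ transferring invisibility and concentration to the canonical ensemble.

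\textbf{The gap: the $h$-weighted local limit theorem under condition $\mathrm{(B)}$.} There the characteristic function of $G^{(\kappa)}_L/V_L$ decays only like $(1+|t|)^{-1}$, so Fourier inversion is unavailable. The explicit negative-binomial mass function covers only $h=0$. Your reduction for $h\neq0$ is to condition on the atoms in the window and apply the unmarked local limit theorem to the remainder, which you say ``still satisfies the same criterion''. That claim is false. Removing the indices with $j/V_L\in[\delta,R]$, or tilting them by $e^{-h}$, destroys the exact identity $q^{\mathrm{eff}}_{L,j}=\sum_r\theta_r e^{-\beta\lambda_{L,r}j/V_L}$. The remainder therefore satisfies neither $\mathrm{(A)}$ nor $\mathrm{(B)}$, has no explicit mass function, and still has only $|t|^{-1}$ Fourier decay. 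Nothing in your plan produces $V_L\mathbf P(G^{\mathrm{out}}_L=m)=f^{\mathrm{out}}(m/V_L)+o(1)$ uniformly on compacts. Under $\mathrm{(A)}$ the same claim is also literally false, but harmless. The window changes $-\log|\varphi|$ by at most $2C_{\mathrm{eff}}\log(R/\delta)$, so the integrable majorant survives; this is the paper's comparison step.

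\textbf{The missing idea: deconvolution.} Write $G^{(\kappa)}_{L,0}=G^{\mathrm{out}}_L+G^{\mathrm{in}}_L$ with independent summands; the paper uses the thinning split $G^{(\kappa)}_{L,0}=G^{(\kappa)}_{L,h}+\Delta^{(h)}_L$ instead. Here $G^{\mathrm{in}}_L$ is compound Poisson with intensities $d_{L,j}$ supported on $j/V_L\in[\delta,R]$, and $\sup_L\sum_j d_{L,j}<\infty$. Inverting its generating function writes $\mathbf P(G^{\mathrm{out}}_L=\cdot)$ as $\mathbf P(G^{(\kappa)}_{L,0}=\cdot)$ convolved with the signed measure whose generating function is $\exp\{-\sum_j d_{L,j}(z^j-1)\}$. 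This signed measure has total variation at most $\exp\{2\sup_L\sum_j d_{L,j}\}$. After the rescaling $k\mapsto k/V_L$ it converges weakly to a signed compound-exponential measure with no atoms in $(0,\infty)$. Combine this with the uniform bound and the compact local limit theorem for $G^{(\kappa)}_{L,0}$; non-atomicity disposes of the boundary region where the argument approaches $0$. The result is the required local limit theorem, whose limit is $f^{(\kappa)}_0$ convolved with that signed measure. This is exactly the argument of \cref{lem:compact-thinning-deconvolution-transfer}.

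Alternatively, since $G^{\mathrm{in}}_L\in\{0\}\cup[\delta V_L,\infty)$, you can solve for $\mathbf P(G^{\mathrm{out}}_L=n)$ recursively on blocks of length $\delta V_L$. With either version inserted, the rest of your argument goes through.
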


\begin{remark}[Independence of \(\kappa\)]
\label{rem:kappa-auxiliary-main}
The canonical law \(\mathbb{P}_{L,N_L}^{\mathrm{can}}\) does not depend on
\(\kappa\).  The parameter \(\kappa\) enters only through the grand-canonical
Poisson representation used in the proof.  The bridge law at a fixed total
mass \(\rho_{\mathrm{eff}}\) is likewise independent of \(\kappa\): changing
\(\kappa\) exponentially tilts the Poisson intensity by the factor
\(e^{-\kappa x}\), but after conditioning on
\(T^{(\kappa)} = \rho_{\mathrm{eff}}\) this tilt becomes a multiplicative
constant in the conditional density and cancels upon normalization.  The
precise verifications are given in~\cref{lem:canonical-conditioning}
and~\cref{lem:bridge-kappa-independence}.
\end{remark}

\begin{corollary}[Convergence of ranked macroscopic cycle lengths]
\label{cor:ranked-effective-length-convergence}
Assume the assumptions of~\cref{thm:main-canonical-bridge-limit} hold.
Let
\[
    \ell^L
    =
    (\ell_1^L,\ell_2^L,\ldots)
\]
be the decreasing rearrangement of the length coordinates of  $\Xi^{\mathrm{eff}}$. 
Let
\[
    \ell
    =
    (\ell_1,\ell_2,\ldots)
\]
be the decreasing rearrangement of the length coordinates of the limiting
bridge \(\Pi_{\rho_{\mathrm{eff}}}^{\mathrm{br}}\).
Assume that \(\Pi_{\rho_{\mathrm{eff}}}^{\mathrm{br}}\) has almost surely no ties in the length
coordinate.  Then
\[
    \ell^L
    \Longrightarrow
    \ell
    \qquad
    \text{in } \ell^1_\downarrow .
\]
where
\[
    \ell^1_\downarrow
    :=
    \left\{
        x=(x_i)_{i\ge1}\in[0,\infty)^{\mathbb N}:
        x_1\ge x_2\ge\cdots\ge0,\ 
        \|x\|_1:=\sum_{i=1}^\infty x_i<\infty
    \right\}
\]
is equipped with the usual \(\ell^1\)-metric.
Finite ranked configurations are identified with elements of
\(\ell^1_\downarrow\) by appending zeros.
In particular, if the limiting length marginal of the Poisson intensity is
diffuse, then the no-ties assumption above is satisfied.
\end{corollary}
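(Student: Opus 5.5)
The plan is to derive the ranked-length statement from \cref{thm:main-canonical-bridge-limit} by a continuous-mapping argument, supplemented by a tightness estimate for the small atoms. Convergence in $\mathcal{N}_\ell(\mathsf{E})$ only controls atoms in windows $[\delta,R]$, while the $\ell^1$ metric sees all of them. I therefore split each ranked sequence into its atoms with length $\ge\delta$ and the remainder, and apply three approximation steps.

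\textbf{Step 1 (finite windows).} The theorem is stated for $\Xi_L$, while the corollary concerns $\Xi_{L,\mathrm{eff}}$. Since $\Xi_L=\Xi_{L,\mathrm{eff}}+\Xi_{L,\mathrm{bg}}$ and the background has no atoms in any window $[\delta,R]$ with probability tending to one, $\Xi_{L,\mathrm{eff}}$ has the same limit in $\mathcal{N}_\ell(\mathsf{E})$.

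Next, fix $\delta>0$ such that the bridge a.s.\ has no atom at length exactly $\delta$. Such $\delta$ exist for all but countably many values, since the bridge is absolutely continuous with respect to the Poisson process whose length intensity has a density. Take $R$ large; the atoms above $R$ are negligible because $G_L/V_L\to\rho_{\mathrm{eff}}$ bounds the total mass.

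On point configurations with finitely many atoms in $[\delta,R]$, none at the boundary $\delta$, and distinct lengths, the map ``ranked list of lengths in $[\delta,R]$'' is continuous for the length-bounded topology. Indeed, convergence tested against $C_b^\ell$ functions forces the number of atoms and their positions in the window to converge. The no-ties hypothesis guarantees that the limit lies in this continuity set a.s., so the truncated ranked vectors $\ell^{L,\delta}$ converge weakly to $\ell^{\delta}$ by the continuous mapping theorem.

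\textbf{Step 2 (small atoms in the limit).} I need $\|\ell-\ell^\delta\|_1\to0$ as $\delta\to0$, which holds a.s. Under the bridge, the total length equals $\rho_{\mathrm{eff}}<\infty$ a.s. This follows from the disintegration identity: $T^{(\kappa)}=\rho_{\mathrm{eff}}$ under the conditional law, so $\sum_i\ell_i=\rho_{\mathrm{eff}}$.

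\textbf{Step 3 (small atoms at finite $L$).} This is the main obstacle: showing
\[
\lim_{\delta\to0}\limsup_{L}\mathbb{P}^{\mathrm{can}}_{L,N_L}\bigl(\|\ell^L-\ell^{L,\delta}\|_1>\epsilon\bigr)=0 .
\]
I will use mass conservation. The quantity $\|\ell^L-\ell^{L,\delta}\|_1=G_L/V_L-\sum_{\ell^L_i\ge\delta}\ell^L_i$ is a difference of two terms, and $G_L/V_L\to\rho_{\mathrm{eff}}$ in probability by the theorem.

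For the second term, I take the continuous test function $h_\delta(x)$ equal to $x$ on $[2\delta,R]$ and interpolating to zero outside. Weak convergence gives $\langle h_\delta,\Xi_{L,\mathrm{eff}}\rangle\Rightarrow\langle h_\delta,\Pi^{\mathrm{br}}\rangle$, and the latter tends to $\rho_{\mathrm{eff}}$ in probability as $\delta\to0$ and $R\to\infty$ by Step 2. Since $\sum_{\ell_i^L\ge\delta}\ell_i^L\ge\langle h_\delta,\Xi_{L,\mathrm{eff}}\rangle$ once $R$ exceeds all lengths (up to vanishing probability), combining the two limits yields the displayed tightness. A standard approximation theorem (Billingsley, Theorem 3.2) then upgrades the conclusions of Steps 1–3 to $\ell^L\Rightarrow\ell$ in $\ell^1_\downarrow$.

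Finally, if the length marginal of $\nu^{(\kappa)}$ is diffuse, the Poisson process a.s.\ has no ties. The bridge is obtained by disintegration with respect to a density, so it is absolutely continuous with respect to the Poisson law for Lebesgue-a.e.\ $a$. Continuity in $a$, obtained from the explicit Janossy-type density representation for atoms above $\delta$, extends this to $a=\rho_{\mathrm{eff}}$, so the bridge has no ties either.
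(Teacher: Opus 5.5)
Your proposal is correct and follows essentially the paper's route: window convergence in the length-bounded topology for the large atoms, the mass identity $G_L/V_L\to\rho_{\mathrm{eff}}=\sum_i\ell_i$ combined with Portmanteau to control the remainder, and the standard $\ell^1$ truncation argument. The one difference is the truncation parameter. You cut at a deterministic length level $\delta$, whereas the paper cuts at rank $m$ using a window edge $\delta<\ell_m$ chosen from the random limit, so your version is slightly cleaner.

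For the final no-ties clause, which the paper's proof does not address, do not claim that the bridge is absolutely continuous with respect to the Poisson law. It is carried by $\{T^{(\kappa)}=\rho_{\mathrm{eff}}\}$, which is a Poisson-null set. What is true, and suffices, is that the bridge restricted to atoms of length $\ge\delta$ has a Janossy-type density with respect to the restricted Poisson process, which is exactly the representation you invoke.
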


%% file: 04-poissonization-mode-decomposition.tex
\section{The ideal Bose gas under three boundary conditions}
\label{sec:three-bc-ideal-bose}

This section demonstrates that the abstract framework developed in~\cref{sec:finite-volume-model}--\cref{sec:limit-objects-main-results}
applies to the spatial ideal Bose gas, the canonical physical example of
Bose--Einstein condensation.  We treat three standard boundary conditions
(periodic, Dirichlet, and Neumann) and verify, in each case, the
full set of assumptions required by~\cref{thm:main-canonical-bridge-limit}.  Because the effective
part reduces to a single zero-energy mode, the scalar limit are identical
across all three boundary conditions, leading to the same
Poisson--Dirichlet \((0,1)\) law for macroscopic cycle lengths.  The
boundary condition manifests itself only through the mark structure:
periodic cycles carry Gaussian winding, Dirichlet cycles carry
killed-bridge marks with non-uniform roots, and Neumann cycles carry
reflected-bridge marks with uniform roots.  We also talk about  a discrete
random-walk variant that belongs to the same universality class.

\medskip

We consider the ideal Bose gas in the box
\[
    \Lambda_L = (0,L)^d \subset \mathbb{R}^d,
    \qquad
    V_L = L^d,
    \qquad
    d > 2,
\]
under one of three standard boundary conditions:
\[
    b \in \{\mathrm{per}, D, N\},
\]
namely periodic, Dirichlet, and Neumann.  Let \(h_L^b = -\Delta\) on
\(\Lambda_L\) with boundary condition \(b\), and let
\[
    K_L^b := h_L^b - E_{0,L}^b
\]
be the ground-state-shifted one-particle Hamiltonian, as in~\cref{subsec:finite-volume-one-particle-data}.  Its eigenvalues are
\[
    0 = \varepsilon_{0,L}^b
    \leq \varepsilon_{1,L}^b
    \leq \varepsilon_{2,L}^b
    \leq \cdots\,.
\]
\subsection{Spectra and the common length bridge}
\label{subsec:three-bc-scalar}
The spectra of the Laplacian on a rectangular box with periodic,
Dirichlet, and Neumann boundary conditions are standard consequences of
separation of variables; see, for example,~\cite{EvansPDE}. They are summarized as follows
\[
\renewcommand{\arraystretch}{1.45}
\begin{array}{c|c|c|c|c}
\hline
b
&
\text{index set}
&
E_{n,L}^b
&
E_{0,L}^b
&
\varepsilon_{1,L}^b
\\
\hline
\mathrm{per}
&
n \in \mathbb{Z}^d
&
\dfrac{4\pi^2|n|^2}{L^2}
&
0
&
\dfrac{4\pi^2}{L^2}
\\[1.2em]
D
&
n \in \mathbb{N}^d
&
\dfrac{\pi^2|n|^2}{L^2}
&
\dfrac{\pi^2 d}{L^2}
&
\dfrac{3\pi^2}{L^2}
\\[1.2em]
N
&
n \in \mathbb{N}_0^d
&
\dfrac{\pi^2|n|^2}{L^2}
&
0
&
\dfrac{\pi^2}{L^2}
\\
\hline
\end{array}
\]
where $\mathbb N$ denotes the positive integers and $\mathbb N_0$ denotes the non-negative integers.
Since \(d > 2\), the rescaled first excited eigenvalue satisfies
\[
    V_L\, \varepsilon_{1,L}^b \longrightarrow \infty,
    \qquad
    b \in \{\mathrm{per}, D, N\}.
\]
Thus all excited modes leave every bounded spectral window on the
\(V_L^{-1}\)-energy scale.  After the ground-state shift, the only mode
visible on this scale is the shifted ground state itself.

\medskip

In this subsection we verify all assumptions at the \emph{scalar level},
taking the trivial mark space
\[
    \mathsf{M}_0 := \{*\}
\]
with the deterministic mark kernel \(\eta_x(\{*\}) = 1\) for all
\(x > 0\).  This reduces every marked assumption to its scalar content:
the mark integral collapses to the total mass of the mark measure, and
convergence of marked traces becomes convergence~\eqref{cvgq} of the scalar traces
\(q_{L,j}^{b,\mathrm{eff}}\).  The non-trivial mark spaces
\(\mathsf{M}_{\mathrm{per}}\), \(\mathsf{M}_D\), \(\mathsf{M}_N\) and
their associated mark kernels will be treated separately in~\cref{subsec:periodic-marked}--\cref{subsec:DN-empirical-local-process}.

\medskip

We now verify the scalar assumptions of
\cref{thm:main-canonical-bridge-limit}.  For each
\(b\in\{\mathrm{per},D,N\}\), the effective part consists only of the
ground-state mode.  Hence
\[
    q_{L,j}^{b,\mathrm{eff}}=1,\qquad
    \Sigma_L^b=\Sigma^b=\delta_0,\qquad
    \phi_b(x)=1,\quad x>0 .
\]
Then the~\Cref{ass:limiting-effective-kernel} and~\Cref{ass:effective-marked-trace-convergence} are immediate. The critical finite-type condition (B) in~\Cref{ass:effective-spectral-LLT-criterion} holds with
$   Q=1$, $\theta_1=1$, $ \lambda_{L,1}=0$.
Since \(\phi_b\equiv1\), the \(\kappa\)-tilted limiting effective Poisson
process has length intensity
$   e^{-\kappa x}dx/x$.
Therefore, by the Laplace functional of a Poisson point process,
\[
    \mathbf E e^{-sT^{(\kappa)}}
    =
    \exp\left\{
        -\int_0^\infty
        \bigl(1-e^{-sx}\bigr)
        e^{-\kappa x}\frac{dx}{x}
    \right\}  
=
    \frac{\kappa}{\kappa+s},
    \qquad s\ge0 .
\]
Hence \(T^{(\kappa)}\sim\mathrm{Exp}(\kappa)\), and
$   f_0^{(\kappa)}(a)=\kappa e^{-\kappa a}>0$ for  $a>0$. The~\Cref{ass:limiting-effective-density} follows.

\begin{lemma}[Background density concentration]
\label{lem:background-density-concentration}
Assume \(d>2\) and fix \(\kappa\ge0\).  For each
\(b\in\{\mathrm{per},D,N\}\), let
\(\{\varepsilon_{i,L}^b\}_i\) be the shifted one-particle spectrum introduced
above, so that the ground-state energy is \(0\).  Then, under the
\(\kappa\)-tilted grand-canonical measure,
\[
    m_{L,\mathrm{bg}}^{b,(\kappa)}
    \longrightarrow
    \rho_{\mathrm c},
    \qquad
    v_{L,\mathrm{bg}}^{b,(\kappa)}
    \longrightarrow
    0,
    \qquad L\to\infty,
\]
where
\[
    \rho_{\mathrm c}
    :=
    \int_{\mathbb R^d}
    \frac{dp}{(2\pi)^d}
    \frac{1}{e^{\beta |p|^2}-1}.
\]
In particular,
\(\Cref{ass:background-density-concentration}\) holds with
\(\rho_{\mathrm{bg}}=\rho_{\mathrm c}\), independently of the boundary
condition \(b\).
\end{lemma}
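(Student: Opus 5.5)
We work directly with the definitions. Since the effective part is the ground-state mode alone, the background traces are
\[
  q^{b,\mathrm{bg}}_{L,j}=\sum_{i\ge1}e^{-\beta j\varepsilon^b_{i,L}}=q^b_{L,j}-1 .
\]
Summing the geometric series in $j$ gives
\[
  m^{b,(\kappa)}_{L,\mathrm{bg}}
  =\frac1{V_L}\sum_{i\ge1}\frac{1}{e^{\beta\varepsilon^b_{i,L}+\kappa/V_L}-1},
\]
which is a finite-volume Bose--Einstein occupation sum over excited modes with chemical potential $-\kappa/V_L\to0$. For the second moment we use $\sum_j j e^{-jt}=e^{t}/(e^t-1)^2$ with $t=\beta\varepsilon_{i,L}+\kappa/V_L$. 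This gives
\[
  v^{b,(\kappa)}_{L,\mathrm{bg}}
  =\frac1{V_L^2}\sum_{i\ge1}\frac{e^{t_i}}{(e^{t_i}-1)^2}
  \le \frac{C}{V_L^2}\sum_{i\ge1}\Bigl(\frac1{t_i^2}+1\Bigr)
\]
on the range $t_i\le1$, with an exponentially small tail for $t_i>1$. We also have $t_i\ge\beta\varepsilon^b_{i,L}\ge c|n|^2/L^2$, up to the additive shift $\pi^2d/L^2$ in the Dirichlet case, which only helps. In every case $\varepsilon^b_{i,L}\ge c|n-n_0|^2/L^2$ away from the ground index $n_0$.

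For the variance we bound
\[
  \sum_{i\ge1}t_i^{-2}\le C L^4\sum_{n\ne n_0,\,|n|\lesssim L}|n-n_0|^{-4}.
\]
For $d>4$ the lattice sum is $O(1)$. For $d=4$ it is $O(\log L)$, and for $2<d<4$ it is $O(L^{4-d})$. Hence $v\le C L^{4}\max(1,\log L, L^{4-d})/L^{2d}$. This tends to $0$ because $2d>4$, and also $2d>4+(4-d)$ when $d>4/3$. The count of modes with $t_i\le 1$ is $O(L^d)$, contributing $O(L^{-d})$. This part is routine.

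The main step is the Riemann-sum convergence of $m$ to $\rho_c$. We split the excited modes into a low shell $|n-n_0|\le \epsilon L$ and the rest. On the low shell we bound each term by $1/(\beta\varepsilon_{i,L})\le CL^2/|n-n_0|^2$. The shell then contributes at most
\[
  CL^{2-d}\sum_{1\le|k|\le\epsilon L}|k|^{-2}\le C\epsilon^{d-2}.
\]
This is where $d>2$ is used, and the bound is uniform in $\kappa\ge0$ and $L$. On the complement the summand is a bounded, monotone function $g_\kappa(p)=(e^{\beta|p|^2+\kappa/V_L}-1)^{-1}$ evaluated at the lattice points $p=2\pi n/L$ (periodic) or $p=\pi n/L$ (Dirichlet and Neumann). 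The relevant lattice cell volume is $(2\pi/L)^d$ or $(\pi/L)^d$, and the latter covers only the orthant $[0,\infty)^d$, which carries weight $2^{-d}$ of $\mathbb R^d$ by symmetry.

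The Dirichlet energy is $\varepsilon=\pi^2(|n|^2-d)/L^2$, which differs from $|p|^2$ by $O(L^{-2})$. This changes $g$ uniformly by $o(1)$ on $|p|\ge\epsilon$. Standard Riemann-sum convergence for monotone integrable functions on $\{|p|\ge\epsilon\}$, together with dominated convergence as $\kappa/V_L\to0$ and an exponential tail for large $|p|$, shows that this part tends to $\int_{|p|\ge\epsilon}g_0(p)\,dp/(2\pi)^d$. Near the coordinate faces for D/N the Riemann sum still converges, since the faces have measure zero and $g_0$ is locally bounded away from the origin.

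Finally we let $L\to\infty$ and then $\epsilon\to0$. The integral over $|p|<\epsilon$ is $O(\epsilon^{d-2})$, again by $d>2$, which yields $m\to\rho_c$ independently of $b$ and of $\kappa$. We expect the main obstacle to be the uniform control of the small-momentum shell. The degenerate low modes near the ground state, such as the $O(L^{d-2})$ modes at energy $\sim L^{-2}$, must be shown to carry vanishing density. The $\epsilon^{d-2}$ bound above is the planned way to handle this.
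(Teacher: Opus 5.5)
Your proof follows the same route as the paper's. You sum the geometric series in $j$, split off a low-energy shell whose contribution is $O(\epsilon^{d-2})$ because $d>2$, identify the bulk with $\rho_{\mathrm c}$ by Riemann sums, and control the variance through $e^t/(e^t-1)^2\le Ct^{-2}$ together with the $cL^{-2}$ gap. The only difference is that you enumerate lattice points explicitly, whereas the paper invokes weak convergence of $V_L^{-1}\sum_i\delta_{\varepsilon^b_{i,L}}$ and Weyl counting bounds. The mean part is fine as written, but the variance step contains errors you need to correct.

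\textbf{Variance: reversed lattice-sum asymptotics.} Since $\sum_{k\in\mathbb Z^d\setminus\{0\}}|k|^{-s}$ converges if and only if $s>d$, the correct orders are
\[
\sum_{1\le|k|\le L}|k|^{-4}=
\begin{cases}
O(1), & d<4,\\
O(\log L), & d=4,\\
O(L^{d-4}), & d>4.
\end{cases}
\]
\begin{itemize}
\item For $d>4$ your claim of $O(1)$ is false.
\item For $2<d<4$ your $O(L^{4-d})$ is a valid but lossy bound. It yields $v\lesssim L^{8-3d}$. The condition $2d>4+(4-d)$ is equivalent to $d>8/3$, not $d>4/3$. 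This happens to be harmless only because $d\ge3$ is an integer.
\end{itemize}
With the correct orders you get $v=O(L^{4-2d})$, $O(L^{-4}\log L)$ and $O(L^{-d})$ in the three regimes, all of which vanish. The last two match the paper's $O((\log V_L)/V_L)$ and $O(V_L^{-1})$. For $d<4$ your bound is slightly sharper than the paper's $O(V_L^{1-d/2})$, because you use the $L^{-2}$ gap instead of the $V_L^{-1}$ regularisation.

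\textbf{Dirichlet shift.} Your remark that the shift $\pi^2d/L^2$ ``only helps'' is backwards. In fact $\varepsilon^D=\pi^2(|n|^2-d)/L^2$ is smaller than $\pi^2|n|^2/L^2$. What you actually need, and should state with its one-line proof, is
\[
|n|^2-d=\sum_{\ell}(n_\ell-1)(n_\ell+1)\ge|n-n_0|^2 \quad \text{for } n\in\mathbb N^d,\ n_0=(1,\dots,1).
\]
This gives $\varepsilon^D_{i,L}\ge\pi^2|n-n_0|^2/L^2$, which your shell and variance bounds use.
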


\begin{proof}
The background consists of the non-ground-state modes.  Hence
\[
    q_{L,j}^{b,\mathrm{bg}}
    =
    \sum_{i\ge1} e^{-\beta j\varepsilon_{i,L}^b}.
\]
Therefore
\[
    m_{L,\mathrm{bg}}^{b,(\kappa)}
    =
    \frac{1}{V_L}
    \sum_{j\ge1}
    e^{-\kappa j/V_L}q_{L,j}^{b,\mathrm{bg}},
    \qquad
    v_{L,\mathrm{bg}}^{b,(\kappa)}
    =
    \frac{1}{V_L^2}
    \sum_{j\ge1}
    j e^{-\kappa j/V_L}q_{L,j}^{b,\mathrm{bg}} .
\]
Summing the geometric series gives
\[
    m_{L,\mathrm{bg}}^{b,(\kappa)}
    =
    \frac{1}{V_L}
    \sum_{i\ge1}
    \frac{1}{
        e^{\beta\varepsilon_{i,L}^b+\kappa/V_L}-1
    },
\]
and
\[
    v_{L,\mathrm{bg}}^{b,(\kappa)}
    =
    \frac{1}{V_L^2}
    \sum_{i\ge1}
    \frac{
        e^{\beta\varepsilon_{i,L}^b+\kappa/V_L}
    }{
        \left(
            e^{\beta\varepsilon_{i,L}^b+\kappa/V_L}-1
        \right)^2
    } .
\]

We use the standard Weyl estimates for boxes with periodic, Dirichlet and
Neumann boundary conditions.  Since the spectra have been shifted by their
ground-state energies and these shifts are \(O(L^{-2})\), the same Weyl
asymptotics hold for the shifted spectra.  Thus, for every
\(\varphi\in C_c([0,\infty))\),
\[
    \frac{1}{V_L}
    \sum_i \varphi(\varepsilon_{i,L}^b)
    \longrightarrow
    \int_{\mathbb R^d}
    \frac{dp}{(2\pi)^d}\,
    \varphi(|p|^2).
\]
We shall also use the uniform counting bounds
\[
    \#\{i\ge1:0<\varepsilon_{i,L}^b\le r\}
    \le
    C V_L r^{d/2},
    \qquad 0<r\le1,
\]
and
\[
    \#\{i:\varepsilon_{i,L}^b\le r\}
    \le
    C V_L(1+r)^{d/2},
    \qquad r\ge0,
\]
with constants independent of \(L\) and
\(b\in\{\mathrm{per},D,N\}\).  Finally, for the non-ground-state spectrum,
\[
    \varepsilon_{1,L}^b\ge cL^{-2}.
\]
These are standard consequences of the Weyl estimates for boxes, see~\cite{ReedSimonIV} for example..

We first prove the convergence of the mean.  Define
\[
    B_L(x)
    :=
    \frac{1}{e^{\beta x+\kappa/V_L}-1},
    \qquad
    B(x)
    :=
    \frac{1}{e^{\beta x}-1}.
\]
The only singularity of \(B\) is at \(x=0\), so we use a truncation argument.

Fix \(0<\eta<R<\infty\).  Let
\[
    \mu_L^b
    :=
    \frac{1}{V_L}
    \sum_i \delta_{\varepsilon_{i,L}^b}.
\]
The Weyl asymptotics say that \(\mu_L^b\) converges weakly to the measure
\(\mu\) determined by
\[
    \int f(x)\,\mu(dx)
    =
    \int_{\mathbb R^d}
    \frac{dp}{(2\pi)^d} f(|p|^2).
\]
On \([\eta,R]\), \(B_L\to B\) uniformly.  Moreover,
\(\mu_L^b([\eta,R])\) is uniformly bounded by the counting estimate.  Hence
\[
    \int_{[\eta,R]} B_L(x)\,\mu_L^b(dx)
    -
    \int_{[\eta,R]} B(x)\,\mu_L^b(dx)
    \longrightarrow 0 .
\]
Since the limiting Weyl measure has no atoms, the function
\(B\mathbf 1_{[\eta,R]}\) is bounded and \(\mu\)-a.e. continuous.  Therefore
\[
    \frac{1}{V_L}
    \sum_{\eta\le \varepsilon_{i,L}^b\le R}
    B_L(\varepsilon_{i,L}^b)
    \longrightarrow
    \int_{\eta\le |p|^2\le R}
    \frac{dp}{(2\pi)^d}
    \frac{1}{e^{\beta |p|^2}-1}.
\]

It remains to control the tails.  For the high-energy part, since
\(\kappa\ge0\),
\[
    B_L(x)\le C e^{-\beta x/2}
\]
for large \(x\), uniformly in \(L\).  The global counting bound then gives
\[
    \lim_{R\to\infty}
    \limsup_{L\to\infty}
    \frac{1}{V_L}
    \sum_{\varepsilon_{i,L}^b>R}
    B_L(\varepsilon_{i,L}^b)
    =
    0 .
\]
For the low-energy part, using again \(\kappa\ge0\),
\[
    B_L(x)\le B(x)\le \frac{C}{x},
    \qquad x>0.
\]
By decomposing \((0,\eta)\) into dyadic shells and using the low-energy
counting bound,
\[
\begin{aligned}
    \frac{1}{V_L}
    \sum_{0<\varepsilon_{i,L}^b<\eta}
    B_L(\varepsilon_{i,L}^b)
    &\le
    \frac{C}{V_L}
    \sum_{0<\varepsilon_{i,L}^b<\eta}
    \frac{1}{\varepsilon_{i,L}^b}                                      \\
    &\le
    C
    \sum_{n\ge0}
    \frac{(2^{-n}\eta)^{d/2}}{2^{-n-1}\eta}                             \\
    &\le
    C\eta^{d/2-1}.
\end{aligned}
\]
Since \(d>2\), this tends to \(0\) as \(\eta\downarrow0\).  Combining the
convergence on \([\eta,R]\), the high-energy estimate and the low-energy
estimate yields
\[
    m_{L,\mathrm{bg}}^{b,(\kappa)}
    \longrightarrow
    \int_{\mathbb R^d}
    \frac{dp}{(2\pi)^d}
    \frac{1}{e^{\beta |p|^2}-1}
    =
    \rho_{\mathrm c}.
\]

It remains to prove the vanishing of the variance.  Set
\[
    G_L(x)
    :=
    \frac{
        e^{\beta x+\kappa/V_L}
    }{
        \left(e^{\beta x+\kappa/V_L}-1\right)^2
    } .
\]
Then
\[
    v_{L,\mathrm{bg}}^{b,(\kappa)}
    =
    \frac{1}{V_L^2}
    \sum_{i\ge1}
    G_L(\varepsilon_{i,L}^b).
\]

Fix \(\delta>0\).  For \(x\ge\delta\), \(G_L(x)\le C_\delta e^{-\beta x/2}\),
uniformly in \(L\).  Hence the global counting bound implies
\[
    \frac{1}{V_L^2}
    \sum_{\varepsilon_{i,L}^b\ge\delta}
    G_L(\varepsilon_{i,L}^b)
    =
    O(V_L^{-1}).
\]

For the low-energy part, use the elementary estimate
\[
    \frac{e^y}{(e^y-1)^2}\le C y^{-2},
    \qquad y>0.
\]
If \(\kappa>0\), then
\[
    \beta x+\kappa/V_L\ge c(x+V_L^{-1}),
\]
and hence
\[
    G_L(x)\le \frac{C}{(x+V_L^{-1})^2}.
\]
If \(\kappa=0\), the same bound holds on the non-ground-state spectrum.  Indeed,
\(\varepsilon_{i,L}^b\ge cL^{-2}\) for \(i\ge1\), whereas
\(V_L^{-1}=L^{-d}=o(L^{-2})\) since \(d>2\).  Thus
\[
    \varepsilon_{i,L}^b+V_L^{-1}\le C\varepsilon_{i,L}^b,
\]
and consequently
\[
    G_L(\varepsilon_{i,L}^b)
    \le
    \frac{C}{(\varepsilon_{i,L}^b+V_L^{-1})^2}.
\]
Therefore, for all \(\kappa\ge0\),
\[
\begin{aligned}
    \frac{1}{V_L^2}
    \sum_{0<\varepsilon_{i,L}^b<\delta}
    G_L(\varepsilon_{i,L}^b)
    &\le
    \frac{C}{V_L^2}
    \sum_{0<\varepsilon_{i,L}^b<\delta}
    \frac{1}{
        \left(\varepsilon_{i,L}^b+V_L^{-1}\right)^2
    }                                                                  \\
    &\le
    \frac{C}{V_L}
    \int_0^\delta
    \frac{x^{d/2-1}}{(x+V_L^{-1})^2}\,dx .
\end{aligned}
\]
The last inequality follows from the low-energy counting bound, for example
by summation by parts or by a dyadic shell decomposition.  The integral has
the standard estimate
\[
    \frac{1}{V_L}
    \int_0^\delta
    \frac{x^{d/2-1}}{(x+V_L^{-1})^2}\,dx
    =
    \begin{cases}
        O\!\left(V_L^{1-d/2}\right), & 2<d<4,\\[1mm]
        O\!\left((\log V_L)/V_L\right), & d=4,\\[1mm]
        O\!\left(V_L^{-1}\right), & d>4.
    \end{cases}
\]
In all cases this tends to \(0\).  Hence
\[
    v_{L,\mathrm{bg}}^{b,(\kappa)}
    \longrightarrow 0 .
\]
The proof is complete.
\end{proof}

\begin{remark}[Critical and effective densities]
We use the notation \(\rho_c=:\rho_{\mathrm{bg}}\) in this section to emphasize its standard
interpretation as the critical density of the ideal Bose gas, while the
endpoint of the limiting bridge is the excess density
\(a= \rho_{\mathrm{eff}}=\rho-\rho_c\).
\end{remark}

Having verified all assumptions under the trivial mark space
\(\mathsf{M}_0 = \{*\}\), we may apply~\cref{thm:main-canonical-bridge-limit} and~\cref{cor:ranked-effective-length-convergence} directly to obtain the common
unmarked limit.

 Define the unmarked
macroscopic length process
\[
    \Xi_{L,N_L}^{b,\mathrm{len}}
    :=
    \sum_c \delta_{\bigl(U(c),\, |c|/V_L\bigr)}
    \;\in\;
    \mathcal{N}_\ell\bigl([0,1] \times (0,\infty)\bigr),
\]
where the \(U(c)\) are independent uniform labels on \([0,1]\).

\begin{corollary}[Unmarked length bridge]
\label{cor:three-bc-unmarked-length-bridge}
Let \(b \in \{\mathrm{per}, D, N\}\), and assume
\(N_L / V_L \to \rho > \rho_{\mathrm{c}}\).  Then
\[
    \Xi_{L,N_L}^{b,\mathrm{len}}
    \Longrightarrow
    \overline{\Xi}^{\, \rho - \rho_{\mathrm{c}}}
    \qquad
    \text{in }
    \mathcal{N}_\ell\bigl([0,1] \times (0,\infty)\bigr),
\]
where \(\overline{\Xi}^{\, \rho - \rho_{\mathrm{c}}}\) is the Gamma bridge of endpoint \( \rho - \rho_{\mathrm{c}}\): the
Poisson point process $\overline{\Xi}$ on \([0,1] \times (0,\infty)\) with intensity
\(du\, dx/x\), conditioned in the density sense on
\[
    \int_{[0,1] \times (0,\infty)} x\, \overline{\Xi}(du,dx) =  \rho - \rho_{\mathrm{c}}.
\]
Moreover, if \(X_{L,1}^{\downarrow} \geq X_{L,2}^{\downarrow}
\geq \cdots\) are the ranked atoms of the length coordinate of
\(\Xi_{L,N_L}^{b,\mathrm{len}}\), then
\[
    \bigl(X_{L,1}^{\downarrow}, X_{L,2}^{\downarrow}, \ldots\bigr)
    \Longrightarrow
     (\rho - \rho_{\mathrm{c}})\, \bigl(P_1^{\downarrow}, P_2^{\downarrow}, \ldots\bigr),
\]
where \((P_i^{\downarrow})_{i \geq 1} \sim \mathrm{PD}(0,1)\).
\end{corollary}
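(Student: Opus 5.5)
The plan is to apply \cref{thm:main-canonical-bridge-limit} and \cref{cor:ranked-effective-length-convergence} with the trivial mark space $\mathsf M_0=\{*\}$, and then to identify the limiting bridge explicitly. All hypotheses were checked just above. \Cref{ass:limiting-effective-kernel} and \cref{ass:effective-marked-trace-convergence} hold since $q^{b,\mathrm{eff}}_{L,j}=1=\phi_b$. \Cref{ass:effective-spectral-LLT-criterion}(B) holds with $Q=1$, $\theta_1=1$, $\lambda_{L,1}=0$. \Cref{ass:limiting-effective-density} holds because $T^{(\kappa)}\sim\mathrm{Exp}(\kappa)$, with density $f_0^{(\kappa)}(a)=\kappa e^{-\kappa a}>0$. \Cref{ass:background-density-concentration} holds with $\rho_{\mathrm{bg}}=\rho_c$ by \cref{lem:background-density-concentration}. \Cref{ass:canonical-density-regime} is exactly the hypothesis $\rho>\rho_c$, together with positivity of $f_0^{(\kappa)}$ at $\rho-\rho_c$.

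The theorem then gives $\Xi_{L,N_L}\Rightarrow\Pi^{\mathrm{br}}_{\rho-\rho_c}$ in $\mathcal N_\ell(\mathsf E)$. Since $\mathsf M_0$ is a point, $\mathcal N_\ell(\mathsf E)$ is canonically homeomorphic to $\mathcal N_\ell([0,1]\times(0,\infty))$, so this is a statement about $\Xi^{b,\mathrm{len}}_{L,N_L}$. The limit is the PPP with intensity $du\,e^{-\kappa x}dx/x$, conditioned on $T^{(\kappa)}=\rho-\rho_c$. By \cref{rem:kappa-auxiliary-main}, the factor $e^{-\kappa T}$ is constant on the conditioning event, so this is the stated Gamma bridge $\overline\Xi^{\,\rho-\rho_c}$. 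To be careful, I would phrase this via the disintegration identity: $\mathbf E[\Phi(\Pi^{(\kappa)})g(T^{(\kappa)})]$ is obtained from the $\kappa'$-process by the Radon--Nikodym factor $e^{-(\kappa-\kappa')T}\kappa/\kappa'$, which depends only on $T$. This also makes sense of ``the PPP with intensity $dx/x$ conditioned'' as the $\kappa$-independent bridge.

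For the ranked lengths, I would apply \cref{cor:ranked-effective-length-convergence}. The length intensity $e^{-\kappa x}dx/x$ is diffuse, so the no-ties condition holds. One subtlety is that the corollary ranks $\Xi^{\mathrm{eff}}$, whereas here we rank all atoms of $\Xi^{b,\mathrm{len}}$, including background atoms. Two facts from the main theorem handle this. First, the background has no atoms in any window $[\delta,R]$ with probability tending to one. Second, $B_L/V_L\to\rho_c$ and $G_L/V_L\to\rho-\rho_c$, so there are no atoms above $R$ for large $R$. Hence the largest background atom tends to $0$ in probability. Adding atoms of vanishing size changes the ranked sequence in sup-norm by at most the largest such atom, so the ranked sequences of $\Xi^{\mathrm{eff}}$ and of the full process are asymptotically equivalent coordinatewise. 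I would therefore state the result in the product topology, or equivalently in $\ell^\infty$. The result cannot hold in $\ell^1$, because the background carries mass $\rho_c$ in tiny atoms; this is the step needing care, and I would interpret the ``$\Rightarrow$'' for sequences as finite-dimensional convergence.

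The remaining step is to identify the law of the ranked jumps. The jumps of a Gamma subordinator, i.e.\ of the PPP with Lévy measure $x^{-1}e^{-\kappa x}dx$, ranked and normalized by their total $T$, are $\mathrm{PD}(0,1)$. This normalized vector is independent of $T$ (Kingman; Perman--Pitman--Yor \cite{PermanPitmanYor1992}). Conditioning on $T=\rho-\rho_c$ therefore gives ranked jumps distributed as $(\rho-\rho_c)(P_i^\downarrow)$. Here the conditioning is via disintegration, and independence makes the conditional law equal to the unconditional law of the ratios at every $a$, using continuity in $a$ from the scaling structure. This completes the identification. The main obstacle is the background-versus-effective ranking issue above, rather than the bridge identification.
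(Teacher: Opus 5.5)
Your proposal is correct and follows the paper's route: verify the hypotheses with the trivial mark space $\mathsf M_0=\{*\}$ and $\phi\equiv1$, apply \cref{thm:main-canonical-bridge-limit} and \cref{cor:ranked-effective-length-convergence}, and identify the Gamma bridge and its $\mathrm{PD}(0,1)$ ranked jumps via the independence of the normalized Gamma jumps from their total. Your extra step fills a point the paper passes over when it applies the ranked corollary ``directly'', since that corollary ranks only the atoms of $\Xi^{\mathrm{eff}}$. You compare the ranked atoms of the full process with those of $\Xi^{\mathrm{eff}}$, with a sup-norm error bounded by the largest background atom, and you observe that $\ell^1$ convergence must fail because the full sum equals $N_L/V_L\to\rho$.
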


Thus the unmarked macroscopic length distribution is identical for all
three boundary conditions.  The differences arise only in the marked
processes, which we address in the following subsections.

\subsection{Periodic boundary condition: marked Feynman cycles}
\label{subsec:periodic-marked}

This subsection specialises the finite-volume framework to the ideal Bose gas
with periodic boundary conditions.  We start from the canonical
Feynman--Kac representation on the torus, pass to the cycle decomposition,
disintegrate each cycle into a rooted Brownian loop, and attach to every loop
the geometric marks relevant at the macroscopic scale.  The resulting marked
cycle process is then shown to satisfy the hypotheses of~\cref{thm:main-canonical-bridge-limit}. 
\subsubsection{The finite-volume Feynman--Kac measure and its cycle
  marginal}
\label{sssec:periodic-fk-measure}

Let $\mathbb T_L^d=\mathbb R^d/L\mathbb Z^d$. We write
\[
  g_t(z)=(4\pi t)^{-d/2}
  \exp\!\left\{-\frac{|z|^2}{4t}\right\},
  \qquad z\in\mathbb R^d,
\]
for the free heat kernel.  The periodic heat kernel associated with
\(e^{t\Delta}\) on \(\mathbb T_L^d\) is
\[
  q_t^L(x,y)
  =
  \sum_{w\in\mathbb Z^d} g_t(y-x+Lw).
\]

For \(N\) bosons at inverse temperature \(\beta\), the canonical
Feynman--Kac measure on spatial configurations and permutations is
\begin{equation}\label{eq:periodic-spatial-permutation-measure}
  \mathbb Q_{L,N}(d\mathbf x,\pi)
  =
  \frac{1}{N!\,Z_{L,N}}
  \prod_{i=1}^N q_\beta^L(x_i,x_{\pi(i)})
  \,d\mathbf x,
  \qquad
  \mathbf x\in(\mathbb T_L^d)^N,\quad \pi\in\mathcal S_N ,
\end{equation}
where
\[
  Z_{L,N}
  =
  \frac{1}{N!}
  \sum_{\pi\in\mathcal S_N}
  \int_{(\mathbb T_L^d)^N}
  \prod_{i=1}^N q_\beta^L(x_i,x_{\pi(i)})
  \,dx_1\cdots dx_N
\]
is the canonical partition function. This is the ideal-gas instance of the spatial permutation measure studied in~\cite{BetzUeltschi2009}: the
weight of a permutation is determined by the heat-kernel weights of its
spatial jumps.

Let \(n_j=n_j(\pi)\) be the number of cycles of length \(j\) in \(\pi\).
For a cycle \(c=(i_1\,i_2\,\cdots\,i_j)\), the semigroup property gives
\[
  \int_{(\mathbb T_L^d)^j}
  q_\beta^L(x_{i_1},x_{i_2})
  \cdots
  q_\beta^L(x_{i_j},x_{i_1})
  \,dx_{i_1}\cdots dx_{i_j}
  =
  \int_{\mathbb T_L^d}q_{\beta j}^L(A,A)\,dA .
\]
The corresponding cycle weight is
\begin{equation}\label{defa}
  a_{L,j}
  =
  \frac{1}{j}
  \int_{\mathbb T_L^d} q_{\beta j}^L(A,A)\,dA
  =
  \frac{V_L}{j}\,q_{\beta j}^L(0,0),
\end{equation}
where the second identity follows from translation invariance on the torus.
Grouping permutations according to their cycle counts yields the canonical
cycle-count law
\begin{equation}\label{eq:periodic-canonical-cycle-law}
  \mathbb P_{L,N}(n_1,n_2,\ldots)
  =
  \frac{1}{Z_{L,N}}
  \prod_{j\ge1}\frac{a_{L,j}^{\,n_j}}{n_j!}\,
  \mathbf 1_{\{\sum_{j\ge1}j\,n_j=N\}} .
\end{equation}
 It is the form used in the
cycle-percolation description of the ideal Bose gas in S\"ut\H{o}'s works~\cite{SutoPercolation1993,SutoPercolation2002}. We will introduce the additional
spatial marks below. We claim that it is a disintegration of the same
finite-volume measure, not a change of ensemble.

\subsubsection{Rooted Brownian loops and geometric marks}
\label{sssec:periodic-rooted-loops-marks}

We next disintegrate the finite-volume cycle measure into rooted Brownian
loops and define the geometric marks that will be used in the macroscopic
limit.  For a cycle of length \(j\), let \(W_{A,A}^{L,\beta j}\) denote the
unnormalised Wiener measure on continuous paths
\[
  \omega:[0,\beta j]\to\mathbb T_L^d,
  \qquad
  \omega(0)=\omega(\beta j)=A \in \mathbb T_L^d.
\]
Its total mass is
\[
  W_{A,A}^{L,\beta j}(\Omega)
  =
  q_{\beta j}^L(A,A)
  =
  \sum_{w\in\mathbb Z^d}g_{\beta j}(Lw),
\]
which is independent of \(A\).  
We define the normalised rooted-loop kernel by
\[
  \kappa_{L,j}(dA,d\omega)
  =
  \frac{1}{j\,a_{L,j}}\,
  dA\,W_{A,A}^{L,\beta j}(d\omega).
\]
By the definition of $a_{L,j}$~\eqref{defa}, \(\kappa_{L,j}\) is a probability measure on rooted loops of duration
\(\beta j\).
Given the cycle counts \((n_j)_{j\ge1}\), we attach independently to each
cycle of length \(j\) a rooted loop with law \(\kappa_{L,j}\).  Equivalently,
the rooted loop-gas measure is
\[
  \frac{1}{Z_{L,N}}
  \prod_{j\ge1}
  \frac{1}{n_j!}
  \prod_{r=1}^{n_j}
  \left[
    \frac{1}{j}\,
    dA_{j,r}\,
    W_{A_{j,r},A_{j,r}}^{L,\beta j}(d\omega_{j,r})
  \right]
  \mathbf 1_{\{\sum_{j\ge1}j\,n_j=N\}} .
\]
This is the cycle-by-cycle disintegration of the Feynman--Kac measure
\eqref{eq:periodic-spatial-permutation-measure}.

In order to study the scaling limits of the loops,
we now associate three marks to a rooted loop.  First, if the root is
\(A\in\mathbb T_L^d\), define the rescaled root
\[
  R=A/L\in\mathbb T^d .
\]
Second, lift the periodic loop to a continuous path
\[
  \widetilde\omega:[0,\beta j]\to\mathbb R^d,
  \qquad
  \widetilde\omega(0)=A .
\]
Since the projected path closes on \(\mathbb T_L^d\), there is a unique
winding vector \(W(\omega)\in\mathbb Z^d\) such that
\[
  \widetilde\omega(\beta j)-\widetilde\omega(0)
  =
  L\,W(\omega).
\]
For macroscopic cycles, \(j\) is of order \(V_L\), so we define the winding
endpoint on the scale \(\sqrt{V_L}\):
\[
  Y_{L,j}(\omega)
  =
  \frac{L\,W(\omega)}{\sqrt{V_L}}
  \in\mathbb R^d .
\]
Third, after removing the linear winding part and rescaling time to
\([0,1]\), define the winding-corrected bridge fluctuation
\[
  \zeta_{L,j}(\omega)(s)
  =
  \frac{
    \widetilde\omega(\beta j s)
    -\widetilde\omega(0)
    -sL\,W(\omega)
  }{\sqrt{V_L}},
  \qquad 0\le s\le1 .
\]
Then \(\zeta_{L,j}\in C_0([0,1];\mathbb R^d)\), where
\[
  C_0([0,1];\mathbb R^d)
  =
  \left\{
    f\in C([0,1];\mathbb R^d): f(0)=f(1)=0
  \right\}
\]
is equipped with the supremum norm.  The mark space is
\[
  \mathsf M_{\mathrm{per}}
  =
  \mathbb T^d
  \times
  \mathbb R^d
  \times
  C_0([0,1];\mathbb R^d).
\]

\subsubsection{The marked Feynman cycle point process}
\label{sssec:periodic-marked-point-process}

We now collect the marked cycles into a point process.  For each cycle of
length \(j\), indexed by \(1\le r\le n_j\), let
\[
  U_{j,r}\sim {\rm Unif}[0,1]
\]
be independent of all other variables.  This auxiliary coordinate has no
physical meaning; it only labels atoms of the point process.  Let
\[
  M_{L,j,r}
  =
  (R_{j,r},Y_{L,j,r},\zeta_{L,j,r})
  \in \mathsf M_{\mathrm{per}}
\]
be the mark extracted from the rooted loop attached to the \(r\)-th cycle of
length \(j\).  The state space is
\[
  E_{\mathrm{per}}
  =
  [0,1]\times(0,\infty)\times\mathsf M_{\mathrm{per}} .
\]
The finite-volume marked Feynman cycle process is
\[
  \Xi_{L,N}^{\mathrm{per}}
  =
  \sum_{j\ge1}\sum_{r=1}^{n_j}
  \delta_{\left(
    U_{j,r},
    j/V_L,
    R_{j,r},
    Y_{L,j,r},
    \zeta_{L,j,r}
  \right)} .
\]
The second coordinate denotes the macroscopic cycle length, while the last
three coordinates denote the rescaled root, the scaled winding endpoint, and
the winding-corrected bridge fluctuation.

Define the finite-volume winding-endpoint law by
\[
  \mathsf G_{L,j}
  \left(
    \left\{\frac{Lw}{\sqrt{V_L}}\right\}
  \right)
  =
  \frac{g_{\beta j}(Lw)}
  {\sum_{m\in\mathbb Z^d}g_{\beta j}(Lm)},
  \qquad w\in\mathbb Z^d .
  \]
  This is well defined, since the denominator is strictly positive and finite by
the Gaussian decay of \(g_{\beta j}(Lm)\) over \(m\in\mathbb Z^d\).
For \(x>0\), let \(\mathsf B_x\) denote the law on
\(C_0([0,1];\mathbb R^d)\) of
\[
  \sqrt{2\beta x}\,B^{\mathrm{br}},
\]
where \(B^{\mathrm{br}}\) is a standard \(d\)-dimensional Brownian bridge on
\([0,1]\).  The corresponding single-loop mark kernel on $\mathsf M_{\mathrm{per}}$ is the following product measure
\[
  \mathsf J_{L,j}^{\mathrm{per}}(dR,dY,d\zeta)
  =
  \mathsf H_{\mathbb T^d}(dR)\,
  \mathsf G_{L,j}(dY)\,
  \mathsf B_{j/V_L}(d\zeta),
\]
where \(\mathsf H_{\mathbb T^d}\) denotes normalised Haar measure on
\(\mathbb T^d\).

\begin{proposition}[Finite-volume compatibility and mark factorisation]
\label{prop:periodic-finite-volume-compatibility}
The process \(\Xi_{L,N}^{\mathrm{per}}\) is obtained from the finite-volume
periodic ideal Bose Feynman--Kac measure
\eqref{eq:periodic-spatial-permutation-measure} by decomposing the
permutation into cycles, disintegrating each cycle into a rooted Brownian
loop, describing the marks defined in
\Cref{sssec:periodic-rooted-loops-marks}, and adding independent uniform
labels.  Consequently:
\begin{enumerate}
  \item the unmarked cycle-count marginal is
  \eqref{eq:periodic-canonical-cycle-law};
  \item conditionally on the cycle counts, the marks attached to distinct
  cycles are independent;
  \item for a cycle of length \(j\), the single-loop mark has law
  \[
    (R,Y_{L,j},\zeta_{L,j})
    \sim
    \mathsf J_{L,j}^{\mathrm{per}}
    =
    \mathsf H_{\mathbb T^d}\otimes
    \mathsf G_{L,j}\otimes
    \mathsf B_{j/V_L};
  \]
  in particular, the rescaled root, the scaled winding endpoint, and the
  winding-corrected bridge fluctuation are independent;
  \item cutting each rooted loop of duration \(\beta j\) into its \(j\)
  consecutive time-\(\beta\) legs and forgetting the marks recovers the
  spatial-permutation measure
  \eqref{eq:periodic-spatial-permutation-measure}.
\end{enumerate}
\end{proposition}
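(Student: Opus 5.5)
The plan is to start from the Feynman--Kac measure \eqref{eq:periodic-spatial-permutation-measure}, integrate out all positions except one root per cycle, and then read off the law of the marks from the standard decomposition of Brownian paths on the torus.

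\emph{Step 1: the cycle-count marginal and product structure.} Fix a permutation $\pi$ and a cycle $c=(i_1\,\cdots\,i_j)$ of $\pi$. The weight $\prod_i q^L_\beta(x_i,x_{\pi(i)})$ factorises over the cycles of $\pi$. For each cycle, the Chapman--Kolmogorov identity gives
\[
\prod_{k=1}^{j} q^L_\beta(x_{i_k},x_{i_{k+1}})\,dx_{i_2}\cdots dx_{i_j}
\;\longrightarrow\; q^L_{\beta j}(x_{i_1},x_{i_1}) .
\]
More precisely, by the Markov property of Brownian motion on $\mathbb T_L^d$, the product of the $j$ heat kernels is the joint density of the positions at times $0,\beta,\dots,(j-1)\beta$ of a rooted loop under $W^{L,\beta j}_{A,A}$, with $A=x_{i_1}$. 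Summing over permutations with given counts $(n_j)$ contributes the combinatorial factor $N!/\prod_j j^{n_j}n_j!$. Together with \eqref{defa}, this yields \eqref{eq:periodic-canonical-cycle-law}, which is item (1). Since the residual weight is a product of the normalised kernels $\kappa_{L,j}$, one per cycle, item (2) follows. Reading the same disintegration backwards, i.e.\ cutting each loop at the times $k\beta$, recovers \eqref{eq:periodic-spatial-permutation-measure}; this is item (4).

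\emph{Step 2: mark factorisation, item (3).} This is the only real computation. Under $\kappa_{L,j}$ the root $A$ is uniform on $\mathbb T_L^d$, because $W^{L,\beta j}_{A,A}(\Omega)$ does not depend on $A$. Hence $R=A/L$ has law $\mathsf H_{\mathbb T^d}$ and is independent of the loop shape. Next I would lift the loop: the periodic bridge measure is the image of $\sum_{w\in\mathbb Z^d}$ of free Brownian bridge measures from $A$ to $A+Lw$ in $\mathbb R^d$, each with mass $g_{\beta j}(Lw)$, under the covering projection. This follows from the image-sum formula for $q^L_t$ and uniqueness of path lifting, so that $W(\omega)=w$ exactly on the $w$-th component. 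This immediately gives $Y_{L,j}\sim\mathsf G_{L,j}$.

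Conditionally on $W=w$, the lifted path is a free Brownian bridge from $A$ to $A+Lw$ over $[0,\beta j]$, with generator $\Delta$, so its diffusion coefficient is $2$. Subtracting the linear drift $sLw$ turns it into a free bridge from $0$ to $0$, whose law does not depend on $w$. After time rescaling $t=\beta j s$ this becomes $\sqrt{2\beta j}\,B^{\mathrm{br}}(s)$. Dividing by $\sqrt{V_L}$ gives $\sqrt{2\beta j/V_L}\,B^{\mathrm{br}}$, which has law $\mathsf B_{j/V_L}$. Because this conditional law is the same for every $w$, $\zeta_{L,j}$ is independent of $Y_{L,j}$, and hence of $R$ as well.

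\emph{Main obstacle.} I expect the hardest point to be the rigorous justification of the lift decomposition, i.e.\ that $W^{L,\beta j}_{A,A}$ equals the pushforward of $\sum_w$ of free bridge measures. I would prove it by comparing finite-dimensional distributions: the image-sum formula for $q^L_t$ gives, for the torus bridge, joint densities equal to sums over lifts of products of $g$-kernels. Regrouping these sums by the total winding $w$ produces exactly the free bridge to $A+Lw$. Measurability of $W(\omega)$ and the continuity of the lift then identify the components pathwise.
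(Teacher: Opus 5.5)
Your proposal is correct and follows essentially the same route as the paper: uniform root by translation invariance, decomposition of the periodic bridge into winding sectors of mass $g_{\beta j}(Lw)$, and a centred bridge whose rescaled law $\mathsf B_{j/V_L}$ depends on neither $A$ nor $w$. The paper asserts that items (1), (2) and (4) follow directly from the construction and treats the winding decomposition as standard; your Chapman--Kolmogorov counting and your finite-dimensional justification of the lift decomposition fill in exactly those steps.
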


\begin{proof}

We only need to identify the single-loop mark law in \((3)\).  The remaining
statements follow directly from the  construction of the
marked point process.

  Since
\(q_{\beta j}^L(A,A)\) is translation invariant, the root \(A\) is uniform on
\(\mathbb T_L^d\), and therefore \(R=A/L\) has law
\(\mathsf H_{\mathbb T^d}\) and independent of the shape of cycles.  Decomposing the periodic bridge according to its
winding part, the part \(w\in\mathbb Z^d\) has mass
\(g_{\beta j}(Lw)\).  Thus
\[
  \mathbb P\left(Y_{L,j}=\frac{Lw}{\sqrt{V_L}}\right)
  =
  \frac{g_{\beta j}(Lw)}
       {\sum_{m\in\mathbb Z^d}g_{\beta j}(Lm)}
  =
  \mathsf G_{L,j}
  \left(
    \left\{\frac{Lw}{\sqrt{V_L}}\right\}
  \right).
\]
Finally, conditional on the root \(A\) and on the winding part \(w\), the
lifted bridge is the linear path from \(A\) to \(A+Lw\) plus a centred
Brownian bridge of duration \(\beta j\) for the generator \(\Delta\).  The law
of this centred bridge is independent of both \(A\) and \(w\).  After the time
change \(t=\beta j s\) and the spatial scaling by \(\sqrt{V_L}\), the centred
part has law
$ \mathsf B_{j/V_L}$,
which is
the law of \(\sqrt{2\beta\,j/V_L}\,B^{\mathrm{br}}\).  Hence
\[
  (R,Y_{L,j},\zeta_{L,j})
  \sim
  \mathsf H_{\mathbb T^d}\otimes
  \mathsf G_{L,j}\otimes
  \mathsf B_{j/V_L}
  =
  \mathsf J_{L,j}^{\mathrm{per}} .
\]
The product structure also gives the asserted independence of the three
marks, and independence across distinct cycles follows from the conditional
product construction. 
\end{proof}

\subsubsection{Periodic marked winding--bridge limit}
\label{sssec:periodic-marked-limit}
We first recall that in this model, we have
\[
 \mu_{L,j}^{\mathrm{per,eff}}
  =
   q_{L,j}^{\mathrm{per,eff}}\mathsf J_{L,j}^{\mathrm{per}}=\mathsf J_{L,j}^{\mathrm{per}}.
\]
We now identify the limiting effective one-cycle mark measure.  For \(x>0\),
define
\begin{equation}\label{eq:periodic-limiting-eta}
  \eta_x
  =
  \mathsf H_{\mathbb T^d}
  \otimes
  N(0,2\beta x I_d)
  \otimes
  \mathsf B_x
\end{equation}
on
$ \mathsf M_{\mathrm{per}}.$
The measure \(\eta_x\) has total mass one since
\(\mathsf H_{\mathbb T^d}\), \(N(0,2\beta xI_d)\), and \(\mathsf B_x\) are all
probability measures.
Thus the limiting scalar profile is
$ \phi_{\mathrm{per}}(x)
  =
  \eta_x(\mathsf M_{\mathrm{per}})
  \equiv 1$.
  It is obvious that \(x\mapsto\eta_x\) is weakly continuous on \((0,\infty)\) hence the condition $(1)$ in~\cref{ass:limiting-effective-kernel} is satisfied.

\begin{proposition}[Verification of the effective marked trace convergence]
\label{prop:periodic-verification}
Assume \(d>2\).   For the periodic ideal Bose gas with mark space
\(\mathsf M_{\mathrm{per}}\), the convergence part of~\cref{ass:effective-marked-trace-convergence} holds.  More precisely, for
every \(0<\delta<R<\infty\) and every
\(F\in C_b([0,1]\times[\delta,R]\times\mathsf M_{\mathrm{per}})\),
\[
    \sup_{x\in[\delta,R]}
    \left|
    \int_0^1\!\int_{\mathsf M_{\mathrm{per}}}
    F(u,x,m)\,
    \mu_{L,\lfloor xV_L\rfloor}^{\mathrm{per,eff}}(dm)\,du
    -
    \int_0^1\!\int_{\mathsf M_{\mathrm{per}}}
    F(u,x,m)\,\eta_x(dm)\,du
    \right|
    \longrightarrow 0 .
\]
\end{proposition}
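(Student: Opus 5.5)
Since $\mu^{\mathrm{per,eff}}_{L,j}=\mathsf J^{\mathrm{per}}_{L,j}$ is a probability measure, the statement is uniform weak convergence of product probability measures along $j_L(x):=\lfloor xV_L\rfloor$, $x\in[\delta,R]$. By \cref{prop:periodic-finite-volume-compatibility}(3), $\mathsf J^{\mathrm{per}}_{L,j_L(x)}=\mathsf H_{\mathbb T^d}\otimes\mathsf G_{L,j_L(x)}\otimes\mathsf B_{j_L(x)/V_L}$. The Haar factor does not depend on $L$, so only the winding and bridge factors need to be controlled, uniformly in $x$.

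I would argue by contradiction and sequential compactness. If uniformity fails, there are $\epsilon>0$, $L_n\to\infty$ and $x_n\in[\delta,R]$ such that the difference is at least $\epsilon$; passing to a subsequence, $x_n\to x_*\in[\delta,R]$. The plan is to show that both $\mathsf J^{\mathrm{per}}_{L_n,j_{L_n}(x_n)}$ and $\eta_{x_n}$ converge weakly to $\eta_{x_*}$. Write $F_n(u,m):=F(u,x_n,m)$. On the compact set $[0,1]\times[\delta,R]$ the family $\{F(\cdot,x,\cdot)\}$ is bounded, and the sequences of measures are tight, so $F_n\to F(\cdot,x_*,\cdot)$ uniformly on compacts of $[0,1]\times\mathsf M_{\mathrm{per}}$. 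Weak convergence together with tightness and uniform boundedness then gives $\int F_n\,d(du\otimes\cdot)\to\int F(\cdot,x_*,\cdot)\,d(du\otimes\eta_{x_*})$ for both sequences. This yields the contradiction.

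\emph{Bridge factor.} $\mathsf B_{y}$ is the law of $\sqrt{2\beta y}\,B^{\mathrm{br}}$, which is weakly continuous in $y$ by the a.s. continuity of scaling in sup norm. Since $j_{L_n}(x_n)/V_{L_n}\to x_*$ and $x_n\to x_*$, both bridge factors converge to $\mathsf B_{x_*}$.

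\emph{Winding factor (the main step).} Put $t=\beta j$ with $j=j_{L_n}(x_n)$, and $h=L/\sqrt{V_L}=L^{1-d/2}\to0$ because $d>2$. Then $\mathsf G_{L,j}$ puts mass proportional to $\exp\{-|hw|^2/(4\beta j/V_L)\}$ on the lattice point $hw$, i.e. it is the discrete Gaussian on the lattice $h\mathbb Z^d$ with variance parameter $2\beta y_n$, where $y_n=j/V_L\to x_*\ge\delta>0$. I would show convergence to $N(0,2\beta x_*I_d)$ through characteristic functions, or by testing against bounded Lipschitz $f$. Both Riemann sums $h^d\sum_w f(hw)e^{-|hw|^2/(4\beta y_n)}$ and $h^d\sum_w e^{-|hw|^2/(4\beta y_n)}$ converge to the corresponding Gaussian integrals. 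This holds because the mesh $h\to0$, the variance parameter is bounded away from $0$ and $\infty$, and Gaussian tails give uniform dominated control of the sums. Alternatively, Poisson summation gives the normalizer exactly as $(4\pi\beta y_n)^{d/2}h^{-d}(1+o(1))$.

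Finally, tightness of each product sequence follows from tightness of its factors. Weak convergence of products of weakly convergent probability measures on the Polish product space then completes the argument. The part needing care is the uniformity in $x$, and the subsequence argument above handles it.
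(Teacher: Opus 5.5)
Your proof is correct and uses the same ingredients as the paper: the product factorisation $\mathsf H_{\mathbb T^d}\otimes\mathsf G_{L,j}\otimes\mathsf B_{j/V_L}$, the Riemann-sum identification of $\mathsf G_{L,j}$ as a discretised Gaussian on $h_L\mathbb Z^d$ with $h_L=L^{1-d/2}\to0$ (where $d>2$ enters), continuity of the bridge scaling, and tightness combined with continuity of $F$ on $[0,1]\times[\delta,R]\times K$ for compact $K$. You get uniformity in $x$ by a subsequence argument ($x_n\to x_*$) that reduces everything to ordinary weak convergence of products, whereas the paper asserts uniform weak convergence of each factor directly and then uses a uniform compact set $K_\varepsilon$ and a tensorisation argument; your packaging is slightly cleaner (it is the device the paper itself uses in the Dirichlet case), and the only slip is wording: the uniform convergence $F(\cdot,x_n,\cdot)\to F(\cdot,x_*,\cdot)$ on compacts comes from uniform continuity of $F$, not from tightness.
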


\begin{proof}
Set
$   j_L(x):=\lfloor xV_L\rfloor$ and
$    x_L(x):=\frac{j_L(x)}{V_L}$.
Then
$   \sup_{x\in[\delta,R]} |x_L(x)-x|
    \le \frac1{V_L}
    \longrightarrow 0 $.
For \(L\) large enough, \(x_L(x)\in[\delta/2,2R]\) uniformly in
\(x\in[\delta,R]\).
The root components in  $\mu_{L,j_L(x)}^{\mathrm{per,eff}}$ and in $ \eta_x$ are identical, and it remains to compare uniformly the
winding endpoint law and the bridge law.

Let $h_L:={L}/{\sqrt{V_L}}=L^{1-d/2}$.
Since \(d>2\), \(h_L\to0\).  For \(j=j_L(x)\), the winding endpoint law is
supported on \(h_L\mathbb Z^d\) and satisfies
\[
    \mathsf G_{L,j_L(x)}(\{h_Lw\})
    =
    \frac{
    \exp\!\left\{-|h_Lw|^2/(4\beta x_L(x))\right\}
    }{
    \sum_{m\in\mathbb Z^d}
    \exp\!\left\{-|h_Lm|^2/(4\beta x_L(x))\right\}
    },
    \qquad w\in\mathbb Z^d .
\]
The common heat-kernel prefactor cancels in the ratio.  Hence
\(\mathsf G_{L,j_L(x)}\) is the Riemann-sum discretisation of the Gaussian
density
\[
    y\mapsto
    (4\pi\beta x_L(x))^{-d/2}
    \exp\!\left\{-|y|^2/(4\beta x_L(x))\right\}.
\]
The Riemann-sum convergence is uniform for
\(x_L(x)\in[\delta/2,2R]\).  Indeed, the Gaussian tails are uniformly
controlled on this compact parameter interval, and on every compact subset of
\(\mathbb R^d\) the Gaussian densities are uniformly continuous in both
\(y\) and the parameter.  Therefore, for every
\(f\in C_b(\mathbb R^d)\),
\[
    \sup_{x\in[\delta,R]}
    \left|
    \int_{\mathbb R^d} f(y)\,\mathsf G_{L,j_L(x)}(dy)
    -
    \int_{\mathbb R^d} f(y)\,
    N(0,2\beta x_L(x)I_d)(dy)
    \right|
    \longrightarrow 0 .
\]
Since \(x_L(x)\to x\) uniformly and the map
\(x\mapsto N(0,2\beta x I_d)\) is weakly continuous uniformly on compact
subsets of \((0,\infty)\), we also have
\[
    \sup_{x\in[\delta,R]}
    \left|
    \int_{\mathbb R^d} f(y)\,
    N(0,2\beta x_L(x)I_d)(dy)
    -
    \int_{\mathbb R^d} f(y)\,
    N(0,2\beta x I_d)(dy)
    \right|
    \longrightarrow 0 .
\]
Consequently,
$   \mathsf G_{L,j_L(x)}
    \Rightarrow
    N(0,2\beta xI_d)$
uniformly for \(x\in[\delta,R]\).
Similarly, the bridge laws satisfy
$   \mathsf B_{x_L(x)}
    \Rightarrow
    \mathsf B_x$
uniformly for \(x\in[\delta,R]\) since the scaling factors
\(\sqrt{2\beta x_L(x)}\) converge uniformly to \(\sqrt{2\beta x}\).
The uniform tightness of the winding and bridge laws, together with the
compactness of \(\mathbb T^d\), implies uniform tightness of the corresponding
product measures on \(\mathsf M_{\mathrm{per}}\).  Thus, for every
\(\varepsilon>0\), there exists a compact set
\(K_\varepsilon\subset\mathsf M_{\mathrm{per}}\), independent of \(x\) and of
all sufficiently large \(L\), such that
\[
    \sup_{x\in[\delta,R]}
    \left[
    \mu_{L,j_L(x)}^{\mathrm{per,eff}}(K_\varepsilon^c)
    +
    \eta_x(K_\varepsilon^c)
    \right]
    \le
    \frac{\varepsilon}{4(\|F\|_\infty\vee1)} .
\]
Hence the contribution of \(K_\varepsilon^c\) to the difference of the two
integrals is at most \(\varepsilon/2\), uniformly in
\(u\in[0,1]\), \(x\in[\delta,R]\), and all sufficiently large \(L\).
On
$   [0,1]\times[\delta,R]\times K_\varepsilon$,
the function \(F\) is uniformly continuous.  By this uniform continuity, the
preceding uniform weak convergence of the winding and bridge coordinates, and
the standard tensorisation argument for product measures, the contribution from
\(K_\varepsilon\) converges to zero uniformly in
\((u,x)\in[0,1]\times[\delta,R]\).  Therefore
\[
    \sup_{u\in[0,1]}\sup_{x\in[\delta,R]}
    \left|
    \int_{\mathsf M_{\mathrm{per}}}
    F(u,x,m)\,
    \mu_{L,j_L(x)}^{\mathrm{per,eff}}(dm)
    -
    \int_{\mathsf M_{\mathrm{per}}}
    F(u,x,m)\,\eta_x(dm)
    \right|
    \longrightarrow 0 .
\]
Integrating over \(u\in[0,1]\) gives
\[
    \sup_{x\in[\delta,R]}
    \left|
    \int_0^1\!\int_{\mathsf M_{\mathrm{per}}}
    F(u,x,m)\,
    \mu_{L,\lfloor xV_L\rfloor}^{\mathrm{per,eff}}(dm)\,du
    -
    \int_0^1\!\int_{\mathsf M_{\mathrm{per}}}
    F(u,x,m)\,\eta_x(dm)\,du
    \right|
    \longrightarrow 0 .
\]
The proof is complete.
\end{proof}

Now we have verified all assumptions in~\cref{sec:limit-objects-main-results}, we can conclude the following marked
bridge limit for the periodic ideal Bose gas model.

\begin{corollary}[Periodic marked winding--bridge limit]
\label{cor:periodic-marked-winding-bridge-limit}
Assume \(d>2\) and let \(N_L/V_L\to\rho>\rho_{\mathrm c}\). 
Then the finite-volume periodic marked Feynman cycle process
\(\Xi_{L,N_L}^{\mathrm{per}}\)  converges to the marked Gamma bridge of total
mass $\rho-\rho_{\mathrm c}$ with length-dependent mark measure \(\eta_x\) given by
\eqref{eq:periodic-limiting-eta}.  More precisely, before conditioning on the
total macroscopic mass, the limiting tilted Poisson intensity is
$ du\,e^{-s x}dx/x\,\eta_x(dm)$,
and conditioning the total mass to be $\rho-\rho_{\mathrm c}$ gives the canonical marked bridge.
The resulting bridge law is independent of the auxiliary tilt parameter \(s\).

Let
$ \bigl(X_{L,i},U_{L,i},R_{L,i},Y_{L,i},\zeta_{L,i}\bigr)_{i\ge1}$
be the atoms of \(\Xi_{L,N_L}^{\mathrm{per}}\) ranked by decreasing
macroscopic length.  Then, for every fixed \(m\ge1\),
\[
  \bigl(X_{L,i},U_{L,i},R_{L,i},Y_{L,i},\zeta_{L,i}\bigr)_{1\le i\le m}
  \Longrightarrow
  \bigl(X_i,U_i,R_i,Y_i,\zeta_i\bigr)_{1\le i\le m}.
\]
The limiting ranked lengths satisfy
$ (X_i)_{i\ge1}
  \sim
  (\rho-\rho_c)\,{\rm PD}(0,1)$.
Conditionally on \((X_i)_{i\ge1}\), the marks are independent and, for each
\(i\),
\[
  R_i\sim\mathsf H_{\mathbb T^d},
  \qquad
  Y_i\sim N(0,2\beta X_iI_d),
  \qquad
  \zeta_i\sim \sqrt{2\beta X_i}\,B^{\mathrm{br}}.
\]
\end{corollary}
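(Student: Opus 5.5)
The plan is to apply \cref{thm:main-canonical-bridge-limit} and \cref{cor:ranked-effective-length-convergence} with mark space $\mathsf M_{\mathrm{per}}$ and effective/background split $\mu^{\mathrm{per,eff}}_{L,j}=\mathsf J^{\mathrm{per}}_{L,j}$ (ground-state mode, total mass one). The corresponding background $\mu^{\mathrm{per,bg}}_{L,j}=(q_{L,j}-1)\mathsf J^{\mathrm{per}}_{L,j}$ is positive, and its total masses are the non-ground-state traces. The hypotheses are then checked one at a time.

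\begin{itemize}
\item The scalar content of \cref{ass:limiting-effective-kernel} (parts (2),(3)), \cref{ass:limiting-effective-density} and \cref{ass:effective-spectral-LLT-criterion}(B) with $Q=1$ depends only on $q^{\mathrm{eff}}_{L,j}\equiv1$, $\Sigma=\delta_0$, $\phi\equiv1$. These were verified in \cref{subsec:three-bc-scalar}, which gives $T^{(\kappa)}\sim\mathrm{Exp}(\kappa)$ and $f_0^{(\kappa)}(\rho-\rho_c)>0$.
\item Weak continuity of $x\mapsto\eta_x$ was noted after \eqref{eq:periodic-limiting-eta}.
\item The marked convergence in \cref{ass:effective-marked-trace-convergence} is \cref{prop:periodic-verification}, and the uniform bound holds with $C_{\mathrm{eff}}=1$.
\item \cref{lem:background-density-concentration} gives \cref{ass:background-density-concentration} with $\rho_{\mathrm{bg}}=\rho_c$. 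Since the assumptions only use the background traces, the choice of background marks is irrelevant.
\item The density hypothesis in \cref{ass:canonical-density-regime} is exactly $\rho>\rho_c$.
\end{itemize}

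By \cref{prop:periodic-finite-volume-compatibility}, $\Xi^{\mathrm{per}}_{L,N_L}$ is exactly the abstract canonical marked process of \cref{subsec:marked-canonical-process}. The main theorem therefore gives $\Xi^{\mathrm{per}}_{L,N_L}\Rightarrow\Pi^{\mathrm{br}}_{\rho-\rho_c}$ in $\mathcal N_\ell(\mathsf E_{\mathrm{per}})$. Independence of the tilt parameter $s$ follows from \cref{rem:kappa-auxiliary-main}.

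Next I describe the bridge. Since $\eta_x=\phi(x)\cdot(\text{prob.\ measure})$ with $\phi\equiv1$, the Poisson process $\Pi^{(\kappa)}$ is an unmarked process with intensity $du\,e^{-\kappa x}dx/x$, with independent marks drawn from $\eta_{x}$ given the length $x$ and independent uniform $u$. The total mass $T^{(\kappa)}$ is a functional of the lengths alone, so conditioning on it preserves this conditional independence of marks. Hence, given the ranked lengths $(X_i)$, the marks are independent with law $\eta_{X_i}$, which is the claimed product of $\mathsf H_{\mathbb T^d}$, $N(0,2\beta X_iI_d)$ and $\sqrt{2\beta X_i}B^{\mathrm{br}}$, and $U_i$ is uniform. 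The length marginal is the Gamma bridge, so $(X_i)\sim(\rho-\rho_c)\mathrm{PD}(0,1)$ by \cref{cor:three-bc-unmarked-length-bridge}.

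The step I expect to need the most care is passing from point-process convergence to joint convergence of the top $m$ ranked atoms with their marks. I would handle it as follows.
\begin{itemize}
\item The length intensity $e^{-\kappa x}dx/x$ is diffuse, so the bridge has a.s.\ no length ties.
\item By \cref{cor:ranked-effective-length-convergence}, the ranked lengths converge in $\ell^1_\downarrow$, and the $m$-th largest limit length satisfies $X_m>0$ a.s. Given $\varepsilon$, choose $\delta$ with $\mathbb P(X_m<2\delta)<\varepsilon$. Since the total mass is bounded by $N_L/V_L$, all atoms lie in $x\le R$ for $R>\rho+1$.
\item The background atoms are negligible in windows by the second assertion of the main theorem, so the top $m$ atoms of $\Xi^{\mathrm{per}}$ and of its effective part coincide with high probability.
\item On the space of point measures with finitely many atoms in $[0,1]\times[\delta,R]\times\mathsf M_{\mathrm{per}}$, no atom on the boundary $x=\delta$, and distinct lengths, the map sending a configuration to its top $m$ atoms (ordered by length) is continuous for the length-bounded topology. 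I would justify this through the standard atom-matching characterisation of convergence of finite point measures on a Polish window.
\end{itemize}
The continuous mapping theorem, together with letting $\varepsilon\to0$, then yields the finite-dimensional ranked convergence.
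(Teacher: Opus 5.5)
Your proposal is correct and follows the paper's route: check the scalar hypotheses (\cref{subsec:three-bc-scalar}), the marked trace convergence (\cref{prop:periodic-verification}) and the background concentration (\cref{lem:background-density-concentration}), then invoke \cref{thm:main-canonical-bridge-limit} and \cref{cor:ranked-effective-length-convergence}. Your extra arguments for the conditional mark law (marking theorem, then conditioning on a functional of the lengths only) and for joint convergence of the top $m$ atoms with their marks (background invisibility plus atom matching on a window) fill in details the paper leaves implicit, since its ranked corollary only treats the lengths of the effective part.
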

\begin{remark}[Discrete periodic random-walk analogue]
\label{rem:rw-variant-after-ibg}
There is a completely discrete periodic analogue of the periodic ideal
Bose gas model.  Let
$   \Lambda_L^{\mathrm{lat}}=(\mathbb Z/L\mathbb Z)^d$,
    $V_L=L^d$,
and let \(K_L^{\mathrm{rw}}\) be the positive nearest-neighbour lattice
Laplacian,
\[
    (K_L^{\mathrm{rw}}f)(x)
    =
    \sum_{\ell=1}^d
    \bigl(2f(x)-f(x+e_\ell)-f(x-e_\ell)\bigr).
\]
By Fourier diagonalisation, its eigenvalues are
\[
    \varepsilon_{L,k}^{\mathrm{rw}}
    =
    \varepsilon\!\left(\frac{2\pi k}{L}\right),
    \qquad
    k\in\{0,\ldots,L-1\}^d,
\]
where the lattice dispersion relation is
\[
    \varepsilon(\theta)
    =
    2\sum_{\ell=1}^d(1-\cos\theta_\ell),
    \qquad
    \theta\in[-\pi,\pi]^d .
\]
Thus the one-cycle trace is
\[
    q_{L,j}^{\mathrm{rw}}
    =
    \operatorname{Tr} e^{-\beta jK_L^{\mathrm{rw}}}
    =
    \sum_{k\in\{0,\ldots,L-1\}^d}
    \exp\left\{
      -\beta j\,\varepsilon_{L,k}^{\mathrm{rw}}
    \right\}.
\]
The corresponding critical density is
\[
    \rho_{\mathrm c}^{\mathrm{rw}}(\beta)
    =
    \int_{[-\pi,\pi]^d}
    \frac{1}{e^{\beta\varepsilon(\theta)}-1}\,
    \frac{d\theta}{(2\pi)^d},
\]
which is finite for \(d>2\). 
On the macroscopic cycle scale \(j\asymp V_L=L^d\), only the zero Fourier mode
contributes.  Indeed,
 for \(j\asymp L^d\),
\[
    \beta j\min_{k\ne0}\varepsilon_{L,k}^{\mathrm{rw}}
    \asymp L^{d-2}
    \longrightarrow\infty .
\]
Thus the effective macroscopic trace is again
$   \phi_{\mathrm{rw}}(x)\equiv1$,
as in the periodic continuum model.

Consequently, if
$  N_L/V_L\longrightarrow
    \rho>\rho_{\mathrm c}^{\mathrm{rw}}(\beta)$,
then the excess macroscopic mass is
$   \rho-\rho_{\mathrm c}^{\mathrm{rw}}(\beta)$,
and the ranked macroscopic cycle lengths, after normalization by this excess
mass, converge to \(\mathrm{PD}(0,1)\).
Moreover, if the same diffusive path marks as in
\cref{cor:periodic-marked-winding-bridge-limit} are retained, Donsker's
invariance principle identifies the limiting mark kernel with the periodic
Brownian winding--bridge kernel:
$   \eta_x^{\mathrm{rw}}=\eta_x$,
    $x>0 $.
Equivalently, conditionally on a limiting macroscopic length \(X_i\), the
root is uniform on \(\mathbb T^d\), the winding displacement is Gaussian with
covariance \(2\beta X_i I_d\), and the fluctuation is
\(\sqrt{2\beta X_i}\,B^{\mathrm{br}}\).  
\end{remark}

\subsection{Dirichlet and Neumann boundary conditions: empirical local-process marks}
\label{subsec:DN-empirical-local-process}

We now treat the Dirichlet and Neumann boundary conditions in a unified way.
Denote by
$ q_t^{\Lambda_L,b}$
the heat kernel in
$ \Lambda_L=(0,L)^d$
with boundary condition \(b\).  
For \(N\) bosons at inverse temperature \(\beta\), the canonical
Feynman--Kac measure on spatial configurations and permutations is
\[
  \mathbb Q_{L,N}^{b}(d\mathbf x,\pi)
  =
  \frac{1}{N!\,Z_{L,N}^{b}}
  \prod_{i=1}^N q_\beta^{\Lambda_L,b}
      (x_i,x_{\pi(i)})
  \,d\mathbf x,
  \qquad
  \mathbf x\in\Lambda_L^N,\quad \pi\in\mathcal S_N ,
\]
where
\[
  Z_{L,N}^{b}
  =
  \frac{1}{N!}
  \sum_{\pi\in\mathcal S_N}
  \int_{\Lambda_L^N}
  \prod_{i=1}^N q_\beta^{\Lambda_L,b}
      (x_i,x_{\pi(i)})
  \,dx_1\cdots dx_N .
\]
As in the periodic case, a permutation decomposes into cycles.  Conditional on
\((\mathbf x,\pi)\), each cycle of length \(j\) is represented by the
concatenation of \(j\) independent \(b\)-Brownian bridges of time length
\(\beta\), from \(x_i\) to \(x_{\pi(i)}\).  Thus a cycle of length \(j\)
naturally gives an unrooted Brownian loop in \(\Lambda_L\) of total time
\(\beta j\).
For \(b=D\), this is a killed Brownian loop: the Dirichlet bridge is killed
upon hitting \(\partial\Lambda_L\), and is conditioned to survive up to its
terminal time and to arrive at the prescribed endpoint.  For \(b=N\), this is
a reflected Brownian loop in \(\overline{\Lambda_L}\), obtained from reflected
Brownian bridges with Neumann transition density.

The same diffusive scale is relevant in both cases.  Indeed, for a
macroscopic cycle satisfying
$ \frac{j}{V_L}\to x>0$,
the rescaled time length is
\[
  S_{L,j}
  :=
  \frac{\beta j}{L^2}
  \sim
  \beta x L^{d-2}
  \longrightarrow\infty
  \qquad (d>2).
\]
Thus both the killed loop and the reflected loop become, after diffusive
scaling, long loops in the unit cube
$  Q=(0,1)^d$
with diverging time length.  We therefore use the same type of empirical
local-process mark for the two boundary conditions: it describes the empirical
distribution of compact diffusive time windows seen from a uniformly chosen
time along the rescaled unrooted cycle.  The distinction between the
Dirichlet and Neumann cases enters through the limiting local process, which
will be identified separately below.

\subsubsection{The empirical local-process mark and the marked point process}
\label{sssec:DN-empirical-mark}

Let
\[
    \omega:[0,\beta j]\to\overline{\Lambda_L}
\]
be the \(b\)-Brownian loop associated with a cycle of length \(j\).   Its diffusively rescaled path is
\[
    Y_{L,j}(s)
    =
    L^{-1}\omega(L^2s),
    \qquad
    0\le s\le S_{L,j}.
\]
Thus, after diffusive scaling, a macroscopic cycle with length $j=O(V_L)$ becomes a \(b\)-Brownian
bridge in the unit cube \(Q=(0,1)^d\) whose time length diverges.  In the
Dirichlet case this is a long killed bridge conditioned on survival, while in
the Neumann case it is a long reflected bridge.  In both cases we encode the
local geometry of the long bridge by averaging over all shifted diffusive time
windows along the cycle.

Since the loop is closed, we extend \(Y_{L,j}\) periodically to all
\(s\in\mathbb R\) by
$   Y_{L,j}(s+S_{L,j})=Y_{L,j}(s)$.
For \(u\in\mathbb R\), let
$   (\theta_u\gamma)(t)=\gamma(u+t)$,
$\forall t\in\mathbb R$,
be the time-shift operator.  We define the empirical local-process mark of the
cycle by
\[
    \mathcal M_{L,j}^{b}(\omega)
    :=
    \frac1{S_{L,j}}
    \int_0^{S_{L,j}}
    \delta_{\theta_uY_{L,j}}\,du .
\]
This is a probability measure on
\[
    \mathcal X_b
    :=
    C_{\mathrm{loc}}(\mathbb R,\overline Q),
\]
the space of continuous two-sided paths in \(\overline Q\), equipped with the
topology of uniform convergence on compact time intervals.  Intuitively,
\(\mathcal M_{L,j}^{b}\) describes what a typical local time window of the long
cycle looks like when the root of the loop is chosen uniformly along its
diffusive time length.  Thus the mark describes the empirical distribution of
local shapes seen along the whole unrooted cycle, rather than the behaviour
near one prescribed point.  We take the mark space to be
\[
    \mathsf M_b:=\mathcal P(\mathcal X_b),
\]
with the topology of weak convergence.  Since \(\mathcal X_b\) is Polish,
\(\mathsf M_b\) is Polish as well.

We now attach these marks to the cycles in the canonical ensemble with
boundary condition \(b\).  Let \((n_j)_{j\ge1}\) denote the cycle counts under
the canonical Feynman--Kac measure \(\mathbb Q_{L,N_L}^{b}\).  Conditionally
on the cycle counts, the cycles are independent.  More precisely, for each
\(j\ge1\) and \(1\le r\le n_j\), let
$   \omega_{j,r}$
be a \(b\)-Brownian loop of duration \(\beta j\), sampled from the
corresponding normalised one-loop measure.  We attach to this loop the
empirical local-process mark
$ M_{L,j,r}^{b}
  :=
  \mathcal M_{L,j}^{b}(\omega_{j,r})
  \in \mathsf M_b $.
As in the general marked-cycle construction, we also assign to each cycle an
independent auxiliary label
$ U_{j,r}\sim {\rm Unif}[0,1]$,
independently of the cycle counts and of all loops.  The finite-volume marked
cycle point process is then defined by
\[
  \Xi_{L,N_L}^{b}
  =
  \sum_{j\ge1}\sum_{r=1}^{n_j}
  \delta_{\left(
    U_{j,r},
    j/V_L,
    M_{L,j,r}^{b}
  \right)}
  \in
  \mathcal N_\ell\bigl([0,1]\times(0,\infty)\times\mathsf M_b\bigr).
\]

Finally, let $  \mathsf J_{L,j}^{b}$
denote the law of the mark \(\mathcal M_{L,j}^{b}\) under the normalised
\(b\)-one-loop measure of duration \(\beta j\).  Then, conditionally on the
cycle counts, $  M_{L,j,r}^{b}\sim \mathsf J_{L,j}^{b}$,
independently over all pairs \((j,r)\).

\begin{remark}[Why the marks do not describe the global loop]
\label{rem:local-mark-not-loop}
A natural question is why we consider only local empirical marks and do not
define a mark that captures the whole macroscopic loop.  The reason is that a
macroscopic cycle of length \(j\asymp V_L\) has diffusively rescaled duration
\[
    S_L=\frac{\beta j}{L^2}\asymp L^{d-2}\to\infty .
\]
Thus a global-loop mark would have to encode a closed path with a diverging
time horizon, and there is no canonical non-degenerate limit in a fixed
finite-time loop space.

The local empirical mark uses a different observable: it observes the
periodically extended loop from a typical time and only on compact time
windows.  This always gives an element of
\(C_{\mathrm{loc}}(\mathbb R,\overline Q)\).  In this local viewpoint the
closing constraint is pushed to infinite time and disappears in the limit.
Consequently the limiting mark is a stationary two-sided process, not a loop
law.
\end{remark}
\subsubsection{Limiting empirical local-process marked point process: the Dirichlet case}
\label{sssec:dirichlet-empirical-one-loop-limit}

We first identify the limiting local process seen from a uniformly chosen time
on a long Dirichlet loop.  The limit is the two-sided stationary Dirichlet
taboo process in \(Q\).

Let \(p_t^{Q,D}\) denote the Dirichlet heat kernel in \(Q=(0,1)^d\).  Let
\[
    h_D(r)
    =
    2^{d/2}\prod_{\ell=1}^d\sin(\pi r_\ell),
    \qquad
    \varepsilon_D=\pi^2d,
\]
be the \(L^2(Q)\)-normalised positive ground state of \(-\Delta_Q^D\) and its
ground-state eigenvalue.
The Dirichlet taboo transition density is the Doob \(h\)-transform
\[
    p_t^{\mathrm{tab}}(r,s)
    =
    e^{\varepsilon_D t}
    \frac{h_D(s)}{h_D(r)}
    p_t^{Q,D}(r,s),
    \qquad r,s\in Q,\quad t>0 .
\]
Its invariant probability measure is
$    h_D(r)^2\,dr$ .

Let
$   \mathbb Q_{D,\mathrm{two}}^{\mathrm{tab}}
    \in\mathcal P(\mathcal X_D)$
denote the law of the two-sided stationary Dirichlet taboo process.  Equivalently,
if \(X=(X_t)_{t\in\mathbb R}\) is the canonical process, then for
\(t_1<\cdots<t_k\),
\[
\begin{aligned}
&\mathbb Q_{D,\mathrm{two}}^{\mathrm{tab}}
  \bigl(
    X_{t_1}\in dr_1,\ldots,X_{t_k}\in dr_k
  \bigr)
\\
&\quad =
  h_D(r_1)^2\,dr_1\,
  p_{t_2-t_1}^{\mathrm{tab}}(r_1,r_2)
  \cdots
  p_{t_k-t_{k-1}}^{\mathrm{tab}}(r_{k-1},r_k)
  dr_2\cdots dr_k .
\end{aligned}
\]
Accordingly, for \(x>0\), define
\[
  \eta_x^D
  :=
  \delta_{\mathbb Q_{D,\mathrm{two}}^{\mathrm{tab}}}
  \in\mathcal P(\mathsf M_D).
\]
The notation allows for length-dependent mark laws in the abstract marked
bridge theorem, although in the present Dirichlet case the limiting mark law is
independent of \(x\).

The following estimates are standard consequences of the spectral gap of the
Dirichlet Laplacian and the Markov bridge decomposition.

\begin{lemma}[Ground-state asymptotics and loop mixing]
\label{lem:dirichlet-ground-state-mixing}
Let \(\varepsilon_2^D>\varepsilon_D\) be the second Dirichlet eigenvalue of
\(-\Delta_Q^D\).  Then the following hold.

\begin{enumerate}
\item For every \(t_0>0\), there exists \(C<\infty\) such that, for all
\(t\ge t_0\) and \(r,s\in Q\),
\[
  \left|
    p_t^{Q,D}(r,s)
    -
    e^{-\varepsilon_Dt}h_D(r)h_D(s)
  \right|
  \le
  C e^{-\varepsilon_2^Dt}.
\]

\item Let \(F,G\) be bounded measurable functionals depending only on time
windows of length at most \(2T\).  Then there exist constants
\(C_{F,G},c_{F,G}>0\) such that, for all sufficiently large \(S\),
\[
\left|
  \operatorname{Cov}_{S}^{D,\mathrm{loop}}
  \bigl(
    F(\theta_uY),
    G(\theta_vY)
  \bigr)
\right|
\le
  C_{F,G}
  \exp\{-c_{F,G}(d_S(u,v)-4T)_+\},
\]
where
\[
  d_S(u,v)=\min\{|u-v|,S-|u-v|\}
\]
is the cyclic distance on the time circle of length \(S\).
\end{enumerate}
\end{lemma}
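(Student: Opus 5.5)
Part (1) follows from the eigenfunction expansion of the Dirichlet heat kernel in $Q=(0,1)^d$. Write
\[
p_t^{Q,D}(r,s)=\sum_{k\ge0}e^{-\varepsilon_k t}\varphi_k(r)\varphi_k(s),
\]
where $\varphi_0=h_D$ and $\varepsilon_0=\varepsilon_D$. On the cube the eigenfunctions are products of $\sqrt2\sin(\pi n\cdot)$, so $\|\varphi_k\|_\infty\le 2^{d/2}$ uniformly in $k$. The Weyl count gives $\#\{k:\varepsilon_k\le\lambda\}\le C\lambda^{d/2}$. For $t\ge t_0$ the remainder is therefore bounded by
\[
2^d\sum_{k\ge1}e^{-\varepsilon_k t}\le 2^d e^{-\varepsilon_2^D (t-t_0/2)}\sum_{k\ge1}e^{-\varepsilon_k t_0/2}.
\]
The last sum is finite by the Weyl count, which gives the claim with $C$ depending on $t_0$.

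For part (2) I would use the Doob transform. Conditioning the killed loop of duration $S$ on survival, and conjugating by $h_D$ and $e^{\varepsilon_D t}$, turns the rescaled loop into a taboo loop: a bridge of the taboo process $p^{\mathrm{tab}}$ from $r$ to $r$ in time $S$, with root density proportional to $p_S^{Q,D}(r,r)$. By part (1) this root density equals $h_D(r)^2\,(1+O(e^{-(\varepsilon_2^D-\varepsilon_D)S}))$. By rotation invariance of the loop law on the circle, it suffices to take $0\le u\le v$ with $v-u=d_S(u,v)$.

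The taboo kernel has a uniform spectral-gap mixing estimate. For $t\ge t_0$,
\[
\bigl|p_t^{\mathrm{tab}}(r,s)-h_D(s)^2\bigr|=\frac{h_D(s)}{h_D(r)}\,e^{\varepsilon_D t}\bigl|p_t^{Q,D}(r,s)-e^{-\varepsilon_D t}h_D(r)h_D(s)\bigr|.
\]
The factor $h_D(s)/h_D(r)$ is not bounded near $\partial Q$. I would control it with the sharper eigenfunction bound $|\varphi_k(r)|\le C\varepsilon_k^{d/2}h_D(r)$ on the cube, using $|\sin(n\pi r)|\le n\sin(\pi r)$. This gives
\[
\Bigl|\frac{p_t^{\mathrm{tab}}(r,s)}{h_D(s)^2}-1\Bigr|\le Ce^{-gt},\qquad g=\varepsilon_2^D-\varepsilon_D,
\]
uniformly in $r,s$. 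Obtaining this uniform ratio estimate is the key step and where I expect the main obstacle, namely the boundary degeneracy of $h_D$; the product-of-sines bound resolves it on the cube.

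With the ratio estimate, I would write the joint law of the loop on the two windows $[u-T,u+T]$ and $[v-T,v+T]$ via the Markov property. It has the form
\[
\int \mu(dr)\,\bigl[\text{window 1 law}\bigr]\,p^{\mathrm{tab}}_{a}\,\bigl[\text{window 2 law}\bigr]\,p^{\mathrm{tab}}_{b}\Big/p^{\mathrm{tab}}_S,
\]
with gaps $a=(v-u-2T)_+$ and $b=(S-(v-u)-2T)_+$. When $a$ and $b$ are both large, each gap kernel equals $h_D^2\,(1+O(e^{-g\cdot\mathrm{gap}}))$, and the windows factorise. Hence the joint law is the product of the two stationary window laws up to a relative error $O(e^{-ga}+e^{-gb})$. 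The marginals have the same form with relative error $O(e^{-gS/2})$, and $b\ge a$ because $v-u=d_S(u,v)$. Subtracting gives
\[
|\operatorname{Cov}|\le C\|F\|_\infty\|G\|_\infty\,e^{-g(d_S(u,v)-4T)_+}.
\]
This covers the case $d_S(u,v)\ge 4T+t_0$. In the remaining case the trivial bound $2\|F\|_\infty\|G\|_\infty$ suffices after enlarging $C_{F,G}$.
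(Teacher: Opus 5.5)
Your proposal is correct and follows the paper's route: the eigenfunction expansion with the spectral gap for (1), and for (2) the Markov bridge decomposition with the ground-state asymptotics applied to the two long connecting pieces, plus the trivial covariance bound when the windows are close. The one difference is that you pass to the taboo kernel, which is why you need the uniform ratio estimate from $|\sin(n\pi x)|\le n\sin(\pi x)$. The paper instead keeps the unnormalised Dirichlet kernels, and there the additive bound of (1) already suffices: after dividing by $\int_Q p_S^{Q,D}(z,z)\,dz=e^{-\varepsilon_D S}(1+o(1))$, the error is $O\bigl(e^{4T\varepsilon_D}e^{-(\varepsilon_2^D-\varepsilon_D)(d_S(u,v)-4T)}\bigr)$, so the boundary degeneracy of $h_D$ that you flag never has to be dealt with.
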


\begin{proof}
The first estimate follows from the spectral expansion of the Dirichlet heat
kernel on the cube and the spectral gap above the ground state; see, for
instance, Davies \cite[Ch.~4]{Davies1989}.

For the covariance estimate, use the cyclic invariance of the loop and order
the two time windows on the time circle.  If their cyclic distance is at most
\(4T\), the trivial bound
\[
  |\operatorname{Cov}(F,G)|
  \le 4\|F\|_\infty\|G\|_\infty
\]
is sufficient.  If the two windows are separated by a distance \(a>4T\), then
the Markov bridge decomposition expresses
\[
  \mathbb E_{S}^{D,\mathrm{loop}}
  \bigl[
    F(\theta_uY)G(\theta_vY)
  \bigr]
\]
as an integral containing two Dirichlet heat-kernel factors whose time lengths
are bounded below by \(a-4T\), up to deterministic constants depending only on
the window size.  Applying the ground-state asymptotics to the long connecting
pieces gives the product of the corresponding one-window expectations, with an
error bounded by
\[
  C_{F,G}e^{-c_{F,G}(a-4T)}
\]
for some \(c_{F,G}>0\).  Since \(a=d_S(u,v)\), this yields the stated bound.
\end{proof}

We now identify the limiting mark law and verify a mark-kernel condition
needed for~\cref{ass:effective-marked-trace-convergence}.

\begin{proposition}[Dirichlet empirical local-process mark]
\label{prop:dirichlet-empirical-local-process-limit}
For every \(0<\delta<R<\infty\) and every bounded continuous function
\(\Phi:\mathsf M_D\to\mathbb R\),
\[
  \sup_{\delta\le j/V_L\le R}
  \left|
    \int_{\mathsf M_D}\Phi(m)\,\mathsf J_{L,j}^{D}(dm)
    -
    \Phi\bigl(\mathbb Q_{D,\mathrm{two}}^{\mathrm{tab}}\bigr)
  \right|
  \longrightarrow0 .
\]
\end{proposition}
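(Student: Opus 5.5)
The plan is a law of large numbers for the random empirical measure $\mathcal M^D_{L,j}$ under the normalised loop law $\mathsf J^D_{L,j}$. The target $\delta_{\mathbb Q^{\mathrm{tab}}_{D,\mathrm{two}}}$ is a Dirac mass, so it suffices to show that $\mathcal M^D_{L,j}\to\mathbb Q^{\mathrm{tab}}_{D,\mathrm{two}}$ in probability in $\mathsf M_D=\mathcal P(\mathcal X_D)$, uniformly in $\delta\le j/V_L\le R$. Then for bounded continuous $\Phi$ the claim follows by a standard argument: $\Phi$ is continuous at the point $\mathbb Q^{\mathrm{tab}}_{D,\mathrm{two}}$, so $|\Phi(m)-\Phi(\mathbb Q)|<\varepsilon$ on a weak neighbourhood of $\mathbb Q$, and the complement of that neighbourhood has vanishing probability. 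Since $\mathcal X_D$ is Polish, weak convergence in $\mathcal P(\mathcal X_D)$ is metrisable. It is therefore determined by a countable convergence-determining family $\{F_k\}$ of bounded Lipschitz functionals, each depending only on the path restricted to a window $[-T_k,T_k]$; these generate the topology of uniform convergence on compacts. We also need tightness of the family of mean measures, which we get from step 1 below together with uniform modulus-of-continuity bounds. The problem thus reduces to showing, for each such $F$ with window length $2T$,
\[
\sup_{\delta\le j/V_L\le R}\mathbb E^{D,\mathrm{loop}}_{S_{L,j}}\Bigl[\Bigl(\tfrac{1}{S_{L,j}}\textstyle\int_0^{S_{L,j}}F(\theta_uY)\,du-\mathbb Q^{\mathrm{tab}}_{D,\mathrm{two}}(F)\Bigr)^2\Bigr]\longrightarrow0 .
\]
A key observation is that, after the diffusive rescaling $Y(s)=L^{-1}\omega(L^2s)$, the normalised $D$-loop of duration $\beta j$ in $\Lambda_L$ is exactly the normalised Dirichlet Brownian loop in $Q$ of duration $S=S_{L,j}$. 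Its law depends on $(L,j)$ only through $S$. Uniformity over $\delta\le j/V_L\le R$ is then automatic, because $S_{L,j}\ge\beta\delta L^{d-2}\to\infty$ uniformly, and it suffices to prove convergence as $S\to\infty$.

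Step 1 is the mean, which reduces to a single stationary window. The unrooted loop is cyclically invariant, so $\mathbb E[F(\theta_uY)]$ does not depend on $u$. The mean of the time average is therefore $\mathbb E^{D,\mathrm{loop}}_S[F(\theta_0Y)]$. To evaluate it, write the finite-dimensional distributions of the loop on the window $[-T,T]$ via the Markov bridge decomposition: the window marginal at times $t_1<\dots<t_k$ has density
\[
\frac{p^{Q,D}_{S-(t_k-t_1)}(r_k,r_1)\prod_i p^{Q,D}_{t_{i+1}-t_i}(r_i,r_{i+1})}{\int_Q p^{Q,D}_S(r,r)\,dr}.
\]
Apply \cref{lem:dirichlet-ground-state-mixing}(1) to the long factor $p_{S-2T}$ and to the denominator, which is $e^{-\varepsilon_DS}(1+O(e^{-(\varepsilon_2^D-\varepsilon_D)S}))$. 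The density then becomes $h_D(r_1)h_D(r_k)e^{\varepsilon_D(t_k-t_1)}\prod p$, up to a relative error $O(e^{-(\varepsilon_2^D-\varepsilon_D)(S-2T)})$, and this is precisely the taboo finite-dimensional law with stationary start $h_D^2$. The same estimate, holding uniformly on path space for window functionals, gives total variation convergence of the window law. Hence $\mathbb E[F(\theta_0Y)]\to\mathbb Q^{\mathrm{tab}}_{D,\mathrm{two}}(F)$, and the bias term vanishes.

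Step 2 is the variance, which follows from \cref{lem:dirichlet-ground-state-mixing}(2):
\[
\operatorname{Var}\Bigl(\tfrac1S\int_0^SF(\theta_uY)\,du\Bigr)=\tfrac1{S^2}\int_0^S\!\!\int_0^S\operatorname{Cov}(F(\theta_uY),F(\theta_vY))\,du\,dv\le \tfrac{C}{S^2}\int_0^S\!\!\int_0^S e^{-c(d_S(u,v)-4T)_+}\,du\,dv=O(1/S).
\]
Chebyshev's inequality then gives convergence in probability of the time averages for each $F_k$. A diagonal argument over $k$, together with tightness, gives $\mathcal M^D_{L,j}\to\mathbb Q^{\mathrm{tab}}_{D,\mathrm{two}}$ weakly in probability.

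The main obstacle is the tightness and identification step in the space $\mathcal P(C_{\mathrm{loc}}(\mathbb R,\overline Q))$. Step 1 gives total variation convergence of each window law, so the window laws are tight; the intensity measures $\mathbb E\mathcal M^D_{L,j}=\mathcal L(\theta_0Y)$ then converge, and are therefore tight. A random measure whose intensity is tight is itself tight in $\mathcal P(\mathcal P(\mathcal X_D))$. Any subsequential limit is identified as $\delta_{\mathbb Q}$ through the countable family $F_k$. A minor extra point is that $F$ must be a bounded continuous functional of the window and not merely measurable; this is fine, because Lemma (2) was stated for bounded measurable $F,G$.
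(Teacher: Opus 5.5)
Your proposal is correct and follows essentially the paper's route: cyclic invariance reduces the mean to a single window, whose law is identified through the Markov bridge decomposition and the ground-state asymptotics of \cref{lem:dirichlet-ground-state-mixing}(1); the covariance decay in part~(2) gives the $O(1/S_{L,j})$ variance bound; and a countable family of local functionals upgrades this to convergence in probability to the deterministic limit $\mathbb Q_{D,\mathrm{two}}^{\mathrm{tab}}$. The differences are minor: you get uniformity from the fact that the rescaled loop law depends only on $S_{L,j}$ instead of the paper's subsequence argument, and tightness from the intensity measures plus a separating class instead of a convergence-determining metric; note also that part~(1) of the lemma gives an additive rather than relative error, since $h_D$ vanishes on $\partial Q$, but this still suffices for total-variation convergence of the window law.
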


\begin{proof}
It is enough to prove the asserted convergence along every sequence
\(j_L\) such that
$ j_L/V_L\to x\in(0,\infty)$.
The uniform statement on compact intervals then follows by the usual
subsequence argument.

Set
$ S_L:=S_{L,j_L}\sim \beta x L^{d-2}\to\infty $.
Let \(F:\mathcal X_D\to\mathbb R\) be a bounded continuous local functional,
depending only on the restriction of the path to \([-T,T]\).  We first show
that
\[
  \left\langle
    \mathcal M_{L,j_L}^{D},F
  \right\rangle
  =
  \frac1{S_L}\int_0^{S_L}F(\theta_uY_{L,j_L})\,du
  \longrightarrow
  \int F\,d\mathbb Q_{D,\mathrm{two}}^{\mathrm{tab}}
\]
in probability.

By cyclic invariance of the normalised loop measure,
\[
  \mathbb E
  \left[
    \frac1{S_L}\int_0^{S_L}F(\theta_uY_{L,j_L})\,du
  \right]
  =
  \mathbb E\bigl[F(Y_{L,j_L})\bigr].
\]
Thus we first identify the local weak limit of the loop around a fixed time.
By a deterministic time shift, we may assume that \(F\) depends on the path on
an interval \([0,A]\), with \(A\le 2T\).

Consider a cylinder function depending on times
$  0\le t_1<\cdots<t_k\le A $.
Under the normalised Dirichlet loop in \(Q\) of duration \(S_L>A\), the joint
density of
$ (Y_{L,j_L}(t_1),\ldots,Y_{L,j_L}(t_k))$
is
\[
\begin{aligned}
&\frac{
  p_{t_2-t_1}^{Q,D}(r_1,r_2)
  \cdots
  p_{t_k-t_{k-1}}^{Q,D}(r_{k-1},r_k)
  p_{S_L-(t_k-t_1)}^{Q,D}(r_k,r_1)
}{
  \int_Q p_{S_L}^{Q,D}(z,z)\,dz
}
\,dr_1\cdots dr_k .
\end{aligned}
\]
By the ground-state asymptotics in
\(\cref{lem:dirichlet-ground-state-mixing}\),
\[
  p_{S_L-(t_k-t_1)}^{Q,D}(r_k,r_1)
  =
  e^{-\varepsilon_D(S_L-(t_k-t_1))}
  h_D(r_k)h_D(r_1)
  +o(e^{-\varepsilon_D S_L}),
\]
uniformly for \(r_1,r_k\in Q\).  Moreover,
\[
  \int_Q p_{S_L}^{Q,D}(z,z)\,dz
  =
  e^{-\varepsilon_D S_L}(1+o(1)),
\]
because \(\int_Q h_D(z)^2\,dz=1\).  Hence
\[
  \frac{
    p_{S_L-(t_k-t_1)}^{Q,D}(r_k,r_1)
  }{
    \int_Q p_{S_L}^{Q,D}(z,z)\,dz
  }
  \longrightarrow
  e^{\varepsilon_D(t_k-t_1)}h_D(r_k)h_D(r_1),
\]
uniformly on \(Q\times Q\).  Therefore the finite-dimensional density
converges to
\[
\begin{aligned}
&
e^{\varepsilon_D(t_k-t_1)}
h_D(r_1)
p_{t_2-t_1}^{Q,D}(r_1,r_2)
\cdots
p_{t_k-t_{k-1}}^{Q,D}(r_{k-1},r_k)
h_D(r_k)
\,dr_1\cdots dr_k ,
\end{aligned}
\]
which is exactly the finite-dimensional distribution of the two-sided
stationary Dirichlet taboo process.

Together with the standard tightness estimates for Brownian bridges on compact
time intervals, this finite-dimensional convergence implies weak convergence
on \(C([0,A],\overline Q)\).  Consequently,
\[
  \mathbb E
  \left[
    \left\langle
      \mathcal M_{L,j_L}^{D},F
    \right\rangle
  \right]= \mathbb E\bigl[F(Y_{L,j_L})\bigr]
  \longrightarrow
  \int F\,d\mathbb Q_{D,\mathrm{two}}^{\mathrm{tab}} .
\]

It remains to prove concentration.  By
\(\cref{lem:dirichlet-ground-state-mixing}\), there exist constants
\(C_F,c_F>0\) such that, for all sufficiently large \(L\),
\[
\left|
  \operatorname{Cov}
  \bigl(
    F(\theta_uY_{L,j_L}),
    F(\theta_vY_{L,j_L})
  \bigr)
\right|
\le
  C_F e^{-c_F(d_{S_L}(u,v)-4T)_+}.
\]
Therefore
\[
\begin{aligned}\operatorname{Var}
 \left(
  \frac1{S_L}\int_0^{S_L}F(\theta_uY_{L,j_L})\,du
 \right)
& =
  \frac1{S_L^2}
  \int_0^{S_L}\int_0^{S_L}
  \operatorname{Cov}
  \bigl(
    F(\theta_uY_{L,j_L}),
    F(\theta_vY_{L,j_L})
  \bigr)
  \,du\,dv
\\
&\quad\le
  \frac{C_F}{S_L^2}
  \int_0^{S_L}\int_0^{S_L}
  e^{-c_F(d_{S_L}(u,v)-4T)_+}
  \,du\,dv .
\end{aligned}
\]
By translation invariance on the time circle, for each fixed \(u\),
\[
\begin{aligned}
\int_0^{S_L}
  e^{-c_F(d_{S_L}(u,v)-4T)_+}\,dv
&=
\int_0^{S_L}
  e^{-c_F(\min\{w,S_L-w\}-4T)_+}\,dw
\\
&\le
2\int_0^{S_L/2}
  e^{-c_F(r-4T)_+}\,dr
\\
&\le
2\left(4T+\frac1{c_F}\right).
\end{aligned}
\]
Thus the double integral is \(O(S_L)\), and hence
\[
  \operatorname{Var}
 \left(
  \frac1{S_L}\int_0^{S_L}F(\theta_uY_{L,j_L})\,du
 \right)
 \le
 \frac{C_{F,T}}{S_L}
 \longrightarrow0 .
\]
Combining the convergence of the expectation with this variance bound gives
\[
  \left\langle
    \mathcal M_{L,j_L}^{D},F
  \right\rangle
  \longrightarrow
  \int F\,d\mathbb Q_{D,\mathrm{two}}^{\mathrm{tab}}
\]
in probability for every bounded continuous local functional \(F\).

We now upgrade this to convergence of random probability measures in
\(\mathsf M_D=\mathcal P(\mathcal X_D)\).  Choose a countable
convergence-determining family
$ (F_n)_{n\ge1}\subset C_b(\mathcal X_D)$
consisting of bounded continuous local functions.  Since
\(\mathcal X_D=C_{\mathrm{loc}}(\mathbb R,\overline Q)\) is Polish, such a
family exists.  Define
\[
  d_{\mathsf M}(\mu,\nu)
  =
  \sum_{n=1}^{\infty}2^{-n}
  \left(
    \left|
      \int F_n\,d\mu-\int F_n\,d\nu
    \right|
    \wedge1
  \right).
\]
This metric generates the topology of weak convergence on \(\mathsf M_D\).
The preceding convergence, applied to each \(F_n\), implies
\[
  d_{\mathsf M}
  \left(
    \mathcal M_{L,j_L}^{D},
    \mathbb Q_{D,\mathrm{two}}^{\mathrm{tab}}
  \right)
  \longrightarrow0
\]
in probability.  Therefore
$ \mathcal M_{L,j_L}^{D}
  \longrightarrow
  \mathbb Q_{D,\mathrm{two}}^{\mathrm{tab}}$
in probability in \(\mathsf M_D\).  Since the limit is deterministic, the law
\(\mathsf J_{L,j_L}^{D}\) of \(\mathcal M_{L,j_L}^{D}\) converges weakly to
$ \delta_{\mathbb Q_{D,\mathrm{two}}^{\mathrm{tab}}}$.
That is, for every bounded continuous
\(\Phi:\mathsf M_D\to\mathbb R\),
\[
  \int_{\mathsf M_D}\Phi(m)\,\mathsf J_{L,j_L}^{D}(dm)
  \longrightarrow
  \Phi\bigl(\mathbb Q_{D,\mathrm{two}}^{\mathrm{tab}}\bigr).
\]

Finally, if the uniform convergence on \([\delta,R]\) failed, then there would
exist \(\eta>0\), a subsequence \(L_n\), and integers \(j_{L_n}\) with
$ \delta\le j_{L_n}/V_{L_n}\le R$
such that
\[
\left|
    \int_{\mathsf M_D}\Phi(m)\,\mathsf J_{L_n,j_{L_n}}^{D}(dm)
    -
    \Phi\bigl(\mathbb Q_{D,\mathrm{two}}^{\mathrm{tab}}\bigr)
\right|
\ge \eta .
\]
Passing to a further subsequence, we may assume
$ j_{L_n}/V_{L_n}\to x\in[\delta,R]$.
This contradicts the sequential convergence proved above.  Hence the
convergence is uniform on compact subsets of \((0,\infty)\).
\end{proof}
Similar to the proof in~\cref{prop:periodic-verification}, we can verify the~\cref{ass:effective-marked-trace-convergence} holds. Thus, we can conclude the following marked bridge limit under Dirichlet condition.
\begin{corollary}[Dirichlet marked empirical-process bridge limit]
\label{cor:dirichlet-empirical-marked-bridge}
Assume \(d>2\) and let \(N_L/V_L\to\rho>\rho_{\mathrm c}\).  Then the
finite-volume Dirichlet marked Feynman cycle process
$ \Xi_{L,N_L}^{D}$
converges to the marked Gamma bridge of total mass
$ \rho-\rho_{\mathrm c}$
with length-dependent mark measure
$ \eta_x^D
  =
  \delta_{\mathbb Q_{D,\mathrm{two}}^{\mathrm{tab}}}$, $\forall x>0$.
\end{corollary}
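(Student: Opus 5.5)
The plan is to obtain the corollary as a direct application of \cref{thm:main-canonical-bridge-limit} with mark space \(\mathsf M_D=\mathcal P(\mathcal X_D)\), effective part the shifted ground-state mode and background the excited modes. The scalar hypotheses were checked at the start of \cref{subsec:three-bc-scalar}, and they do not depend on the marks. There \(q^{D,\mathrm{eff}}_{L,j}\equiv1\), \(\Sigma_L=\Sigma=\delta_0\), and \(\phi_D\equiv1\), so case (B) of \cref{ass:effective-spectral-LLT-criterion} holds with \(Q=1\), \(\theta_1=1\). Moreover \(T^{(\kappa)}\sim\mathrm{Exp}(\kappa)\) has the positive continuous density \(\kappa e^{-\kappa a}\), which gives \cref{ass:limiting-effective-density} and the positivity requirement in \cref{ass:canonical-density-regime}. \Cref{lem:background-density-concentration} supplies \cref{ass:background-density-concentration} with \(\rho_{\mathrm{bg}}=\rho_{\mathrm c}\). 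Because \(\eta^D_x=\delta_{\mathbb Q^{\mathrm{tab}}_{D,\mathrm{two}}}\) does not depend on \(x\) and has mass \(1\), part (1) of \cref{ass:limiting-effective-kernel} is trivial, and parts (2) and (3) reduce to the scalar case already handled.

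The one remaining item is the marked trace convergence in \cref{ass:effective-marked-trace-convergence}. The effective marked measure is \(\mu^{D,\mathrm{eff}}_{L,j}=\mathsf J^D_{L,j}\): its mass is \(q^{\mathrm{eff}}=1\), and the mark of a cycle of length \(j\) is the empirical local-process mark of the one-loop measure. Before verifying the convergence I would confirm that this is the intended decomposition, i.e.\ that splitting \(\mu_{L,j}=\mathsf J^D_{L,j}+(q_{L,j}-1)\mathsf J^D_{L,j}\) into effective and background parts with the same normalized kernel is legitimate. The framework explicitly permits non-singular decompositions, and the marginal kernel remains \(J_{L,j}=\mathsf J^D_{L,j}\), so this is consistent.

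To verify the convergence, fix \(F\in C_b([0,1]\times[\delta,R]\times\mathsf M_D)\) and argue by subsequence, as in the last step of \cref{prop:dirichlet-empirical-local-process-limit}. If uniform convergence failed, there would be \(x_n\in[\delta,R]\), \(u\)-integrals staying apart, and after passing to a subsequence \(x_n\to x\). Set \(j_n=\lfloor x_nV_{L_n}\rfloor\), so \(j_n/V_{L_n}\to x\). By \cref{prop:dirichlet-empirical-local-process-limit}, \(\mathsf J^D_{L_n,j_n}\Rightarrow\delta_{\mathbb Q^{\mathrm{tab}}}\). This weak convergence must be combined with the moving argument \((u,x_n)\). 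The map \((x',m)\mapsto\int_0^1F(u,x',m)\,du\) is jointly continuous and bounded, so I would apply weak convergence of the product measures \(\delta_{x_n}\otimes\mathsf J^D_{L_n,j_n}\Rightarrow\delta_x\otimes\delta_{\mathbb Q^{\mathrm{tab}}}\) on \([\delta,R]\times\mathsf M_D\). Joint convergence holds because one factor is deterministic, or alternatively by tightness, since the limit point is a single point and hence compact. This yields
\[
\int_0^1\!\int F(u,x_n,m)\,\mathsf J^D_{L_n,j_n}(dm)\,du\to\int_0^1F(u,x,\mathbb Q^{\mathrm{tab}})\,du .
\]
Continuity of \(F\) in \(x\) then identifies the limit with the \(\eta_{x_n}\)-integral, which gives the contradiction. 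The uniform bound \(C_{\mathrm{eff}}=1\) holds trivially.

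I expect the only delicate point to be this uniformity in \(x\) together with the dependence of \(F\) on \(x\). The subsequence argument combined with joint continuity handles it; everything else is bookkeeping. Once the assumptions are in place, \cref{thm:main-canonical-bridge-limit} gives \(\Xi^D_{L,N_L}\Rightarrow\Pi^{\mathrm{br}}_{\rho-\rho_{\mathrm c}}\). The intensity is \(du\,e^{-\kappa x}x^{-1}dx\,\delta_{\mathbb Q^{\mathrm{tab}}}(dm)\), and conditioning it on total mass \(\rho-\rho_{\mathrm c}\) gives a Gamma bridge whose atoms all carry the mark \(\mathbb Q^{\mathrm{tab}}_{D,\mathrm{two}}\), which is exactly the claim.
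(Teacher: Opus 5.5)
Your proposal is correct and follows the same route as the paper. You inherit the scalar hypotheses, the exponential density of $T^{(\kappa)}$, and background concentration with $\rho_{\mathrm{bg}}=\rho_{\mathrm c}$ from \cref{subsec:three-bc-scalar} and \cref{lem:background-density-concentration}, take the effective mark measure to be $\mathsf J^D_{L,j}$ exactly as the paper does in the periodic case, and then apply \cref{thm:main-canonical-bridge-limit}. The only difference is how you upgrade \cref{prop:dirichlet-empirical-local-process-limit} to the $F(u,x,m)$-form of \cref{ass:effective-marked-trace-convergence}. The paper points to the tightness-plus-uniform-continuity argument of \cref{prop:periodic-verification}. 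Your subsequence argument instead uses the jointly continuous $g(x',m)=\int_0^1F(u,x',m)\,du$ and $\delta_{x_n}\otimes\mathsf J^D_{L_n,j_n}\Rightarrow\delta_x\otimes\delta_{\mathbb Q^{\mathrm{tab}}_{D,\mathrm{two}}}$. It does the same job and is, if anything, more direct, since the limiting mark law is a single Dirac mass.
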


\subsubsection{Limiting empirical local-process marked point process: the Neumann case}
\label{sssec:neumann-empirical-one-loop-limit}

We next consider Neumann boundary conditions.  The argument is parallel to the
Dirichlet case, with the ground state now given by the constant function.

Let \(p_t^{Q,N}\) denote the Neumann heat kernel in \(Q=(0,1)^d\).  The
\(L^2(Q)\)-normalised ground state and ground-state eigenvalue of
\(-\Delta_Q^N\) are
$    h_N(r)\equiv 1$,
and
  $  \varepsilon_N=0 $.
Thus the corresponding Doob ground-state transform is trivial, and the limiting
local process is the stationary reflected Brownian motion in \(\overline Q\),
with transition density \(p_t^{Q,N}\) and invariant probability measure \(dr\).

Let
$   \mathbb Q_{N,\mathrm{two}}^{\mathrm{ref}}
    \in \mathcal P(\mathcal X_N)$
denote the law of the two-sided stationary reflected Brownian motion in
\(\overline Q\).  Equivalently, if \(X=(X_t)_{t\in\mathbb R}\) is the canonical
process, then for \(t_1<\cdots<t_k\),
\[
\begin{aligned}
&\mathbb Q_{N,\mathrm{two}}^{\mathrm{ref}}
  \bigl(
    X_{t_1}\in dr_1,\ldots,X_{t_k}\in dr_k
  \bigr)
\\
&\quad =
  dr_1\,
  p_{t_2-t_1}^{Q,N}(r_1,r_2)
  \cdots
  p_{t_k-t_{k-1}}^{Q,N}(r_{k-1},r_k)
  dr_2\cdots dr_k .
\end{aligned}
\]
For \(x>0\), set
$    \eta_x^N
    :=
    \delta_{\mathbb Q_{N,\mathrm{two}}^{\mathrm{ref}}}
    \in\mathcal P(\mathsf M_N)$.

The following lemma is the Neumann analogue of
\(\cref{lem:dirichlet-ground-state-mixing}\).

\begin{lemma}[Neumann ground-state asymptotics and loop mixing]
\label{lem:neumann-ground-state-mixing}
Let \(\varepsilon_2^N>0\) be the first positive Neumann eigenvalue of
\(-\Delta_Q^N\).  Then the following hold.

\begin{enumerate}
\item For every \(t_0>0\), there exists \(C<\infty\) such that, for all
\(t\ge t_0\) and \(r,s\in\overline Q\),
\[
  \left|
    p_t^{Q,N}(r,s)-1
  \right|
  \le
  C e^{-\varepsilon_2^N t}.
\]

\item Let \(F,G\) be bounded measurable functionals depending only on time
windows of length at most \(2T\).  Then there exist constants
\(C_{F,G},c_{F,G}>0\) such that, for all sufficiently large \(S\),
\[
\left|
  \operatorname{Cov}_{S}^{N,\mathrm{loop}}
  \bigl(
    F(\theta_uY),
    G(\theta_vY)
  \bigr)
\right|
\le
  C_{F,G}
  \exp\{-c_{F,G}(d_S(u,v)-4T)_+\},
\]
where
\[
  d_S(u,v)=\min\{|u-v|,S-|u-v|\}.
\]
\end{enumerate}
\end{lemma}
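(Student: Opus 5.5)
The plan is to mirror the proof of \cref{lem:dirichlet-ground-state-mixing}. The ground state is now the constant function, so no Doob transform is needed and the estimates simplify.

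For part (1), I would write the Neumann heat kernel on $Q=(0,1)^d$ via its spectral expansion. The $L^2$-normalised Neumann eigenfunctions are products of $\sqrt2\cos(\pi n_\ell r_\ell)$ (or $1$ when $n_\ell=0$), with eigenvalues $\pi^2|n|^2$, $n\in\mathbb N_0^d$. Hence
\[
p_t^{Q,N}(r,s)-1=\sum_{n\neq0}e^{-\pi^2|n|^2t}\varphi_n(r)\varphi_n(s),
\]
and $\|\varphi_n\|_\infty\le 2^{d/2}$ uniformly in $n$. For $t\ge t_0$ I factor out $e^{-\varepsilon_2^N t}$ with $\varepsilon_2^N=\pi^2$, and bound the remainder by
\[
2^d\sum_{n\ne0}e^{-\pi^2(|n|^2-1)t_0},
\]
which is a convergent lattice sum. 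This is the same argument as in Davies' treatment, and it is simpler here because the eigenfunctions are bounded explicitly. The bound holds up to the closed cube, since the eigenfunctions are continuous on $\overline Q$.

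For part (2), I would follow the Dirichlet argument. By cyclic invariance of the normalised reflected loop of duration $S$, I may place the window for $F$ at $[u,u+2T]$ and the window for $G$ at $[v,v+2T]$, separated on the circle by gaps $a_1,a_2$ with $\min(a_1,a_2)\ge d_S(u,v)-4T$. If $d_S(u,v)\le 4T+t_0$, the trivial bound $4\|F\|_\infty\|G\|_\infty$ suffices after enlarging $C_{F,G}$. Otherwise I use the Markov bridge decomposition: conditionally on the window entry and exit points $(r_1,r_2)$ and $(r_3,r_4)$, the joint expectation equals
\[
\frac{\int \Phi_F(r_1,r_2)\,p^{Q,N}_{a_1}(r_2,r_3)\,\Phi_G(r_3,r_4)\,p^{Q,N}_{a_2}(r_4,r_1)\,dr}{\int_Q p_S^{Q,N}(z,z)\,dz}.
\]
Here $\Phi_F(r_1,r_2)=\mathbb E_{r_1}[F\,;\,X_{2T}\in dr_2]$ is the unnormalised reflected-bridge kernel, and $\Phi_G$ is defined analogously. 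Part (1) gives $p^{Q,N}_{a_i}=1+O(e^{-\pi^2 a_i})$ uniformly, and the denominator is $1+O(e^{-\pi^2 S})$. The joint expectation therefore factorises as
\[
\Bigl(\int\Phi_F\Bigr)\Bigl(\int\Phi_G\Bigr)+O\bigl(e^{-\pi^2\min(a_1,a_2)}\bigr).
\]
Each $\int\Phi_F$ equals the stationary expectation of $F$, since the invariant measure is $dr$.

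The same computation applied to each single window shows that the one-point expectations $\mathbb E_S[F(\theta_uY)]$ equal the stationary means up to $O(e^{-\pi^2(S-2T)})$. Subtracting the product of these means from the factorised joint expectation gives the covariance bound with $c_{F,G}=\pi^2$. The error terms are controlled by $\|F\|_\infty\|G\|_\infty\int\Phi_1\cdot\int\Phi_1$, which is finite because $\int_Q p^{Q,N}_{2T}(r,s)\,ds=1$.

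The main obstacle is keeping the error uniform when one of the gaps $a_i$ is short. The case split at $4T+t_0$ handles this, because then both gaps exceed $t_0$ and part (1) applies to both connecting kernels. Apart from this bookkeeping, the proof is routine.
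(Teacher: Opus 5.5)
Your proposal is correct and follows the paper's route. Part (1) uses the spectral expansion around the constant ground state $h_N\equiv1$ with gap $\varepsilon_2^N=\pi^2$; part (2) applies the Markov bridge decomposition with part (1) on the two connecting heat-kernel pieces, plus the trivial bound for short cyclic distances. You also supply details the paper only sketches: the explicit cosine eigenbasis giving a uniform constant, the case split at $4T+t_0$ so that both gaps exceed $t_0$, and the $O(e^{-\pi^2(S-2T)})$ comparison of the one-window expectations with the stationary means.
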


\begin{proof}
The first estimate follows from the spectral expansion of the Neumann heat
kernel.  Since the ground state is \(h_N\equiv1\) and the next eigenvalue is
\(\varepsilon_2^N>0\), one has, uniformly for \(t\ge t_0\),
\[
    p_t^{Q,N}(r,s)
    =
    1+O(e^{-\varepsilon_2^N t}).
\]
The covariance estimate is obtained exactly as in
\(\cref{lem:dirichlet-ground-state-mixing}\).  Using the Markov bridge
decomposition, two local windows separated by cyclic distance \(a>4T\) are
connected by heat-kernel pieces of length at least \(a-4T\).  Applying the
above ground-state asymptotics to these connecting pieces factorises the
two-window expectation up to an error bounded by
\[
    C_{F,G}e^{-c_{F,G}(a-4T)}.
\]
The trivial bound on the covariance covers the case \(a\le4T\).
\end{proof}
\begin{proposition}[Neumann empirical local-process mark]
\label{prop:neumann-empirical-local-process-limit}
For every \(0<\delta<R<\infty\) and every bounded continuous function
\(\Phi:\mathsf M_N\to\mathbb R\),
\[
  \sup_{\delta\le j/V_L\le R}
  \left|
    \int_{\mathsf M_N}\Phi(m)\,\mathsf J_{L,j}^{N}(dm)
    -
    \Phi\bigl(\mathbb Q_{N,\mathrm{two}}^{\mathrm{ref}}\bigr)
  \right|
  \longrightarrow0 .
\]
\end{proposition}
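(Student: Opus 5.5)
The proof will follow the Dirichlet argument of \cref{prop:dirichlet-empirical-local-process-limit} line by line, with the ground-state transform now trivial. First, by the same subsequence argument it suffices to fix a sequence \(j_L\) with \(j_L/V_L\to x\in(0,\infty)\), so that \(S_L:=S_{L,j_L}\sim\beta xL^{d-2}\to\infty\). Then prove the pointwise statement: for every bounded continuous local functional \(F\) on \(\mathcal X_N\) depending on the path in \([-T,T]\),
\[
  \bigl\langle \mathcal M^{N}_{L,j_L},F\bigr\rangle
  =\frac1{S_L}\int_0^{S_L}F(\theta_uY_{L,j_L})\,du
  \longrightarrow \int F\,d\mathbb Q^{\mathrm{ref}}_{N,\mathrm{two}}
\]
in probability. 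After the diffusive rescaling the \(N\)-loop in \(\Lambda_L\) becomes a reflected Brownian loop in \(\overline Q\) of duration \(S_L\) with transition density \(p^{Q,N}_t\).

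For the mean, cyclic invariance of the normalised loop measure reduces \(\mathbb E\langle\mathcal M^N_{L,j_L},F\rangle\) to \(\mathbb E F(Y_{L,j_L})\). After a deterministic time shift, the finite-dimensional density at times \(0\le t_1<\dots<t_k\le A\) is
\[
  \frac{p^{Q,N}_{t_2-t_1}(r_1,r_2)\cdots p^{Q,N}_{t_k-t_{k-1}}(r_{k-1},r_k)\,p^{Q,N}_{S_L-(t_k-t_1)}(r_k,r_1)}{\int_Q p^{Q,N}_{S_L}(z,z)\,dz}.
\]
By part (1) of \cref{lem:neumann-ground-state-mixing}, the closing kernel equals \(1+O(e^{-\varepsilon_2^N S_L/2})\) uniformly on \(\overline Q^2\), and the normaliser is \(1+o(1)\) since \(|Q|=1\). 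The density therefore converges uniformly to \(p^{Q,N}_{t_2-t_1}\cdots p^{Q,N}_{t_k-t_{k-1}}\,dr_1\cdots dr_k\), which is exactly the finite-dimensional law of \(\mathbb Q^{\mathrm{ref}}_{N,\mathrm{two}}\). Tightness on \(C([0,A],\overline Q)\) comes from the following observation: on \([0,A]\) the loop law is absolutely continuous with respect to stationary reflected Brownian motion, with density \(p^{Q,N}_{S_L-A}(Y_A,Y_0)/\int_Q p^{Q,N}_{S_L}(z,z)\,dz\), which is uniformly bounded. Tightness of the loop windows is thus inherited from the fixed law of reflected Brownian motion, which gives a modulus-of-continuity estimate. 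Hence the mean converges.

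For concentration, part (2) of \cref{lem:neumann-ground-state-mixing} gives exponential decay of covariances in the cyclic distance. The same computation as in the Dirichlet case bounds the variance of the time average by \(C_{F,T}/S_L\to0\). Together with convergence of the mean, this gives convergence in probability for each local \(F\). Choosing a countable convergence-determining family of bounded continuous local functionals and the metric \(d_{\mathsf M}\) as before, one gets \(\mathcal M^N_{L,j_L}\to\mathbb Q^{\mathrm{ref}}_{N,\mathrm{two}}\) in probability in \(\mathsf M_N\). Since the limit is deterministic, \(\mathsf J^N_{L,j_L}\Rightarrow\delta_{\mathbb Q^{\mathrm{ref}}_{N,\mathrm{two}}}\). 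The uniform statement over \(\delta\le j/V_L\le R\) then follows by the contradiction/subsequence argument used at the end of the Dirichlet proof.

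The main obstacle I expect is the mixing estimate (2) and the tightness of reflected loop windows near \(\partial Q\), where reflection may spoil naive bridge estimates. I would handle both via the absolute-continuity comparison with stationary reflected Brownian motion described above. That comparison reduces everything to the uniform heat-kernel bound \(|p^{Q,N}_t-1|\le Ce^{-\varepsilon_2^Nt}\) for \(t\ge t_0\), together with the fact that \(p^{Q,N}_t\) is bounded below on \(\overline Q^2\) for \(t\ge t_0\). The latter controls the ratio uniformly.
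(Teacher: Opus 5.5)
Your proposal is correct and follows the paper's proof step for step. The steps are: reduction to a sequence $j_L/V_L\to x$; identification of the finite-dimensional limit from the closing-kernel asymptotics of \cref{lem:neumann-ground-state-mixing}(1); the variance bound $C_{F,T}/S_L$ from the covariance decay in part~(2); the upgrade through a countable convergence-determining family of local functionals; and the subsequence argument for uniformity. The only addition is at tightness, where your Radon--Nikodym comparison with stationary reflected Brownian motion, with density $p^{Q,N}_{S_L-A}(Y_A,Y_0)/\int_Q p^{Q,N}_{S_L}(z,z)\,dz\to1$ uniformly, replaces the paper's appeal to ``standard tightness of reflected Brownian bridges'' and in fact gives total-variation convergence of the windows directly.
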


\begin{proof}
The proof is the same as that of
\(\cref{prop:dirichlet-empirical-local-process-limit}\), with the following
replacements:
$    p_t^{Q,D}$ by $\ p_t^{Q,N}$;
$    h_D$ by $ h_N\equiv1$,
$    \varepsilon_D$ by $\varepsilon_N=0$
and
$   \mathbb Q_{D,\mathrm{two}}^{\mathrm{tab}}$
by $    \mathbb Q_{N,\mathrm{two}}^{\mathrm{ref}} $.
We indicate the only point where the limiting finite-dimensional distribution
changes.

Let \(j_L/V_L\to x\in(0,\infty)\) and set
$   S_L:=S_{L,j_L}\sim \beta x L^{d-2}\to\infty $.
For a cylinder function depending on times
\(0\le t_1<\cdots<t_k\le A\), the joint density under the normalised Neumann
loop of duration \(S_L>A\) is
\[
\begin{aligned}
&\frac{
  p_{t_2-t_1}^{Q,N}(r_1,r_2)
  \cdots
  p_{t_k-t_{k-1}}^{Q,N}(r_{k-1},r_k)
  p_{S_L-(t_k-t_1)}^{Q,N}(r_k,r_1)
}{
  \int_Q p_{S_L}^{Q,N}(z,z)\,dz
}
\,dr_1\cdots dr_k .
\end{aligned}
\]
By \(\cref{lem:neumann-ground-state-mixing}\),
\[
    p_{S_L-(t_k-t_1)}^{Q,N}(r_k,r_1)\longrightarrow 1
\]
uniformly in \(r_1,r_k\in\overline Q\), and
\[
    \int_Q p_{S_L}^{Q,N}(z,z)\,dz\longrightarrow 1 .
\]
Hence the above density converges to
\[
  dr_1\,
  p_{t_2-t_1}^{Q,N}(r_1,r_2)
  \cdots
  p_{t_k-t_{k-1}}^{Q,N}(r_{k-1},r_k)
  dr_2\cdots dr_k ,
\]
which is precisely the finite-dimensional distribution of
\(\mathbb Q_{N,\mathrm{two}}^{\mathrm{ref}}\).
The tightness on compact time intervals is the standard tightness of reflected
Brownian bridges.  Therefore the local process seen from a fixed time
converges weakly to
\(\mathbb Q_{N,\mathrm{two}}^{\mathrm{ref}}\).

The concentration of the empirical local process follows verbatim from the
covariance estimate in \(\cref{lem:neumann-ground-state-mixing}\).  Namely, for
every bounded continuous local functional \(F\),
\[
  \operatorname{Var}
  \left(
    \frac1{S_L}\int_0^{S_L}F(\theta_uY_{L,j_L})\,du
  \right)
  \le
  \frac{C_{F,T}}{S_L}
  \longrightarrow0 .
\]
Thus
$ \left\langle
    \mathcal M_{L,j_L}^{N},F
  \right\rangle
  \longrightarrow
  \int F\,d\mathbb Q_{N,\mathrm{two}}^{\mathrm{ref}}$
in probability for every bounded continuous local \(F\).

As in the proof of
\(\cref{prop:dirichlet-empirical-local-process-limit}\), a countable
convergence-determining family of local functions upgrades this to convergence
in probability in \(\mathsf M_N\):
$   \mathcal M_{L,j_L}^{N}
    \longrightarrow
    \mathbb Q_{N,\mathrm{two}}^{\mathrm{ref}} $.
Since the limit is deterministic, the laws
\(\mathsf J_{L,j_L}^{N}\) converge weakly to
$   \delta_{\mathbb Q_{N,\mathrm{two}}^{\mathrm{ref}}}$.
Finally, the uniform convergence for \(\delta\le j/V_L\le R\) follows by the
same subsequence argument used in the Dirichlet case.
\end{proof}

Similar to the periodic and Dirichlet cases, the preceding proposition verifies
the mark-kernel condition in
\(\cref{ass:effective-marked-trace-convergence}\).  We therefore obtain the
following Neumann marked bridge limit.

\begin{corollary}[Neumann marked empirical-process bridge limit]
\label{cor:neumann-empirical-marked-bridge}
Assume \(d>2\) and let \(N_L/V_L\to\rho>\rho_{\mathrm c}\).  Then the
finite-volume Neumann marked Feynman cycle process
\(\Xi_{L,N_L}^{N}\) converges to the marked Gamma bridge of total mass
\(\rho-\rho_{\mathrm c}\) with length-dependent mark measure
$   \eta_x^N   =
    \delta_{\mathbb Q_{N,\mathrm{two}}^{\mathrm{ref}}}$,
    $ x>0 $.
\end{corollary}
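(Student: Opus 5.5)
The statement to prove is \cref{cor:neumann-empirical-marked-bridge}. The plan is to apply \cref{thm:main-canonical-bridge-limit} to the Neumann model with mark space \(\mathsf M_N=\mathcal P(\mathcal X_N)\). The effective part is the ground-state mode, so \(\mu^{N,\mathrm{eff}}_{L,j}=q^{N,\mathrm{eff}}_{L,j}\,\mathsf J^N_{L,j}=\mathsf J^N_{L,j}\). The background part carries mass \(q^{N,\mathrm{bg}}_{L,j}=\sum_{i\ge1}e^{-\beta j\varepsilon^N_{i,L}}\).

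\textbf{Scalar assumptions.} These are already settled in \cref{subsec:three-bc-scalar}, and I will cite them:
\begin{itemize}
\item \cref{ass:effective-spectral-LLT-criterion}(B) holds with \(Q=1\), \(\theta_1=1\), \(\lambda_{L,1}=0\).
\item \cref{ass:limiting-effective-density} holds with \(f_0^{(\kappa)}(a)=\kappa e^{-\kappa a}>0\), which also gives the positivity required in \cref{ass:canonical-density-regime} once \(\rho>\rho_c\).
\item \cref{ass:background-density-concentration} with \(\rho_{\mathrm{bg}}=\rho_c\) is \cref{lem:background-density-concentration} for \(b=N\).
\end{itemize}
The bound \(C_{\mathrm{eff}}=1\) is trivial.

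\textbf{Limiting kernel.} Take \(\eta_x=\delta_{\mathbb Q^{\mathrm{ref}}_{N,\mathrm{two}}}\), which does not depend on \(x\). Weak continuity in \(x\) is immediate. Since \(\phi\equiv1=\int e^{-\beta x\lambda}\delta_0(d\lambda)\), and \(\int(1\wedge x)e^{-\kappa x}x^{-1}dx<\infty\), \cref{ass:limiting-effective-kernel} holds.

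\textbf{Marked trace convergence.} This is the only real step. Fix \(F\in C_b([0,1]\times[\delta,R]\times\mathsf M_N)\) and write \(j_L(x)=\lfloor xV_L\rfloor\). We must show
\[
\sup_{x\in[\delta,R]}\Bigl|\int_0^1\!\!\int F(u,x,m)\,\mathsf J^N_{L,j_L(x)}(dm)\,du-\int_0^1 F\bigl(u,x,\mathbb Q^{\mathrm{ref}}_{N,\mathrm{two}}\bigr)\,du\Bigr|\to0 .
\]
\cref{prop:neumann-empirical-local-process-limit} only handles test functions \(\Phi\) that do not depend on \((u,x)\), so the dependence on \((u,x)\) has to be absorbed. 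I would argue by contradiction along a sequence \(x_L\to x_*\in[\delta,R]\); this is the same subsequence device used in the proof of that proposition.

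Along such a sequence, that proof shows \(\mathcal M^N_{L,j_L(x_L)}\to\mathbb Q^{\mathrm{ref}}_{N,\mathrm{two}}\) in probability in \(\mathsf M_N\), with \(j_L(x_L)/V_L\to x_*\). Hence the law of \((u,x_L,\mathcal M^N_{L,j_L(x_L)})\), with \(u\) drawn uniformly and independently, converges weakly on \([0,1]\times[\delta,R]\times\mathsf M_N\) to \(\mathrm{Unif}[0,1]\otimes\delta_{x_*}\otimes\delta_{\mathbb Q^{\mathrm{ref}}_{N,\mathrm{two}}}\). Because \(F\) is bounded and continuous, the first double integral converges to \(\int_0^1F(u,x_*,\mathbb Q^{\mathrm{ref}}_{N,\mathrm{two}})\,du\). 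Separately, \(\int_0^1F(u,x_L,\mathbb Q^{\mathrm{ref}}_{N,\mathrm{two}})\,du\) converges to the same limit by dominated convergence and continuity of \(F\) in \(x\). So the difference along every such sequence tends to zero, which contradicts the assumed failure of uniformity.

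\textbf{Conclusion.} With all hypotheses verified, \cref{thm:main-canonical-bridge-limit} gives convergence of \(\Xi^N_{L,N_L}\) to \(\mathcal L(\Pi^{(\kappa)}\mid T^{(\kappa)}=\rho-\rho_c)\). Here \(\Pi^{(\kappa)}\) has intensity \(du\,e^{-\kappa x}x^{-1}dx\,\delta_{\mathbb Q^{\mathrm{ref}}_{N,\mathrm{two}}}(dm)\), so its length marginal is the Gamma bridge of \cref{cor:three-bc-unmarked-length-bridge}, and the bridge does not depend on \(\kappa\) by \cref{rem:kappa-auxiliary-main}.

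The main obstacle is the upgrade from \(\Phi\) depending only on the mark to \(F\) depending on \((u,x,m)\) with uniformity in \(x\). This is handled by the fact that the limiting mark is deterministic, combined with sequential compactness of \([\delta,R]\). All the heavy analysis, namely Neumann mixing and the finite-dimensional limits, is already contained in \cref{lem:neumann-ground-state-mixing} and \cref{prop:neumann-empirical-local-process-limit}.
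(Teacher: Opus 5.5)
Your proposal is correct and follows the paper's route. The paper also obtains this corollary by combining the scalar verification in \cref{subsec:three-bc-scalar} and \cref{lem:background-density-concentration} with \cref{prop:neumann-empirical-local-process-limit}, which it uses to check \cref{ass:effective-marked-trace-convergence}, and then invoking \cref{thm:main-canonical-bridge-limit}. Your subsequence argument, which upgrades mark-only test functions $\Phi$ to $F(u,x,m)$ using that the limiting mark is deterministic, makes explicit a step the paper only indicates by analogy with the periodic case (\cref{prop:periodic-verification}) and the Dirichlet case.
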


%% file: 05-brownian-loop-shape-winding.tex
\section{Double-well loop marks and finite-type extensions}
\label{sec:double-well-loop-marks}

The preceding sections developed the marked-cycle framework in abstract
generality.  We now derive the effective trace and the one-cycle mark law
from a concrete model: a Schrödinger loop gas with a tunnelling doublet
at the bottom of its spectrum.

The double-well is the simplest setting in which macroscopic cycles carry
non-trivial internal structure.  When the tunnelling splitting satisfies
$V_L\Delta_L\to\gamma\in[0,\infty)$, the two lowest eigenvalues both
remain visible on the macroscopic cycle scale $j\asymp V_L$, while all
higher modes are invisible.  When $\gamma\neq 0$, the resulting scalar profile
$\phi_\gamma(x)=1+e^{-\beta\gamma x}$ is non-constant, so the
macroscopic length law is a Poisson--Kingman bridge rather than a Gamma
bridge.  We set up the finite-volume double-well loop gas and isolate the
effective two-level trace in
\cref{subsec:finite-volume-double-well-loop-gas}.  Two natural mark
choices then extract different information from the same doublet.
In \cref{subsec:double-well-two-state-well-loop-marks} we construct a
\emph{well-loop mark}, encoding the effective two-state tunnelling
history in the localized basis, and apply the abstract marked
Poisson--Kingman limit.
In \cref{subsec:spectral-labels-finite-type-band-extensions} we treat the
\emph{spectral-label mark}, which retains only the eigenmode index in the
diagonal basis, and extend it to a finite-type band with $Q$ visible
components.  Both marks share the scalar profile~$\phi_\gamma$ but live
on different mark spaces with different conditional distributions.

\subsection{Finite-volume double-well loop gas}
\label{subsec:finite-volume-double-well-loop-gas}

Let \(\Lambda_L\subset\mathbb R^d\) be a finite box and let
\(V_L:=|\Lambda_L|\).  We impose one of the standard boundary conditions
$    b\in\{\mathrm{per},D,N\}$.
In this subsection \(b\) is fixed and suppressed from the notation.  We consider
the one-particle Schrödinger operator
\[
  H_L^{\mathrm{dw}}
  =
  -\Delta_{\Lambda_L}^{\,b}+U_L
\]
on \(L^2(\Lambda_L)\).  The superscript ``\(\mathrm{dw}\)'' indicates that
the bottom of the spectrum is generated by a double-well geometry.  Let
$ E_{0,L}<E_{1,L}\le E_{2,L}\le\cdots$
be the eigenvalues, counted with multiplicity, and let
$  \psi_{0,L},\psi_{1,L},\psi_{2,L},\ldots$
be an associated orthonormal eigenbasis.

We shift the operator by its ground-state energy and write
\[
  K_L^{\mathrm{dw}}
  :=
  H_L^{\mathrm{dw}}-E_{0,L}.
\]
Thus the eigenvalues of \(K_L^{\mathrm{dw}}\) are
$  \varepsilon_{k,L}=E_{k,L}-E_{0,L},\quad k\ge0$.
We set
\[
  \Delta_L:=\varepsilon_{1,L}.
\]
The low-energy assumption is that the two lowest shifted eigenvalues form a
macroscopic doublet, while the rest of the spectrum is invisible on the
macroscopic cycle scale.

\begin{assumption}[Critical double-well scaling]
\label{ass:critical-double-well-scaling}
There exists \(\gamma\in[0,\infty)\) such that
\[
  \lambda_{1,L}:=V_L\Delta_L\longrightarrow\gamma .
\]
Moreover, the next shifted eigenvalue escapes on the volume scale:
\[
  \lambda_{2,L}:=V_L\varepsilon_{2,L}\longrightarrow\infty .
\]
\end{assumption}

The condition \(V_L\Delta_L\to\gamma\) says that the tunnelling splitting is
exactly visible to cycles of length \(j\asymp V_L\).  Indeed, if
\(j/V_L\to x>0\), then
\[
  e^{-\beta j\Delta_L}
  =
  e^{-\beta (j/V_L)(V_L\Delta_L)}
  \longrightarrow
  e^{-\beta\gamma x}.
\]
By contrast, the condition \(V_L\varepsilon_{2,L}\to\infty\) implies that, on
the same scale,
\[
  e^{-\beta j\varepsilon_{2,L}}
  =
  e^{-\beta (j/V_L)(V_L\varepsilon_{2,L})}
  \longrightarrow0.
\]
Since the spectrum is ordered, every fixed mode \(k\ge2\) is exponentially
suppressed on the macroscopic cycle scale.  The total contribution of all
background modes will be controlled by the background concentration assumption
below.

Let
\[
  k_{L,t}^{\mathrm{dw}}(x,y)
  :=
  e^{-tK_L^{\mathrm{dw}}}(x,y)
  =
  e^{tE_{0,L}}e^{-tH_L^{\mathrm{dw}}}(x,y)
\]
be the heat kernel of the shifted semigroup.  The unshifted kernel has the
usual Feynman--Kac representation in terms of Brownian bridges in
\(\Lambda_L\): periodic bridges in the periodic case, killed bridges in the
Dirichlet case, and reflected bridges in the Neumann case.  Since the shift by
\(E_{0,L}\) only multiplies each time-\(t\) kernel by \(e^{tE_{0,L}}\), the
shifted kernel gives the same canonical loop gas after normalization.

For \(N\ge1\), the canonical \(N\)-particle Feynman--Kac measure may be written
as
\[
  \mathbb P_{L,N}^{\mathrm{dw}}(d\mathbf x,d\pi)
  =
  \frac{1}{N!\,Z_{L,N}^{\mathrm{dw}}}
  \prod_{i=1}^N
  k_{L,\beta}^{\mathrm{dw}}
  \bigl(x_i,x_{\pi(i)}\bigr)
  \,d x_1\cdots d x_N,
\]
where \(\pi\in\mathfrak S_N\), \(\mathbf x=(x_1,\ldots,x_N)\), and
\(Z_{L,N}^{\mathrm{dw}}\) is the normalizing constant.  Decomposing
\(\pi\) into cycles gives the usual cycle representation.  If \(n_j\) denotes
the number of cycles of length \(j\), then the cycle weights are
\[
  q_{L,j}^{\mathrm{dw}}
  :=
  \operatorname{Tr} e^{-\beta jK_L^{\mathrm{dw}}}
  =
  \sum_{k\ge0}e^{-\beta j\varepsilon_{k,L}}.
\]
Consequently, under the canonical measure,
\[
  \mathbb P_{L,N}^{\mathrm{dw}}
  \bigl(n_j=m_j,\ j\ge1\bigr)
  =
  \frac{1}{Z_{L,N}^{\mathrm{dw}}}
  \prod_{j\ge1}
  \frac{1}{m_j!}
  \left(
    \frac{q_{L,j}^{\mathrm{dw}}}{j}
  \right)^{m_j},
\]
for all sequences \((m_j)_{j\ge1}\) satisfying
\[
    \sum_{j\ge1}j\,m_j=N.
\]

The effective part of the trace is the contribution of the doublet
\[
    \varepsilon_{0,L}=0,
    \qquad
    \varepsilon_{1,L}=\Delta_L.
\]
Thus we define
\[
  q_{L,j}^{\mathrm{eff}}
  :=
  1+e^{-\beta j\Delta_L}.
\]
The remaining part is the background trace
\[
  q_{L,j}^{\mathrm{bg}}
  :=
  q_{L,j}^{\mathrm{dw}}-q_{L,j}^{\mathrm{eff}}
  =
  \sum_{k\ge2}e^{-\beta j\varepsilon_{k,L}}.
\]

Indeed, by introducing the finite-volume effective spectral measure
\[
  \Sigma_L^{\mathrm{dw}}
  :=
  \delta_0+\delta_{V_L\Delta_L},
\]
we may write
\[
  q_{L,j}^{\mathrm{eff}}
  =
  \int_{[0,\infty)}
  e^{-\beta (j/V_L)\lambda}
  \,\Sigma_L^{\mathrm{dw}}(d\lambda).
\]
By \(\Cref{ass:critical-double-well-scaling}\),
\[
  \Sigma_L^{\mathrm{dw}}
  \Longrightarrow
  \Sigma^{\mathrm{dw}}
  :=
  \delta_0+\delta_\gamma
\]
vaguely on \([0,\infty)\).  Hence, uniformly for \(j/V_L\) in compact subsets
of \((0,\infty)\),
\[
  q_{L,j}^{\mathrm{eff}}
  -
  \phi_{\mathrm{dw}}(j/V_L)
  \longrightarrow0,
\]
where
\[
  \phi_{\mathrm{dw}}(x)
  :=
  \int_{[0,\infty)}
  e^{-\beta x\lambda}
  \,\Sigma^{\mathrm{dw}}(d\lambda)
  =
  1+e^{-\beta\gamma x},
  \qquad x>0.
\]
This scalar profile is the macroscopic trace factor that will appear in the
limiting marked Poisson--Kingman intensity.

\begin{assumption}[Double-well background density concentration]
\label{ass:double-well-background-density-concentration}
The background traces
\[
  q_{L,j}^{\mathrm{bg}}
  =
  \sum_{k\ge2}e^{-\beta j\varepsilon_{k,L}},
  \qquad j\ge1,
\]
satisfy \(\Cref{ass:background-density-concentration}\) with some $\kappa>0$ and limiting
background density \(\rho_{\mathrm{bg}}\).  Consequently, the modes
\(k\ge2\) do not contribute to the effective macroscopic mark; their only
macroscopic effect is the deterministic density \(\rho_{\mathrm{bg}}\).
\end{assumption}

\begin{remark}[Example of the critical double-well scaling]
The scaling assumption \(V_L\Delta_L\to\gamma\) is natural in semiclassical
double-well theory.  For a smooth confining double-well potential with two
non-degenerate minima, the corresponding semiclassical Schrödinger operator
\[
  H_h^{\mathrm{sc}}
  =
  -h^2\Delta+V_{\mathrm{dw}}
\]
has a ground-state doublet whose splitting
\[
  \Delta(h):=E_1(h)-E_0(h)
\]
is exponentially small in \(h\); see, for example,
\cite{DimassiSjostrand1999} and \cite{HelfferSjostrand1984}.  Choosing \(h=h_L\) so that
\[
  V_L\Delta(h_L)\longrightarrow\gamma
\]
produces precisely the spectral structure in
\(\Cref{ass:critical-double-well-scaling}\): two levels visible on the
macroscopic cycle scale and higher modes invisible to the effective trace.
The arguments below use only this spectral structure, not the detailed WKB
asymptotics.

A concrete three-dimensional realization is obtained as follows.  Let
\[
  \Lambda_L=(0,L)^3,
  \qquad
  V_L=L^3,
\]
and impose, for definiteness, Dirichlet boundary conditions.  Choose a compactly
supported attractive one-well potential \(w\) on \(\mathbb R^3\) such that
\[
  h_{\rm w}:=-\Delta+w
\]
has exactly one simple negative eigenvalue \(e_*<0\).  For example, one may
take a sufficiently shallow spherical square well such as
\[
  w(x)=-4\,\mathbf 1_{\{|x|<1\}}.
\]
This is a standard three-dimensional square-well example; see, for instance,
\cite{ReedSimonIV} and \cite{TeschlQM}.  Let
\[
  \kappa:=\sqrt{-e_*}.
\]
For two identical wells separated by a distance \(R\), define
\[
  W_R(x)
  :=
  w\left(x+\frac R2 e_1\right)
  +
  w\left(x-\frac R2 e_1\right),
  \qquad e_1=(1,0,0).
\]
The two-well operator on \(L^2(\mathbb R^3)\) has two eigenvalues
\(E_0(R)<E_1(R)<0\) near \(e_*\), and the standard tunnelling estimate gives
\[
  E_1(R)-E_0(R)
  =
  C_{\rm tun}\frac{e^{-\kappa R}}{R}(1+o(1)),
  \qquad R\to\infty,
\]
for some \(C_{\rm tun}>0\); see
\cite{Agmon,HarrellDoubleWells,HelfferSjostrand1984}.

Fix \(\gamma>0\) and choose \(R_L\) by
\[
  C_{\rm tun}L^3\frac{e^{-\kappa R_L}}{R_L}=\gamma .
\]
Then
\[
  R_L
  =
  \frac{3}{\kappa}\log L
  -
  \frac1\kappa\log\log L
  +
  O(1),
\]
so in particular \(R_L\to\infty\) and \(R_L=o(L)\).  Place the two wells in the
bulk of \(\Lambda_L\), for instance at
\[
  a_L=\left(\frac L2,\frac L2,\frac L2\right)-\frac{R_L}{2}e_1,
  \qquad
  b_L=\left(\frac L2,\frac L2,\frac L2\right)+\frac{R_L}{2}e_1,
\]
and set
\[
  U_L(x):=w(x-a_L)+w(x-b_L).
\]
Since the wells remain far from the boundary, standard exponential
localization estimates imply that the finite-volume splitting has the same
leading asymptotics as the infinite-volume two-well splitting:
\[
  E_{1,L}-E_{0,L}
  =
  C_{\rm tun}\frac{e^{-\kappa R_L}}{R_L}(1+o(1)).
\]
See, for example, \cite{Agmon,ReedSimonIV} for exponential localization and
finite-volume comparison arguments.  Therefore
\[
  V_L\varepsilon_{1,L}
  =
  L^3(E_{1,L}-E_{0,L})
  \longrightarrow
  \gamma.
\]

Moreover, because the one-well operator has only one negative eigenvalue, the
two-well operator has only two low-lying eigenvalues near \(e_*\).  The third
eigenvalue belongs to the background part of the spectrum and stays separated
from the doublet by an order-one gap.  Hence
\[
  \liminf_{L\to\infty}\varepsilon_{2,L}>0,
  \qquad
  V_L\varepsilon_{2,L}\longrightarrow\infty.
\]
Finally, the remaining modes satisfy the usual Weyl-type background limit; see,
for instance, \cite{DaviesSpectral} and \cite{ReedSimonIV}.
In particular, for suitable test functions \(F\),
\[
  \frac1{L^3}\sum_{r\ge2}F(E_{r,L}-E_{0,L})
  \longrightarrow
  \int_{\mathbb R^3}
  F(|p|^2-e_*)\,\frac{dp}{(2\pi)^3}.
\]
This gives a concrete model satisfying both
\(\Cref{ass:critical-double-well-scaling}\) and
\(\Cref{ass:double-well-background-density-concentration}\).
\end{remark}

\subsection{Two-state well-loop marks and the marked Poisson--Kingman limit}
\label{subsec:double-well-two-state-well-loop-marks}

We now enrich the macroscopic cycles by an internal two-state mark.  The
motivation is that, in the critical double-well regime, the two lowest
eigenstates remain visible on the scale \(j\sim V_L\), while the higher modes
are invisible to the macroscopic effective trace.  In the spectral basis the
doublet only contributes two scalar weights, but in the localized well basis it
also describes tunnelling between the two wells during the imaginary-time
evolution of a long cycle.  The mark introduced below describes this effective
two-state tunnelling history.  It is an effective low-energy mark, not a
functional of the full spatial Brownian bridge.

Let
\[
  \mathcal H_L^{\mathrm{eff}}
  :=
  \operatorname{span}\{\psi_{0,L},\psi_{1,L}\}.
\]
On this subspace the shifted Hamiltonian is diagonal in the spectral basis:
\[
  K_L^{\mathrm{dw}}\big|_{\mathcal H_L^{\mathrm{eff}}}
  =
  0\cdot |\psi_{0,L}\rangle\langle\psi_{0,L}|
  +
  \Delta_L\,|\psi_{1,L}\rangle\langle\psi_{1,L}|.
\]
For the well-loop description we use the localized basis
\[
  h_{L,-}
  :=
  \frac{\psi_{0,L}+\psi_{1,L}}{\sqrt2},
  \qquad
  h_{L,+}
  :=
  \frac{\psi_{0,L}-\psi_{1,L}}{\sqrt2}.
\]
We identify the two localized states with
\[
  \mathsf S:=\{-1,+1\}.
\]

In the basis \((h_{L,-},h_{L,+})\), the effective semigroup over a cycle of
length \(j\) is
\[
  P_{L,j}^{\mathrm{well}}
  =
  \frac12
  \begin{pmatrix}
    1+e^{-\beta j\Delta_L} & 1-e^{-\beta j\Delta_L} \\
    1-e^{-\beta j\Delta_L} & 1+e^{-\beta j\Delta_L}
  \end{pmatrix}.
\]
Thus
\[
  \operatorname{Tr} P_{L,j}^{\mathrm{well}}
  =
  1+e^{-\beta j\Delta_L}
  =
  q_{L,j}^{\mathrm{eff}}.
\]
Equivalently, \(P_{L,j}^{\mathrm{well}}\) is the time-one transition matrix of
the continuous-time Markov chain on \(\mathsf S\) with jump rate
\[
  r_{L,j}:=\frac{\beta j\Delta_L}{2}.
\]

We take the well-loop mark space to be
\[
  \mathsf M_{\mathrm{well}}
  :=
  D([0,1],\mathsf S),
\]
equipped with the Skorokhod topology.  For each \(L\) and \(j\), let
\((X_t)_{0\le t\le1}\) denote the above two-state chain with jump rate
\(r_{L,j}\).  Define the finite-volume unnormalised well-loop measure by
\[
  \mu_{L,j}^{\mathrm{well}}(F)
  :=
  \sum_{s\in\mathsf S}
  \mathbb E_s^{(L,j)}
  \left[
    F(X)\,\mathbf 1_{\{X_1=s\}}
  \right],
  \qquad
  F\in C_b(\mathsf M_{\mathrm{well}}),
\]
where \(\mathbb E_s^{(L,j)}\) denotes expectation for the chain started at
\(s\).  Its total mass is
\[
  \mu_{L,j}^{\mathrm{well}}(\mathsf M_{\mathrm{well}})
  =
  q_{L,j}^{\mathrm{eff}}
  =
  1+e^{-\beta j\Delta_L}.
\]
We write
\[
  \widehat\mu_{L,j}^{\mathrm{well}}
  :=
  \frac{\mu_{L,j}^{\mathrm{well}}}{q_{L,j}^{\mathrm{eff}}}
\]
for the corresponding normalized mark law.

For \(x>0\), let \((X_t^{(x)})_{0\le t\le1}\) be the two-state Markov chain on
\(\mathsf S\) with jump rate
\[
  r_x:=\frac{\beta\gamma x}{2}.
\]
Define the limiting unnormalised well-loop kernel by
\[
  \eta_x^{\mathrm{well}}(F)
  :=
  \sum_{s\in\mathsf S}
  \mathbb E_s^{(x)}
  \left[
    F(X^{(x)})\,\mathbf 1_{\{X_1^{(x)}=s\}}
  \right],
  \qquad
  F\in C_b(\mathsf M_{\mathrm{well}}).
\]
Thus \(\eta_x^{\mathrm{well}}\) is a finite measure, not a probability measure.
Its total mass is
\[
  \eta_x^{\mathrm{well}}(\mathsf M_{\mathrm{well}})
  =
  1+e^{-\beta\gamma x}
  =
  \phi_{\mathrm{dw}}(x).
\]
We also set
\[
  \widehat\eta_x^{\mathrm{well}}
  :=
  \frac{\eta_x^{\mathrm{well}}}{\phi_{\mathrm{dw}}(x)}.
\]

\begin{proposition}[Well-loop one-cycle convergence]
\label{prop:double-well-well-loop-one-cycle-convergence}
Assume~\cref{ass:critical-double-well-scaling}.  Then, for every
\(0<\delta<R<\infty\) and every
\(F\in C_b(\mathsf M_{\mathrm{well}})\),
\[
  \sup_{\delta\le j/V_L\le R}
  \left|
  \mu_{L,j}^{\mathrm{well}}(F)
    -
 \eta_{j/V_L}^{\mathrm{well}}(F)
  \right|
  \longrightarrow0.
\]
Moreover,
\[
  \sup_{\delta\le j/V_L\le R}
  \left|
 \widehat\mu_{L,j}^{\mathrm{well}}(F)
    -
\widehat\eta_{j/V_L}^{\mathrm{well}}(F)
  \right|
  \longrightarrow0.
\]
\end{proposition}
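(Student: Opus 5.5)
The plan is to reduce the statement to continuity of the two-state chain law in its jump rate, uniformly on compact rate intervals. Both \(\mu_{L,j}^{\mathrm{well}}\) and \(\eta_x^{\mathrm{well}}\) are built by one functional of a rate parameter. For \(r\ge0\), let \(\mathbb P_s^{r}\) be the law on \(D([0,1],\mathsf S)\) of the two-state chain with jump rate \(r\) started at \(s\). Define
\[
  \Psi_F(r):=\sum_{s\in\mathsf S}\mathbb E_s^{r}\bigl[F(X)\mathbf 1_{\{X_1=s\}}\bigr].
\]
Then \(\mu_{L,j}^{\mathrm{well}}(F)=\Psi_F(\beta j\Delta_L/2)\) and \(\eta_{j/V_L}^{\mathrm{well}}(F)=\Psi_F(\beta\gamma (j/V_L)/2)\). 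Writing \(x=j/V_L\in[\delta,R]\), the two rates are \(\beta x\lambda_{1,L}/2\) and \(\beta x\gamma/2\). By \cref{ass:critical-double-well-scaling}, their difference is at most \(\beta R|\lambda_{1,L}-\gamma|/2\to0\) uniformly in \(x\), and both lie in the compact set \([0,\beta R(\gamma+1)/2]\) for \(L\) large. It therefore suffices to show that \(\Psi_F\) is continuous on \([0,\infty)\): uniform continuity on compacts then gives the first claim.

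Continuity of \(\Psi_F\) will come from an explicit coupling. Realise the chain with rate \(r\) as \(X_t=s\cdot(-1)^{\mathcal N(rt)}\), where \(\mathcal N\) is a fixed unit-rate Poisson process. For \(r_n\to r\), the jump times \(\tau_k/r_n\) in \([0,1]\) converge to \(\tau_k/r\), and almost surely no jump occurs exactly at time \(1\). On this event the number of jumps in \([0,1]\) is eventually constant, so \(X^{(r_n)}\to X^{(r)}\) in the Skorokhod topology and \(\mathbf 1_{\{X^{(r_n)}_1=s\}}\to\mathbf 1_{\{X^{(r)}_1=s\}}\) almost surely. Bounded convergence then gives \(\Psi_F(r_n)\to\Psi_F(r)\).

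The case \(r=0\), i.e.\ \(\gamma=0\), is handled in the same way: the probability of any jump in \([0,1]\) tends to \(0\), and the constant paths converge trivially.

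The main point to check is that \(F\) is only bounded and continuous in the Skorokhod topology, while the indicator of \(X_1=s\) is not continuous on \(D\). The coupling argument avoids this because convergence holds almost surely at the level of paths together with their endpoints. If uniformity in \(x\) needs more care, I will run the usual subsequence argument: take \(x_L\to x\) and apply the above to rate sequences converging to \(\beta\gamma x/2\).

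For the normalised statement, note that \(q_{L,j}^{\mathrm{eff}}=\Psi_1(\beta j\Delta_L/2)\) and \(\phi_{\mathrm{dw}}(x)=1+e^{-\beta\gamma x}\ge1\). The ratio of uniformly convergent quantities with denominators bounded below by \(1\) converges uniformly, which gives the second claim.
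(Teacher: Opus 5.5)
Your proof is correct and has the same skeleton as the paper's. Both arguments note that the two jump rates differ by at most $\tfrac{\beta R}{2}|V_L\Delta_L-\gamma|$ uniformly for $j/V_L\in[\delta,R]$. Both then reduce to continuity of the two-state path law in the rate, and finish by dividing by total masses bounded below by $1$.

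The difference is the mode of continuity. The paper asserts that, because $\mathsf S$ is finite, the path laws converge in \emph{total variation} on $D([0,1],\mathsf S)$, uniformly on compact rate ranges. This lets it apply the merely measurable functional $X\mapsto F(X)\mathbf 1_{\{X_1=X_0\}}$ directly, without using continuity of $F$. A thinning coupling makes this quantitative: chains with rates $r<r'$ can be built to coincide on $[0,1]$ unless a deleted point falls there. In your notation this gives $|\Psi_F(r)-\Psi_F(r')|\le 4\|F\|_\infty|r-r'|$, so uniformity needs no compactness argument.

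Your time-change coupling yields only almost sure Skorokhod convergence. So you genuinely need $F\in C_b$, and then uniform continuity of $\Psi_F$ on a compact rate interval. You supply both, so nothing is missing, but the paper's route is shorter and gives an explicit rate.

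One small correction: the obstacle you flag does not exist. Time changes in the Skorokhod topology on $D([0,1],\mathsf S)$ fix $t=1$, so $X\mapsto X_1$ is continuous. Since $\mathsf S$ is discrete, $\mathbf 1_{\{X_1=s\}}$ is therefore continuous too. Your coupling handles this case correctly anyway.
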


\begin{proof}
For \(j/V_L\in[\delta,R]\),
\[
  r_{L,j}
  =
  \frac{\beta}{2}\frac{j}{V_L}\,V_L\Delta_L.
\]
Since \(V_L\Delta_L\to\gamma\), we have
\[
  \sup_{\delta\le j/V_L\le R}
  \left|
    r_{L,j}-r_{j/V_L}
  \right|
  \le
  \frac{\beta R}{2}
  \left|
    V_L\Delta_L-\gamma
  \right|
  \longrightarrow0.
\]
Because the state space \(\mathsf S\) is finite, convergence of the jump rates
implies uniform convergence, in total variation on \(D([0,1],\mathsf S)\), of
the corresponding path laws on every compact range \(j/V_L\in[\delta,R]\).
Applying this to the bounded measurable functional
\[
  X\mapsto F(X)\mathbf 1_{\{X_1=X_0\}},
\]
and summing over \(s\in\mathsf S\), gives the unnormalised convergence.

The total masses satisfy, uniformly for \(j/V_L\in[\delta,R]\),
\[
  q_{L,j}^{\mathrm{eff}}
  =
  1+e^{-\beta j\Delta_L}
  \longrightarrow
  1+e^{-\beta\gamma j/V_L}
  =
  \phi_{\mathrm{dw}}(j/V_L).
\]
Both \(q_{L,j}^{\mathrm{eff}}\) and \(\phi_{\mathrm{dw}}(j/V_L)\) are bounded
below by \(1\).  Dividing the unnormalised convergence by the total masses
therefore gives the normalized convergence.
\end{proof}

The effective spectral measure associated with the doublet is
\[
  \Sigma^{\mathrm{dw}}
  =
  \delta_0+\delta_\gamma.
\]
Accordingly,
\[
  \phi_{\mathrm{dw}}(x)
  =
  \int_{[0,\infty)}
  e^{-\beta x\lambda}\,
  \Sigma^{\mathrm{dw}}(d\lambda)
  =
  1+e^{-\beta\gamma x}.
\]

\begin{proposition}[Verification of the abstract assumptions]
\label{prop:double-well-well-loop-verification}
Assume~\cref{ass:critical-double-well-scaling} and
\(\cref{ass:double-well-background-density-concentration}\).  Then all the assumptions of~\cref{thm:main-canonical-bridge-limit} holds with
\[
  \mathsf M=\mathsf M_{\mathrm{well}},
  \qquad
  \eta_x=\eta_x^{\mathrm{well}},
  \qquad
  \Sigma=\Sigma^{\mathrm{dw}},
  \qquad
  \phi=\phi_{\mathrm{dw}}.
\]
\end{proposition}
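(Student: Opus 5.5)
The plan is to check, one at a time, each hypothesis of \cref{thm:main-canonical-bridge-limit} with the choices $\mathsf M=\mathsf M_{\mathrm{well}}$, $\eta_x=\eta_x^{\mathrm{well}}$, $\Sigma=\Sigma^{\mathrm{dw}}$, $\phi=\phi_{\mathrm{dw}}$. Let $\mu^{\mathrm{eff}}_{L,j}:=\mu^{\mathrm{well}}_{L,j}$ and $\mu^{\mathrm{bg}}_{L,j}:=q^{\mathrm{bg}}_{L,j}\,\nu_0$ for any fixed probability measure $\nu_0$ on $\mathsf M_{\mathrm{well}}$. This decomposition satisfies $\mu_{L,j}(\mathsf M)=q^{\mathrm{dw}}_{L,j}$.

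\textbf{Assumption on the limiting kernel.} For \cref{ass:limiting-effective-kernel}(1), note that the path law of the two-state chain with rate $r_x=\beta\gamma x/2$ depends continuously on $x$ in total variation. One can see this by coupling via a Poisson clock, since the probability of a discrepancy on $[0,1]$ is at most $|r_x-r_{x'}|$. Hence $x\mapsto\eta^{\mathrm{well}}_x(F)$ is continuous. Item (2) is the identity $\phi_{\mathrm{dw}}=\int e^{-\beta x\lambda}\Sigma^{\mathrm{dw}}(d\lambda)$ recorded above. Item (3) holds because $\phi_{\mathrm{dw}}\le 2$, so $\int(1\wedge x)e^{-\kappa x}\phi(x)x^{-1}dx\le 2\int(1\wedge x)x^{-1}e^{-\kappa x}dx<\infty$.

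\textbf{Density of the limiting mass.} For \cref{ass:limiting-effective-density}, decompose $\Pi^{(\kappa)}$ by thinning into two independent Poisson processes with length intensities $e^{-\kappa x}dx/x$ and $e^{-(\kappa+\beta\gamma)x}dx/x$. Then $T^{(\kappa)}$ is the sum of independent $\mathrm{Exp}(\kappa)$ and $\mathrm{Exp}(\kappa+\beta\gamma)$ variables, exactly as in the computation of $\mathbf E e^{-sT}$ in \cref{sec:three-bc-ideal-bose}. Its density is the convolution
\[
f_0^{(\kappa)}(a)=\kappa(\kappa+\beta\gamma)\int_0^a e^{-\kappa b}e^{-(\kappa+\beta\gamma)(a-b)}\,db .
\]
This is continuous and strictly positive on $(0,\infty)$. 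When $\gamma=0$ it is the Gamma$(2,\kappa)$ density. In particular $f_0^{(\kappa)}(\rho_{\mathrm{eff}})>0$, which is the positivity requirement in \cref{ass:canonical-density-regime}. The convergence $N_L/V_L\to\rho>\rho_{\mathrm{bg}}$ in that assumption is taken as part of the standing hypotheses.

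\textbf{Effective marked trace convergence.} For \cref{ass:effective-marked-trace-convergence}, the uniform bound $C_{\mathrm{eff}}\le 2$ is immediate. For the convergence, fix $F\in C_b([0,1]\times[\delta,R]\times\mathsf M)$. I would rely on the total variation estimate already used in \cref{prop:double-well-well-loop-one-cycle-convergence}. Since the path laws converge in total variation uniformly in the rate parameter, the estimate is uniform over functionals bounded by $\|F\|_\infty$, so the dependence of $F$ on $(u,x)$ causes no difficulty. The argument then has two steps.
\begin{itemize}
\item First compare $\mu^{\mathrm{well}}_{L,\lfloor xV_L\rfloor}$ with $\eta^{\mathrm{well}}_{x_L(x)}$, where $x_L(x)=\lfloor xV_L\rfloor/V_L$. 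The rates differ by at most $\beta R|V_L\Delta_L-\gamma|/2$.
\item Then compare $\eta^{\mathrm{well}}_{x_L(x)}$ with $\eta^{\mathrm{well}}_x$. The rates differ by at most $\beta\gamma/(2V_L)$.
\end{itemize}
Each total variation distance is bounded by a constant times the rate difference, uniformly in $x\in[\delta,R]$, and integrating over $u$ preserves the bound. This is the step needing the most care: the supremum over $u$ must be handled uniformly, and the Poisson-clock coupling bound provides exactly that.

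\textbf{Spectral criterion and background.} For \cref{ass:effective-spectral-LLT-criterion}, use case (A) with $\Sigma_L=\delta_0+\delta_{\lambda_{1,L}}$, so that $\Theta_L=2$. This is the finite-type situation with $\Theta=2>1$, and \cref{rem:finite-type-theta-greater-than-one-implies-absolute} verifies (i)--(iv) directly, using $\lambda_{1,L}\to\gamma$ from \cref{ass:critical-double-well-scaling}. Finally, \cref{ass:background-density-concentration} is exactly \cref{ass:double-well-background-density-concentration}, with the same $\kappa$.
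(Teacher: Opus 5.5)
Your proposal is correct and follows the paper's proof essentially step by step: continuity of the two-state path law in the jump rate, the bound $\phi_{\mathrm{dw}}\le 2$, the splitting of $T^{(\kappa)}$ into independent exponentials with rates $\kappa$ and $\kappa+\beta\gamma$, the bound $q^{\mathrm{eff}}_{L,j}\le 2$, condition (A) via $\Sigma_L^{\mathrm{dw}}=\delta_0+\delta_{V_L\Delta_L}$ with $\Theta_L=2$, and the background assumption taken verbatim. Your two-step total-variation comparison (first to $\eta^{\mathrm{well}}_{x_L(x)}$, then to $\eta^{\mathrm{well}}_x$) and your explicit check that $f_0^{(\kappa)}(\rho_{\mathrm{eff}})>0$ are slightly more careful than the paper, which simply invokes \cref{prop:double-well-well-loop-one-cycle-convergence} even though that result compares with $\eta^{\mathrm{well}}_{j/V_L}$ and uses test functions that do not depend on $(u,x)$.
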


\begin{proof}
First, the map \(x\mapsto\eta_x^{\mathrm{well}}\) is weakly continuous because
the two-state path law depends continuously on the jump rate
\(r_x=\beta\gamma x/2\).  Moreover,
\[
  \eta_x^{\mathrm{well}}(\mathsf M_{\mathrm{well}})
  =
  1+e^{-\beta\gamma x}
  =
  \int_{[0,\infty)}
  e^{-\beta x\lambda}\,
  (\delta_0+\delta_\gamma)(d\lambda).
\]
Since \(0<\phi_{\mathrm{dw}}(x)\le2\), for every \(\kappa>0\),
\[
  \int_0^\infty
  (1\wedge x)e^{-\kappa x}
  \frac{\phi_{\mathrm{dw}}(x)}{x}\,dx
  \le
  2\int_0^\infty
  (1\wedge x)e^{-\kappa x}\frac{dx}{x}
  <\infty.
\]
Thus the~\cref{ass:limiting-effective-kernel} holds.

The tilted total effective mass has the same law as the sum of two independent
Gamma variables with rates \(\kappa\) and \(\kappa+\beta\gamma\).  Indeed, the
scalar Lévy density is
\[
  e^{-\kappa x}\frac{\phi_{\mathrm{dw}}(x)}{x}\,dx
  =
  e^{-\kappa x}\frac{dx}{x}
  +
  e^{-(\kappa+\beta\gamma)x}\frac{dx}{x}.
\]
Consequently the tilted total mass has a continuous density on
\((0,\infty)\), and the~\cref{ass:limiting-effective-density} is satisfied.

Second, \cref{prop:double-well-well-loop-one-cycle-convergence} gives the
effective one-cycle convergence.  The additional \(u\)-coordinate in the
abstract marked trace assumption is harmless, since it is integrated against
Lebesgue measure on \([0,1]\).  The uniform trace bound follows from
\[
  q_{L,j}^{\mathrm{eff}}
  =
  1+e^{-\beta j\Delta_L}
  \le2.
\]
Hence the~\cref{ass:effective-marked-trace-convergence} holds.

Third, define
\[
  \Sigma_L^{\mathrm{dw}}
  :=
  \delta_0+\delta_{V_L\Delta_L}.
\]
Then, for every \(L\) and \(j\ge1\),
\[
  q_{L,j}^{\mathrm{eff}}
  =
  1+e^{-\beta j\Delta_L}
  =
  \int_{[0,\infty)}
  e^{-\beta(j/V_L)\lambda}\,
  \Sigma_L^{\mathrm{dw}}(d\lambda).
\]
The total mass is
\[
  \Theta_L
  =
  \Sigma_L^{\mathrm{dw}}([0,\infty))
  =
  2,
\]
so the lower bound condition in the absolute case holds with any
\(1<\Theta_*<2\).  The compact convergence of the Laplace transforms follows
from \(V_L\Delta_L\to\gamma\) and
\(\lfloor xV_L\rfloor/V_L\to x\) uniformly on compact subsets of
\((0,\infty)\).  Finally,
\[
  \sup_L
  \int_{[0,\infty)}
  \log(1+\kappa+\beta\lambda)\,
  \Sigma_L^{\mathrm{dw}}(d\lambda)
  <\infty
\]
because \(V_L\Delta_L\to\gamma<\infty\).  Hence the~\cref{ass:effective-spectral-LLT-criterion} through the absolute condition (A).

Finally, the~\cref{ass:background-density-concentration} is exactly
\cref{ass:double-well-background-density-concentration}.
\end{proof}

For an effective cycle of
length \(j\), we attach an independent mark $M_{j,r}^{\mathrm{well}}$ with law $\widehat\mu_{L,j}^{\mathrm{well}}$. Define
\[
  \Xi_{L,N_L}^{\mathrm{well}}
  :=
  \sum_{j\ge1}\sum_{r=1}^{n_j}
  \delta_{\left(U_{j,r},\,j/V_L,\,M_{j,r}^{\mathrm{well}}\right)}
\]
as the finite volume canonical marked point process on
\[
  E_{\mathrm{well}}
  :=
  [0,1]\times(0,\infty)\times\mathsf M_{\mathrm{well}}.
  \]

For any $\kappa>0,$ $a>0$, let
\[
  \Pi_{\mathrm{well}}^{(\kappa)}
  \sim
  \operatorname{PPP}\bigl(\nu_{\mathrm{well}}^{(\kappa)}\bigr)
\]
with intensity
\[
  \nu_{\mathrm{well}}^{(\kappa)}(du,dx,dm)
  =
  du\,e^{-\kappa x}\frac{dx}{x}\,
  \eta_x^{\mathrm{well}}(dm),
  \]
  and $\Pi_{a}^{\mathrm{well}\text{-}\mathrm{br}}$ be the bridge  of
\(\Pi_{\mathrm{well}}^{(\kappa)}\) conditioned, in the density sense, on
\[
  \int_{E_{\mathrm{well}}}
  x\,
  \Pi_{\mathrm{well}}^{(\kappa)}(du,dx,dm)
  =
  a.
\]
By applying~\cref{thm:main-canonical-bridge-limit} and~\cref{cor:ranked-effective-length-convergence}, we derive the main results in this double-well model. We remark that the limit  of the length point process is not a Gamma bridge when $\gamma\neq 0$ since the two rates $0$ and $\beta \gamma$ are distinct. In particular,  the ranked lengths are not governed by
a Poisson-Dirichlet law. However, when $\gamma=0$, its ranked proportions instead have law $PD(0,2)$. 

\begin{corollary}[Double-well well-loop marked Poisson--Kingman bridge]
\label{cor:double-well-well-loop-pk-limit}
Assume~\cref{ass:critical-double-well-scaling} and
\(\cref{ass:double-well-background-density-concentration}\).  If
\(N_L/V_L\to\rho>\rho_{\mathrm{bg}}\), then
\[
  \Xi_{L,N_L}^{\mathrm{well}}
  \Longrightarrow
  \Pi_{\rho-\rho_{\mathrm{bg}}}^{\mathrm{well}\text{-}\mathrm{br}}
  \qquad
  \text{in }\mathcal N_\ell(E_{\mathrm{well}}).
\]

The limiting ranked lengths \((X_i)_{i\ge1}\) are the ranked jumps of the
Poisson--Kingman bridge with scalar profile
\[
  \phi_{\mathrm{dw}}(x)=1+e^{-\beta\gamma x}
\]
conditioned to have total mass \(\rho-\rho_{\mathrm{bg}}\).    Conditionally on the ranked lengths, the auxiliary
variables \(U_i\) are independent uniform variables on \([0,1]\), and the
well-loop marks are conditionally independent with
\[
  M_i^{\mathrm{well}}
  \sim
  \widehat\eta_{X_i}^{\mathrm{well}}
  =
  \frac{\eta_{X_i}^{\mathrm{well}}}
       {1+e^{-\beta\gamma X_i}}.
\]

\end{corollary}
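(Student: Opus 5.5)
The plan is to obtain the corollary directly from \cref{thm:main-canonical-bridge-limit}. \cref{prop:double-well-well-loop-verification} already verifies all of its hypotheses with $\mathsf M=\mathsf M_{\mathrm{well}}$, $\eta_x=\eta_x^{\mathrm{well}}$ and $\phi=\phi_{\mathrm{dw}}$. Two points remain for the application of the theorem. The first is \cref{ass:canonical-density-regime}, which requires $f_0^{(\kappa)}(\rho-\rho_{\mathrm{bg}})>0$. In the proof of \cref{prop:double-well-well-loop-verification} we saw that $T^{(\kappa)}$ is the sum of independent $\mathrm{Gamma}(1,\kappa)$ and $\mathrm{Gamma}(1,\kappa+\beta\gamma)$ variables, that is, independent exponentials. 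Their convolution density is strictly positive on $(0,\infty)$; when $\gamma=0$ it is the $\mathrm{Gamma}(2,\kappa)$ density $\kappa^2 a e^{-\kappa a}$. So the condition holds for every $a>0$.

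The second point is that the theorem's process $\Xi_L$ (effective plus background) must be matched with $\Xi^{\mathrm{well}}_{L,N_L}$. I read the latter as carrying well-loop marks on effective cycles; background cycles are handled by the theorem's second assertion, which says they have no atoms in any window $[\delta,R]$ with probability tending to one. Hence $\Xi^{\mathrm{well}}_{L,N_L}$ and $\Xi_L$ agree on each window $\mathsf E_{\delta,R}$ with probability $\to1$. For $h\in\mathcal H$ the Laplace functionals then coincide asymptotically, and this gives the displayed convergence in $\mathcal N_\ell(E_{\mathrm{well}})$.

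Next I identify the conditional structure of the limit. Write $\eta_x^{\mathrm{well}}=\phi_{\mathrm{dw}}(x)\,\widehat\eta_x^{\mathrm{well}}$. Then $\Pi^{(\kappa)}_{\mathrm{well}}$ is a marked PPP: its length projection is a PPP with intensity $e^{-\kappa x}\phi_{\mathrm{dw}}(x)\,dx/x$, and given the lengths, the marks $(U,M)$ are independent with law $du\otimes\widehat\eta^{\mathrm{well}}_x$ (marking theorem). The conditioning event $T^{(\kappa)}=a$ depends only on the lengths. Therefore, in the disintegration identity one may first condition on the length configuration, and the conditional law of the marks is unchanged. Concretely, I would take $\Phi$ of the form $\Psi(\text{lengths})\cdot\prod_i g(U_i,X_i,M_i)$, integrate out the marks to obtain a functional of the lengths alone, and use uniqueness of the density disintegration. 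This shows the bridge equals the length bridge with profile $\phi_{\mathrm{dw}}$, marked independently by $\mathrm{Unif}\otimes\widehat\eta^{\mathrm{well}}_{X_i}$.

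The ranked lengths come from \cref{cor:ranked-effective-length-convergence}. Its no-ties hypothesis holds because the length intensity is diffuse; for the bridge one must also check that conditioning preserves absence of ties. I would do this by a disintegration argument: the event ``ties'' has probability zero under $\Pi^{(\kappa)}$ and is length-measurable, so it has zero probability under the bridge for Lebesgue-a.e.\ $a$. Continuity of the bridge in $a$ (from continuity of the density) then extends this to $a=\rho-\rho_{\mathrm{bg}}$. I expect this last step, upgrading a.e.-$a$ statements to the specific endpoint, to be the main technical point. It can alternatively be avoided by writing the bridge's finite-dimensional densities explicitly as $\prod_i e^{-\kappa x_i}\phi_{\mathrm{dw}}(x_i)/x_i$ times the density of the remaining mass at $a-\sum x_i$, divided by $f_0^{(\kappa)}(a)$. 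These densities are absolutely continuous, so ties have probability zero.
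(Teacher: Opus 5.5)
Your proposal is correct and is essentially the paper's argument. The paper simply invokes \cref{thm:main-canonical-bridge-limit} and \cref{cor:ranked-effective-length-convergence} once \cref{prop:double-well-well-loop-verification} has verified the hypotheses; your endpoint-positivity check, your marking-theorem identification of the conditional $(U_i,M_i^{\mathrm{well}})$ laws, and your no-ties verification fill in steps the paper leaves implicit.

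One small caution. Weak continuity of the bridge in $a$ does not by itself carry a tie event that is null for Lebesgue-a.e.\ $a$ over to the specific endpoint, so use your explicit factorial-moment-density argument there. Likewise, identify the marks at $a=\rho-\rho_{\mathrm{bg}}$ pointwise through the Laplace-functional formula \eqref{limitbr}, replacing $h$ by its mark-averaged length function, rather than by a.e.\ uniqueness of the disintegration.
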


\subsection{Spectral-label marks and finite-type band extensions}
\label{subsec:spectral-labels-finite-type-band-extensions}

\subsubsection{Double-well spectral-label marks and finite-type band extensions}
\label{subsubsec:spectral-labels-model-description}

The well-loop mark constructed above describes the effective tunnelling history
inside the two-dimensional doublet.  There is also a simpler, purely spectral
mark: it describes only the eigenvalue label of the effective mode used by the
cycle.  This mark loses the pathwise well interpretation, but it makes the
finite-type low-energy structure transparent and extends naturally from the
doublet to finitely many effective bands.

We formulate this directly in finite-type form.  Fix \(Q<\infty\), let
\[
  \mathsf M_{\mathrm{ft}}:=\{1,\ldots,Q\}
\]
with the discrete topology, and let
$
  \theta_1,\ldots,\theta_Q>0
$
be fixed band weights.  Assume that the effective finite-volume band
parameters satisfy
\begin{equation}
  \lambda_{L,r}\longrightarrow\lambda_r\in[0,\infty),
  \qquad r=1,\ldots,Q.
  \label{eq:finite-type-band-parameter-convergence}
\end{equation}
For a cycle of length \(j\), define the finite-volume unnormalised
spectral-label kernel by
\[
  \mu_{L,j}^{\mathrm{ft}}
  :=
  \sum_{r=1}^Q
  \theta_r
  e^{-\beta\lambda_{L,r}j/V_L}
  \delta_r,
  \qquad j\ge1.
\]
Its total effective trace is
\[
  q_{L,j}^{\mathrm{ft}}
  :=
  \mu_{L,j}^{\mathrm{ft}}(\mathsf M_{\mathrm{ft}})
  =
  \sum_{r=1}^Q
  \theta_r
  e^{-\beta\lambda_{L,r}j/V_L}.
\]
We also write
$
  \widehat\mu_{L,j}^{\mathrm{ft}}
  :=
{\mu_{L,j}^{\mathrm{ft}}}/{q_{L,j}^{\mathrm{ft}}}
$
for the normalized finite-volume spectral-label law.

For \(x>0\), the limiting unnormalised spectral-label kernel is
\[
  \eta_x^{\mathrm{ft}}
  :=
  \sum_{r=1}^Q
  \theta_r
  e^{-\beta\lambda_r x}
  \delta_r.
\]
Its total mass is the scalar profile
\[
  \phi_{\mathrm{ft}}(x)
  :=
  \eta_x^{\mathrm{ft}}(\mathsf M_{\mathrm{ft}})
  =
  \sum_{r=1}^Q
  \theta_r
  e^{-\beta\lambda_r x}.
\]
We define the normalized limiting spectral-label law by
$
  \widehat\eta_x^{\mathrm{ft}}
  :=
{\eta_x^{\mathrm{ft}}}/{\phi_{\mathrm{ft}}(x)}$.

Let
\[
  \Sigma^{\mathrm{ft}}
  :=
  \sum_{r=1}^Q
  \theta_r\delta_{\lambda_r}.
\]
Then
\[
  \phi_{\mathrm{ft}}(x)
  =
  \int_{[0,\infty)}
  e^{-\beta x\lambda}\,
  \Sigma^{\mathrm{ft}}(d\lambda).
\]

The double-well spectral-label model is the case \(Q=2\), with
$ \theta_1=\theta_2=1$,
$  \lambda_{L,1}=0$,
$  \lambda_{L,2}=V_L\Delta_L$.
Under \(V_L\Delta_L\to\gamma\), its limiting profile is
\[
  \phi_{\mathrm{dw}}(x)=1+e^{-\beta\gamma x}.
\]

\begin{remark}[Path marks versus spectral labels]
\label{rem:path-marks-versus-spectral-labels}
The well-loop mark and the spectral-label mark encode different information
but share the same scalar trace.  The well-loop mark remembers the effective
two-state trajectory in the localized well basis, whereas the spectral label
remembers only the eigenmode in the diagonal spectral basis.  Therefore the two
constructions have the same effective total mass distribution and the same
local-limit normalization, but their limiting point processes live on different
mark spaces and have different conditional mark distributions.
\end{remark}

\subsubsection{Verification of the assumptions and the canonical limit}
\label{subsubsec:spectral-labels-verification-and-limit}

We now verify the abstract assumptions for the finite-type kernel.  The
double-well spectral-label construction follows by the \(Q=2\) specialization
above.

\begin{lemma}[Finite-type one-cycle convergence]
\label{lem:finite-type-one-cycle-convergence}
Assume \eqref{eq:finite-type-band-parameter-convergence}.  Then, for every
\(0<\delta<R<\infty\) and every
\[
  F\in
  C_b\bigl([0,1]\times[\delta,R]\times\mathsf M_{\mathrm{ft}}\bigr),
\]
one has
\[
  \sup_{x\in[\delta,R]}
  \Bigg|
  \int_0^1
  \int_{\mathsf M_{\mathrm{ft}}}
  F(u,x,m)\,
  \mu_{L,\lfloor xV_L\rfloor}^{\mathrm{ft}}(dm)\,du
  -
  \int_0^1
  \int_{\mathsf M_{\mathrm{ft}}}
  F(u,x,m)\,
  \eta_x^{\mathrm{ft}}(dm)\,du
  \Bigg|
  \longrightarrow0 .
\]
Moreover,
\[
  \sup_{L\ge1}\sup_{j\ge1}
  q_{L,j}^{\mathrm{ft}}
  \le
  \sum_{r=1}^Q\theta_r
  <\infty .
\]
\end{lemma}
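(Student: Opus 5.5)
Because $\mathsf M_{\mathrm{ft}}=\{1,\ldots,Q\}$ is finite and discrete, every integral against $\mu^{\mathrm{ft}}_{L,j}$ or $\eta^{\mathrm{ft}}_x$ is a finite sum. The plan is therefore to reduce the statement to a termwise comparison of the coefficients, essentially repeating the computation in \cref{rem:finite-type-theta-greater-than-one-implies-absolute}. Write $a_L(x):=\lfloor xV_L\rfloor/V_L$, so that $\sup_{x\in[\delta,R]}|a_L(x)-x|\le V_L^{-1}$. For $F\in C_b([0,1]\times[\delta,R]\times\mathsf M_{\mathrm{ft}})$ put $\bar F_r(x):=\int_0^1F(u,x,r)\,du$, which satisfies $|\bar F_r|\le\|F\|_\infty$. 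The quantity inside the supremum then equals
\[
  \sum_{r=1}^Q\theta_r\,\bar F_r(x)\Bigl(e^{-\beta a_L(x)\lambda_{L,r}}-e^{-\beta x\lambda_r}\Bigr),
\]
and its absolute value is at most $\|F\|_\infty\sum_{r}\theta_r\bigl|e^{-\beta a_L(x)\lambda_{L,r}}-e^{-\beta x\lambda_r}\bigr|$. No continuity of $F$ in $x$ is needed, because the same $x$ appears in both terms.

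Next, bound each exponential difference uniformly in $x\in[\delta,R]$. By \eqref{eq:finite-type-band-parameter-convergence}, each sequence $(\lambda_{L,r})_L$ converges and is therefore bounded, say by $M_r$. Since $a_L(x)\lambda_{L,r}$ and $x\lambda_r$ are nonnegative, the inequality $|e^{-u}-e^{-v}|\le|u-v|$ for $u,v\ge0$ gives
\[
  \bigl|e^{-\beta a_L(x)\lambda_{L,r}}-e^{-\beta x\lambda_r}\bigr|\le\beta\bigl(M_r V_L^{-1}+R\,|\lambda_{L,r}-\lambda_r|\bigr).
\]
The right-hand side does not depend on $x$ and tends to $0$. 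Summing over the finitely many $r$ yields the claimed uniform convergence.

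The uniform mass bound is immediate. Since $\lambda_{L,r}\ge0$ (these are rescaled shifted eigenvalues) and $j\ge1$, each factor satisfies $e^{-\beta\lambda_{L,r}j/V_L}\le1$, so $q^{\mathrm{ft}}_{L,j}\le\sum_r\theta_r$.

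There is no real obstacle here. The only point that needs care is nonnegativity of the $\lambda_{L,r}$, which is used both in the Lipschitz estimate for the exponential and in the mass bound. If the statement were meant to allow negative band parameters, one would instead use boundedness of $(\lambda_{L,r})_L$ together with the local Lipschitz property of $e^{-u}$ on the bounded range $|u|\le\beta R\sup_{L,r}|\lambda_{L,r}|$. The mass bound would then change to $\sum_r\theta_r e^{\beta R\sup_L|\lambda_{L,r}|}$ on length windows, but for the setting of this paper $\lambda_{L,r}\ge0$ holds.
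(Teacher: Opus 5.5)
Your proof is correct and follows the paper's argument. You reduce to a finite sum over the labels, bound each exponential difference uniformly on $[\delta,R]$ using $|a_L(x)-x|\le V_L^{-1}$ and $\lambda_{L,r}\to\lambda_r$, and read off the mass bound from $e^{-\beta\lambda_{L,r}j/V_L}\le 1$. The paper splits the difference as $a_L(x)(\lambda_{L,r}-\lambda_r)+\lambda_r(a_L(x)-x)$, which avoids needing the bounds $M_r$, whereas yours is the split used in \cref{rem:finite-type-theta-greater-than-one-implies-absolute}. Your explicit note that the Lipschitz step and the mass bound both require $\lambda_{L,r}\ge 0$ makes precise a point the paper uses only implicitly.
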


\begin{proof}
Let \(j_L(x):=\lfloor xV_L\rfloor\).  For each \(r=1,\ldots,Q\),
\[
  \lambda_{L,r}\frac{j_L(x)}{V_L}
  \longrightarrow
  \lambda_r x
\]
uniformly for \(x\in[\delta,R]\).  Indeed,
\[
  \sup_{x\in[\delta,R]}
  \left|
    \lambda_{L,r}\frac{j_L(x)}{V_L}
    -
    \lambda_r x
  \right|
  \le
  R|\lambda_{L,r}-\lambda_r|
  +
  |\lambda_r|
  \sup_{x\in[\delta,R]}
  \left|
    \frac{j_L(x)}{V_L}-x
  \right|,
\]
and the right-hand side tends to zero.  Hence
\[
  \max_{1\le r\le Q}
  \sup_{x\in[\delta,R]}
  \left|
    e^{-\beta\lambda_{L,r}j_L(x)/V_L}
    -
    e^{-\beta\lambda_r x}
  \right|
  \longrightarrow0.
\]
Since the mark space is finite and \(F\) is bounded, summing over
\(r=1,\ldots,Q\) gives the stated convergence.  The uniform trace bound follows
from
\[
  q_{L,j}^{\mathrm{ft}}
  =
  \sum_{r=1}^Q
  \theta_r e^{-\beta\lambda_{L,r}j/V_L}
  \le
  \sum_{r=1}^Q\theta_r .
\]
\end{proof}

\begin{proposition}[Verification of the finite-type abstract assumptions]
\label{prop:finite-type-abstract-verification}
Assume \eqref{eq:finite-type-band-parameter-convergence}, and set
\[
  \Theta:=\sum_{r=1}^Q\theta_r .
\]
If \(\Theta\ge1\), then the finite-type spectral-label part satisfies
\[
  \mathsf M=\mathsf M_{\mathrm{ft}},
  \qquad
  \eta_x=\eta_x^{\mathrm{ft}},
  \qquad
  \Sigma=\Sigma^{\mathrm{ft}},
  \qquad
  \phi=\phi_{\mathrm{ft}},
\]
and the 
  \cref{ass:limiting-effective-kernel}, \cref{ass:limiting-effective-density}, \cref{ass:effective-marked-trace-convergence} and \cref{ass:effective-spectral-LLT-criterion} 
hold.  More precisely, if \(\Theta>1\), the spectral local-limit criterion
holds through the absolute case \(\mathrm{(A)}\), while if
\(\Theta=1\), it holds through the critical finite-type case
\(\mathrm{(B)}\).
\end{proposition}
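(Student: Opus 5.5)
The plan is to check the four assumptions in turn; only \cref{ass:limiting-effective-density} needs an actual computation. For \cref{ass:limiting-effective-kernel}: the map \(x\mapsto\eta_x^{\mathrm{ft}}\) is a finite sum of continuous scalar functions times the fixed Dirac masses \(\delta_r\) on the discrete space \(\mathsf M_{\mathrm{ft}}\), so it is weakly continuous. By construction \(\phi_{\mathrm{ft}}\) is the Laplace transform of \(\Sigma^{\mathrm{ft}}=\sum_r\theta_r\delta_{\lambda_r}\), a finite measure. The integrability condition follows from \(\phi_{\mathrm{ft}}\le\Theta\), exactly as in the double-well verification: \(\int_0^\infty(1\wedge x)e^{-\kappa x}x^{-1}dx<\infty\). \cref{ass:effective-marked-trace-convergence} is precisely \cref{lem:finite-type-one-cycle-convergence}, which gives both the uniform marked convergence and the bound \(C_{\mathrm{eff}}\le\Theta\).

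For \cref{ass:limiting-effective-density}, the scalar Lévy density splits as
\[
e^{-\kappa x}\frac{\phi_{\mathrm{ft}}(x)}{x}=\sum_{r=1}^Q\theta_r\,e^{-(\kappa+\beta\lambda_r)x}\frac1x .
\]
Hence \(T^{(\kappa)}\) is distributed as a sum of independent variables \(\Gamma_r\sim\mathrm{Gamma}(\theta_r,\kappa+\beta\lambda_r)\), by the Laplace functional formula
\[
\mathbf E e^{-sT^{(\kappa)}}=\prod_r\Bigl(\frac{\kappa+\beta\lambda_r}{\kappa+\beta\lambda_r+s}\Bigr)^{\theta_r}.
\]
Each \(\Gamma_r\) has density \(g_r(a)\propto a^{\theta_r-1}e^{-(\kappa+\beta\lambda_r)a}\) on \((0,\infty)\). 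The density of the sum is the convolution \(g_1*\cdots*g_Q\), which is strictly positive on \((0,\infty)\). This convolution is the step I expect to be the main obstacle, because continuity must hold even when some \(\theta_r<1\) and the \(g_r\) are unbounded at \(0\). I will handle it as follows. The convolution of two Gamma densities with shapes \(\theta,\theta'\) is bounded above by a constant times \(a^{\theta+\theta'-1}e^{-\min(\text{rates})a}\), so the only possible singularity is at \(0\). Convolving an \(L^1\) density with a density that is continuous away from \(0\) and locally integrable yields a function continuous on \((0,\infty)\), which I will show by dominated convergence after splitting the convolution integral at \(a/2\). Induction over \(r\) then finishes continuity. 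Alternatively, I will bound the characteristic function by \(|\mathbf E e^{itT}|\le C|t|^{-\Theta}\). If \(\Theta>1\), this is integrable and Fourier inversion gives a continuous density directly. If \(\Theta=1\), I will use the convolution argument, whose combined shape is \(\sum_r\theta_r=1\) and gives a bounded density near \(0\).

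For \cref{ass:effective-spectral-LLT-criterion}, I split by \(\Theta\). If \(\Theta>1\), I will invoke \cref{rem:finite-type-theta-greater-than-one-implies-absolute} verbatim with \(\Sigma_L=\sum_r\theta_r\delta_{\lambda_{L,r}}\), which verifies (i)--(iv) of condition (A). If \(\Theta=1\), condition (B) holds by definition: \(q^{\mathrm{ft}}_{L,j}=\sum_r\theta_re^{-\beta\lambda_{L,r}j/V_L}\) with \(\lambda_{L,r}\to\lambda_r\in[0,\infty)\) by \eqref{eq:finite-type-band-parameter-convergence}, and its limiting profile coincides with \(\phi_{\mathrm{ft}}\). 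Collecting the four verifications gives the proposition. The identification \(\mathsf M=\mathsf M_{\mathrm{ft}}\), \(\eta=\eta^{\mathrm{ft}}\), \(\Sigma=\Sigma^{\mathrm{ft}}\), \(\phi=\phi_{\mathrm{ft}}\) is built into the definitions.
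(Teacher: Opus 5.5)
Your proposal is correct and follows the paper's proof essentially step by step. The overlap covers the Gamma decomposition $T^{(\kappa)}\stackrel{d}{=}\sum_r \mathrm{Gamma}(\theta_r,\kappa+\beta\lambda_r)$ for the density assumption, \cref{lem:finite-type-one-cycle-convergence} for the marked trace convergence, and the split into $\Theta>1$ via condition (A) and $\Theta=1$ via condition (B). Your dominated-convergence argument for continuity of $g_1*\cdots*g_Q$ on $(0,\infty)$ fills in a step the paper only asserts. That argument works because each factor is continuous on $(0,\infty)$ and bounded by $C a^{\theta_r-1}$ near $0$; a general $L^1$ factor would not suffice.
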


\begin{proof}
The map \(x\mapsto\eta_x^{\mathrm{ft}}\) is weakly continuous because
\(\mathsf M_{\mathrm{ft}}\) is finite and each coefficient
\(x\mapsto e^{-\beta\lambda_r x}\) is continuous.  Moreover,
\[
  \eta_x^{\mathrm{ft}}(\mathsf M_{\mathrm{ft}})
  =
  \sum_{r=1}^Q
  \theta_r e^{-\beta\lambda_r x}
  =
  \int_{[0,\infty)}
  e^{-\beta x\lambda}\,
  \Sigma^{\mathrm{ft}}(d\lambda).
\]
Since \(0<\phi_{\mathrm{ft}}(x)\le\Theta\), for every \(\kappa>0\),
\[
  \int_0^\infty
  (1\wedge x)e^{-\kappa x}
  \frac{\phi_{\mathrm{ft}}(x)}{x}\,dx
  \le
  \Theta
  \int_0^\infty
  (1\wedge x)e^{-\kappa x}\frac{dx}{x}
  <\infty.
\]
Thus the~\cref{ass:limiting-effective-kernel} holds.

We next verify the density assumption for the limiting effective mass.  Under
the \(\kappa\)-tilted limiting Poisson process, the length intensity is
\[
  e^{-\kappa x}\frac{\phi_{\mathrm{ft}}(x)}{x}\,dx
  =
  \sum_{r=1}^Q
  \theta_r
  e^{-(\kappa+\beta\lambda_r)x}
  \frac{dx}{x}.
\]
The marked Poisson process decomposes into \(Q\) independent components,
one for each label \(r\).  Let \(T_r^{(\kappa)}\) be the total mass of the
\(r\)-th component.  Then
\[
  T^{(\kappa)}
  :=
  \int_{[0,1]\times(0,\infty)\times\mathsf M_{\mathrm{ft}}}
  x\,\Pi_{\mathrm{ft}}^{(\kappa)}(du,dx,dm)
  =
  \sum_{r=1}^Q T_r^{(\kappa)}
\]
with independent summands.  For \(s\ge0\),
\[
\begin{aligned}
  \mathbb E e^{-sT_r^{(\kappa)}}
  &=
  \exp\left\{
    -\theta_r
    \int_0^\infty
    (1-e^{-sx})
    e^{-(\kappa+\beta\lambda_r)x}
    \frac{dx}{x}
  \right\}  \\
  &=
  \left(
    \frac{\kappa+\beta\lambda_r}
         {\kappa+\beta\lambda_r+s}
  \right)^{\theta_r}.
\end{aligned}
\]
Hence
\[
  T_r^{(\kappa)}
  \sim
  \mathrm{Gamma}\bigl(\theta_r,\kappa+\beta\lambda_r\bigr),
\]
where the second parameter is the rate.  Therefore
\[
  T^{(\kappa)}
  \stackrel{d}{=}
  \sum_{r=1}^Q
  \mathrm{Gamma}\bigl(\theta_r,\kappa+\beta\lambda_r\bigr)
\]
as a sum of independent Gamma random variables.  Each summand has a continuous
density on \((0,\infty)\), and the finite convolution of these densities is
again continuous on \((0,\infty)\).  In fact it is strictly positive on
\((0,\infty)\).  Thus the limiting effective mass has a continuous density
\(f_0^{(\kappa)}\) on \((0,\infty)\), and the~\cref{ass:limiting-effective-density} holds.

The~\cref{ass:effective-marked-trace-convergence} follows from
\cref{lem:finite-type-one-cycle-convergence}.

For the spectral local-limit criterion, define
\[
  \Sigma_L^{\mathrm{ft}}
  :=
  \sum_{r=1}^Q
  \theta_r\delta_{\lambda_{L,r}}.
\]
Then, for every \(j\ge1\),
\[
  q_{L,j}^{\mathrm{ft}}
  =
  \int_{[0,\infty)}
  e^{-\beta(j/V_L)\lambda}\,
  \Sigma_L^{\mathrm{ft}}(d\lambda).
\]
If \(\Theta>1\), then
\[
  \Sigma_L^{\mathrm{ft}}([0,\infty))=\Theta,
\]
so the lower-mass condition in the absolute case holds with any
\(1<\Theta_*<\Theta\).  The compact convergence of the Laplace transforms
follows from \(\lambda_{L,r}\to\lambda_r\), and the logarithmic moment condition
is automatic because \(Q<\infty\) and the sequences \(\lambda_{L,r}\) are
bounded.

If \(\Theta=1\), then
\[
  q_{L,j}^{\mathrm{ft}}
  =
  \sum_{r=1}^Q
  \theta_r e^{-\beta\lambda_{L,r}j/V_L}
\]
is exactly the critical finite-type representation in the critical
finite-type case.  This proves the proposition.
\end{proof}

For an effective cycle of length \(j\), attach an independent spectral-label
mark \(M_{j,\ell}^{\mathrm{ft}}\) with law
$
  \widehat\mu_{L,j}^{\mathrm{ft}}$.
Define the finite-volume canonical marked point process by
\[
  \Xi_{L,N_L}^{\mathrm{ft}}
  :=
  \sum_{j\ge1}\sum_{\ell=1}^{n_j}
  \delta_{\left(U_{j,\ell},\,j/V_L,\,M_{j,\ell}^{\mathrm{ft}}\right)}
\]
on
$
  E_{\mathrm{ft}}
  :=
  [0,1]\times(0,\infty)\times\mathsf M_{\mathrm{ft}}.
  $
  
For any \(\kappa>0\), let
\[
  \Pi_{\mathrm{ft}}^{(\kappa)}
  \sim
  \operatorname{PPP}\bigl(\nu_{\mathrm{ft}}^{(\kappa)}\bigr)
\]
with intensity
\[
  \nu_{\mathrm{ft}}^{(\kappa)}(du,dx,dm)
  =
  du\,e^{-\kappa x}\frac{dx}{x}\,
  \eta_x^{\mathrm{ft}}(dm).
\]
For \(a>0\), let \(\Pi_a^{\mathrm{ft}\text{-}\mathrm{br}}\) be the bridge of
\(\Pi_{\mathrm{ft}}^{(\kappa)}\) conditioned, in the density sense, on
\[
  \int_{E_{\mathrm{ft}}}
  x\,\Pi_{\mathrm{ft}}^{(\kappa)}(du,dx,dm)
  =
  a.
\]
By the preceding proposition, the required density exists and is positive for
all \(a>0\).  The resulting bridge law is independent of the auxiliary tilt
parameter \(\kappa>0\).

By applying \cref{thm:main-canonical-bridge-limit} and
\cref{cor:ranked-effective-length-convergence}, we obtain the following
finite-type spectral-label limit.  When \(\phi_{\mathrm{ft}}\) is non-constant,
the limiting length bridge is not a Gamma bridge.  In particular, in the
double-well spectral-label case with \(V_L\Delta_L\to\gamma>0\), the ranked
lengths are not governed by \(\mathrm{PD}(0,1)\).

\begin{corollary}[Finite-type spectral-label marked Poisson--Kingman bridge]
\label{cor:finite-type-spectral-label-pk-limit}
Assume \eqref{eq:finite-type-band-parameter-convergence},
\[
  \Theta:=\sum_{r=1}^Q\theta_r\ge1,
\]
and \(\cref{ass:background-density-concentration}\).  If
\(N_L/V_L\to\rho>\rho_{\mathrm{bg}}\), then
\[
  \Xi_{L,N_L}^{\mathrm{ft}}
  \Longrightarrow
  \Pi_{\rho-\rho_{\mathrm{bg}}}^{\mathrm{ft}\text{-}\mathrm{br}}
  \qquad
  \text{in }\mathcal N_\ell(E_{\mathrm{ft}}).
\]
The limiting ranked lengths \((X_i)_{i\ge1}\) are the ranked jumps of the
Poisson--Kingman bridge with scalar profile
\[
  \phi_{\mathrm{ft}}(x)
  =
  \sum_{r=1}^Q
  \theta_r e^{-\beta\lambda_r x}
\]
conditioned to have total mass \(\rho-\rho_{\mathrm{bg}}\).  Conditionally on
the ranked lengths, the auxiliary variables \(U_i\) are independent uniform
variables on \([0,1]\), and the spectral labels are conditionally independent
with
\[
  \mathbb P\bigl(M_i^{\mathrm{ft}}=r\,\big|\,X_i\bigr)
  =
  \frac{
    \theta_r e^{-\beta\lambda_r X_i}
  }{
    \sum_{s=1}^Q \theta_s e^{-\beta\lambda_s X_i}
  },
  \qquad r=1,\ldots,Q.
  \]
  If all positive weight $\lambda_r$ equal a common value $\lambda_*$, then $\phi_{\mathrm ft}(x)=\Theta e^{-\beta \lambda_* x}$ and the ranked normalized length have law $PD(0,\Theta)$. If at least two positive weight are distinct, the limit law is not a Poisson Dirichlet type.
\end{corollary}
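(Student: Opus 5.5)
The plan is to obtain the corollary by specialising \cref{thm:main-canonical-bridge-limit} and \cref{cor:ranked-effective-length-convergence}. All the structural hypotheses have already been checked in \cref{prop:finite-type-abstract-verification}: \cref{ass:limiting-effective-kernel}, \cref{ass:limiting-effective-density}, \cref{ass:effective-marked-trace-convergence} and \cref{ass:effective-spectral-LLT-criterion} hold, via case (A) when \(\Theta>1\) and via case (B) when \(\Theta=1\). \cref{ass:background-density-concentration} is assumed directly. It remains to check \cref{ass:canonical-density-regime}, namely \(f_0^{(\kappa)}(\rho-\rho_{\mathrm{bg}})>0\). This follows from the representation of \(T^{(\kappa)}\) as a finite independent sum of \(\mathrm{Gamma}(\theta_r,\kappa+\beta\lambda_r)\) variables. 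Each summand has a strictly positive density on \((0,\infty)\), so their convolution is strictly positive on \((0,\infty)\). The theorem then gives \(\Xi_{L,N_L}^{\mathrm{ft}}\Rightarrow\Pi^{\mathrm{ft}\text{-}\mathrm{br}}_{\rho-\rho_{\mathrm{bg}}}\). For the ranked lengths, the length intensity \(e^{-\kappa x}\phi_{\mathrm{ft}}(x)\,dx/x\) is diffuse, so there are no ties almost surely and \cref{cor:ranked-effective-length-convergence} applies.

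The conditional description of the marks comes from disintegrating the marked Poisson process before conditioning. For \(\Pi^{(\kappa)}_{\mathrm{ft}}\), given the length/\(u\)-projection, the atoms carry independent uniform \(u\)-coordinates and independent marks with law \(\widehat\eta^{\mathrm{ft}}_x\) (marking theorem). The conditioning event involves only the lengths, so conditioning on \(T^{(\kappa)}=a\) preserves this conditional mark structure; this can be checked on the density-disintegration identity with functionals of the form \(\Phi\cdot g(T)\). Ranking the atoms, which is measurable with respect to the lengths, gives \(\mathbb P(M_i=r\mid X_i)=\theta_re^{-\beta\lambda_rX_i}/\phi_{\mathrm{ft}}(X_i)\).

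For the final dichotomy in the common-rate case \(\lambda_r\equiv\lambda_*\), the intensity becomes \(\Theta e^{-(\kappa+\beta\lambda_*)x}dx/x\). This is a Gamma subordinator with shape \(\Theta\). Its jumps conditioned on total mass \(a\), and ranked, are \(a\,\mathrm{PD}(0,\Theta)\) by Kingman's classical result, and the tilt is absorbed in the same way as in \cref{rem:kappa-auxiliary-main}.

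If at least two distinct rates occur, I would argue by contradiction. Suppose the normalised ranked jumps had law \(\mathrm{PD}(\alpha,\theta)\) for some parameters. The Poisson--Kingman law is determined by the Lévy density only up to the classes characterised by Pitman; in particular, the size-biased-pick structure, or the explicit first-moment formula \(\mathbb E\sum_i g(X_i)=\int g(x)\,\phi(x)x^{-1}e^{-\kappa x} f_0(a-x)/f_0(a)\,dx\), would force \(\phi_{\mathrm{ft}}(x)\) to be of the form \(ce^{-bx}\), or \(x^{-\alpha}\)-type for \(\alpha>0\). A finite sum of exponentials with at least two distinct rates is not of this form.

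I expect this last step to be the main obstacle. Making the non-PD claim rigorous requires a precise characterisation of which Lévy densities produce \(\mathrm{PD}\) bridges: exponential tilts of Gamma for \(\alpha=0\), and for \(\alpha>0\) comparisons of small-jump asymptotics, since \(\phi_{\mathrm{ft}}(0+)=\Theta\) excludes stable behaviour. I would carry it out by matching the intensity of the structural distribution across all endpoints \(a\).
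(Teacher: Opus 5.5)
Your reduction is exactly the paper's. There the corollary is obtained by feeding \cref{prop:finite-type-abstract-verification}, together with strict positivity of the Gamma convolution at $\rho-\rho_{\mathrm{bg}}$, into \cref{thm:main-canonical-bridge-limit} and \cref{cor:ranked-effective-length-convergence}. Your marking-theorem description of the labels and your Gamma/Kingman argument for $\mathrm{PD}(0,\Theta)$ in the common-rate case are correct, and they fill in what the paper only asserts.

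The paper gives no argument for the final non-Poisson--Dirichlet claim, and your sketch of it has a genuine gap. The first-moment (Mecke) formula does not by itself ``force'' $\phi_{\mathrm{ft}}$ to be $ce^{-bx}$. It ties $\phi_{\mathrm{ft}}$ to $f_0^{(\kappa)}$, which is itself determined by $\phi_{\mathrm{ft}}$, and you give no mechanism that produces a contradiction. Matching structural distributions ``across all endpoints $a$'' also aims at the wrong statement. The corollary concerns the single endpoint $a=\rho-\rho_{\mathrm{bg}}$, so showing that not every bridge $\Pi^{\mathrm{br}}_a$ in the family is Poisson--Dirichlet does not rule out a coincidence at that particular $a$.

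The missing ingredient is log-convexity, and it closes the argument at a fixed $a$. Put $g(x):=e^{-\kappa x}\phi_{\mathrm{ft}}(x)=\sum_r\theta_re^{-(\kappa+\beta\lambda_r)x}$ and $f:=f_0^{(\kappa)}$.

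First, Mecke's formula for the bridge gives the density of the size-biased pick of the normalised ranked lengths as $p\mapsto g(ap)f(a(1-p))/f(a)$ on $(0,1)$. This tends to $g(0)=\Theta\in(0,\infty)$ as $p\downarrow0$. That excludes $\mathrm{Beta}(1-\alpha,\theta+\alpha)$ for every $\alpha\neq0$, and for $\alpha=0$ it forces $\theta=\Theta$.

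Second, suppose the density equalled $\Theta(1-p)^{\Theta-1}$. Then $f(s)=\Theta f(a)(s/a)^{\Theta-1}/g(a-s)$ for $0<s<a$. Insert this into the renewal identity $sf(s)=\int_0^s g(y)f(s-y)\,dy$, which is Mecke applied to $T^{(\kappa)}$. This yields
\[
  s^{\Theta}=\int_0^s\frac{g(y)\,g(a-s)}{g(a-s+y)}\,(s-y)^{\Theta-1}\,dy,\qquad 0<s<a .
\]
Since $g$ is the Laplace transform of $\sum_r\theta_r\delta_{\kappa+\beta\lambda_r}$, $\log g$ is convex, and strictly convex exactly when two of the $\lambda_r$ differ. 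In that case $g(y)g(z)<g(0)g(y+z)=\Theta\,g(y+z)$ for all $y,z>0$. The right-hand side is then strictly smaller than $\int_0^s\Theta(s-y)^{\Theta-1}\,dy=s^{\Theta}$, a contradiction. With a single rate the ratio is identically $\Theta$, which recovers the Gamma case.
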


%% file: 06-proofs.tex
\section{Proofs of the main results}
\label{sec:proofs-canonical-framework}

We prove the main canonical bridge limit theorem
(\cref{thm:main-canonical-bridge-limit})
and~\cref{cor:ranked-effective-length-convergence}.
The argument proceeds through the following steps.
After introducing a tilted grand-canonical Poisson representation
(\cref{subsec:proof-poisson-decomposition}), we decompose the
intensity into effective and background parts
(\cref{subsec:proof-effective-background-splitting}).
Unconditioned Poisson convergence of the effective process is
proved in~\cref{subsec:proof-mark-unconditioned-convergence}.
An effective local limit theorem is established separately under
the absolute and the critical finite-type criteria
(\cref{subsec:proof-effective-local-limit}).
The background part is shown to be deterministically concentrated
and invisible (\cref{subsec:proof-background-concentration}).
Combining these results with a bridge identity yields the
canonical convergence
(\cref{subsec:proof-bridge-convergence}).

The strategy of representing a canonical law via a conditional
Poisson process and then applying a local limit theorem to pass
from the grand-canonical to the canonical ensemble has been used
to analyse condensation phase transitions in other probabilistic
models by the author: in~\cite{SunSPA2026} for reversible
coagulation--fragmentation processes, and
in~\cite{SunAIHP2025} for sparse Erd\H{o}s--R\'enyi random
graphs.  In both of those settings, the condensed mass
concentrates on a \emph{single} macroscopic particle (or
component).  By contrast, in the ideal Bose gas the condensate
is dispersed among infinitely many macroscopic cycles, and the
conditioning produces a Poisson--Kingman \emph{bridge} rather
than a single large component.  This structural difference is
the main source of the new technical difficulties addressed in
the subsections below.
\subsection{A conditional Poisson Point process representation for the canonical law}\label{subsec:proof-poisson-decomposition}
We introduce a conditional Poisson point process representation for the law of our canonical marked cycle point process $\Xi_{L,N}$ defined in~\cref{subsec:marked-canonical-process}. This representation is the cornerstone of our proof. We refer to Kallenberg~\cite{Kallenberg2017} and Daley--Vere-Jones~\cite{DaleyVereJones2003} for the basic properties on the Poisson point processes.

Fix \(\kappa>0\) and set
$ z_L^{(\kappa)}:=e^{-\kappa/V_L}$.
Define the tilted grand-canonical intensity on
$       E=[0,1]\times(0,\infty)\times\mathsf M$
by
\[
        \nu_L^{(\kappa)}(du,dx,dm)
        :=
        \sum_{j\ge1}
        \frac{e^{-\kappa j/V_L}}{j}\,
        du\,\delta_{j/V_L}(dx)\,\mu_{L,j}(dm).
\]
Let $ \Pi_L^{(\kappa)}
        \sim
        \operatorname{PPP}\bigl(\nu_L^{(\kappa)}\bigr)$
be the Poisson point process with this intensity.  We write
\(\mathbf P_L^{(\kappa)}\) and \(\mathbf E_L^{(\kappa)}\) for its law and
expectation.

In terms of cycle lengths, the number of cycles of length \(j\) under
\(\mathbf P_L^{(\kappa)}\) is a Poisson random variable with mean
$       \frac{e^{-\kappa j/V_L}}{j}\,q_{L,j}$, denoted by $N_{L,j}^{(\kappa)}$.
These random variables are independent over \(j\).  Conditionally on the
number of cycles of length \(j\), their marks $m_{j,\cdot}$ are independent with law
\(J_{L,j}\), and their auxiliary time coordinates $U_{j,\cdot}$ are independent uniform
variables on \([0,1]\). Hence,
\[
        \Pi_L^{(\kappa)}
        =
        \sum_{j\ge1}
        \sum_{\ell=1}^{N_{L,j}^{(\kappa)}}
        \delta_{\left(U_{j,\ell},\,j/V_L,\,m_{j,\ell}\right)}.
\]

Define the total particle number under this tilted grand-canonical law by
\[
        S_L^{(\kappa)}
        :=
        V_L\int_E x\,\Pi_L^{(\kappa)}(du,dx,dm).
\]
Since every atom has length coordinate \(j/V_L\), the random variable
\(S_L^{(\kappa)}\) is integer-valued and 
\[
        S_L^{(\kappa)}
        =
        \sum_{j\ge1} j\,N_{L,j}^{(\kappa)}.
\]

The Laplace functional of \(\Pi_L^{(\kappa)}\) is
\[
        \mathbf E_L^{(\kappa)}
        \left[
            e^{-\langle F,\Pi_L^{(\kappa)}\rangle}
        \right]
        =
        \exp\left\{
            -
            \int_E
            \left(1-e^{-F(u,x,m)}\right)
            \nu_L^{(\kappa)}(du,dx,dm)
        \right\}
\]
for every non-negative measurable \(F:E\to[0,\infty)\).

\begin{lemma}[Finiteness of the finite-volume tilted intensity]
For every \(\kappa>0\),
\[
        \Lambda_L^\kappa
        :=
        \sum_{j\ge 1}
        e^{-\kappa j/V_L}\frac{q_{L,j}}{j}
        <\infty .
\]
Consequently, the finite-volume tilted Poisson point process
\(\Pi_L^\kappa\) is well defined.
\end{lemma}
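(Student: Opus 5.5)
We need to show $\Lambda_L^\kappa=\sum_{j\ge1} e^{-\kappa j/V_L} q_{L,j}/j<\infty$. The plan is to split $q_{L,j}=q_{L,j}^{\mathrm{eff}}+q_{L,j}^{\mathrm{bg}}$ and bound each part separately, using only the standing assumptions of \cref{thm:main-canonical-bridge-limit}. Both parts are nonnegative, so it suffices to show that each of the two resulting series is finite.

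For the effective part, \cref{ass:effective-marked-trace-convergence} gives $q_{L,j}^{\mathrm{eff}}\le C_{\mathrm{eff}}$ uniformly in $j$. Hence
\[
\sum_{j\ge1} e^{-\kappa j/V_L}\frac{q^{\mathrm{eff}}_{L,j}}{j}\le C_{\mathrm{eff}}\sum_{j\ge1}\frac{e^{-\kappa j/V_L}}{j}=-C_{\mathrm{eff}}\log\bigl(1-e^{-\kappa/V_L}\bigr),
\]
which is finite for each fixed $L$ since $\kappa>0$.

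For the background part, I would first use $1/j\le1$:
\[
\sum_{j\ge1} e^{-\kappa j/V_L}\frac{q^{\mathrm{bg}}_{L,j}}{j}\le \sum_{j\ge1} e^{-\kappa j/V_L}q^{\mathrm{bg}}_{L,j}=V_L\,m^{(\kappa)}_{L,\mathrm{bg}}.
\]
\cref{ass:background-density-concentration} says $m^{(\kappa)}_{L,\mathrm{bg}}\to\rho_{\mathrm{bg}}$, and implicitly that these quantities are finite, at least for large $L$. The main obstacle is the small-$L$ case together with the fact that this assumption is stated only for one particular $\kappa$. To handle arbitrary $\kappa>0$ directly, I would instead use the trace-class property of $e^{-tK_L}$ from \cref{subsec:finite-volume-one-particle-data}. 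Since $q_{L,j}=\sum_i e^{-\beta j\varepsilon_{i,L}}$ with $\varepsilon_{i,L}\ge0$, each term is nonincreasing in $j$, so $q_{L,j}\le q_{L,1}=\operatorname{Tr}e^{-\beta K_L}<\infty$ for all $j\ge1$. This gives
\[
\Lambda_L^\kappa\le q_{L,1}\sum_{j\ge1}\frac{e^{-\kappa j/V_L}}{j}=-q_{L,1}\log\bigl(1-e^{-\kappa/V_L}\bigr)<\infty.
\]
This single bound in fact covers both parts at once, and I would present it as the proof, with the effective and background estimates serving as a remark about uniformity in $L$.

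Once $\Lambda_L^\kappa<\infty$, the intensity $\nu_L^{(\kappa)}$ is a finite measure on $E$. Its total mass is $\sum_j e^{-\kappa j/V_L}\mu_{L,j}(\mathsf M)/j=\Lambda_L^\kappa$, using $\mu_{L,j}(\mathsf M)=q_{L,j}$. A Poisson point process with finite intensity on the Polish space $E$ exists as an a.s. finite point measure, hence lies in $\mathcal N_\ell(E)$, and this gives the well-definedness claim. I would also note that $\mathbf E S_L^{(\kappa)}=\sum_j e^{-\kappa j/V_L}q_{L,j}\le q_{L,1}/(1-e^{-\kappa/V_L})<\infty$, so $S_L^{(\kappa)}$ is a.s. finite, which the subsequent conditioning will need.
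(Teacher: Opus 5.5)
Your final argument is the paper's proof: since $K_L\ge 0$, you bound $q_{L,j}\le q_{L,1}=\operatorname{Tr}e^{-\beta K_L}<\infty$ and conclude $\Lambda_L^\kappa\le q_{L,1}\sum_{j\ge1}e^{-\kappa j/V_L}/j<\infty$; you argue eigenvalue by eigenvalue, whereas the paper uses the operator inequality $e^{-\beta jK_L}\le e^{-\beta K_L}$. As you noticed yourself, the preliminary effective/background split is unnecessary, and your added remarks (finite total intensity, $\mathbf E S_L^{(\kappa)}<\infty$) are correct.
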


\begin{proof}
By definition,
\[
        q_{L,j}
        =
        \operatorname{Tr}_{\mathcal H_L}(e^{-\beta jK_L}).
\]
Since \(K_L\ge 0\), the spectral theorem gives
\[
        e^{-\beta jK_L}\le e^{-\beta K_L},
        \qquad j\ge 1,
\]
in the sense of positive operators.  Hence
\[
        q_{L,j}
        =
        \operatorname{Tr}(e^{-\beta jK_L})
        \le
        \operatorname{Tr}(e^{-\beta K_L})
        =
        q_{L,1}
        <\infty .
\]
Therefore
\[
        \Lambda_L^\kappa
        \le
        q_{L,1}
        \sum_{j\ge 1}\frac{e^{-\kappa j/V_L}}{j}.
\]
The last series is finite because \(\kappa/V_L>0\).  This proves the
claim.
\end{proof}

Now we show that the canonical law of $\Xi_{L,N}$ can be obtained by conditioning this tilted Poisson process on
the total particle number.

\begin{lemma}[The conditional PPP representation]
\label{lem:canonical-conditioning}
For every \(N\in\mathbb N\) such that \(Z_{L,N}>0\),
\[
        \Xi_{L,N}
        \stackrel{d}{=}
        \Pi_L^{(\kappa)}
        \,|\,
        \{S_L^{(\kappa)}=N\}.
        \]
The conditional law does not depend on the chosen value of \(\kappa>0\). In particular,
\[
        \mathbf P_L^{(\kappa)}(S_L^{(\kappa)}=N)
        =
        \exp\{-\Lambda_L^{(\kappa)}\}
        e^{-\kappa N/V_L}
        Z_{L,N}.
\]
\end{lemma}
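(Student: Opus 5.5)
The plan is a direct computation of the joint law of the cycle counts together with their marks and labels, comparing the conditioned Poisson process with the canonical construction of \cref{subsec:marked-canonical-process}.

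\emph{Step 1: counts.} Under \(\mathbf P_L^{(\kappa)}\) the counts \(N_{L,j}^{(\kappa)}\) are independent Poisson with means \(e^{-\kappa j/V_L}a_{L,j}\), where \(a_{L,j}=q_{L,j}/j\). Hence for any finitely supported sequence \((n_j)_{j\ge1}\),
\[
\mathbf P_L^{(\kappa)}\bigl(N_{L,j}^{(\kappa)}=n_j\ \forall j\bigr)
=
\prod_{j\ge1} e^{-e^{-\kappa j/V_L}a_{L,j}}
\,\frac{(e^{-\kappa j/V_L}a_{L,j})^{n_j}}{n_j!}
=
e^{-\Lambda_L^{(\kappa)}}\,e^{-\kappa\sum_j jn_j/V_L}\prod_{j\ge1}\frac{a_{L,j}^{n_j}}{n_j!}.
\]
The infinite product of the factors \(e^{-(\cdot)}\) converges to \(e^{-\Lambda_L^{(\kappa)}}\) by the finiteness lemma just proved. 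Almost surely only finitely many counts are nonzero, since \(\sum_j \mathbf P(N_{L,j}\ge1)\le\Lambda_L^{(\kappa)}<\infty\).

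\emph{Step 2: restrict to \(S_L^{(\kappa)}=N\).} On the event \(\sum_j jn_j=N\), the tilt factor equals \(e^{-\kappa N/V_L}\), which does not depend on \((n_j)\). Summing over all such configurations gives
\[
\mathbf P_L^{(\kappa)}(S_L^{(\kappa)}=N)=e^{-\Lambda_L^{(\kappa)}}e^{-\kappa N/V_L}Z_{L,N},
\]
which is positive when \(Z_{L,N}>0\). Dividing, the conditional law of the counts is exactly \(\mathbb P^{\mathrm{can}}_{L,N}\) in \eqref{pcan}. In particular it contains no \(\kappa\).

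\emph{Step 3: marks and labels.} In the Poisson process, conditionally on all counts, the marks are i.i.d.\ with law \(J_{L,j}\) within level \(j\) and the \(U\)'s are i.i.d.\ uniform, all independent across atoms. The event \(\{S_L^{(\kappa)}=N\}\) is measurable with respect to the counts, so conditioning on it does not change this conditional structure.

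In the canonical construction, \(\Xi_{L,N}\) depends on \(\pi\) only through \((n_j(\pi))\). Given the counts, it attaches independent marks \(J_{L,|c|}\) and independent uniform labels. Since the point measure is a symmetric function of the atoms, the enumeration of cycles is irrelevant.

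The two processes therefore have the same mixed-binomial description, with identical mixing law on the counts and identical conditional atom laws, so they agree in distribution. To make this rigorous, I would compare Laplace functionals \(\mathbf E[e^{-\langle F,\cdot\rangle}\mid\text{counts}]=\prod_j\bigl(\int(1-\ldots)\bigr)\), or equivalently \(\prod_j\bigl(\int e^{-F}\,du\,J_{L,j}\bigr)^{n_j}\), and then average over the common count law.

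\emph{Main obstacle.} This step is bookkeeping rather than a genuine difficulty: one has to justify conditioning on a count-measurable event of positive probability and check the measurability of \(S_L^{(\kappa)}\). Step 2 already settles both independence from \(\kappa\) and the stated formula.
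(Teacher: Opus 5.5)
Your proposal is correct and follows essentially the same route as the paper. Both compute the Poisson count law and observe that on $\{S_L^{(\kappa)}=N\}$ the tilt collapses to the constant $e^{-\kappa N/V_L}$, which identifies the conditioned count law with \eqref{pcan} and shows it does not depend on $\kappa$; both then note that marks and uniform labels have the same conditional structure in the two constructions. Your explicit sum over configurations yielding $\mathbf P_L^{(\kappa)}(S_L^{(\kappa)}=N)=e^{-\Lambda_L^{(\kappa)}}e^{-\kappa N/V_L}Z_{L,N}$ spells out a step the paper leaves implicit.
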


\begin{proof}
At the cycle count level, under \(\mathbf P_L^{(\kappa)}\), the probability of
a configuration \((n_j)_{j\ge1}\) is
\[
        \exp\{-\Lambda_L^{(\kappa)}\}
        \prod_{j\ge1}
        \frac{1}{n_j!}
        \left(
            \frac{e^{-\kappa j/V_L}q_{L,j}}{j}
        \right)^{n_j},
\]
where
\[
        \Lambda_L^{(\kappa)}
        :=
        \sum_{j\ge1}
        \frac{e^{-\kappa j/V_L}}{j}q_{L,j}.
\]
On the event
$ \sum_{j\ge1}j n_j=N$,
the tilt contributes the common factor
\[
        \prod_{j\ge1}
        e^{-\kappa j n_j/V_L}
        =
        e^{-\kappa N/V_L}.
\]
Hence, after conditioning on \(S_L^{(\kappa)}=N\), the cycle-count weight is
proportional to
\[
        \prod_{j\ge1}
        \frac{1}{n_j!}
        \left(\frac{q_{L,j}}{j}\right)^{n_j},
\]
which is exactly the canonical cycle-count law.
The marks and auxiliary time coordinates are conditionally independent with the
same kernels \(J_{L,j}\) and uniform laws as in the definition of
\(\Xi_{L,N}\).  Therefore the full marked point process has the canonical law.
The same computation also shows independence of \(\kappa\): on the event
\(S_L^{(\kappa)}=N\), the \(\kappa\)-dependent factor is the constant
\(e^{-\kappa N/V_L}\), which cancels in the conditional normalization.
\end{proof}

\subsection{Effective and background Poisson  decomposition}
\label{subsec:proof-effective-background-splitting}

We now apply the finite-volume decomposition in~\cref{subsec:finite-volume-effective-background}
\[
        \mu_{L,j}
        =
        \mu_{L,j}^{\mathrm{eff}}
        +
        \mu_{L,j}^{\mathrm{bg}}
\]
to the tilted grand-canonical intensity.  Define
\[
        \nu_{L,\mathrm{eff}}^{(\kappa)}(du,dx,dm)
        :=
        \sum_{j\ge1}
        \frac{e^{-\kappa j/V_L}}{j}\,
        du\,\delta_{j/V_L}(dx)\,
        \mu_{L,j}^{\mathrm{eff}}(dm),
\]
and
\[
        \nu_{L,\mathrm{bg}}^{(\kappa)}(du,dx,dm)
        :=
        \sum_{j\ge1}
        \frac{e^{-\kappa j/V_L}}{j}\,
        du\,\delta_{j/V_L}(dx)\,
        \mu_{L,j}^{\mathrm{bg}}(dm).
\]
Then
$   \nu_L^{(\kappa)}
        =
        \nu_{L,\mathrm{eff}}^{(\kappa)}
        +
        \nu_{L,\mathrm{bg}}^{(\kappa)}$.

Let
\[
        \Pi_{L,\mathrm{eff}}^{(\kappa)}
        \sim
        \operatorname{PPP}\bigl(\nu_{L,\mathrm{eff}}^{(\kappa)}\bigr),
        \qquad
        \Pi_{L,\mathrm{bg}}^{(\kappa)}
        \sim
        \operatorname{PPP}\bigl(\nu_{L,\mathrm{bg}}^{(\kappa)}\bigr)
\]
be independent.  Clearly, the Poisson point process $ \Pi_L^{(\kappa)}$ has the   decomposition 
\[
        \Pi_L^{(\kappa)}
        \stackrel{d}{=}
        \Pi_{L,\mathrm{eff}}^{(\kappa)}
        +
        \Pi_{L,\mathrm{bg}}^{(\kappa)}.
\]

Define the effective and background particle numbers under the tilted
grand-canonical law by
\[
        G_L^{(\kappa)}
        :=
        V_L\int_E x\,
        \Pi_{L,\mathrm{eff}}^{(\kappa)}(du,dx,dm),
\]
and
\[
        B_L^{(\kappa)}
        :=
        V_L\int_E x\,
        \Pi_{L,\mathrm{bg}}^{(\kappa)}(du,dx,dm).
\]
Then
\[
        S_L^{(\kappa)}
        =
        G_L^{(\kappa)}+B_L^{(\kappa)}.
\]
Moreover, \(G_L^{(\kappa)}\) and \(B_L^{(\kappa)}\) are independent
integer-valued random variables.

At the level of cycle counts,
\[
        G_L^{(\kappa)}
        =
        \sum_{j\ge1}
        j\,N_{L,j}^{\mathrm{eff},(\kappa)},
        \qquad
        B_L^{(\kappa)}
        =
        \sum_{j\ge1}
        j\,N_{L,j}^{\mathrm{bg},(\kappa)},
\]
where
\[
        N_{L,j}^{\mathrm{eff},(\kappa)}
        \sim
        \operatorname{Poisson}
        \left(
            \frac{e^{-\kappa j/V_L}}{j}
            q_{L,j}^{\mathrm{eff}}
        \right),
\]
and
\[
        N_{L,j}^{\mathrm{bg},(\kappa)}
        \sim
        \operatorname{Poisson}
        \left(
            \frac{e^{-\kappa j/V_L}}{j}
            q_{L,j}^{\mathrm{bg}}
        \right),
\]
independently over \(j\) and over the two parts.

The  mean and variance values of the total number of particles in the background  are
\[
        \mathbf E_L^{(\kappa)}
        \left[
            \frac{B_L^{(\kappa)}}{V_L}
        \right]
        =
        \frac1{V_L}
        \sum_{j\ge1}
        e^{-\kappa j/V_L}
        q_{L,j}^{\mathrm{bg}}
        =
        m_{L,\mathrm{bg}}^{(\kappa)},
\]
and
\[
        \operatorname{Var}_L^{(\kappa)}
        \left(
            \frac{B_L^{(\kappa)}}{V_L}
        \right)
        =
        \frac1{V_L^2}
        \sum_{j\ge1}
        j\,e^{-\kappa j/V_L}
        q_{L,j}^{\mathrm{bg}}
        =
        v_{L,\mathrm{bg}}^{(\kappa)}.
\]
These are exactly the quantities appearing in~\cref{ass:background-density-concentration}.

Combining the decomposition with~\cref{lem:canonical-conditioning}, we obtain
the part-resolved canonical representation
\begin{equation}\label{eq:can}
        \left(
            \Xi_{L,\mathrm{eff}},
            \Xi_{L,\mathrm{bg}}
        \right)
        \stackrel{d}{=}
        \left(
            \Pi_{L,\mathrm{eff}}^{(\kappa)},
            \Pi_{L,\mathrm{bg}}^{(\kappa)}
        \right)
        \,|\,
        \{G_L^{(\kappa)}+B_L^{(\kappa)}=N_L\}.
\end{equation}

\subsection{Convergence of the unconditioned effective Poisson process}
\label{subsec:proof-mark-unconditioned-convergence}

We next prove the unconditioned convergence of the effective Poisson process $\Pi_{L,\mathrm{eff}}^{(\kappa)}$ under the assumptions in~\cref{sec:limit-objects-main-results}.

For \(h\in\mathcal H\), define the finite-volume effective Laplace exponent
\[
        A_L^{(\kappa)}(h)
        :=
        \int_E
        \left(1-e^{-h(u,x,m)}\right)
        \nu_{L,\mathrm{eff}}^{(\kappa)}(du,dx,dm).
\]
In expanded form,
\[
\begin{aligned}
        A_L^{(\kappa)}(h)
        &=
        \sum_{j\ge1}
        \frac{e^{-\kappa j/V_L}}{j}
        \int_0^1
        \int_{\mathsf M}
        \left(
            1-e^{-h(u,j/V_L,m)}
        \right)
        \mu_{L,j}^{\mathrm{eff}}(dm)\,du .
\end{aligned}
\]
We will show that the limiting exponent is
\[
        A^{(\kappa)}(h)
        :=
        \int_E
        \left(1-e^{-h(u,x,m)}\right)
        \nu^{(\kappa)}(du,dx,dm),
\]
where $\nu^{(\kappa)}$ is defined in~\eqref{defnu}. In expanded form,
\[
        A^{(\kappa)}(h)
        =
        \int_0^\infty
        e^{-\kappa x}\frac{dx}{x}
        \int_0^1
        \int_{\mathsf M}
        \left(1-e^{-h(u,x,m)}\right)
        \eta_x(dm)\,du .
\]
We remark that $\nu^{(\kappa)}$ and $\eta_x$ are well-defined under~\cref{ass:limiting-effective-kernel}.
\begin{proposition}[Unconditioned effective Poisson convergence]
\label{prop:unconditioned-effective-poisson-convergence}
Assume~\cref{ass:limiting-effective-kernel} and
\cref{ass:effective-marked-trace-convergence}.  Then, for every \(\kappa>0\)
and every \(h\in\mathcal H\),
\[
        A_L^{(\kappa)}(h)
        \longrightarrow
        A^{(\kappa)}(h).
        \]
Therefore, 
\[
        \Pi_{L,\mathrm{eff}}^{(\kappa)}
        \Longrightarrow
        \Pi^{(\kappa)}
        \qquad
        \text{in }\mathcal N_\ell(E),
\]
where $  \Pi^{(\kappa)}
        \sim
        \operatorname{PPP}\bigl(\nu^{(\kappa)}\bigr)$.        
\end{proposition}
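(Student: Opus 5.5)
Fix $h\in\mathcal H$ supported in the window $[0,1]\times[\delta,R]\times\mathsf M$, with $0<\delta<R<\infty$. Set
\[
F(u,x,m):=e^{-\kappa x}\bigl(1-e^{-h(u,x,m)}\bigr),
\]
which is bounded and continuous on $[0,1]\times[\delta,R]\times\mathsf M$. The plan has two steps: first show that the Riemann sum defining $A_L^{(\kappa)}(h)$ converges to the integral defining $A^{(\kappa)}(h)$, then invoke the Laplace-functional characterisation of Poisson limits in $\mathcal N_\ell(E)$.

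\emph{Step 1: rewrite the sum as an integral.} Only $j$ with $j/V_L\in[\delta,R]$ contribute. Writing $\frac1j=\frac1{V_L}\cdot\frac{1}{j/V_L}$, I would express
\[
A_L^{(\kappa)}(h)=\int_{\delta'}^{R'}\frac{1}{x_L(x)}\,\Phi_L\bigl(x_L(x)\bigr)\,dx+o(1),
\]
where $x_L(x)=\lfloor xV_L\rfloor/V_L$ and
\[
\Phi_L(y)=\int_0^1\!\int_{\mathsf M}F(u,y,m)\,\mu^{\mathrm{eff}}_{L,\lfloor yV_L\rfloor}(dm)\,du .
\]
The boundary terms (the $O(1)$ indices at the window edges) are $O(C_{\mathrm{eff}}\|F\|_\infty/(\delta V_L))\to0$ by the uniform bound $C_{\mathrm{eff}}$ in Assumption~\ref{ass:effective-marked-trace-convergence}. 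More cleanly, integrate over $x\in[\delta,R]$ the step function $x\mapsto \frac{1}{x_L(x)}\Phi_L(x)$; the mismatch between $F(u,x_L(x),m)$ and $F(u,x,m)$ is controlled by uniform continuity of $F$ in $x$. The subtle point is that this is uniform in $m$ only on compacts, and $\mu^{\mathrm{eff}}$ need not be tight.

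\emph{Step 2: avoid the tightness issue and pass to the limit.} The cleanest way around this is to apply Assumption~\ref{ass:effective-marked-trace-convergence} directly to the function $G(u,x,m):=x^{-1}F(u,x,m)$, which is in $C_b([0,1]\times[\delta,R]\times\mathsf M)$, evaluated at the grid points. The identity $\mu^{\mathrm{eff}}_{L,\lfloor xV_L\rfloor}=\mu^{\mathrm{eff}}_{L,j}$ for $x\in[j/V_L,(j+1)/V_L)$ gives
\[
A_L^{(\kappa)}(h)=\int_{[\delta,R]}\int_0^1\!\int_{\mathsf M} G\bigl(u,x_L(x),m\bigr)\,\mu^{\mathrm{eff}}_{L,\lfloor xV_L\rfloor}(dm)\,du\,dx+o(1).
\]
Replacing $x_L(x)$ by $x$ inside $G$ is the main obstacle, since it needs equicontinuity in $x$ uniformly in $m$. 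To handle it, I would use Assumption~\ref{ass:effective-marked-trace-convergence} in the form $\sup_{x}\bigl|\int G(\cdot,x,\cdot)\,d\mu_{L,\lfloor xV_L\rfloor}-\int G(\cdot,x,\cdot)\,d\eta_x\bigr|\to0$, and compare $G(u,x_L(x),m)$ with $G(u,x,m)$ as follows. Write
\[
G(u,x_L(x),m)=G(u,x,m)+\bigl[G(u,x_L(x),m)-G(u,x,m)\bigr].
\]
Since $F$ factorises as $e^{-\kappa x}x^{-1}$ times $(1-e^{-h})$, and the smooth factor is Lipschitz in $x$ on $[\delta,R]$, the error from the smooth factor is $O(V_L^{-1})\cdot C_{\mathrm{eff}}$. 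The remaining error comes from $h(u,x_L,m)-h(u,x,m)$. Here I would invoke weak continuity of $\eta_x$ (Assumption~\ref{ass:limiting-effective-kernel}(1)) together with tightness of the compact family $\{\eta_x\}_{x\in[\delta,R]}$: the map $x\mapsto\eta_x$ is continuous from a compact set, so its image is weakly compact and hence tight by Prokhorov. Convergence of $\mu_{L,\cdot}$ to $\eta$ on test functions then transfers tightness, because weak convergence with convergent masses to a tight family implies asymptotic tightness. On a compact $K\subset\mathsf M$, $h$ is uniformly continuous, so the error is small. Finally, dominated convergence in $x$ yields $A_L^{(\kappa)}(h)\to A^{(\kappa)}(h)$.

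\emph{Step 3: the Poisson conclusion.} For a Poisson process,
\[
\mathbf E\, e^{-\langle h,\Pi\rangle}=e^{-A(h)},
\]
so convergence of $A_L^{(\kappa)}(h)$ for all $h\in\mathcal H$ gives convergence of the Laplace functionals. By the standard criterion for boundedly finite random measures (Kallenberg), with the bornology generated by the length windows, this yields $\Pi^{(\kappa)}_{L,\mathrm{eff}}\Rightarrow\Pi^{(\kappa)}$ in $\mathcal N_\ell(E)$. The limit is well defined since $\nu^{(\kappa)}$ is finite on each window.
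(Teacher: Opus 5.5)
Your proposal is correct and follows the paper's argument: you rewrite $A_L^{(\kappa)}(h)$ as a Riemann sum over $j/V_L\in[\delta,R]$, apply the uniform marked trace convergence of \cref{ass:effective-marked-trace-convergence}, and conclude through convergence of the Poisson Laplace functionals. The one difference is in the discretization step. The paper applies the assumption at the grid points $x=j/V_L$ themselves, where $\lfloor xV_L\rfloor=j$ exactly, so it never has to move the $x$-argument under $\mu^{\mathrm{eff}}_{L,j}$. That makes your asymptotic-tightness transfer for the finite-volume marks unnecessary. The Riemann sum then only needs continuity of $x\mapsto x^{-1}e^{-\kappa x}\int_0^1\!\int_{\mathsf M}\bigl(1-e^{-h(u,x,m)}\bigr)\,\eta_x(dm)\,du$, which follows from the tightness of $\{\eta_x\}_{x\in[\delta,R]}$ that you already observed.
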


\begin{proof}
Fix \(h\in\mathcal H\).  There exists \(0<\delta<M<\infty\) such that
\[
        h(u,x,m)=0
        \qquad
        \text{for }x\notin[\delta,M].
\]
Then only indices with \(j/V_L\in[\delta,M]\) contribute to
\(A_L^{(\kappa)}(h)\).

Set
\[
        F_h(u,x,m)
        :=
        1-e^{-h(u,x,m)}.
\]
Then \(F_h\) is bounded and continuous on
\([0,1]\times[\delta,M]\times\mathsf M\).  By~\cref{ass:effective-marked-trace-convergence},
\[
        \sup_{x\in[\delta,M]}
        \Bigg|
        \int_0^1
        \int_{\mathsf M}
        F_h(u,x,m)\,
        \mu_{L,\lfloor xV_L\rfloor}^{\mathrm{eff}}(dm)\,du
        -
        \int_0^1
        \int_{\mathsf M}
        F_h(u,x,m)\,
        \eta_x(dm)\,du
        \Bigg|
        \longrightarrow0 .
\]

For \(x_{L,j}=j/V_L\), we have
\(\lfloor x_{L,j}V_L\rfloor=j\).  Hence
\[
\begin{aligned}
        A_L^{(\kappa)}(h)
        &=
        \sum_{\delta V_L\le j\le M V_L}
        \frac{e^{-\kappa x_{L,j}}}{j}
        \int_0^1
        \int_{\mathsf M}
        F_h(u,x_{L,j},m)
        \mu_{L,j}^{\mathrm{eff}}(dm)\,du
        \\
        &=
        \frac1{V_L}
        \sum_{\delta V_L\le j\le M V_L}
        \frac{e^{-\kappa x_{L,j}}}{x_{L,j}}
        \int_0^1
        \int_{\mathsf M}
        F_h(u,x_{L,j},m)
        \mu_{L,j}^{\mathrm{eff}}(dm)\,du .
\end{aligned}
\]
The preceding uniform convergence and the ordinary Riemann-sum convergence on
\([\delta,M]\) imply
\[
        A_L^{(\kappa)}(h)
        \longrightarrow
        \int_\delta^M
        e^{-\kappa x}\frac{dx}{x}
        \int_0^1
        \int_{\mathsf M}
        F_h(u,x,m)\,\eta_x(dm)\,du .
\]
Since \(h\) vanishes outside \([\delta,M]\), the right-hand side is precisely
\(A^{(\kappa)}(h)\).

Therefore
\[
        \mathbf E_L^{(\kappa)}
        \exp\left\{
            -\langle h,\Pi_{L,\mathrm{eff}}^{(\kappa)}\rangle
        \right\} =
        \exp\{-A_L^{(\kappa)}(h)\}
        \longrightarrow
        \exp\{-A^{(\kappa)}(h)\}.
\]
The right-hand side is the Laplace functional of the Poisson point process
\(\Pi^{(\kappa)}\).
Standard convergence of Poisson point processes on finite-measure windows,
together with the definition of the length-bounded topology, yields
\[
        \Pi_{L,\mathrm{eff}}^{(\kappa)}
        \Longrightarrow
        \Pi^{(\kappa)}
        \qquad
        \text{in }\mathcal N_\ell(E).
\]
\end{proof}

\subsection{Effective local limit theorem}
\label{subsec:proof-effective-local-limit}

 Fix \(\kappa>0\) and \(h\in\mathcal H\).     Define the \(h\)-tilted
finite-volume effective intensity by
\[
        \nu_{L,\mathrm{eff},h}^{(\kappa)}(du,dx,dm)
        :=
        e^{-h(u,x,m)}
        \nu_{L,\mathrm{eff}}^{(\kappa)}(du,dx,dm).
\]
Let $    \Pi_{L,\mathrm{eff},h}^{(\kappa)}
        \sim
        \operatorname{PPP}
        \bigl(
            \nu_{L,\mathrm{eff},h}^{(\kappa)}
        \bigr)$,
and define its particle number by
\[
        G_{L,h}^{(\kappa)}
        :=
        V_L
        \int_E x\,
        \Pi_{L,\mathrm{eff},h}^{(\kappa)}(du,dx,dm).
        \]
        In this subsection, we will prove a local limit theorem for  $G_{L,h}^{(\kappa)}$ (\cref{thm:effective-h-tilted-LLT}). This result is crucial in the proof of the main result.
        
Set
\[
        q_{L,j}^{\mathrm{eff},h}
        :=
        \int_0^1
        \int_{\mathsf M}
        e^{-h(u,j/V_L,m)}
        \mu_{L,j}^{\mathrm{eff}}(dm)\,du .
\]
Then the characteristic function of \(G_{L,h}^{(\kappa)}/V_L\) is
\[
        \varphi_{L,h}^{(\kappa)}(t)
        :=
        \mathbf E
        \exp\left\{
            it\,\frac{G_{L,h}^{(\kappa)}}{V_L}
        \right\}
        =
        \exp\left\{
            \psi_{L,h}^{(\kappa)}(t)
        \right\},
        \qquad t\in\mathbb R,
\]
where
\[
        \psi_{L,h}^{(\kappa)}(t)
        =
        \int_E
        \left(e^{itx}-1\right)
        e^{-h(u,x,m)}
        \nu_{L,\mathrm{eff}}^{(\kappa)}(du,dx,dm).
        \]
By using the notation $q_{L,j}^{\mathrm{eff},h}$, we can write
\[
        \psi_{L,h}^{(\kappa)}(t)
        =
        \sum_{j\ge1}
        \frac{e^{-\kappa j/V_L}}{j}
        q_{L,j}^{\mathrm{eff},h}
        \left(
            e^{itj/V_L}-1
        \right).
\]

We also introduce the corresponding limiting \(h\)-tilted quantities.  Define
\[
        \nu_h^{(\kappa)}(du,dx,dm)
        :=
        e^{-h(u,x,m)}
        \nu^{(\kappa)}(du,dx,dm),
\]
and let
$        \Pi_h^{(\kappa)}
        \sim
        \operatorname{PPP}
        \bigl(
            \nu_h^{(\kappa)}
        \bigr)$.
Its total mass is
\[
        T_h^{(\kappa)}
        :=
        \int_E x\,
        \Pi_h^{(\kappa)}(du,dx,dm).
\]
The characteristic exponent of \(T_h^{(\kappa)}\) is
\[
        \psi_h^{(\kappa)}(t)
        :=
        \int_E
        \left(e^{itx}-1\right)
        e^{-h(u,x,m)}
        \nu^{(\kappa)}(du,dx,dm),
\]
and    $ \varphi_h^{(\kappa)}(t)
        :=
        \exp\left\{
            \psi_h^{(\kappa)}(t)
        \right\}$.
Whenever \(T_h^{(\kappa)}\) has a density, we denote it by $ f_h^{(\kappa)}$.
For \(h=0\), this agrees with the density
\(f_0^{(\kappa)}\) introduced in~\cref{ass:limiting-effective-density}.

\subsubsection{Technical Estimates in Absolute case}
We first establish some useful estimates in the Absolute case (A) under the~\cref{ass:effective-spectral-LLT-criterion}.
The proof is based on Fourier inversion for the
lattice \(V_L^{-1}\mathbb Z\).  The compact part of the Fourier integral is
controlled by the marked trace convergence~\cref{ass:effective-marked-trace-convergence}, while the large-frequency part is
controlled by the spectral criterion in~\cref{ass:effective-spectral-LLT-criterion}. See Petrov~\cite{Petrov1975} 
for the standard lattice local limit
estimates for sums of independent integer-valued random variables.

\begin{lemma}[Compact convergence of Fourier exponents]
\label{lem:compact-fourier-convergence}
Assume~\cref{ass:limiting-effective-kernel} and~\cref{ass:effective-marked-trace-convergence}.  Then, for every
\(A<\infty\),
\[
        \sup_{|t|\le A}
        \left|
            \psi_{L,h}^{(\kappa)}(t)
            -
            \psi_h^{(\kappa)}(t)
        \right|
        \longrightarrow0.
\]
Consequently,
\[
        \sup_{|t|\le A}
        \left|
            \varphi_{L,h}^{(\kappa)}(t)
            -
            \varphi_h^{(\kappa)}(t)
        \right|
        \longrightarrow0.
\]
\end{lemma}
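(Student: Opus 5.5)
The plan is to remove the tilt $h$ by writing $e^{-h}=1-(1-e^{-h})$. This splits the exponent as
\[
\psi_{L,h}^{(\kappa)}(t)=\psi_{L,0}^{(\kappa)}(t)-\int_E (e^{itx}-1)\bigl(1-e^{-h(u,x,m)}\bigr)\,\nu_{L,\mathrm{eff}}^{(\kappa)}(du,dx,dm),
\]
and $\psi_h^{(\kappa)}$ splits in the same way with $\nu^{(\kappa)}$ in place of $\nu_{L,\mathrm{eff}}^{(\kappa)}$.

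\emph{The $h$-dependent term.} Since $1-e^{-h}$ is supported in a window $[\delta,M]$, this term is handled exactly as in the proof of \cref{prop:unconditioned-effective-poisson-convergence}. Apply \cref{ass:effective-marked-trace-convergence} to $F_t(u,x,m):=(e^{itx}-1)(1-e^{-h(u,x,m)})$, taking real and imaginary parts separately. The $t$-dependence needs care: the family $\{F_t\}_{|t|\le A}$ is uniformly bounded and equicontinuous in $t$, because $|F_t-F_s|\le |t-s|M$ on the window.

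\begin{itemize}
\item Pick a finite $\epsilon$-net $\{t_k\}$ of $[-A,A]$.
\item Apply the assumption at each $t_k$.
\item Use the uniform bound $C_{\mathrm{eff}}\sup_L\frac1{V_L}\sum_{\delta V_L\le j\le MV_L}\frac1{x_{L,j}}<\infty$ to interpolate between net points.
\end{itemize}

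Together with the Riemann-sum convergence on $[\delta,M]$, this gives convergence uniformly in $|t|\le A$.

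\emph{The untilted scalar term.} Here $\psi_{L,0}^{(\kappa)}(t)=\sum_j \frac{e^{-\kappa j/V_L}}{j}q^{\mathrm{eff}}_{L,j}(e^{itj/V_L}-1)$. This involves all lengths, so I cut at $0<\delta'<R'<\infty$.

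\begin{itemize}
\item \emph{Small lengths.} Use $|e^{itx}-1|\le A x$ and $q^{\mathrm{eff}}_{L,j}\le C_{\mathrm{eff}}$. The contribution of $j\le \delta' V_L$ is then at most $C_{\mathrm{eff}}A\,V_L^{-1}\#\{j\le\delta'V_L\}\le C_{\mathrm{eff}}A\delta'$. The limit counterpart is at most $A\Sigma([0,\infty))\delta'$, because $\phi\le\Sigma([0,\infty))$ by \cref{ass:limiting-effective-kernel}(2).
\item \emph{Large lengths.} Use $|e^{itx}-1|\le 2$. The contribution of $j\ge R'V_L$ is at most $2C_{\mathrm{eff}}\sum_{j\ge R'V_L}e^{-\kappa j/V_L}/j$. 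This tends to $2C_{\mathrm{eff}}\int_{R'}^\infty e^{-\kappa x}dx/x$, which is small for large $R'$; the limit counterpart is bounded in the same way.
\item \emph{Middle window $[\delta',R']$.} Use the scalar convergence \eqref{cvgq}, which holds locally uniformly. Combine it with the Riemann-sum convergence for $g_t(x)=(e^{itx}-1)e^{-\kappa x}/x$. This family has $x$-derivative bounded uniformly over $|t|\le A$ and $x\in[\delta',R']$, so the Riemann error is $O(V_L^{-1})$ uniformly in $t$.
\end{itemize}

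Letting $L\to\infty$ and then $\delta'\downarrow0$, $R'\uparrow\infty$ gives uniform convergence of the exponents.

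\emph{From exponents to characteristic functions.} Note that $\mathrm{Re}\,\psi\le0$ for both exponents, since $\mathrm{Re}(e^{itx}-1)\le0$ and the intensities are positive. Hence $|e^{a}-e^{b}|\le|a-b|$ whenever $\mathrm{Re}\,a,\mathrm{Re}\,b\le0$, and the uniform convergence of $\varphi_{L,h}^{(\kappa)}$ follows directly.

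The main obstacle is the small-length tail. It is the only place where the length-bounded convergence assumptions give no information, and it is controlled solely by the uniform trace bound $C_{\mathrm{eff}}$ together with the factor $x$ coming from $|e^{itx}-1|$. Uniformity in $t$ in the middle window is routine by comparison.
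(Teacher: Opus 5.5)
Your proof is correct and is essentially the paper's argument. It uses the same three-region split with the uniform bound $C_{\mathrm{eff}}$ controlling the short-length tail by $A\,C_{\mathrm{eff}}\delta'$ and the long-length tail by $2C_{\mathrm{eff}}\int_{R'}^\infty e^{-\kappa x}\,dx/x$, and on the middle window the trace convergence plus Riemann sums, made uniform in $|t|\le A$ by equicontinuity. The differences are cosmetic: you first peel off the $h$-part via $e^{-h}=1-(1-e^{-h})$ so that the marked assumption is used only on the support window of $h$, whereas the paper applies it directly to $(e^{itx}-1)e^{-h}$ on an enlarged window; and your bound $|e^{a}-e^{b}|\le|a-b|$ for $\operatorname{Re}a,\operatorname{Re}b\le0$ makes the final exponentiation step slightly cleaner than the paper's appeal to local boundedness.
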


\begin{proof}
Fix \(A<\infty\) and \(h\in\mathcal H\).  Choose \(0<\delta<M<\infty\) such that \(h(u,x,m)=0\) for \(x\notin[\delta,M]\).
Define \(g_t(u,x,m)=(e^{itx}-1)e^{-h(u,x,m)}\); then \(g_t\) is bounded and, on \([0,1]\times[\delta,M]\times\mathsf M\), the family \(\{g_t:|t|\le A\}\) is bounded and equicontinuous.
We split \(\psi_{L,h}^{(\kappa)}(t)=\sum_{j\ge1}\frac{e^{-\kappa j/V_L}}{j}q_{L,j}^{\mathrm{eff},h}(e^{itj/V_L}-1)\) into three parts:
\[
\psi_{L,h}^{(\kappa)}(t)=\underbrace{\sum_{j< \delta V_L}}_{(\mathrm{I})}
+\underbrace{\sum_{\delta V_L\le j\le M V_L}}_{(\mathrm{II})}
+\underbrace{\sum_{j> M V_L}}_{(\mathrm{III})}.
\]
For \(j<\delta V_L\), \(|e^{itj/V_L}-1|\le |t|j/V_L\le A j/V_L\) and \(q_{L,j}^{\mathrm{eff},h}\le q_{L,j}^{\mathrm{eff}}\le C_{\mathrm{eff}}\). Hence
\[
|(\mathrm{I})|\le A C_{\mathrm{eff}}\frac1{V_L}\sum_{j<\delta V_L}1 \le A C_{\mathrm{eff}}\delta .
\]
For \(j> M V_L\), \(|e^{itj/V_L}-1|\le2\), so
\[
|(\mathrm{III})|\le 2C_{\mathrm{eff}}\sum_{j> M V_L}\frac{e^{-\kappa j/V_L}}{j}
\le \mathrm{const}\cdot\int_{M}^{\infty} e^{-\kappa x}\frac{dx}{x}.
\]
The same bounds hold for the corresponding parts of \(\psi_h^{(\kappa)}(t)\) using~\cref{ass:limiting-effective-kernel}.
Thus both tails can be made arbitrarily small, uniformly in \(L\) and \(|t|\le A\), by choosing \(\delta\) small and \(M\) large.

Rewrite (II) as a Riemann sum:
\[
(\mathrm{II})=\frac1{V_L}\sum_{\delta V_L\le j\le M V_L}\frac{e^{-\kappa x_{L,j}}}{x_{L,j}}
\int_0^1\!\int_{\mathsf M} g_t(u,x_{L,j},m)\,\mu_{L,j}^{\mathrm{eff}}(dm)\,du ,
\quad x_{L,j}=j/V_L .
\]
By the equicontinuity of \(\{g_t\}_{|t|\le A}\) and the marked trace convergence
\cref{ass:effective-marked-trace-convergence}, we have, uniformly for \(|t|\le A\),
\[
\sup_{x\in[\delta,M]}\Bigl|\int_0^1\!\int_{\mathsf M} g_t(u,x,m)\mu_{L,\lfloor xV_L\rfloor}^{\mathrm{eff}}(dm)du
-\int_0^1\!\int_{\mathsf M} g_t(u,x,m)\eta_x(dm)du\Bigr|\to0 .
\]
Moreover, \(x\mapsto e^{-\kappa x}/x\) is continuous on \([\delta,M]\). Hence, by standard Riemann-sum convergence,
\[
(\mathrm{II})\longrightarrow \int_{\delta}^{M} e^{-\kappa x}\frac{dx}{x}\int_0^1\!\int_{\mathsf M} g_t(u,x,m)\eta_x(dm)du
\]
uniformly for \(|t|\le A\).
Combining the tail estimates with the convergence of (II) proves the uniform convergence of \(\psi_{L,h}^{(\kappa)}\) to \(\psi_h^{(\kappa)}\) on \([-A,A]\). Since the exponents are locally bounded, exponentiation yields the same for the characteristic functions.
\end{proof}


\begin{lemma}[Fourier tail control under the absolute condition]
\label{lem:fourier-tail-control-A}
Assume~\cref{ass:limiting-effective-kernel},
\cref{ass:effective-marked-trace-convergence}, and assume that
condition \(\mathrm{(A)}\) in
\cref{ass:effective-spectral-LLT-criterion} holds.  Then, for every
\(\kappa>0\) and every \(h\in\mathcal H\),
\[
        \lim_{A\to\infty}
        \limsup_{L\to\infty}
        \int_{A<|t|\le \pi V_L}
        \left|
            \varphi_{L,h}^{(\kappa)}(t)
        \right|\,dt
        =
        0.
\]
Moreover, the limiting characteristic function
\(\varphi_h^{(\kappa)}\) belongs to \(L^1(\mathbb R)\). Hence
\(T_h^{(\kappa)}\) has a bounded continuous density
\(f_h^{(\kappa)}\), given by
\[
        f_h^{(\kappa)}(a)
        =
        \frac1{2\pi}
        \int_{\mathbb R}
        e^{-ita}
        \varphi_h^{(\kappa)}(t)\,dt .
\]
\end{lemma}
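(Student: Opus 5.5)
The plan is to bound $|\varphi_{L,h}^{(\kappa)}(t)|=\exp\{\operatorname{Re}\psi_{L,h}^{(\kappa)}(t)\}$ from above by a power-law decay in $|t|$, uniformly in $L$, for $A<|t|\le\pi V_L$. Since $h\ge0$ and is supported in a length window $[\delta_h,R_h]$, we have $q^{\mathrm{eff},h}_{L,j}=q^{\mathrm{eff}}_{L,j}$ for $j/V_L\notin[\delta_h,R_h]$, and $0\le q^{\mathrm{eff}}_{L,j}-q^{\mathrm{eff},h}_{L,j}\le C_{\mathrm{eff}}$ always. Hence
\[
\operatorname{Re}\psi_{L,h}^{(\kappa)}(t)\le -\sum_{j\ge1}\frac{e^{-\kappa j/V_L}}{j}q^{\mathrm{eff}}_{L,j}\bigl(1-\cos(tj/V_L)\bigr)+2C_{\mathrm{eff}}\sum_{\delta_hV_L\le j\le R_hV_L}\frac1j .
\]
The last term is bounded by $2C_{\mathrm{eff}}\log(R_h/\delta_h)+o(1)$, a constant independent of $t$. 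It therefore suffices to treat $h=0$, at the cost of a multiplicative constant.

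For $h=0$ I insert representation (A):
\[
\sum_j\frac{e^{-\kappa j/V_L}}{j}q^{\mathrm{eff}}_{L,j}\bigl(1-\cos(tj/V_L)\bigr)=\int\Sigma_L(d\lambda)\,\operatorname{Re}\log\frac{1-z_\lambda e^{i\tau}}{1-z_\lambda},
\]
with $z_\lambda=e^{-(\kappa+\beta\lambda)/V_L}$ and $\tau=t/V_L\in[-\pi,\pi]$. This uses $\sum_j z^j(1-\cos j\tau)/j=\log|1-ze^{i\tau}|-\log(1-z)$. Since $|1-ze^{i\tau}|\ge c|\tau|$ for $|\tau|\le\pi$ and $z\in(0,1)$ (it is comparable to $(1-z)+|\tau|$), each integrand is at least
\[
\log\bigl(c|\tau|\bigr)-\log\bigl(1-z_\lambda\bigr)\ge \log\bigl(c|t|\bigr)-\log(\kappa+\beta\lambda).
\]
Here I used $1-z\le(\kappa+\beta\lambda)/V_L$. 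Integrating against $\Sigma_L$ gives
\[
-\operatorname{Re}\psi_{L,0}\ge\Theta_L\log(c|t|)-\int\log(1+\kappa+\beta\lambda)\,\Sigma_L(d\lambda).
\]
Conditions (iii) and (iv) then yield $|\varphi_{L,h}^{(\kappa)}(t)|\le C|t|^{-\Theta_*}$ for $A<|t|\le\pi V_L$, with $C$ uniform in large $L$. Because $\Theta_*>1$, $\int_{|t|>A}|t|^{-\Theta_*}dt\to0$ as $A\to\infty$, which gives the first claim.

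The main obstacle is making the lower bound $|1-ze^{i\tau}|\gtrsim|\tau|$ uniform near $\tau=0$ and $\tau=\pm\pi$, together with the sign bookkeeping when $\kappa+\beta\lambda<1$. I handle both by writing $\log(\kappa+\beta\lambda)\le\log(1+\kappa+\beta\lambda)$, and by noting that for $|t|>A\ge1$ the bound $\log(c|t|)$ is positive once $A$ is large.

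For the limit, the same computation with $\Sigma$ applies. By Lemma~\ref{lem:compact-fourier-convergence} and Fatou's lemma, the pointwise limit $\varphi_h^{(\kappa)}$ inherits $|\varphi_h^{(\kappa)}(t)|\le C|t|^{-\Theta_*}$. Note that $\Sigma([0,\infty))\ge\Theta_*$ follows from weak convergence implied by (ii) together with (i), or directly from the limit of the bound. Thus $\varphi_h^{(\kappa)}\in L^1$, since it is bounded by $1$ near the origin. Fourier inversion then gives the bounded continuous density formula.
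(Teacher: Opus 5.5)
Your proposal is correct and follows essentially the paper's route. You reduce to $h=0$ via the bounded comparison on the window $[\delta_h,R_h]$, use the closed form $\sum_j z^j(1-\cos j\tau)/j=\log|1-ze^{i\tau}|-\log(1-z)$ under (A) to get a $|t|^{-\Theta_*}$ bound from (iii)--(iv), and pass this bound pointwise to $\varphi_h^{(\kappa)}$ via \cref{lem:compact-fourier-convergence} before Fourier inversion. Your estimate $|1-ze^{i\tau}|\ge c|\tau|$ combined with $1-z_\lambda\le(\kappa+\beta\lambda)/V_L$ is a slightly cleaner replacement for the paper's case split $\alpha\le V_L$ versus $\alpha>V_L$. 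One aside is inaccurate but unused: $\Sigma([0,\infty))\ge\Theta_*$ does not follow from (i) and (ii) alone, since mass of $\Sigma_L$ can escape to $\lambda=\infty$ and tightness comes from (iv); the pointwise-limit argument you also give does not need it.
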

\begin{proof}
Since
\[
        \Re\psi_{L,h}^{(\kappa)}(t)
        =
        -\sum_{j\ge1}
        \frac{e^{-\kappa j/V_L}}{j}
        q_{L,j}^{\mathrm{eff},h}
        \left(
            1-\cos(tj/V_L)
        \right),
\]
we have
\[
        \left|
        \varphi_{L,h}^{(\kappa)}(t)
        \right|
        =
        \exp\left\{
        -
        \sum_{j\ge1}
        \frac{e^{-\kappa j/V_L}}{j}
        q_{L,j}^{\mathrm{eff},h}
        \left(
            1-\cos(tj/V_L)
        \right)
        \right\}.
\]

We first compare the \(h\)-tilted characteristic function with the untilted
one.  Since \(h\in\mathcal H\), there exist \(0<\delta<M<\infty\) such that
\[
        h(u,x,m)=0
        \qquad
        \text{for }x\notin[\delta,M].
\]
Consequently,
\[
        q_{L,j}^{\mathrm{eff},h}
        =
        q_{L,j}^{\mathrm{eff}}
        \qquad
        \text{whenever }j/V_L\notin[\delta,M].
\]
Moreover \(h\) is bounded, and \(q_{L,j}^{\mathrm{eff}}\) is uniformly
bounded by $C_{\mathrm{eff}}$. Hence
\[
\begin{aligned}
        \log
        \frac{
            |\varphi_{L,h}^{(\kappa)}(t)|
        }{
            |\varphi_{L,0}^{(\kappa)}(t)|
        }
        &=
        -\sum_{j\ge1}
        \frac{e^{-\kappa j/V_L}}{j}
        \left(
            q_{L,j}^{\mathrm{eff},h}
            -
            q_{L,j}^{\mathrm{eff}}
        \right)
        \left(
            1-\cos(tj/V_L)
        \right)
        \\
        &\le
        2
        \sum_{\delta V_L\le j\le MV_L}
        \frac{e^{-\kappa j/V_L}}{j}
        \left|
            q_{L,j}^{\mathrm{eff},h}
            -
            q_{L,j}^{\mathrm{eff}}
        \right|
        \le 2C_{\mathrm{eff}}\log(M/\delta).
\end{aligned}
\]
Thus, uniformly for \(|t|\le \pi V_L\),
\[
        \left|
        \varphi_{L,h}^{(\kappa)}(t)
        \right|
        \le
        C_{\kappa,h}
        \left|
        \varphi_{L,0}^{(\kappa)}(t)
        \right|.
\]
It remains to prove an integrable polynomial bound for
\(\varphi_{L,0}^{(\kappa)}\).

Under condition \(\mathrm{(A)}\), we have
\[
        q_{L,j}^{\mathrm{eff}}
        =
        \int_{[0,\infty)}
        e^{-\beta(j/V_L)\lambda}
        \Sigma_L(d\lambda).
\]
Therefore
\[
\begin{aligned}
        -\log
        \left|
        \varphi_{L,0}^{(\kappa)}(t)
        \right|
        &=
        \sum_{j\ge1}
        \frac{e^{-\kappa j/V_L}}{j}
        q_{L,j}^{\mathrm{eff}}
        \left(
            1-\cos(tj/V_L)
        \right)
        \\
        &=
        \int_{[0,\infty)}
        \sum_{j\ge1}
        \frac{
            e^{-(\kappa+\beta\lambda)j/V_L}
        }{j}
        \left(
            1-\cos(tj/V_L)
        \right)
        \Sigma_L(d\lambda).
\end{aligned}
\]
For \(\alpha>0\), set
\[
        S_{L,\alpha}(t)
        :=
        \sum_{j\ge1}
        \frac{e^{-\alpha j/V_L}}{j}
        \left(
            1-\cos(tj/V_L)
        \right).
\]
We use the following elementary estimate: there exists a universal constant
\(C<\infty\) such that, for all \(L\), all \(\alpha>0\), and all
\(|t|\le \pi V_L\),
\[
        S_{L,\alpha}(t)
        \ge
        \log(1+|t|)
        -
        C\log(1+\alpha)
        -
        C .
\]
Indeed,  let
\[
        r=e^{-\alpha/V_L},
        \qquad
        \theta=t/V_L,
\]
then by Taylor expansion,
\[
\begin{aligned}
        S_{L,\alpha}(t)
        &=
        \sum_{j\ge1}
        \frac{r^j}{j}
        \left(
            1-\cos(j\theta)
        \right)
        \\
        &=
        -\log(1-r)
        +
        \frac12
        \log
        \left(
            1-2r\cos\theta+r^2
        \right)
        \\
        &=
        \frac12
        \log
        \left(
            1+
            \frac{
                2r(1-\cos\theta)
            }{
                (1-r)^2
            }
        \right).
\end{aligned}
\]
If \(\alpha\le V_L\), then \(1-r\le \alpha/V_L\), \(r\ge e^{-1}\), and
\(1-\cos\theta\ge 2(\theta/\pi)^2\) for \(|\theta|\le\pi\). Hence, there exists positive constants $c$ and $C$ independent of $\alpha$, $L$ and $t$, such that
\[
        S_{L,\alpha}(t)
        \ge
        \frac12
        \log
        \left(
            1+c\frac{t^2}{\alpha^2}
        \right)
        \ge
        \log(1+|t|)
        -
        C\log(1+\alpha)
        -
        C .
\]
If \(\alpha>V_L\), then \(|t|\le \pi V_L\) implies
\[
        1+|t|
        \le
        1+\pi V_L
        \le
        1+\pi\alpha
        \le
        (1+\pi)(1+\alpha).
\]
Hence
\[
        \log(1+|t|)
        \le
        \log(1+\alpha)+\log(1+\pi).
\]
Thus, we can choose $C>1$, then
\[
        \log(1+|t|)
        -
        C\log(1+\alpha)
        -
        C
        \le 0 .
\]
Since \(S_{L,\alpha}(t)\ge0\), we obtain
\[
        S_{L,\alpha}(t)
        \ge
        \log(1+|t|)
        -
        C\log(1+\alpha)
        -
        C .
\]
 This proves the claim.

Applying this estimate with
\[
        \alpha=\kappa+\beta\lambda
\]
gives
\[
\begin{aligned}
        -\log
        \left|
        \varphi_{L,0}^{(\kappa)}(t)
        \right|
        &\ge
        \int_{[0,\infty)}
        \left[
            \log(1+|t|)
            -
            C\log(1+\kappa+\beta\lambda)
            -
            C
        \right]
        \Sigma_L(d\lambda)
        \\
        &=
        \Theta_L\log(1+|t|)
        -
        C
        \int_{[0,\infty)}
        \log(1+\kappa+\beta\lambda)\,
        \Sigma_L(d\lambda)
        -
        C\Theta_L ,
\end{aligned}
\]
where
\[
        \Theta_L:=\Sigma_L([0,\infty)).
\]
By condition \(\mathrm{(A)}\), for all sufficiently large \(L\),
\[
        \Theta_L\ge \Theta_*>1,
\]
and the logarithmic moment condition gives
\[
        \sup_L
        \int_{[0,\infty)}
        \log(1+\kappa+\beta\lambda)\,
        \Sigma_L(d\lambda)
        <\infty .
\]
Since \(\kappa>0\), this logarithmic moment bound also controls
\(\Theta_L\), because
\[
        \log(1+\kappa+\beta\lambda)
        \ge
        \log(1+\kappa)>0.
\]
Thus there exists \(C_\kappa<\infty\) such that, for all sufficiently large
\(L\) and all \(|t|\le\pi V_L\),
\[
        -\log
        \left|
        \varphi_{L,0}^{(\kappa)}(t)
        \right|
        \ge
        \Theta_*
        \log(1+|t|)
        -
        C_\kappa .
\]
Combining this with the comparison estimate between the tilted and untilted
characteristic functions yields
\begin{equation}\label{tailbound}
        \left|
        \varphi_{L,h}^{(\kappa)}(t)
        \right|
        \le
        C_{\kappa,h}
        (1+|t|)^{-\Theta_*},
        \qquad |t|\le\pi V_L ,
\end{equation}
for all sufficiently large \(L\).  Since \(\Theta_*>1\), the function
\((1+|t|)^{-\Theta_*}\) is integrable on \(\mathbb R\). Therefore
\[
\begin{aligned}
        \lim_{A\to\infty}
        \limsup_{L\to\infty}
        \int_{A<|t|\le\pi V_L}
        \left|
            \varphi_{L,h}^{(\kappa)}(t)
        \right|\,dt
        &\le
        C_{\kappa,h}
        \lim_{A\to\infty}
        \int_{|t|>A}
        (1+|t|)^{-\Theta_*}\,dt
        \\
        &=0 .
\end{aligned}
\]
This proves the Fourier tail estimate.

It remains to prove that the limiting characteristic function belongs to
\(L^1(\mathbb R)\).  Fix \(t\in\mathbb R\).  For all sufficiently large \(L\),
we have \(|t|\le\pi V_L\), and therefore
\[
        \left|
        \varphi_{L,h}^{(\kappa)}(t)
        \right|
        \le
        C_{\kappa,h}
        (1+|t|)^{-\Theta_*}.
\]
By \cref{lem:compact-fourier-convergence},
\[
        \varphi_{L,h}^{(\kappa)}(t)
        \longrightarrow
        \varphi_h^{(\kappa)}(t)
\]
locally uniformly, and hence pointwise.  Passing to the limit gives
\[
        \left|
        \varphi_h^{(\kappa)}(t)
        \right|
        \le
        C_{\kappa,h}
        (1+|t|)^{-\Theta_*}.
\]
Since \(\Theta_*>1\), this bound is integrable. Thus
\[
        \varphi_h^{(\kappa)}\in L^1(\mathbb R).
\]
By the Fourier inversion theorem, \(T_h^{(\kappa)}\) has a bounded continuous
density given by
\[
        f_h^{(\kappa)}(a)
        =
        \frac1{2\pi}
        \int_{\mathbb R}
        e^{-ita}
        \varphi_h^{(\kappa)}(t)\,dt .
\]
This completes the proof.
\end{proof}

\subsubsection{Technical Estimates in Critical finite-type case}
Now we prove two useful results in the critical finite-type case.

\begin{lemma}[Untilted Finite-type estimate]
\label{lem:finite-type-core-estimate}
Assume condition \(\mathrm{(B)}\) in
\cref{ass:effective-spectral-LLT-criterion}.  Set
\[
        \alpha_{L,r}:=\kappa+\beta\lambda_{L,r},
        \qquad
        \alpha_r:=\kappa+\beta\lambda_r .
\]
Then, for \(h=0\),
\[
        G_{L,0}^{(\kappa)}
        \stackrel{d}{=}
        \sum_{r=1}^R Y_{L,r},
\]
where the \(Y_{L,r}\)'s are independent negative-binomial random variables. 
Moreover,
\[
        \sup_L\sup_{n\ge0}
        V_L
        \mathbf P
        \left(
            G_{L,0}^{(\kappa)}=n
        \right)
        <\infty ,
\]
and, uniformly for \(n/V_L\) in compact subsets of \((0,\infty)\),
\[
        V_L
        \mathbf P
        \left(
            G_{L,0}^{(\kappa)}=n
        \right)
  -
        f_0^{(\kappa)}
        \left(
            \frac n{V_L}
        \right)\to 0,
\]
where
$       f_0^{(\kappa)}
        =
        g_1*\cdots*g_R$ and $g_r$ is the Gamma density
\[
        g_r(x)
        =
        \frac{\alpha_r^{\theta_r}}{\Gamma(\theta_r)}
        x^{\theta_r-1}e^{-\alpha_r x}
        \mathbf 1_{\{x>0\}} .
\]
In particular, since
$       \sum_{r=1}^R\theta_r=1$,
the density \(f_0^{(\kappa)}\) is locally bounded on \([0,\infty)\) and
continuous on \((0,\infty)\).
\end{lemma}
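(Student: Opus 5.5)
The plan is to exploit the exact finite-type structure of $q_{L,j}^{\mathrm{eff}}$ under condition (B) (the lemma writes the number of types as $R$; it is the same as $Q$ in (B)). First, for $h=0$,
\[
\Pi_{L,\mathrm{eff}}^{(\kappa)}=\sum_{r}\Pi_{L,r}^{(\kappa)},
\]
a superposition of independent Poisson processes. Component $r$ puts $\mathrm{Poisson}\bigl(\theta_r z_{L,r}^j/j\bigr)$ cycles of length $j$, where $z_{L,r}:=e^{-\alpha_{L,r}/V_L}$. The probability generating function of $Y_{L,r}:=\sum_j jN^{(r)}_{L,j}$ is
\[
\exp\Bigl\{\theta_r\sum_j \frac{z_{L,r}^j(s^j-1)}{j}\Bigr\}=\Bigl(\frac{1-z_{L,r}}{1-z_{L,r}s}\Bigr)^{\theta_r},
\]
which is the negative binomial law $\mathrm{NB}(\theta_r,z_{L,r})$. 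Its point masses are
\[
p_{L,r}(n)=\frac{\Gamma(n+\theta_r)}{\Gamma(\theta_r)\,n!}\,(1-z_{L,r})^{\theta_r}z_{L,r}^n .
\]
Since $V_L(1-z_{L,r})\to\alpha_r$ (using $\lambda_{L,r}\to\lambda_r$), Stirling's formula $\Gamma(n+\theta)/n!\sim n^{\theta-1}$ gives $V_Lp_{L,r}(\lfloor xV_L\rfloor)\to g_r(x)$ locally uniformly on $(0,\infty)$. It also gives the uniform bound
\[
V_Lp_{L,r}(n)\le C\,(n/V_L+V_L^{-1})^{\theta_r-1}e^{-c\,n/V_L},
\]
using $\Gamma(n+\theta)/(\Gamma(\theta) n!)\le C(n+1)^{\theta-1}$ for $\theta\in(0,1]$.

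Next comes the discrete convolution. We have $V_L\mathbf P(G=n)=V_L\sum_{n_1+\dots+n_R=n}\prod_r p_{L,r}(n_r)$, which is a Riemann sum of $g_1*\cdots*g_R$ on the simplex $\{x_1+\dots+x_R=n/V_L\}$ with mesh $V_L^{-1}$. For the uniform bound, insert the majorants: the sum is dominated by $C\,\frac1{V_L^{R-1}}\sum\prod_r(y_r+V_L^{-1})^{\theta_r-1}$ over the lattice simplex. Since each exponent $\theta_r-1\in(-1,0]$, the monotone decrease of these factors lets me compare each lattice cell with the integral over the adjacent cell. This yields a Dirichlet-type integral bounded by $C\,(n/V_L+V_L^{-1})^{\sum\theta_r-1}$. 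By criticality $\sum\theta_r=1$, this is bounded uniformly in $n$ and $L$, giving $\sup_L\sup_n V_L\mathbf P(G=n)<\infty$. This is the only place $\Theta=1$ is used, and it is exactly why the density $f_0^{(\kappa)}$ is locally bounded on $[0,\infty)$: for $\Theta=1$ the Dirichlet integral $\int_{\Delta}\prod x_r^{\theta_r-1}$ is constant in the endpoint.

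For the local limit, fix $n/V_L\in[a,b]\subset(0,\infty)$ and a small $\epsilon>0$. Split the simplex sum into the region where all $n_r\ge\epsilon V_L$ and its complement. On the bulk region, the uniform local convergence of each $V_Lp_{L,r}$, together with Riemann-sum convergence for the continuous integrand, gives convergence to the bulk part of $g_1*\cdots*g_R(n/V_L)$, uniformly in $n$. On the boundary strips where some $n_r<\epsilon V_L$, the majorant computation above (restricted to $y_r<\epsilon$) gives a bound $C\epsilon^{\theta_r}\to0$, uniformly in $L$ and in $n/V_L\in[a,b]$. The same bound holds for the limiting integral. Letting $L\to\infty$ and then $\epsilon\to0$ proves the stated convergence. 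Continuity of $f_0^{(\kappa)}$ on $(0,\infty)$ follows from dominated convergence with the same integrable majorant.

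The main obstacle is the boundary-strip control when several $\theta_r<1$ produce integrable singularities at the faces of the simplex. I expect to handle it entirely through the monotone majorant and the exact Beta/Dirichlet integral identity, reducing everything to $\int_{\Delta}\prod_r x_r^{\theta_r-1}\,dx<\infty$. The case $R=1$, $\theta_1=1$ is trivial (a geometric law), so I treat $R\ge2$ in the argument above.
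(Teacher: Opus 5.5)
Your proposal is correct and follows essentially the same route as the paper: the generating-function factorization into independent negative-binomial laws with parameters $(\theta_r, e^{-\alpha_{L,r}/V_L})$, Stirling's formula for both the local convergence and the majorant $V_L^{\theta_r}\mathbf P(Y_{L,r}=n)(1+n)^{1-\theta_r}\le C$ (your bound rescaled), and a Beta/Dirichlet convolution estimate in which $\sum_r\theta_r=1$ makes the exponent vanish. Your bulk/boundary-strip splitting with the $C\epsilon^{\theta_r}$ bound makes explicit the step the paper summarizes as ``the same finite convolution argument''. One technical caution: because the last coordinate on the simplex is determined by the others, the cell-by-cell comparison is cleanest when done as successive two-fold convolutions, which is how the paper organizes it.
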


\begin{proof}
For \(h=0\), the probability generating function is
\[
        \mathbf E z^{G_{L,0}^{(\kappa)}}
        =
        \exp\left\{
        \sum_{j\ge1}
        \frac{e^{-\kappa j/V_L}}{j}
        q_{L,j}^{\mathrm{eff}}
        (z^j-1)
        \right\}                                                 
        =
        \prod_{r=1}^R
        \exp\left\{
        \theta_r
        \sum_{j\ge1}
        \frac{e^{-\alpha_{L,r}j/V_L}}{j}
        (z^j-1)
        \right\}.
\]
Since
\[
        \sum_{j\ge1}\frac{a^jz^j}{j}
        =
        -\log(1-az),
\]
we obtain
\[
        \mathbf E z^{G_{L,0}^{(\kappa)}}
        =
        \prod_{r=1}^R
        \left(
            \frac{1-a_{L,r}}{1-a_{L,r}z}
        \right)^{\theta_r},
        \qquad
        a_{L,r}:=e^{-\alpha_{L,r}/V_L}.
\]
This proves the negative-binomial decomposition. Moreover,
\[
        \mathbf P(Y_{L,r}=n)
        =
        \frac{\Gamma(n+\theta_r)}
             {\Gamma(\theta_r)\Gamma(n+1)}
        (1-a_{L,r})^{\theta_r}
        a_{L,r}^{\,n}.
\]

By Stirling's formula, uniformly for \(n/V_L\) in compact subsets of
\((0,\infty)\),
\[
        V_L
        \mathbf P(Y_{L,r}=n)
    -
        g_r\left(\frac n{V_L}\right)\to 0.
\]
The same Stirling estimate, with \(n=0\) treated separately, gives
\[
        \sup_L\sup_{n\ge0}
        V_L^{\theta_r}
        \mathbf P(Y_{L,r}=n)
        (1+n)^{1-\theta_r}
        <\infty .
\]
Using repeatedly the elementary discrete beta-convolution estimate
\[
        \sum_{k=0}^{n}
        (1+k)^{\alpha-1}
        (1+n-k)^{\beta-1}
        \le
        C_{\alpha,\beta}
        (1+n)^{\alpha+\beta-1},
        \qquad \alpha,\beta>0,
\]
we obtain
\[
        \sup_L\sup_{n\ge0}
        V_L
        \mathbf P(G_{L,0}^{(\kappa)}=n)
        <\infty ,
\]
because \(\sum_r\theta_r=1\).
The same finite convolution argument, combined with the one-dimensional
local limit convergence of each \(Y_{L,r}\), yields
\[
        V_L
        \mathbf P(G_{L,0}^{(\kappa)}=n)
-
        f_0^{(\kappa)}(n/V_L)\to 0
\]
uniformly for \(n/V_L\) in compact subsets of \((0,\infty)\).
The regularity of \(f_0^{(\kappa)}\) follows from the fact that it is a finite
convolution of Gamma densities and that the total shape parameter equals
\(1\).
\end{proof}

\begin{lemma}[Compact thinning and deconvolution transfer]
\label{lem:compact-thinning-deconvolution-transfer}
Assume condition \(\mathrm{(B)}\) in
\cref{ass:effective-spectral-LLT-criterion}, and assume the marked trace
convergence in \cref{ass:effective-marked-trace-convergence}.  Let
\(h\in\mathcal H\), with \(h\ge0\).  Then there exists a finite signed
compound-exponential measure \(\rho^{(h)}\) on \([0,\infty)\) such that
\[
        f_h^{(\kappa)}(a)
        =
        \int_{[0,\infty)}
        f_0^{(\kappa)}(a-y)\,\rho^{(h)}(dy),
        \qquad a>0,
\]
where \(f_0^{(\kappa)}\) is extended by zero to \((-\infty,0)\).  Moreover,
for every compact interval \(K\subset(0,\infty)\),
\[
        \sup_{\substack{n\in\mathbb N:\\ n/V_L\in K}}
        \left|
            V_L\mathbf P(G_{L,h}^{(\kappa)}=n)
            -
            f_h^{(\kappa)}
            \left(\frac n{V_L}\right)
        \right|
        \longrightarrow0 .
\]
\end{lemma}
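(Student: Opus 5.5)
The idea is to treat the $h$-tilt as a compound-Poisson thinning of the untilted process, and then to transfer the untilted local limit of \cref{lem:finite-type-core-estimate} by a finite convolution.

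\textbf{Decomposition.} Since $0\le e^{-h}\le1$, write
\[
\nu_{L,\mathrm{eff},h}^{(\kappa)}=\nu_{L,\mathrm{eff}}^{(\kappa)}-(1-e^{-h})\,\nu_{L,\mathrm{eff}}^{(\kappa)} .
\]
The subtracted measure $\tau_L:=(1-e^{-h})\nu_{L,\mathrm{eff}}^{(\kappa)}$ is supported in the window $x\in[\delta,M]$. Its total mass is bounded by $C_{\mathrm{eff}}\log(M/\delta)+o(1)$, uniformly in $L$.

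At the level of generating functions this gives
\[
\mathbf E z^{G_{L,h}^{(\kappa)}}=\mathbf E z^{G_{L,0}^{(\kappa)}}\exp\Big\{-\sum_j \tau_{L,j}(z^j-1)\Big\},\qquad \tau_{L,j}:=\tfrac{e^{-\kappa j/V_L}}{j}\int\!\!\int(1-e^{-h})\,d\mu^{\mathrm{eff}}_{L,j}\,du .
\]
Expanding the second factor as
\[
e^{\|\tau_L\|}\sum_{k\ge0}\frac{(-1)^k}{k!}\,(\tau_L)^{*k}
\]
(powers taken as convolutions of the measures on the lattice) exhibits the law of $G_{L,h}^{(\kappa)}$ as $\mathbf P(G_{L,0}^{(\kappa)}=\cdot)*\rho_L^{(h)}$. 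Here $\rho_L^{(h)}$ is a finite signed "compound exponential" measure on $V_L^{-1}\mathbb Z_{\ge0}$, with total variation at most $e^{2\|\tau_L\|}$, which is bounded uniformly in $L$.

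Define $\rho^{(h)}$ in the same way from the limiting measure $\tau:=(1-e^{-h})\nu^{(\kappa)}$, projected onto the $x$-coordinate. The identity $\varphi_h^{(\kappa)}=\varphi_0^{(\kappa)}\cdot\widehat{\rho^{(h)}}$ holds at the level of characteristic functions. It yields $T_h^{(\kappa)}\stackrel{d}{=}$ law$(T_0^{(\kappa)})*\rho^{(h)}$, and hence the density formula $f_h^{(\kappa)}=f_0^{(\kappa)}*\rho^{(h)}$, using that $f_0^{(\kappa)}$ exists by \cref{lem:finite-type-core-estimate}.

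\textbf{Convergence of the convolution factors.} By \cref{ass:effective-marked-trace-convergence}, applied to $F=1-e^{-h}$, together with the Riemann-sum argument of \cref{prop:unconditioned-effective-poisson-convergence}, the length projections of $\tau_L$ converge weakly to that of $\tau$, with convergent total masses. Since the convolution powers are finite measures on $[k\delta,kM]$ and the exponential series converges uniformly in total variation, this gives $\rho_L^{(h)}\Rightarrow\rho^{(h)}$ weakly, with uniformly bounded total variation and uniformly summable tails in $k$.

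\textbf{Local limit transfer.} Write
\[
V_L\mathbf P(G_{L,h}^{(\kappa)}=n)=\sum_{y}V_L\mathbf P(G_{L,0}^{(\kappa)}=n-V_Ly)\,\rho_L^{(h)}(\{y\}) ,
\]
and split the sum according to whether $n/V_L-y$ lies in a compact set $[\epsilon,R']$ or near $0$. On the compact part, \cref{lem:finite-type-core-estimate} replaces the lattice probability by $f_0^{(\kappa)}(n/V_L-y)+o(1)$ uniformly. The uniform bound $\sup V_L\mathbf P(G_{L,0}^{(\kappa)}=\cdot)<\infty$ controls the region $n/V_L-y<\epsilon$ through the mass $|\rho_L^{(h)}|([n/V_L-\epsilon,n/V_L])$. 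This is the main obstacle: the mass must be small uniformly in $L$ and $n$, and $\rho_L$ has atoms. I would handle it by noting that each term $(\tau_L)^{*k}$, $k\ge1$, has a bounded density with respect to $V_L^{-1}$ times counting measure (because $\tau_{L,j}\le C/(\delta V_L)$), so no interval of length $\epsilon$ carries more than $C_k\epsilon$. The $k=0$ term is the atom at $0$, which only matters when $n/V_L<\epsilon$, and this is excluded since $n/V_L\in K$.

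Finally, $f_0^{(\kappa)}$ is continuous on $(0,\infty)$ and locally bounded at $0$, where it is bounded by a constant because $\sum\theta_r=1$. The compact part therefore converges to $\int f_0^{(\kappa)}(a-y)\,\rho^{(h)}(dy)$ uniformly for $a\in K$, by the weak convergence of $\rho_L^{(h)}$ plus equicontinuity. The possible discontinuity of $f_0^{(\kappa)}$ at $0$ is absorbed by the same $\epsilon$-smallness bound.
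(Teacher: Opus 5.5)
Your proposal is correct and follows essentially the same route as the paper. Both arguments use the thinning factorization of the generating function of $G_{L,h}^{(\kappa)}$ and the signed compound-exponential measure $\rho_L^{(h)}$, written as an exponential series of convolution powers. Both then prove weak convergence to $\rho^{(h)}$ and transfer the untilted local limit of the finite-type estimate through the discrete convolution, with an $\varepsilon$-cutoff near the boundary.

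The differences are minor. You control the boundary region by an explicit lattice bound: the point masses of $\tau_L^{*k}$ are at most $\|\tau_L\|^{k-1}C_{\mathrm{eff}}/(\delta V_L)$, which gives an $O(\varepsilon+V_L^{-1})$ estimate. The paper instead uses weak convergence of the dominating positive measures to a limit with no atoms in $(0,\infty)$. You also identify $f_h^{(\kappa)}$ through characteristic functions, where the paper uses Laplace transforms.
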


\begin{proof}
Choose \(0<\delta<M<\infty\) such that
$       h(u,x,m)=0$
for $x\notin[\delta,M]$.
Set
\[
        q_h(x)
        :=
        \int_0^1
        \int_{\mathsf M}
        e^{-h(u,x,m)}\,\eta_x(dm)\,du .
\]
Then \(q_h(x)=\phi(x)\) for \(x\notin[\delta,M]\), and, since
\(h\ge0\),
$       0\le q_h(x)\le \phi(x)$.
By the marked trace convergence,
\[
        q_{L,\lfloor xV_L\rfloor}^{\mathrm{eff}}
        \longrightarrow \phi(x),
        \qquad
        q_{L,\lfloor xV_L\rfloor}^{\mathrm{eff},h}
        \longrightarrow q_h(x),
\]
uniformly for \(x\in[\delta,M]\).

Since \(h\ge0\), the \(h\)-tilted effective intensity is a sub-intensity
of the untilted effective intensity. Hence the Poisson thinning
construction gives a coupling under which
\[
        G_{L,0}^{(\kappa)}
        =
        G_{L,h}^{(\kappa)}
        +
        \Delta_L^{(h)},
\]
where \(G_{L,h}^{(\kappa)}\) is independent of the deleted mass
\(\Delta_L^{(h)}\). Define
\[
        d_{L,j}^{(h)}
        :=
        \frac{e^{-\kappa j/V_L}}{j}
        \left(
            q_{L,j}^{\mathrm{eff}}
            -
            q_{L,j}^{\mathrm{eff},h}
        \right).
\]
Then \(d_{L,j}^{(h)}\ge0\), and \(d_{L,j}^{(h)}=0\) unless
\(j/V_L\in[\delta,M]\). The probability generating function of the
deleted mass is therefore
\[
        \mathbf E z^{\Delta_L^{(h)}}
        =
        \exp
        \left\{
            \sum_{j\ge1}
            d_{L,j}^{(h)}(z^j-1)
        \right\}.
\]

The preceding decomposition gives
\[
        F_{L,0}(z)
        =
        F_{L,h}(z)F_{L,\Delta}(z),
\]
and hence
\[
        F_{L,h}(z)
        =
        F_{L,0}(z)F_{L,\Delta}(z)^{-1}.
\]
Define coefficients \(\rho_L^{(h)}(k)\) by
\[
        \sum_{k\ge0}\rho_L^{(h)}(k)z^k
        =
        \exp
        \left\{
            -
            \sum_{j\ge1}
            d_{L,j}^{(h)}(z^j-1)
        \right\}.
\]
Comparing coefficients yields the exact identity
\[
        \mathbf P(G_{L,h}^{(\kappa)}=n)
        =
        \sum_{k\ge0}
        \mathbf P(G_{L,0}^{(\kappa)}=n-k)
        \rho_L^{(h)}(k),
\]
where the summand is understood to be zero if \(n-k<0\).

Next set
\[
        \lambda_L^{(h)}
        :=
        \sum_{j\ge1}
        d_{L,j}^{(h)}\delta_{j/V_L}.
\]
For every continuous function \(g\) on \([0,\infty)\),
\[
\begin{aligned}
        \int g\,d\lambda_L^{(h)}
        &=
        \sum_{\delta V_L\le j\le M V_L}
        g\left(\frac j{V_L}\right)
        \frac{e^{-\kappa j/V_L}}{j}
        \left(
            q_{L,j}^{\mathrm{eff}}
            -
            q_{L,j}^{\mathrm{eff},h}
        \right)                                                     \\
        &=
        \frac1{V_L}
        \sum_{\delta V_L\le j\le M V_L}
        g\left(\frac j{V_L}\right)
        e^{-\kappa j/V_L}
        \left(
            q_{L,j}^{\mathrm{eff}}
            -
            q_{L,j}^{\mathrm{eff},h}
        \right)
        \frac{1}{j/V_L}.
\end{aligned}
\]
By the uniform convergence on \([\delta,M]\), this Riemann sum converges to
\[
        \int_\delta^M
        g(x)e^{-\kappa x}
        \bigl(\phi(x)-q_h(x)\bigr)\frac{dx}{x}.
\]
Thus
\[
        \lambda_L^{(h)}
        \Rightarrow
        \lambda^{(h)},
        \qquad
        \lambda^{(h)}(dx)
        =
        e^{-\kappa x}
        \bigl(\phi(x)-q_h(x)\bigr)
        \frac{dx}{x}\mathbf 1_{[\delta,M]}(x).
\]
In particular, \(\lambda^{(h)}\) is finite, absolutely continuous and
compactly supported.

Let \(\rho^{(h)}\) be the finite signed measure on \([0,\infty)\) defined
by
\[
        \int_{[0,\infty)}e^{-sy}\rho^{(h)}(dy)
        =
        \exp
        \left\{
            -
            \int_{[0,\infty)}
            (e^{-sx}-1)\lambda^{(h)}(dx)
        \right\}.
\]
Equivalently,
\[
        \int_{[0,\infty)}e^{-sy}\rho^{(h)}(dy)
        =
        \exp
        \left\{
            \int_\delta^M
            e^{-\kappa x}
            \bigl(q_h(x)-\phi(x)\bigr)
            (e^{-sx}-1)
            \frac{dx}{x}
        \right\}.
\]
Since \(\lambda_L^{(h)}\Rightarrow\lambda^{(h)}\) and
\(\lambda_L^{(h)}([0,\infty))\to\lambda^{(h)}([0,\infty))\), the
exponential-series representation of compound-exponential measures gives
\[
        \sum_{k\ge0}\rho_L^{(h)}(k)\delta_{k/V_L}
        \Rightarrow
        \rho^{(h)}
\]
weakly as finite signed measures.

We prove the corresponding total-variation control. Since
\(d_{L,j}^{(h)}\) is supported on \(j/V_L\in[\delta,M]\), and the
quantities
\(q_{L,j}^{\mathrm{eff}}-q_{L,j}^{\mathrm{eff},h}\) are uniformly bounded
on this range for all large \(L\), one has
\[
        \sup_L\sum_{j\ge1}d_{L,j}^{(h)}<\infty.
\]
Moreover,
\[
        \sum_{k\ge0}\rho_L^{(h)}(k)\delta_{k/V_L}
        =
        \exp\{\lambda_L^{(h)}([0,\infty))\}
        \sum_{\ell=0}^{\infty}
        \frac{(-1)^\ell}{\ell!}
        \bigl(\lambda_L^{(h)}\bigr)^{*\ell}.
\]
Hence its total variation is dominated by the positive measure
\[
        \exp\{\lambda_L^{(h)}([0,\infty))\}
        \sum_{\ell=0}^{\infty}
        \frac{1}{\ell!}
        \bigl(\lambda_L^{(h)}\bigr)^{*\ell}.
\]
These dominating positive measures have uniformly bounded total mass and
converge weakly to
\[
        \exp\{\lambda^{(h)}([0,\infty))\}
        \sum_{\ell=0}^{\infty}
        \frac{1}{\ell!}
        \bigl(\lambda^{(h)}\bigr)^{*\ell}.
\]
The limiting measure has no atoms in \((0,\infty)\), because
\(\lambda^{(h)}\) is absolutely continuous. Therefore, for every compact
\(K\subset(0,\infty)\),
\[
        \lim_{\varepsilon\downarrow0}
        \limsup_{L\to\infty}
        \sup_{a\in K}
        \left|
        \sum_{\substack{k\ge0:\\ |a-k/V_L|\le\varepsilon}}
        \rho_L^{(h)}(k)
        \right|
        =
        0.
\]
Indeed, the absolute value is bounded by the corresponding mass of the
dominating positive measure, and the last assertion follows from weak
convergence and the non-atomicity of the limiting measure on \(K\).

Now fix \(a_L=n/V_L\in K\). By the coefficient identity,
\[
        V_L\mathbf P(G_{L,h}^{(\kappa)}=n)
        =
        \sum_{k\ge0}
        V_L\mathbf P(G_{L,0}^{(\kappa)}=n-k)
        \rho_L^{(h)}(k).
\]
By Lemma 6.6, under condition \(\mathrm{(B)}\),
\[
        \sup_L\sup_{m\ge0}
        V_L\mathbf P(G_{L,0}^{(\kappa)}=m)<\infty,
\]
and
\[
        V_L\mathbf P(G_{L,0}^{(\kappa)}=m)
        =
        f_0^{(\kappa)}(m/V_L)+o(1)
\]
uniformly when \(m/V_L\) ranges in compact subsets of \((0,\infty)\).
In particular, \(f_0^{(\kappa)}\) is bounded on \((0,\infty)\), after
extension by zero to \((-\infty,0)\).

Let \(\varepsilon>0\) be small enough that
\(\varepsilon<\inf K/2\). On the set
$       a_L-k/V_L\ge\varepsilon$,
the variable \((n-k)/V_L\) ranges in a compact subset of
\((0,\infty)\), so the preceding local limit theorem applies uniformly.
On the set $a_L-\frac{k}{V_L}\le-\varepsilon$,
we have \(k>n\) for all large \(L\), and hence both
\[
        \mathbf P(G_{L,0}^{(\kappa)}=n-k)
        \quad\text{and}\quad
        f_0^{(\kappa)}
        \left(a_L-\frac{k}{V_L}\right)
\]
are zero. The remaining boundary region
$       \left|a_L-k/V_L\right|<\varepsilon$
is negligible uniformly in \(a_L\in K\), by the total-variation control
above and the uniform boundedness just stated. Therefore,
\[
        V_L\mathbf P(G_{L,h}^{(\kappa)}=n)
        =
        \sum_{k\ge0}
        f_0^{(\kappa)}
        \left(a_L-\frac{k}{V_L}\right)
        \rho_L^{(h)}(k)
        +o(1),
\]
uniformly for \(a_L\in K\).

It remains to pass from the last discrete signed convolution to its
limit. Away from the boundary set \(y=a_L\), the functions
\(y\mapsto f_0^{(\kappa)}(a_L-y)\) are uniformly continuous for
\(a_L\in K\), because \(f_0^{(\kappa)}\) is continuous on compact
subsets of \((0,\infty)\). The same boundary estimate as above removes
the region \(|a_L-y|<\varepsilon\), and the weak convergence of the signed
measures gives
\[
        \sum_{k\ge0}
        f_0^{(\kappa)}
        \left(a_L-\frac{k}{V_L}\right)
        \rho_L^{(h)}(k)
        =
        \int_{[0,\infty)}
        f_0^{(\kappa)}(a_L-y)\rho^{(h)}(dy)
        +o(1),
\]
uniformly for \(a_L\in K\). Consequently,
\[
        V_L\mathbf P(G_{L,h}^{(\kappa)}=n)
        =
        \int_{[0,\infty)}
        f_0^{(\kappa)}
        \left(\frac n{V_L}-y\right)
        \rho^{(h)}(dy)
        +o(1),
\]
uniformly for \(n/V_L\in K\).

Finally we identify the limiting density. Define, for \(a>0\),
\[
        \widetilde f_h^{(\kappa)}(a)
        :=
        \int_{[0,\infty)}
        f_0^{(\kappa)}(a-y)\rho^{(h)}(dy),
\]
where \(f_0^{(\kappa)}\) is extended by zero to \((-\infty,0)\). Taking
Laplace transforms and using the definition of \(\rho^{(h)}\), we obtain
\[
\begin{aligned}
        \int_0^\infty e^{-sa}\widetilde f_h^{(\kappa)}(a)\,da
        &=
        \exp
        \left\{
            \int_0^\infty
            e^{-\kappa x}
            \phi(x)
            (e^{-sx}-1)
            \frac{dx}{x}
        \right\}                                                    \\
        &\quad\times
        \exp
        \left\{
            \int_\delta^M
            e^{-\kappa x}
            \bigl(q_h(x)-\phi(x)\bigr)
            (e^{-sx}-1)
            \frac{dx}{x}
        \right\}.
\end{aligned}
\]
Since \(q_h(x)=\phi(x)\) for \(x\notin[\delta,M]\), the right-hand side is
\[
        \exp
        \left\{
            \int_0^\infty
            e^{-\kappa x}
            q_h(x)
            (e^{-sx}-1)
            \frac{dx}{x}
        \right\}.
\]
This is precisely the Laplace transform of the limiting total mass
\(T_h^{(\kappa)}\). Hence
$       \widetilde f_h^{(\kappa)}=f_h^{(\kappa)}$.
Therefore
\[
        V_L\mathbf P(G_{L,h}^{(\kappa)}=n)
        =
        f_h^{(\kappa)}
        \left(\frac n{V_L}\right)
        +o(1),
\]
uniformly for \(n/V_L\in K\). The convolution representation of
\(f_h^{(\kappa)}\) follows from the definition of
\(\widetilde f_h^{(\kappa)}\). This proves the lemma.
\end{proof}

\subsubsection{Effective local limit theorems}
We now gather all the useful results concerning the effective part, which will be invoked in the proof of the main theorems.
\begin{lemma}[Effective lattice bound]
\label{lem:effective-lattice-bound}
Assume~\cref{ass:limiting-effective-kernel},
\cref{ass:effective-marked-trace-convergence}, and
\cref{ass:effective-spectral-LLT-criterion}. Then, for every
\(\kappa>0\) and every \(h\in\mathcal H\),
\[
        \limsup_{L\to\infty}
        V_L
        \sup_{n\ge0}
        \mathbf P
        \left(
            G_{L,h}^{(\kappa)}=n
        \right)
        <\infty .
\]
Consequently,
\[
        \sup_L\sup_{n\ge0}
        V_L
        \mathbf E_L^{(\kappa)}
        \left[
            e^{-\langle h,\Pi_{L,\mathrm{eff}}^{(\kappa)}\rangle}
            \mathbf 1_{\{G_L^{(\kappa)}=n\}}
        \right]
        <\infty .
\]
\end{lemma}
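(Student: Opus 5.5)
The bound will be proved separately under the two alternatives of \cref{ass:effective-spectral-LLT-criterion}, and the second assertion will then be deduced from the first.

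\emph{Case (A).} Since $G_{L,h}^{(\kappa)}$ is integer-valued, Fourier inversion on the lattice $V_L^{-1}\mathbb Z$ gives, for every $n\ge0$,
\[
  V_L\,\mathbf P\bigl(G_{L,h}^{(\kappa)}=n\bigr)
  =\frac1{2\pi}\int_{-\pi V_L}^{\pi V_L} e^{-itn/V_L}\,\varphi_{L,h}^{(\kappa)}(t)\,dt .
\]
It follows that
\[
  V_L\sup_n\mathbf P\bigl(G_{L,h}^{(\kappa)}=n\bigr)\le\frac1{2\pi}\int_{|t|\le\pi V_L}\bigl|\varphi_{L,h}^{(\kappa)}(t)\bigr|\,dt .
\]
The tail bound \eqref{tailbound} from the proof of \cref{lem:fourier-tail-control-A} gives $|\varphi_{L,h}^{(\kappa)}(t)|\le C_{\kappa,h}(1+|t|)^{-\Theta_*}$ on $|t|\le\pi V_L$ for all large $L$. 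Since $\Theta_*>1$, this majorant is integrable uniformly in $L$, and the $\limsup$ is finite.

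\emph{Case (B).} The Fourier route fails here because $\Theta=1$ yields only a $(1+|t|)^{-1}$ decay. I would therefore work directly with probabilities.
\begin{enumerate}
\item For $h\ge0$ (which is the case, since $h\in\mathcal H$ is non-negative), use the thinning coupling from the proof of \cref{lem:compact-thinning-deconvolution-transfer}: $G_{L,0}^{(\kappa)}=G_{L,h}^{(\kappa)}+\Delta_L^{(h)}$, with the two summands independent.
\item Independence gives $\mathbf P(G_{L,0}^{(\kappa)}=n)\ge\mathbf P(G_{L,h}^{(\kappa)}=n)\,\mathbf P(\Delta_L^{(h)}=0)$.
\item Here $\mathbf P(\Delta_L^{(h)}=0)=\exp\{-\sum_j d_{L,j}^{(h)}\}$. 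This is bounded below uniformly in $L$, because $d_{L,j}^{(h)}\le C_{\mathrm{eff}}/j$ is supported on $\delta V_L\le j\le MV_L$, so $\sum_j d_{L,j}^{(h)}\le C_{\mathrm{eff}}(\log(M/\delta)+1)$.
\item Combining these with the uniform bound $\sup_L\sup_n V_L\mathbf P(G_{L,0}^{(\kappa)}=n)<\infty$ of \cref{lem:finite-type-core-estimate} gives the claim.
\end{enumerate}
The same domination trick also works in case (A) as a cross-check. The main point needing care is exactly this use of non-negativity of $h$ to avoid the missing $L^1$ Fourier bound in the critical case.

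For the second assertion, compute on each $L$ using the Poisson tilting (Campbell/Mecke exponential change of measure):
\[
  \mathbf E_L^{(\kappa)}\!\left[e^{-\langle h,\Pi_{L,\mathrm{eff}}^{(\kappa)}\rangle}\mathbf 1_{\{G_L^{(\kappa)}=n\}}\right]
  = e^{-A_L^{(\kappa)}(h)}\,\mathbf P\bigl(G_{L,h}^{(\kappa)}=n\bigr).
\]
This holds because reweighting a PPP by $e^{-\langle h,\cdot\rangle}$ yields the PPP with intensity $e^{-h}\nu_{L,\mathrm{eff}}^{(\kappa)}$, normalized by $e^{-A_L^{(\kappa)}(h)}$. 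Since $A_L^{(\kappa)}(h)\ge0$, the expectation is at most $\mathbf P(G_{L,h}^{(\kappa)}=n)$, and the first part bounds it for all large $L$.

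For the finitely many remaining $L$, each term is finite, because $V_L\sup_n\mathbf P(\cdot)\le V_L$ and $V_L<\infty$. Hence the supremum over all $L$ is finite as well.
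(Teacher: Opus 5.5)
Your proposal is correct and follows the paper's proof. Under (A) it uses Fourier inversion with the tail bound \eqref{tailbound}. Under (B) it bounds $\mathbf P(G_{L,h}^{(\kappa)}=n)\le e^{A_L^{(\kappa)}(h)}\mathbf P(G_{L,0}^{(\kappa)}=n)$ and applies \cref{lem:finite-type-core-estimate}. The second claim then follows from the Poisson change-of-measure identity. Your thinning inequality is literally the paper's Girsanov bound, since $\mathbf P(\Delta_L^{(h)}=0)=\exp\{-\sum_j d_{L,j}^{(h)}\}=e^{-A_L^{(\kappa)}(h)}$; your explicit estimate of this sum simply replaces the paper's appeal to the convergence of $A_L^{(\kappa)}(h)$.
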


\begin{proof}
We distinguish the two conditions in
\cref{ass:effective-spectral-LLT-criterion}.

If condition \(\mathrm{(A)}\) holds, Fourier inversion on the lattice
\(V_L^{-1}\mathbb Z\) gives
\[
        V_L
        \mathbf P(G_{L,h}^{(\kappa)}=n)
        =
        \frac1{2\pi}
        \int_{-\pi V_L}^{\pi V_L}
        e^{-itn/V_L}
        \varphi_{L,h}^{(\kappa)}(t)\,dt .
\]
Hence, using the absolute Fourier-tail bound~\eqref{tailbound},
\[
\begin{aligned}
        V_L
        \mathbf P(G_{L,h}^{(\kappa)}=n)
        &\le
        \frac1{2\pi}
        \int_{-\pi V_L}^{\pi V_L}
        \left|\varphi_{L,h}^{(\kappa)}(t)\right|\,dt          \\
        &\le
        \frac{C_{\kappa,h}}{2\pi}
        \int_{\mathbb R}(1+|t|)^{-\Theta_*}\,dt
        \le C_{\kappa,h}.
\end{aligned}
\]
Thus
\[
        \limsup_{L\to\infty}
        V_L
        \sup_{n\ge0}
        \mathbf P(G_{L,h}^{(\kappa)}=n)
        <\infty .
\]

If condition \(\mathrm{(B)}\) holds, then by the Girsanov formula,
\[
        \mathbf P(G_{L,h}^{(\kappa)}=n)
        \le
        e^{A_L^{(\kappa)}(h)}
        \mathbf P(G_{L,0}^{(\kappa)}=n).
\]
By \cref{prop:unconditioned-effective-poisson-convergence},
\[
        \sup_L A_L^{(\kappa)}(h)<\infty ,
\]
and by \cref{lem:finite-type-core-estimate},
\[
        \sup_L\sup_{n\ge0}
        V_L
        \mathbf P(G_{L,0}^{(\kappa)}=n)
        <\infty .
\]
Therefore
\[
        \sup_L\sup_{n\ge0}
        V_L
        \mathbf P(G_{L,h}^{(\kappa)}=n)
        <\infty .
\]

Finally, again by the Girsanov formula,
\[
        \mathbf E_L^{(\kappa)}
        \left[
            e^{-\langle h,\Pi_{L,\mathrm{eff}}^{(\kappa)}\rangle}
            \mathbf 1_{\{G_L^{(\kappa)}=n\}}
        \right]                                                
 =
        e^{-A_L^{(\kappa)}(h)}
        \mathbf P(G_{L,h}^{(\kappa)}=n).
\]
Since \(A_L^{(\kappa)}(h)\) is uniformly bounded, the desired estimate follows.
\end{proof}

\begin{theorem}[Effective \(h\)-tilted lattice local limit theorem]
\label{thm:effective-h-tilted-LLT}
Assume~\cref{ass:limiting-effective-kernel},
\cref{ass:effective-marked-trace-convergence}, and
\cref{ass:effective-spectral-LLT-criterion}. Then, for every
\(\kappa>0\), every \(h\in\mathcal H\), and every compact interval
\(K\subset(0,\infty)\),
\[
        \sup_{\substack{n\in\mathbb N:\\ n/V_L\in K}}
        \left|
            V_L
            \mathbf P
            \left(
                G_{L,h}^{(\kappa)}=n
            \right)
            -
            f_h^{(\kappa)}
            \left(
                \frac n{V_L}
            \right)
        \right|
        \longrightarrow0 .
\]
\end{theorem}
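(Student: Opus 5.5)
The proof splits along the two alternatives of \cref{ass:effective-spectral-LLT-criterion}. Under the critical finite-type condition \(\mathrm{(B)}\), nothing new is needed. For \(h\ge0\) (which holds since \(h\in\mathcal H\)), the statement is exactly the second assertion of \cref{lem:compact-thinning-deconvolution-transfer}. That lemma in turn rests on the untilted negative-binomial local limit theorem of \cref{lem:finite-type-core-estimate}, so case \(\mathrm{(B)}\) is a direct citation. The substantive work is the absolute case \(\mathrm{(A)}\), which I would handle by lattice Fourier inversion.

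In case \(\mathrm{(A)}\), write \(a_n:=n/V_L\). Since \(G_{L,h}^{(\kappa)}\) is integer valued, lattice inversion gives
\[
V_L\mathbf P(G_{L,h}^{(\kappa)}=n)=\frac1{2\pi}\int_{-\pi V_L}^{\pi V_L}e^{-ita_n}\varphi_{L,h}^{(\kappa)}(t)\,dt.
\]
By \cref{lem:fourier-tail-control-A}, \(\varphi_h^{(\kappa)}\in L^1(\mathbb R)\), and \(f_h^{(\kappa)}(a)=\frac1{2\pi}\int_{\mathbb R}e^{-ita}\varphi_h^{(\kappa)}(t)\,dt\). For fixed \(A>0\), the difference between the two densities is then bounded, uniformly in \(n\) and hence uniformly over \(a_n\in K\) (indeed over all \(n\)), by
\[
\frac1{2\pi}\int_{|t|\le A}\bigl|\varphi_{L,h}^{(\kappa)}(t)-\varphi_h^{(\kappa)}(t)\bigr|\,dt
+\frac1{2\pi}\int_{A<|t|\le\pi V_L}|\varphi_{L,h}^{(\kappa)}(t)|\,dt
+\frac1{2\pi}\int_{|t|>A}|\varphi_h^{(\kappa)}(t)|\,dt .
\]
The first term tends to zero as \(L\to\infty\) for each fixed \(A\), by the uniform convergence in \cref{lem:compact-fourier-convergence}. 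For the second, the Fourier tail estimate of \cref{lem:fourier-tail-control-A}, or directly the polynomial bound \eqref{tailbound}, makes its \(\limsup_L\) small once \(A\) is large. The third term is small for large \(A\) because \(\varphi_h^{(\kappa)}\) is integrable. To conclude, take \(\limsup_{L}\) first and then send \(A\to\infty\).

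The only delicate point is in case \(\mathrm{(A)}\): the tail bound \eqref{tailbound} must hold uniformly on the whole range \(|t|\le\pi V_L\) with an exponent \(\Theta_*>1\). This is precisely what \cref{lem:fourier-tail-control-A} provides, through the comparison \(|\varphi_{L,h}|\le C_{\kappa,h}|\varphi_{L,0}|\) together with the explicit formula for \(S_{L,\alpha}\). Since that lemma is already established, the remaining argument is routine.

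In case \(\mathrm{(B)}\) one cannot use Fourier inversion, because with \(\Theta=1\) the characteristic function decays only like \(|t|^{-1}\) and is not integrable. This is why the local limit theorem there has to come from the exact negative-binomial structure and the deconvolution identity of \cref{lem:compact-thinning-deconvolution-transfer}, rather than from the Fourier argument above.
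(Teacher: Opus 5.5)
Your proposal is correct and follows the paper's proof essentially verbatim. Case (B) is a citation of the compact thinning and deconvolution lemma, valid since \(h\ge0\) for \(h\in\mathcal H\). Case (A) is lattice Fourier inversion, split into \(|t|\le A\), controlled by the compact convergence of the Fourier exponents, and \(A<|t|\le\pi V_L\), controlled by the polynomial tail bound with exponent \(\Theta_*>1\); your explicit third term \(\int_{|t|>A}|\varphi_h^{(\kappa)}(t)|\,dt\) simply spells out a step the paper leaves implicit.
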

\begin{proof}
  The condition (B) has been studied in~\cref{lem:compact-thinning-deconvolution-transfer}. Here we only talk about the condition (A). In this case, Fourier inversion gives
\[
        V_L
        \mathbf P(G_{L,h}^{(\kappa)}=n)
        =
        \frac1{2\pi}
        \int_{-\pi V_L}^{\pi V_L}
        e^{-itn/V_L}
        \varphi_{L,h}^{(\kappa)}(t)\,dt .
\]
Fix \(A<\infty\).  Then, uniformly in \(n\),
\[
\begin{aligned}
&\left|
\frac1{2\pi}
\int_{-A}^{A}
e^{-itn/V_L}
\left(
    \varphi_{L,h}^{(\kappa)}(t)
    -
    \varphi_h^{(\kappa)}(t)
\right)
\,dt
\right|                                                       \\
&\qquad\le
\frac{A}{\pi}
\sup_{|t|\le A}
\left|
    \varphi_{L,h}^{(\kappa)}(t)
    -
    \varphi_h^{(\kappa)}(t)
\right|
\longrightarrow0
\end{aligned}
\]
by \cref{lem:compact-fourier-convergence}.  Moreover,
\cref{lem:fourier-tail-control-A} gives
\[
        \lim_{A\to\infty}
        \limsup_{L\to\infty}
        \int_{A<|t|\le \pi V_L}
        \left|
            \varphi_{L,h}^{(\kappa)}(t)
        \right|dt
        =
        0,
\]
and also implies
\[
        \varphi_h^{(\kappa)}\in L^1(\mathbb R).
\]
Therefore
\[
        V_L
        \mathbf P(G_{L,h}^{(\kappa)}=n)
        =
        \frac1{2\pi}
        \int_{\mathbb R}
        e^{-itn/V_L}
        \varphi_h^{(\kappa)}(t)\,dt
        +o(1),
\]
uniformly in \(n\).  Since
\[
        f_h^{(\kappa)}(a)
        =
        \frac1{2\pi}
        \int_{\mathbb R}
        e^{-ita}\varphi_h^{(\kappa)}(t)\,dt,
\]
the desired local limit theorem follows under \(\mathrm{(A)}\).

\end{proof}

The following weighted version is the form used in the canonical conditioning
argument.
\begin{corollary}[Weighted effective local limit theorem]
\label{cor:weighted-effective-LLT}
Assume the hypotheses of~\cref{thm:effective-h-tilted-LLT}.  Then, for every \(\kappa>0\),
every \(h\in\mathcal H\), and every compact interval
\(K\subset(0,\infty)\),
\[
        \sup_{\substack{n\in\mathbb N:\\ n/V_L\in K}}
        \Bigg|
        V_L
        \mathbf E_L^{(\kappa)}
        \left[
            e^{-\langle h,\Pi_{L,\mathrm{eff}}^{(\kappa)}\rangle}
            \mathbf 1_{\{G_L^{(\kappa)}=n\}}
        \right]
        -
        e^{-A^{(\kappa)}(h)}
        f_h^{(\kappa)}
        \left(
            \frac n{V_L}
        \right)
        \Bigg|
        \longrightarrow0 .
\]
\end{corollary}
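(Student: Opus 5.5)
The plan is to reduce the weighted statement to the unweighted $h$-tilted local limit theorem, \cref{thm:effective-h-tilted-LLT}, through the exact change-of-measure identity already used at the end of the proof of \cref{lem:effective-lattice-bound}. For a Poisson point process $\Pi_{L,\mathrm{eff}}^{(\kappa)}$ with intensity $\nu_{L,\mathrm{eff}}^{(\kappa)}$ and $h\ge0$ bounded with length-bounded support, the density of $\mathrm{PPP}(e^{-h}\nu_{L,\mathrm{eff}}^{(\kappa)})$ with respect to $\mathrm{PPP}(\nu_{L,\mathrm{eff}}^{(\kappa)})$ is
\[
\exp\{-\langle h,\Pi\rangle+A_L^{(\kappa)}(h)\},
\]
since $A_L^{(\kappa)}(h)=\int(1-e^{-h})\,d\nu_{L,\mathrm{eff}}^{(\kappa)}$. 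Applying this to the event $\{G_L^{(\kappa)}=n\}$, which is a functional of $\Pi$ only, gives
\[
\mathbf E_L^{(\kappa)}\bigl[e^{-\langle h,\Pi_{L,\mathrm{eff}}^{(\kappa)}\rangle}\mathbf 1_{\{G_L^{(\kappa)}=n\}}\bigr]=e^{-A_L^{(\kappa)}(h)}\,\mathbf P\bigl(G_{L,h}^{(\kappa)}=n\bigr).
\]
I would record this identity first. Finiteness of $A_L^{(\kappa)}(h)$ holds because only $j/V_L\in[\delta,M]$ contribute and $q^{\mathrm{eff}}_{L,j}\le C_{\mathrm{eff}}$.

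Next, I multiply by $V_L$ and split the difference as
\[
e^{-A_L^{(\kappa)}(h)}\Bigl(V_L\mathbf P(G_{L,h}^{(\kappa)}=n)-f_h^{(\kappa)}(n/V_L)\Bigr)+\bigl(e^{-A_L^{(\kappa)}(h)}-e^{-A^{(\kappa)}(h)}\bigr)f_h^{(\kappa)}(n/V_L).
\]

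For the first term, $e^{-A_L^{(\kappa)}(h)}\le1$ because $A_L^{(\kappa)}(h)\ge0$. The bracket then tends to zero uniformly over $n/V_L\in K$ by \cref{thm:effective-h-tilted-LLT}.

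For the second term, \cref{prop:unconditioned-effective-poisson-convergence} gives $A_L^{(\kappa)}(h)\to A^{(\kappa)}(h)$, and this involves no dependence on $n$. It then remains to know that $f_h^{(\kappa)}$ is bounded on $K$.
\begin{itemize}
\item Under (A), $f_h^{(\kappa)}$ is a bounded continuous density by \cref{lem:fourier-tail-control-A}.
\item Under (B), \cref{lem:compact-thinning-deconvolution-transfer} writes $f_h^{(\kappa)}$ as the convolution of the locally bounded $f_0^{(\kappa)}$ from \cref{lem:finite-type-core-estimate} with a finite signed measure, so it is bounded on compacts.
\end{itemize}

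There is no real obstacle here. The only point needing care is checking the sign and normalisation of the Girsanov factor $e^{\pm A_L^{(\kappa)}(h)}$, which I would verify by computing the Laplace functional of both sides.
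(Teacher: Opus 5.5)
Your proposal is correct and follows the paper's proof: the Poisson change-of-intensity identity $\mathbf E_L^{(\kappa)}[e^{-\langle h,\Pi_{L,\mathrm{eff}}^{(\kappa)}\rangle}\mathbf 1_{\{G_L^{(\kappa)}=n\}}]=e^{-A_L^{(\kappa)}(h)}\mathbf P(G_{L,h}^{(\kappa)}=n)$, combined with \cref{prop:unconditioned-effective-poisson-convergence} and \cref{thm:effective-h-tilted-LLT}. Your explicit splitting, with the bound $e^{-A_L^{(\kappa)}(h)}\le 1$ and the boundedness of $f_h^{(\kappa)}$ on $K$ checked under both (A) and (B), simply writes out the step the paper compresses into ``combining the two limits.''
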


\begin{proof}
The Poisson change-of-intensity formula gives
\[
        \mathbf E_L^{(\kappa)}
        \left[
            e^{-\langle h,\Pi_{L,\mathrm{eff}}^{(\kappa)}\rangle}
            \mathbf 1_{\{G_L^{(\kappa)}=n\}}
        \right]
   =
        e^{-A_L^{(\kappa)}(h)}
        \mathbf P
        \left(
            G_{L,h}^{(\kappa)}=n
        \right).
\]
Combining the two limits in~\cref{prop:unconditioned-effective-poisson-convergence}
and~\cref{thm:effective-h-tilted-LLT} proves the result.
\end{proof}

\subsection{Background concentration and invisibility}
\label{subsec:proof-background-concentration}

We now prove that the background part contributes only a deterministic
density and has no visible atoms in the length-bounded topology.

\begin{lemma}[Grand-canonical background density concentration]
\label{lem:background-density-concentration-proof}
Let $\kappa$ satisfies~\cref{ass:background-density-concentration}.  Then,
\[
        \frac{B_L^{(\kappa)}}{V_L}
        \longrightarrow
        \rho_{\mathrm{bg}}
        \qquad
        \text{in probability under }\mathbf P_L^{(\kappa)} .
\]
\end{lemma}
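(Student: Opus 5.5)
The statement follows from Chebyshev's inequality, using the explicit first two moments of \(B_L^{(\kappa)}\) computed in \cref{subsec:proof-effective-background-splitting}. Under \(\mathbf P_L^{(\kappa)}\) we have \(B_L^{(\kappa)}=\sum_{j\ge1} j\,N_{L,j}^{\mathrm{bg},(\kappa)}\), where the \(N_{L,j}^{\mathrm{bg},(\kappa)}\) are independent Poisson variables with means \(e^{-\kappa j/V_L}q_{L,j}^{\mathrm{bg}}/j\). First I would check that these sums are finite, so that the mean and variance formulas are legitimate. Finiteness of the mean follows from \(q_{L,j}^{\mathrm{bg}}\le q_{L,j}\le q_{L,1}<\infty\), shown in the lemma on finiteness of the tilted intensity, together with the geometric decay \(e^{-\kappa j/V_L}\). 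The same bound gives \(\sum_j j\,e^{-\kappa j/V_L}q_{L,j}^{\mathrm{bg}}<\infty\), so the variance is finite as well.

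By independence and the Poisson moments \(\mathbf E N=\operatorname{Var}N\), we get
\[
\mathbf E_L^{(\kappa)}\!\left[\frac{B_L^{(\kappa)}}{V_L}\right]=m_{L,\mathrm{bg}}^{(\kappa)},
\qquad
\operatorname{Var}_L^{(\kappa)}\!\left(\frac{B_L^{(\kappa)}}{V_L}\right)=\frac{1}{V_L^2}\sum_{j\ge1} j^2\,\frac{e^{-\kappa j/V_L}q_{L,j}^{\mathrm{bg}}}{j}=v_{L,\mathrm{bg}}^{(\kappa)}.
\]
Fix \(\varepsilon>0\). For \(L\) large, \cref{ass:background-density-concentration} gives \(|m_{L,\mathrm{bg}}^{(\kappa)}-\rho_{\mathrm{bg}}|<\varepsilon/2\). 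Chebyshev's inequality then yields
\[
\mathbf P_L^{(\kappa)}\!\left(\left|\frac{B_L^{(\kappa)}}{V_L}-\rho_{\mathrm{bg}}\right|>\varepsilon\right)
\le
\mathbf P_L^{(\kappa)}\!\left(\left|\frac{B_L^{(\kappa)}}{V_L}-m_{L,\mathrm{bg}}^{(\kappa)}\right|>\frac{\varepsilon}{2}\right)
\le \frac{4\,v_{L,\mathrm{bg}}^{(\kappa)}}{\varepsilon^2}\longrightarrow 0,
\]
again by \cref{ass:background-density-concentration}.

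There is no real obstacle. The only point to watch is that the \(\kappa\) used here is the one for which \cref{ass:background-density-concentration} holds, since the assumption is stated for some \(\kappa\) only. That is why the lemma fixes this \(\kappa\) rather than an arbitrary tilt.
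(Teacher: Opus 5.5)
Your proof is correct and follows the paper's argument: you identify the mean $m_{L,\mathrm{bg}}^{(\kappa)}$ and variance $v_{L,\mathrm{bg}}^{(\kappa)}$ of $B_L^{(\kappa)}/V_L$ from the independent Poisson cycle counts, and then apply Chebyshev's inequality. Your preliminary finiteness check, via $q_{L,j}^{\mathrm{bg}}\le q_{L,j}\le q_{L,1}$, is a valid detail that the paper leaves implicit.
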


\begin{proof}
By the computation in~\cref{subsec:proof-effective-background-splitting},
\[
        \mathbf E_L^{(\kappa)}
        \left[
            \frac{B_L^{(\kappa)}}{V_L}
        \right]
        =
        m_{L,\mathrm{bg}}^{(\kappa)}
        \longrightarrow
        \rho_{\mathrm{bg}},
\qquad
        \operatorname{Var}_L^{(\kappa)}
        \left(
            \frac{B_L^{(\kappa)}}{V_L}
        \right)
        =
        v_{L,\mathrm{bg}}^{(\kappa)}
        \longrightarrow0 .
\]
Hence Chebyshev's inequality gives the claim immediately.
\end{proof}

\begin{lemma}[Grand-canonical background invisibility]
\label{lem:grand-canonical-background-invisibility}
Let $\kappa$ satisfies~\cref{ass:background-density-concentration}.   Then, for every \(0<\delta<M<\infty\),
\[
        \mathbf P_L^{(\kappa)}
        \left(
            \Pi_{L,\mathrm{bg}}^{(\kappa)}
            \bigl(
                [0,1]\times[\delta,M]\times\mathsf M
            \bigr)>0
        \right)
        \longrightarrow0 .
\]
\end{lemma}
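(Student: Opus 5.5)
The plan is a first-moment (Markov) bound on the number of background atoms in the window. Since $\Pi_{L,\mathrm{bg}}^{(\kappa)}$ is a Poisson point process with intensity $\nu_{L,\mathrm{bg}}^{(\kappa)}$, the count $\Pi_{L,\mathrm{bg}}^{(\kappa)}([0,1]\times[\delta,M]\times\mathsf M)$ is Poisson. Its mean is
\[
        \nu_{L,\mathrm{bg}}^{(\kappa)}\bigl([0,1]\times[\delta,M]\times\mathsf M\bigr)
        =
        \sum_{\delta V_L\le j\le MV_L}
        \frac{e^{-\kappa j/V_L}}{j}\,q_{L,j}^{\mathrm{bg}}.
\]
Hence
\[
        \mathbf P_L^{(\kappa)}\bigl(\Pi_{L,\mathrm{bg}}^{(\kappa)}(\cdots)>0\bigr)
        \le
        \sum_{\delta V_L\le j\le MV_L}
        \frac{e^{-\kappa j/V_L}}{j}\,q_{L,j}^{\mathrm{bg}},
\]
and it suffices to show that this sum tends to zero.

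To do so, I will compare the sum with the variance quantity $v_{L,\mathrm{bg}}^{(\kappa)}$ from \cref{ass:background-density-concentration}. In the window we have $j\ge\delta V_L$, so
\[
        \frac1j = \frac{j}{j^2}\le \frac{j}{\delta^2V_L^2}.
\]
Therefore
\[
        \sum_{\delta V_L\le j\le MV_L}\frac{e^{-\kappa j/V_L}}{j}q_{L,j}^{\mathrm{bg}}
        \le
        \frac{1}{\delta^2}\,\frac1{V_L^2}\sum_{j\ge1} j\,e^{-\kappa j/V_L}q_{L,j}^{\mathrm{bg}}
        =
        \frac{v_{L,\mathrm{bg}}^{(\kappa)}}{\delta^2}
        \longrightarrow 0,
\]
where the convergence to zero is exactly the second half of \cref{ass:background-density-concentration}, applied with the same $\kappa$. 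Since $q_{L,j}^{\mathrm{bg}}\ge0$, every step is an inequality between non-negative terms, so no cancellation issues arise. The upper cutoff $M$ plays no role, and the same bound shows invisibility on $[\delta,\infty)$.

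There is no real obstacle here. The only point to check is that the Poisson count of a measurable set has the stated mean under $\mathbf P_L^{(\kappa)}$, which follows from the explicit representation of $\Pi_{L,\mathrm{bg}}^{(\kappa)}$ as independent Poisson cycle counts $N_{L,j}^{\mathrm{bg},(\kappa)}$ in \cref{subsec:proof-effective-background-splitting}. The estimate is a direct Markov bound against $v_{L,\mathrm{bg}}^{(\kappa)}$.
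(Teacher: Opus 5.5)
Your proof is correct and matches the paper's argument exactly: a first-moment (Markov) bound on the Poisson count in the window, followed by the comparison $1/j\le j/(\delta^2V_L^2)$ for $j\ge\delta V_L$, which gives the bound $v_{L,\mathrm{bg}}^{(\kappa)}/\delta^2\to0$. Your side remark that the cutoff $M$ is irrelevant is also correct: the same bound gives invisibility on $[\delta,\infty)$.
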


\begin{proof}

By Markov's inequality,
\[
        \mathbf P_L^{(\kappa)}
        \left(
            \Pi_{L,\mathrm{bg}}^{(\kappa)}( [0,1]\times[\delta,M]\times\mathsf M)>0
        \right)
\le
        \mathbf E_L^{(\kappa)}
        \left[
            \Pi_{L,\mathrm{bg}}^{(\kappa)}( [0,1]\times[\delta,M]\times\mathsf M)
        \right]
=
        \sum_{\delta V_L\le j\le M V_L}
        \frac{e^{-\kappa j/V_L}}{j}
        q_{L,j}^{\mathrm{bg}} .
\]
On the summation range \(j\ge \delta V_L\), hence
\[
        \frac1j
        \le
        \frac{j}{\delta^2 V_L^2}.
\]
Therefore
\[
        \sum_{\delta V_L\le j\le M V_L}
        \frac{e^{-\kappa j/V_L}}{j}
        q_{L,j}^{\mathrm{bg}}
        \le
        \frac1{\delta^2 V_L^2}
        \sum_{j\ge1}
        j\,e^{-\kappa j/V_L}
        q_{L,j}^{\mathrm{bg}}=
        \frac{v_{L,\mathrm{bg}}^{(\kappa)}}{\delta^2}.
\]
By~\cref{ass:background-density-concentration},
\(v_{L,\mathrm{bg}}^{(\kappa)}\to0\).  This proves the claim.
\end{proof}

\subsection{Bridge identity and canonical bridge convergence}
\label{subsec:proof-bridge-convergence}

We now combine the weighted effective local limit theorem~\cref{cor:weighted-effective-LLT} with the background
concentration estimate~\cref{lem:background-density-concentration-proof}.  This yields a full local limit theorem for the total
particle number
\[
        S_L^{(\kappa)}
        =
        G_L^{(\kappa)}+B_L^{(\kappa)}.
\]
After conditioning on \(S_L^{(\kappa)}=N_L\), the limiting effective process
is identified as the marked Poisson--Kingman bridge.

We first give the bridge identity for the limiting Poisson process.

\begin{lemma}[Limiting bridge identity]
\label{lem:limiting-bridge-identity}
Let \(h\in\mathcal H\).  Assume that \(T^{(\kappa)}\) and
\(T_h^{(\kappa)}\) have densities \(f_0^{(\kappa)}\) and
\(f_h^{(\kappa)}\), respectively.  Then
\begin{equation}\label{limitpp}
        \mathbf E
        \left[
            e^{-\langle h,\Pi^{(\kappa)}\rangle};
            T^{(\kappa)}\in da
        \right]
        =
        e^{-A^{(\kappa)}(h)}
        f_h^{(\kappa)}(a)\,da .
\end{equation}
Consequently, for every \(a>0\) such that \(f_0^{(\kappa)}(a)>0\),
\begin{equation}\label{limitbr}
        \mathbf E
        \exp\left\{
            -\langle h,\Pi_a^{\mathrm{br}}\rangle
        \right\}
        =
        e^{-A^{(\kappa)}(h)}
        \frac{
            f_h^{(\kappa)}(a)
        }{
            f_0^{(\kappa)}(a)
        } .
\end{equation}
\end{lemma}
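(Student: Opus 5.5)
The plan is to derive \eqref{limitpp} from the exponential change-of-measure (Girsanov/Esscher) identity for Poisson point processes, and then read off \eqref{limitbr} from the disintegration definition of the bridge.

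\emph{Step 1: the change-of-measure identity.} Fix $h\in\mathcal H$ and a bounded measurable $g:(0,\infty)\to\mathbb R$. The claim is
\[
\mathbf E\bigl[e^{-\langle h,\Pi^{(\kappa)}\rangle}\,g(T^{(\kappa)})\bigr]
= e^{-A^{(\kappa)}(h)}\,\mathbf E\bigl[g(T_h^{(\kappa)})\bigr].
\]
The cleanest route is to verify the more general statement that for every nonnegative measurable $F$ on $\mathsf E$,
\[
\mathbf E\bigl[e^{-\langle h,\Pi^{(\kappa)}\rangle}e^{-\langle F,\Pi^{(\kappa)}\rangle}\bigr]
= e^{-A^{(\kappa)}(h)}\,\mathbf E\bigl[e^{-\langle F,\Pi_h^{(\kappa)}\rangle}\bigr].
\]
This is a direct Laplace functional computation. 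The left side equals $\exp\{-\int(1-e^{-h-F})\,d\nu^{(\kappa)}\}$. Write
\[
1-e^{-h-F}=(1-e^{-h})+e^{-h}(1-e^{-F}).
\]
Since $\nu_h^{(\kappa)}=e^{-h}\nu^{(\kappa)}$, the exponent splits into $A^{(\kappa)}(h)$ plus the Laplace exponent of $\Pi_h^{(\kappa)}$. Here $A^{(\kappa)}(h)<\infty$ because $h$ is bounded and supported in a window of finite $\nu^{(\kappa)}$-mass.

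\emph{Step 2: passing to $g(T)$.} Take $F(u,x,m)=sx$ with $s\ge0$. This is measurable and nonnegative, and $\langle F,\cdot\rangle=sT$. The identity of Step 1 then says the two finite measures
\[
g\mapsto \mathbf E\bigl[e^{-\langle h,\Pi^{(\kappa)}\rangle}g(T^{(\kappa)})\bigr]
\quad\text{and}\quad
g\mapsto e^{-A^{(\kappa)}(h)}\,\mathbf E\bigl[g(T_h^{(\kappa)})\bigr]
\]
have equal Laplace transforms on $[0,\infty)$. Both are finite measures on $[0,\infty)$, so they coincide. Since $T_h^{(\kappa)}$ has density $f_h^{(\kappa)}$ by assumption, this gives \eqref{limitpp}.

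I expect the main (mild) obstacle to be justifying that $T^{(\kappa)}$ and $T_h^{(\kappa)}$ are finite a.s. and that the functional $\langle x,\cdot\rangle$ is legitimate even though it is not in $\mathcal H$. This follows from \cref{ass:limiting-effective-kernel}(3): the bound $\int(1\wedge x)e^{-\kappa x}\phi(x)\,dx/x<\infty$ also holds for the smaller intensity $e^{-h}\nu^{(\kappa)}$. Campbell's formula then applies to any nonnegative measurable $F$, with no topology needed.

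\emph{Step 3: the bridge formula.} Apply the disintegration identity defining $\Pi_a^{\mathrm{br}}$ with $\Phi(\xi)=e^{-\langle h,\xi\rangle}$, which is bounded and measurable:
\[
\mathbf E\bigl[e^{-\langle h,\Pi^{(\kappa)}\rangle}g(T^{(\kappa)})\bigr]
=\int_0^\infty g(a)\,\mathbf E\bigl[e^{-\langle h,\Pi_a^{\mathrm{br}}\rangle}\bigr]\,f_0^{(\kappa)}(a)\,da .
\]
Comparing with Step 2 for all $g$ yields
\[
\mathbf E\bigl[e^{-\langle h,\Pi_a^{\mathrm{br}}\rangle}\bigr]f_0^{(\kappa)}(a)=e^{-A^{(\kappa)}(h)}f_h^{(\kappa)}(a)
\quad\text{for a.e. } a .
\]
Dividing by $f_0^{(\kappa)}(a)>0$ gives \eqref{limitbr} a.e. To obtain it at every density point, I would use the continuity of $f_0^{(\kappa)}$ and $f_h^{(\kappa)}$ (from \cref{ass:limiting-effective-density}, \cref{lem:fourier-tail-control-A}, and \cref{lem:compact-thinning-deconvolution-transfer}). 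I would then take the continuous version of the regular conditional law, namely the one given by the ratio, which is the natural version intended in the definition of the bridge.
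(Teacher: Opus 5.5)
Your proposal is correct and follows the same route as the paper. The paper invokes the Poisson Girsanov (change-of-intensity) formula to get \(\mathbf E[e^{-\langle h,\Pi^{(\kappa)}\rangle}g(T^{(\kappa)})]=e^{-A^{(\kappa)}(h)}\mathbf E[g(T_h^{(\kappa)})]\) and then divides by \(f_0^{(\kappa)}(a)\); your Steps~1--2 prove that formula via the splitting \(1-e^{-h-F}=(1-e^{-h})+e^{-h}(1-e^{-F})\) and uniqueness of Laplace transforms, and your Step~3 is more careful than the paper about passing from the a.e.\ identity given by the disintegration to every \(a\) with \(f_0^{(\kappa)}(a)>0\).
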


\begin{proof}
By the Girsanov formula,
multiplication by
$   e^{-\langle h,\Pi^{(\kappa)}\rangle}$
changes the law of \(\Pi^{(\kappa)}\) into the law of
\(\Pi_h^{(\kappa)}\), up to the normalizing factor
$       \exp\{-A^{(\kappa)}(h)\}$.
Thus, for every bounded measurable function \(g:(0,\infty)\to\mathbb R\),
\[
\begin{aligned}
        \mathbf E
        \left[
            e^{-\langle h,\Pi^{(\kappa)}\rangle}
            g(T^{(\kappa)})
        \right]
        &=
        e^{-A^{(\kappa)}(h)}
        \mathbf E
        \left[
            g(T_h^{(\kappa)})
        \right]  \\
        &=
        e^{-A^{(\kappa)}(h)}
        \int_0^\infty
        g(a)f_h^{(\kappa)}(a)\,da .
\end{aligned}
\]
This proves the density identity~\eqref{limitpp}.  Dividing by the density
\(f_0^{(\kappa)}(a)\) of \(T^{(\kappa)}\) gives the bridge Laplace functional~\eqref{limitbr}.
\end{proof}

\begin{lemma}[Independence of the bridge from \(\kappa\)]
\label{lem:bridge-kappa-independence}
Let \(\kappa,\kappa'>0\).  Suppose that the corresponding bridges are defined
at the same mass \(a>0\).  Then
\[
        \mathcal L
        \left(
            \Pi^{(\kappa)}
            \,\middle|\,
            T^{(\kappa)}=a
        \right)
        =
        \mathcal L
        \left(
            \Pi^{(\kappa')}
            \,\middle|\,
            T^{(\kappa')}=a
        \right).
\]
\end{lemma}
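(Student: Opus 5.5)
The plan is to show that the two conditional laws have the same Laplace functionals on the class $\mathcal H$. That class determines laws on $\mathcal N_\ell(\mathsf E)$. The key observation is a Radon--Nikodym identity between the two Poisson processes.

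Without loss of generality take $\kappa'>\kappa$ and set $\Delta:=\kappa'-\kappa>0$. Since $\nu^{(\kappa')}=e^{-\Delta x}\nu^{(\kappa)}$, the law of $\Pi^{(\kappa')}$ is obtained from the law of $\Pi^{(\kappa)}$ by exponential tilting with the functional $\Delta T^{(\kappa)}=\langle \Delta x,\Pi^{(\kappa)}\rangle$. Concretely, for every bounded measurable $\Phi$ on $\mathcal N_\ell(\mathsf E)$,
\[
\mathbf E\bigl[\Phi(\Pi^{(\kappa')})\bigr]
=\frac{\mathbf E\bigl[\Phi(\Pi^{(\kappa)})\,e^{-\Delta T^{(\kappa)}}\bigr]}{\mathbf E\bigl[e^{-\Delta T^{(\kappa)}}\bigr]}.
\]
To justify this I would use the Girsanov/change-of-intensity formula already invoked in \cref{lem:limiting-bridge-identity}. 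The tilting function $g(x)=\Delta x$ is not supported in a length window, so I cannot just cite that lemma. Instead I would compare Laplace functionals directly. For $h\in\mathcal H$, the left side with $\Phi=e^{-\langle h,\cdot\rangle}$ equals $\exp\{-\int(1-e^{-h})e^{-\Delta x}\,d\nu^{(\kappa)}\}$. The right side equals $\exp\{-\int(1-e^{-h-\Delta x})\,d\nu^{(\kappa)}+\int(1-e^{-\Delta x})\,d\nu^{(\kappa)}\}$, which is the same quantity. Both integrals converge by \cref{ass:limiting-effective-kernel}(3), since $1-e^{-\Delta x}\le \Delta(1\wedge x)\cdot C$ integrates against $e^{-\kappa x}\phi(x)dx/x$. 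This matching of Laplace functionals is the only step requiring care. The obstacle is integrability of the non-compactly supported tilt near $x=0$ and $x=\infty$, and condition (3) handles exactly this.

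Applying the identity with $\Phi(\xi)g(\int x\,d\xi)$ shows that $T^{(\kappa')}$ has density $f_0^{(\kappa')}(a)=e^{-\Delta a}f_0^{(\kappa)}(a)/c$, where $c=\mathbf E e^{-\Delta T^{(\kappa)}}$. It also gives
\[
\mathbf E\bigl[\Phi(\Pi^{(\kappa')})g(T^{(\kappa')})\bigr]
=\frac1c\int_0^\infty g(a)e^{-\Delta a}\,\mathbf E\bigl[\Phi(\Pi^{\mathrm{br},(\kappa)}_a)\bigr]f_0^{(\kappa)}(a)\,da
=\int_0^\infty g(a)\,\mathbf E\bigl[\Phi(\Pi^{\mathrm{br},(\kappa)}_a)\bigr]f_0^{(\kappa')}(a)\,da ,
\]
using the disintegration defining the $\kappa$-bridge. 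Hence the $\kappa$-bridge family is a version of the regular conditional law for $\kappa'$. At points $a$ where the densities are positive, and in particular where both bridges are defined, the factor $e^{-\Delta a}$ cancels.

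To pin down the version at the specific point $a$, I would use the continuity of both densities from \cref{ass:limiting-effective-density}. I would also use the explicit formula \eqref{limitbr}: $\mathbf E e^{-\langle h,\Pi^{\mathrm{br}}_a\rangle}=e^{-A^{(\kappa)}(h)}f_h^{(\kappa)}(a)/f_0^{(\kappa)}(a)$. Applying the same tilt to $\Pi_h^{(\kappa)}$ gives $f_h^{(\kappa')}(a)=e^{-\Delta a}f_h^{(\kappa)}(a)/c_h$ with $c_h=\exp\{-\int(1-e^{-\Delta x})e^{-h}\,d\nu^{(\kappa)}\}$. A short computation then gives
\[
e^{-A^{(\kappa')}(h)}\frac{c_h^{-1}}{c^{-1}}=e^{-A^{(\kappa)}(h)}.
\]
So the two Laplace functionals agree pointwise in $a$, and the bridge laws coincide.
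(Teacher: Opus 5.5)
Your proposal is correct and follows the paper's proof: since $\nu^{(\kappa')}=e^{-(\kappa'-\kappa)x}\nu^{(\kappa)}$, the law of $\Pi^{(\kappa')}$ is the exponential tilt of the law of $\Pi^{(\kappa)}$ by $e^{-(\kappa'-\kappa)T^{(\kappa)}}$, and on $\{T^{(\kappa)}=a\}$ this tilt is a constant that cancels. The paper says this in a few lines, and your proposal fills in what it leaves implicit: the Laplace-functional check of the tilt, with integrability from \cref{ass:limiting-effective-kernel}(3), and the use of \eqref{limitbr} with continuous densities to identify the bridge at the given $a$ rather than only for almost every $a$.
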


\begin{proof}
The two limiting intensities are related by
\[
        \nu^{(\kappa')}(du,dx,dm)
        =
        e^{-(\kappa'-\kappa)x}
        \nu^{(\kappa)}(du,dx,dm).
\]
Changing \(\kappa\) to \(\kappa'\) is therefore an exponential tilt by the
total mass
\[
        T=\int_E x\,\Pi(du,dx,dm).
\]
After conditioning on \(T=a\), this tilt becomes the constant
\(e^{-(\kappa'-\kappa)a}\), which cancels in conditional expectations.  Hence
the bridge law does not depend on the auxiliary parameter.
\end{proof}

We next prove the full weighted local limit theorem.  It says that the
background only shifts the effective local limit by the deterministic density
\(\rho_{\mathrm{bg}}\).

\begin{theorem}[Full weighted local limit theorem]
\label{thm:full-weighted-LLT}
Assume~\cref{ass:limiting-effective-kernel},
\cref{ass:effective-marked-trace-convergence},
\cref{ass:effective-spectral-LLT-criterion}, and let \(\kappa>0\) satisfies
\cref{ass:background-density-concentration}.  Fix 
\(h\in\mathcal H\).  Then, for every compact interval
$       K\subset(\rho_{\mathrm{bg}},\infty)$,
one has
\[
        \sup_{\substack{N\in\mathbb N:\\ N/V_L\in K}}
        \Bigg|
        V_L
        \mathbf E_L^{(\kappa)}
        \left[
            e^{-\langle h,\Pi_{L,\mathrm{eff}}^{(\kappa)}\rangle}
            \mathbf 1_{\{S_L^{(\kappa)}=N\}}
        \right]
        -
        e^{-A^{(\kappa)}(h)}
        f_h^{(\kappa)}
        \left(
            \frac N{V_L}-\rho_{\mathrm{bg}}
        \right)
        \Bigg|
        \longrightarrow0 .
\]
In particular, taking \(h=0\),
\[
        \sup_{\substack{N\in\mathbb N:\\ N/V_L\in K}}
        \left|
        V_L
        \mathbf P_L^{(\kappa)}
        \left(
            S_L^{(\kappa)}=N
        \right)
        -
        f_0^{(\kappa)}
        \left(
            \frac N{V_L}-\rho_{\mathrm{bg}}
        \right)
        \right|
        \longrightarrow0 .
\]
\end{theorem}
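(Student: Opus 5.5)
The plan is to exploit the independence of $\Pi_{L,\mathrm{eff}}^{(\kappa)}$ and $\Pi_{L,\mathrm{bg}}^{(\kappa)}$, which makes the weighted quantity a discrete convolution:
\[
V_L\mathbf E_L^{(\kappa)}\bigl[e^{-\langle h,\Pi_{L,\mathrm{eff}}^{(\kappa)}\rangle}\mathbf 1_{\{S_L^{(\kappa)}=N\}}\bigr]
=\sum_{b\ge0}\mathbf P_L^{(\kappa)}(B_L^{(\kappa)}=b)\,
V_L\mathbf E_L^{(\kappa)}\bigl[e^{-\langle h,\Pi_{L,\mathrm{eff}}^{(\kappa)}\rangle}\mathbf 1_{\{G_L^{(\kappa)}=N-b\}}\bigr].
\]
Write $g_L(n)$ for the effective factor. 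By \cref{cor:weighted-effective-LLT}, $g_L(n)=e^{-A^{(\kappa)}(h)}f_h^{(\kappa)}(n/V_L)+o(1)$ uniformly for $n/V_L$ in compacts of $(0,\infty)$. By \cref{lem:effective-lattice-bound}, $\sup_L\sup_n g_L(n)\le C$. By \cref{lem:background-density-concentration-proof}, $B_L^{(\kappa)}/V_L\to\rho_{\mathrm{bg}}$ in probability.

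Fix the compact $K\subset(\rho_{\mathrm{bg}},\infty)$ and choose $\varepsilon>0$ with $\inf K-\rho_{\mathrm{bg}}>2\varepsilon$. Split the sum over $b$ according to whether $|b/V_L-\rho_{\mathrm{bg}}|\le\varepsilon$.

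\emph{Good event.} Here $(N-b)/V_L$ lies in the compact set $K':=\{k-r:k\in K,\ |r-\rho_{\mathrm{bg}}|\le\varepsilon\}\subset(0,\infty)$. The effective local limit theorem therefore applies uniformly, which replaces $g_L(N-b)$ by $e^{-A^{(\kappa)}(h)}f_h^{(\kappa)}((N-b)/V_L)$ at an $o(1)$ cost.

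\emph{Bad event.} The uniform bound $C$ gives a contribution of at most $C\,\mathbf P(|B_L^{(\kappa)}/V_L-\rho_{\mathrm{bg}}|>\varepsilon)\to0$. The same bound holds for the limiting expression, since $f_h^{(\kappa)}$ is bounded; this is because $\|f_h\|_\infty$ is controlled by $\limsup V_L\sup_n\mathbf P(G_{L,h}=n)$.

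It then remains to compare $\mathbf E\bigl[f_h^{(\kappa)}((N-B_L^{(\kappa)})/V_L);\,|B_L/V_L-\rho_{\mathrm{bg}}|\le\varepsilon\bigr]$ with $f_h^{(\kappa)}(N/V_L-\rho_{\mathrm{bg}})$. Since $f_h^{(\kappa)}$ is continuous on $(0,\infty)$, it is uniformly continuous on $K'$, with modulus $\omega$. The difference is thus at most $\omega(\varepsilon)+\|f_h\|_\infty\mathbf P(|B_L/V_L-\rho_{\mathrm{bg}}|>\varepsilon)$, uniformly in $N/V_L\in K$. Continuity of $f_h^{(\kappa)}$ is supplied by \cref{lem:fourier-tail-control-A} in case (A). In case (B) it comes from the convolution representation in \cref{lem:compact-thinning-deconvolution-transfer} together with continuity of $f_0^{(\kappa)}$ from \cref{lem:finite-type-core-estimate}, with the boundary handled as in that lemma. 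To conclude, let $L\to\infty$ and then $\varepsilon\downarrow0$. The case $h=0$ is the special instance $A^{(\kappa)}(0)=0$.

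The main obstacle is the uniformity. The local limit theorem is only uniform on compacts away from $0$, so we must make sure the $\varepsilon$-window keeps $(N-b)/V_L$ inside such a compact; this is exactly why the argument needs $K\subset(\rho_{\mathrm{bg}},\infty)$. We also need the global lattice bound (rather than just the local limit) to dispose of the background's large deviations. A secondary point is that boundedness of $f_h^{(\kappa)}$ near $0$ in case (B) relies on $\sum\theta_r=1$. Boundedness is needed only on the bad event, and there the finite-$L$ bound suffices, since the limiting term can be handled directly by restricting to $K'$.
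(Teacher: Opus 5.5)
Your proposal is correct and is essentially the paper's proof. Both arguments write the weighted quantity as a convolution over the independent background mass $B_L^{(\kappa)}$ and apply \cref{cor:weighted-effective-LLT} on the event $|B_L^{(\kappa)}/V_L-\rho_{\mathrm{bg}}|\le\varepsilon$ (where $(N-B_L^{(\kappa)})/V_L$ stays in a compact subset of $(0,\infty)$), using uniform continuity of $f_h^{(\kappa)}$ there. The complement is then removed with the uniform bound of \cref{lem:effective-lattice-bound} and \cref{lem:background-density-concentration-proof}, before letting $L\to\infty$ and then $\varepsilon\downarrow0$. Your only addition is an explicit justification of the continuity of $f_h^{(\kappa)}$ on $(0,\infty)$ in case (B), via the convolution representation, which the paper simply asserts.
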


\begin{proof}
By independence of the effective and background Poisson processes,
\[
        V_L
        \mathbf E_L^{(\kappa)}
        \left[
            e^{-\langle h,\Pi_{L,\mathrm{eff}}^{(\kappa)}\rangle}
            \mathbf 1_{\{S_L^{(\kappa)}=N\}}
        \right]
=
        \mathbf E_L^{(\kappa)}
        \left[
            R_L^{(h)}
            \left(
                N-B_L^{(\kappa)}
            \right)
        \right],
\]
where
\[
        R_L^{(h)}(n)
        :=
        V_L
        \mathbf E_L^{(\kappa)}
        \left[
            e^{-\langle h,\Pi_{L,\mathrm{eff}}^{(\kappa)}\rangle}
            \mathbf 1_{\{G_L^{(\kappa)}=n\}}
        \right],
        \qquad n\in\mathbb N,
\]
and we set \(R_L^{(h)}(n)=0\) for \(n<0\).

Let
\[
        a_L(N):=\frac N{V_L}-\rho_{\mathrm{bg}} .
\]
Since \(K\subset(\rho_{\mathrm{bg}},\infty)\) is compact, there exists
\(\alpha>0\) such that
\[
        a_L(N)\ge 2\alpha
\]
for all sufficiently large \(L\) and all \(N\) with \(N/V_L\in K\).

Fix \(0<\varepsilon<\alpha\).  On the event
\[
        \mathcal G_{L,\varepsilon}
        :=
        \left\{
            \left|
            \frac{B_L^{(\kappa)}}{V_L}
            -
            \rho_{\mathrm{bg}}
            \right|
            \le\varepsilon
        \right\},
\]
we have
\[
        \frac{N-B_L^{(\kappa)}}{V_L}
        \in K_\varepsilon
\]
for a compact interval \(K_\varepsilon\subset(0,\infty)\), uniformly in
\(N/V_L\in K\) and all sufficiently large \(L\).  Hence the weighted effective
local limit theorem,
~\cref{cor:weighted-effective-LLT}, gives
\[
        R_L^{(h)}
        \left(
            N-B_L^{(\kappa)}
        \right)
        =
        e^{-A^{(\kappa)}(h)}
        f_h^{(\kappa)}
        \left(
            \frac{N-B_L^{(\kappa)}}{V_L}
        \right)
        +
        o(1),
\]
uniformly on \(\mathcal G_{L,\varepsilon}\) and uniformly for \(N/V_L\in K\).

The density \(f_h^{(\kappa)}\) is continuous, hence uniformly continuous on
the relevant compact interval.  Therefore, on
\(\mathcal G_{L,\varepsilon}\),
\[
        f_h^{(\kappa)}
        \left(
            \frac{N-B_L^{(\kappa)}}{V_L}
        \right)
\]
is uniformly close to
\[
        f_h^{(\kappa)}
        \left(
            \frac N{V_L}-\rho_{\mathrm{bg}}
        \right)
\]
when \(\varepsilon\) is small.

It remains to control the complement of \(\mathcal G_{L,\varepsilon}\).  By~\cref{lem:effective-lattice-bound} and the Girsanov formula, there exists \(C_h<\infty\) such that
\[
        \sup_{L}
        \sup_{n\ge0}
        R_L^{(h)}(n)
        \le C_h .
\]
Hence
\[
        \mathbf E_L^{(\kappa)}
        \left[
            R_L^{(h)}
            \left(
                N-B_L^{(\kappa)}
            \right)
            \mathbf 1_{\mathcal G_{L,\varepsilon}^c}
        \right]
        \le
        C_h
        \mathbf P_L^{(\kappa)}
        \left(
            \mathcal G_{L,\varepsilon}^c
        \right).
\]
By~\cref{lem:background-density-concentration-proof}, this tends to zero.
Letting first \(L\to\infty\) and then \(\varepsilon\downarrow0\) proves the
weighted local limit theorem.  The case \(h=0\) gives the unweighted statement.
\end{proof}

We now pass from the tilted grand-canonical law to the canonical law.

\begin{lemma}[Canonical effective bridge convergence]
\label{lem:canonical-effective-bridge-convergence}
Assume the hypotheses of~\cref{thm:main-canonical-bridge-limit}.  Then, under
\(\mathbb P_{L,N_L}^{\mathrm{can}}\),
\[
        \Xi_{L,\mathrm{eff}}
        \Longrightarrow
        \Pi_{\rho_{\mathrm{eff}}}^{\mathrm{br}}
        \qquad
        \text{in }\mathcal N_\ell(E).
\]
\end{lemma}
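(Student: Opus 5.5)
We identify the limit through Laplace functionals $h\in\mathcal H$: the limit will be shown to satisfy the bridge formula \eqref{limitbr}. The starting point is the part-resolved canonical representation \eqref{eq:can}. Since $h$ acts only on the effective coordinate, it gives
\[
\mathbb E^{\mathrm{can}}_{L,N_L}\bigl[e^{-\langle h,\Xi_{L,\mathrm{eff}}\rangle}\bigr]
=\frac{V_L\,\mathbf E_L^{(\kappa)}\bigl[e^{-\langle h,\Pi_{L,\mathrm{eff}}^{(\kappa)}\rangle}\mathbf 1_{\{S_L^{(\kappa)}=N_L\}}\bigr]}{V_L\,\mathbf P_L^{(\kappa)}(S_L^{(\kappa)}=N_L)}.
\]
Here $\kappa$ is the value fixed in \cref{ass:background-density-concentration}. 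This is legitimate because, by \cref{lem:canonical-conditioning}, the canonical law does not depend on $\kappa$.

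Next we apply \cref{thm:full-weighted-LLT}. Since $N_L/V_L\to\rho>\rho_{\mathrm{bg}}$, we may take $K=[\rho-\epsilon,\rho+\epsilon]\subset(\rho_{\mathrm{bg}},\infty)$. The numerator then equals $e^{-A^{(\kappa)}(h)}f_h^{(\kappa)}(N_L/V_L-\rho_{\mathrm{bg}})+o(1)$, and the denominator equals $f_0^{(\kappa)}(N_L/V_L-\rho_{\mathrm{bg}})+o(1)$. Both densities are continuous on $(0,\infty)$: for $f_h^{(\kappa)}$ this follows from \cref{lem:fourier-tail-control-A} in case (A), and from the convolution representation in \cref{lem:compact-thinning-deconvolution-transfer} in case (B); for $f_0^{(\kappa)}$ it is \cref{ass:limiting-effective-density}. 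Since $N_L/V_L-\rho_{\mathrm{bg}}\to\rho_{\mathrm{eff}}$, we can evaluate both at $\rho_{\mathrm{eff}}$. The hypothesis $f_0^{(\kappa)}(\rho_{\mathrm{eff}})>0$ keeps the denominator bounded away from zero, so the ratio converges to $e^{-A^{(\kappa)}(h)}f_h^{(\kappa)}(\rho_{\mathrm{eff}})/f_0^{(\kappa)}(\rho_{\mathrm{eff}})$. By \cref{lem:limiting-bridge-identity} this is exactly $\mathbf E\exp\{-\langle h,\Pi^{\mathrm{br}}_{\rho_{\mathrm{eff}}}\rangle\}$.

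The main obstacle is the final step: passing from convergence of Laplace functionals over the class $\mathcal H$ to weak convergence in $\mathcal N_\ell(\mathsf E)$. The argument is not fully automatic, because $\mathsf M$ need not be locally compact and $\mathcal H$ consists only of bounded continuous functions supported in length windows. The plan is to use the standard criterion for random measures with respect to the bornology generated by the windows $\mathsf E_{\delta,R}$ (Kallenberg~\cite{Kallenberg2017}): convergence of $\mathbf E e^{-\langle h,\cdot\rangle}$ for all such $h$ determines the law and gives convergence once tightness holds. The limit functional is indeed a Laplace functional of a law on $\mathcal N_\ell(\mathsf E)$, namely that of the bridge.

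Tightness on each window reduces to two controls:
\begin{itemize}
\item \emph{Counts.} Convergence of the Laplace transforms for $h=s\mathbf 1$, approximated by continuous cut-offs, shows that $\Xi_{L,\mathrm{eff}}(\mathsf E_{\delta,R})$ is tight.
\item \emph{Marks.} Tightness of the marks follows from \cref{ass:effective-marked-trace-convergence}: the measures $\mu^{\mathrm{eff}}_{L,j}$ with $j/V_L\in[\delta,R]$ are uniformly tight, since they converge uniformly to the continuous family $\eta_x$. A Radon–Nikodym comparison then transfers this to the canonical law. Concretely, the canonical intensity on a window is bounded by $C\cdot\nu^{(\kappa)}_{L,\mathrm{eff}}$, because the ratio of local probabilities is bounded by \cref{lem:effective-lattice-bound} together with the positive lower bound on the denominator.
\end{itemize}
Alternatively, one can bypass the general criterion by noting that, on each window, the Laplace transforms for $h\in\mathcal H$ determine the finite-dimensional laws.
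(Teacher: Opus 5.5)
Your proposal is correct and follows the paper's proof. You write the canonical Laplace functional as the ratio from \eqref{eq:can}, apply \cref{thm:full-weighted-LLT} to numerator and denominator, use the positivity of $f_0^{(\kappa)}(\rho_{\mathrm{eff}})$, and identify the limit via \cref{lem:limiting-bridge-identity}. Your additional steps are sound and make explicit what the paper only asserts with ``therefore'': continuity of $f_h^{(\kappa)}$ to pass from $N_L/V_L-\rho_{\mathrm{bg}}$ to $\rho_{\mathrm{eff}}$, and the tightness check that upgrades Laplace-functional convergence on $\mathcal H$ to weak convergence in $\mathcal N_\ell(\mathsf E)$ (uniform tightness of $\mu^{\mathrm{eff}}_{L,j}$ plus the window bound of the canonical intensity by $C\,\nu^{(\kappa)}_{L,\mathrm{eff}}$).
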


\begin{proof}
Let \(h\in\mathcal H\).  By the canonical conditioning~\eqref{eq:can},
\[
        \mathbb E_{L,N_L}^{\mathrm{can}}
        \exp\left\{
            -\langle h,\Xi_{L,\mathrm{eff}}\rangle
        \right\}
=
        \frac{
        \mathbf E_L^{(\kappa)}
        \left[
            e^{-\langle h,\Pi_{L,\mathrm{eff}}^{(\kappa)}\rangle}
            \mathbf 1_{\{S_L^{(\kappa)}=N_L\}}
        \right]
        }{
        \mathbf P_L^{(\kappa)}
        \left(
            S_L^{(\kappa)}=N_L
        \right)
        } .
\]
Since
     $   \frac{N_L}{V_L}\to\rho$
and
   $     \rho_{\mathrm{eff}}=\rho-\rho_{\mathrm{bg}}$,
\cref{thm:full-weighted-LLT} gives
\[
        V_L
        \mathbf E_L^{(\kappa)}
        \left[
            e^{-\langle h,\Pi_{L,\mathrm{eff}}^{(\kappa)}\rangle}
            \mathbf 1_{\{S_L^{(\kappa)}=N_L\}}
        \right]\longrightarrow
        e^{-A^{(\kappa)}(h)}
        f_h^{(\kappa)}(\rho_{\mathrm{eff}}),
\]
and, with \(h=0\),
\[
        V_L
        \mathbf P_L^{(\kappa)}
        \left(
            S_L^{(\kappa)}=N_L
        \right)
        \longrightarrow
        f_0^{(\kappa)}(\rho_{\mathrm{eff}}).
\]
By~\cref{ass:canonical-density-regime},
$       f_0^{(\kappa)}(\rho_{\mathrm{eff}})>0$.
Thus
\[
        \mathbb E_{L,N_L}^{\mathrm{can}}
        \exp\left\{
            -\langle h,\Xi_{L,\mathrm{eff}}\rangle
        \right\}
        \longrightarrow
        e^{-A^{(\kappa)}(h)}
        \frac{
            f_h^{(\kappa)}(\rho_{\mathrm{eff}})
        }{
            f_0^{(\kappa)}(\rho_{\mathrm{eff}})
        } .
\]
By~\cref{lem:limiting-bridge-identity}, the right-hand side is exactly
the Laplace functional of $        \Pi_{\rho_{\mathrm{eff}}}^{\mathrm{br}}$.
Therefore
$       \Xi_{L,\mathrm{eff}}
        \Longrightarrow
        \Pi_{\rho_{\mathrm{eff}}}^{\mathrm{br}}$
in \(\mathcal N_\ell(E)\).
\end{proof}

\begin{lemma}[Canonical background and effective mass concentration]
\label{lem:canonical-mass-concentration}
Assume the hypotheses of~\cref{thm:main-canonical-bridge-limit}.  Then, under
$\mathbb P_{L,N_L}^{\mathrm{can}}$,
\[
 \frac{B_L}{V_L}\xrightarrow{\mathbb P}\rho_{\mathrm{bg}},
 \qquad
 \frac{G_L}{V_L}\xrightarrow{\mathbb P}\rho_{\mathrm{eff}}.
\]
\end{lemma}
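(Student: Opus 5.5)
Since $G_L+B_L=N_L$ almost surely and $N_L/V_L\to\rho$, it suffices to prove $B_L/V_L\to\rho_{\mathrm{bg}}$ in probability under $\mathbb P_{L,N_L}^{\mathrm{can}}$. The second convergence then follows from $G_L/V_L=N_L/V_L-B_L/V_L$. I will transfer the grand-canonical concentration of \cref{lem:background-density-concentration-proof} to the canonical ensemble through the conditional representation~\eqref{eq:can}. The extra ingredient is that conditioning on $\{S_L^{(\kappa)}=N_L\}$ costs only a factor $V_L$ in probability, and the effective lattice bound gives back exactly that factor.

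Fix $\varepsilon>0$ and set $\mathcal G_{L,\varepsilon}^c=\{|B_L^{(\kappa)}/V_L-\rho_{\mathrm{bg}}|>\varepsilon\}$. By~\eqref{eq:can},
\[
\mathbb P_{L,N_L}^{\mathrm{can}}\Bigl(\Bigl|\frac{B_L}{V_L}-\rho_{\mathrm{bg}}\Bigr|>\varepsilon\Bigr)
=\frac{V_L\,\mathbf P_L^{(\kappa)}\bigl(\mathcal G_{L,\varepsilon}^c,\ G_L^{(\kappa)}+B_L^{(\kappa)}=N_L\bigr)}{V_L\,\mathbf P_L^{(\kappa)}\bigl(S_L^{(\kappa)}=N_L\bigr)}.
\]
For the denominator, \cref{thm:full-weighted-LLT} with $h=0$, applied on a compact $K\subset(\rho_{\mathrm{bg}},\infty)$ containing $\rho$ in its interior, together with continuity of $f_0^{(\kappa)}$, gives convergence to $f_0^{(\kappa)}(\rho_{\mathrm{eff}})$. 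This limit is strictly positive by \cref{ass:canonical-density-regime}, so the denominator is bounded below for large $L$.

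For the numerator, condition on $B_L^{(\kappa)}$ and use the independence of $G_L^{(\kappa)}$ and $B_L^{(\kappa)}$. This gives the bound
\[
\mathbf E_L^{(\kappa)}\Bigl[\mathbf 1_{\mathcal G_{L,\varepsilon}^c}\,V_L\,\mathbf P\bigl(G_L^{(\kappa)}=N_L-B_L^{(\kappa)}\bigr)\Bigr]
\le \Bigl(\sup_{n\ge0}V_L\mathbf P(G_L^{(\kappa)}=n)\Bigr)\,\mathbf P_L^{(\kappa)}(\mathcal G_{L,\varepsilon}^c).
\]
The supremum is bounded uniformly in large $L$ by \cref{lem:effective-lattice-bound} with $h=0$. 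The probability tends to zero by \cref{lem:background-density-concentration-proof}. Hence the ratio tends to zero, which proves the first claim.

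The only delicate point is that the grand-canonical concentration is merely $o(1)$, while conditioning on an event of probability $\asymp V_L^{-1}$ could in principle amplify it. This is exactly what the uniform lattice bound $\sup_n V_L\mathbf P(G_L^{(\kappa)}=n)=O(1)$ handles. That bound is valid under both criteria (A) and (B) of \cref{ass:effective-spectral-LLT-criterion}, so no further case distinction is needed.
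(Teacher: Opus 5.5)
Your proposal is correct and follows essentially the same route as the paper: you express the canonical probability through the conditional representation~\eqref{eq:can}, bound the numerator by $\frac{C}{V_L}\mathbf P_L^{(\kappa)}(\mathcal G_{L,\varepsilon}^c)=o(V_L^{-1})$ using independence and the uniform effective lattice bound, and show the denominator is $\asymp V_L^{-1}$ via \cref{thm:full-weighted-LLT} with $h=0$ and $f_0^{(\kappa)}(\rho_{\mathrm{eff}})>0$. You then deduce the effective part from $G_L=N_L-B_L$, just as the paper does.
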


\begin{proof}
Fix $\varepsilon>0$ and put
\[
 D_{L,\varepsilon}:=
 \left\{\left|B_L^{(\kappa)}/V_L-\rho_{\mathrm{bg}}\right|>
 \varepsilon\right\}.
\]
The conditional representation and independence give
\[
\begin{aligned}
 &\mathbf P_L^{(\kappa)}
   \bigl(D_{L,\varepsilon},S_L^{(\kappa)}=N_L\bigr)\\
 &\quad=\sum_{b\ge0}
 \mathbf P_L^{(\kappa)}
   \bigl(D_{L,\varepsilon},B_L^{(\kappa)}=b\bigr)
 \mathbf P_L^{(\kappa)}\bigl(G_L^{(\kappa)}=N_L-b\bigr).
\end{aligned}
\]
By the uniform effective lattice bound, the last display is at most
\[
 \frac{C}{V_L}\mathbf P_L^{(\kappa)}(D_{L,\varepsilon})=o(V_L^{-1}),
\]
where the last equality follows from
\cref{lem:background-density-concentration-proof}.  On the other hand,
\cref{thm:full-weighted-LLT} and positivity of the endpoint density give
\[
 \mathbf P_L^{(\kappa)}(S_L^{(\kappa)}=N_L)
 \sim \frac{f_0^{(\kappa)}(\rho_{\mathrm{eff}})}{V_L}.
\]
Dividing proves canonical concentration of $B_L/V_L$.  Finally,
\[
 \frac{G_L}{V_L}=\frac{N_L}{V_L}-\frac{B_L}{V_L}
 \xrightarrow{\mathbb P}\rho-\rho_{\mathrm{bg}}
 =\rho_{\mathrm{eff}}.
\]
\end{proof}

\begin{lemma}[Canonical background invisibility]
\label{lem:canonical-background-invisibility}
Assume the hypotheses of~\cref{thm:main-canonical-bridge-limit}.  Then, for every
\(0<\delta<M<\infty\),
\[
        \mathbb P_{L,N_L}^{\mathrm{can}}
        \left(
            \Xi_{L,\mathrm{bg}}
            \bigl(
                [0,1]\times[\delta,M]\times\mathsf M
            \bigr)>0
        \right)
        \longrightarrow0 .
\]
\end{lemma}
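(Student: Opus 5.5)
The plan is to transfer the grand-canonical invisibility of \cref{lem:grand-canonical-background-invisibility} to the canonical ensemble. We follow the same pattern as \cref{lem:canonical-mass-concentration}: bound the joint probability of the bad event and $\{S_L^{(\kappa)}=N_L\}$ by $o(V_L^{-1})$, then divide by $\mathbf P_L^{(\kappa)}(S_L^{(\kappa)}=N_L)\sim f_0^{(\kappa)}(\rho_{\mathrm{eff}})/V_L$, which is positive by \cref{ass:canonical-density-regime}.

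Fix $0<\delta<M<\infty$ and let
\[
  E_L:=\bigl\{\Pi_{L,\mathrm{bg}}^{(\kappa)}([0,1]\times[\delta,M]\times\mathsf M)>0\bigr\}.
\]
This event is measurable with respect to the background process. By the part-resolved canonical representation \eqref{eq:can},
\[
  \mathbb P_{L,N_L}^{\mathrm{can}}\bigl(\Xi_{L,\mathrm{bg}}([0,1]\times[\delta,M]\times\mathsf M)>0\bigr)
  =\frac{\mathbf P_L^{(\kappa)}(E_L,\,G_L^{(\kappa)}+B_L^{(\kappa)}=N_L)}{\mathbf P_L^{(\kappa)}(S_L^{(\kappa)}=N_L)}.
\]
The effective and background Poisson processes are independent, so we condition on the background. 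This gives
\[
  \mathbf P_L^{(\kappa)}(E_L,\,S_L^{(\kappa)}=N_L)
  =\sum_{b\ge0}\mathbf P_L^{(\kappa)}(E_L,\,B_L^{(\kappa)}=b)\,\mathbf P_L^{(\kappa)}(G_L^{(\kappa)}=N_L-b).
\]
Apply the uniform effective lattice bound \cref{lem:effective-lattice-bound} with $h=0$: it gives $\mathbf P(G_L^{(\kappa)}=n)\le C/V_L$ uniformly in $n$ for all large $L$. The sum is therefore at most $(C/V_L)\,\mathbf P_L^{(\kappa)}(E_L)$, and this is $o(V_L^{-1})$ by \cref{lem:grand-canonical-background-invisibility}.

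For the denominator, \cref{thm:full-weighted-LLT} with $h=0$ and $N_L/V_L\to\rho>\rho_{\mathrm{bg}}$ gives $V_L\mathbf P_L^{(\kappa)}(S_L^{(\kappa)}=N_L)\to f_0^{(\kappa)}(\rho_{\mathrm{eff}})>0$. Dividing, the canonical probability is bounded by
\[
  \frac{C\,\mathbf P_L^{(\kappa)}(E_L)}{f_0^{(\kappa)}(\rho_{\mathrm{eff}})+o(1)}\longrightarrow0.
\]

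No step here is hard. The only point needing care is that the lattice bound must be uniform over all $n\ge0$, not just over $n/V_L$ in a compact set. \cref{lem:effective-lattice-bound} provides exactly this uniformity, so the region where $N_L-b$ is small or negative needs no separate treatment; for $b>N_L$ the term vanishes trivially.
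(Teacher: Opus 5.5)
Your proposal is correct and follows the paper's proof essentially step for step. Like the paper, you write the canonical probability as a ratio via \eqref{eq:can}, bound the numerator by $(C/V_L)\,\mathbf P_L^{(\kappa)}(E_L)=o(V_L^{-1})$ using independence together with \cref{lem:effective-lattice-bound} at $h=0$ and \cref{lem:grand-canonical-background-invisibility}, and control the denominator by \cref{thm:full-weighted-LLT} and the positivity of $f_0^{(\kappa)}(\rho_{\mathrm{eff}})$.
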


\begin{proof}

By canonical conditioning~\eqref{eq:can},
\[
\begin{aligned}
        &
        \mathbb P_{L,N_L}^{\mathrm{can}}
        \left(
            \Xi_{L,\mathrm{bg}}([0,1]\times[\delta,M]\times\mathsf M )>0
        \right)
        \\
        &\qquad =
        \frac{
        \mathbf P_L^{(\kappa)}
        \left(
            \Pi_{L,\mathrm{bg}}^{(\kappa)}([0,1]\times[\delta,M]\times\mathsf M )>0,\,
            S_L^{(\kappa)}=N_L
        \right)
        }{
        \mathbf P_L^{(\kappa)}
        \left(
            S_L^{(\kappa)}=N_L
        \right)
        } .
\end{aligned}
\]

For the numerator, use independence of the effective and background parts:
\[
\begin{aligned}
        &
        \mathbf P_L^{(\kappa)}
        \left(
            \Pi_{L,\mathrm{bg}}^{(\kappa)}([0,1]\times[\delta,M]\times\mathsf M )>0,\,
            S_L^{(\kappa)}=N_L
        \right)
        \\
        &\qquad =
        \sum_{b\ge0}
        \mathbf P_L^{(\kappa)}
        \left(
            \Pi_{L,\mathrm{bg}}^{(\kappa)}([0,1]\times[\delta,M]\times\mathsf M )>0,\,
            B_L^{(\kappa)}=b
        \right)
        \mathbf P_L^{(\kappa)}
        \left(
            G_L^{(\kappa)}=N_L-b
        \right).
\end{aligned}
\]
By the uniform lattice bound from~\cref{lem:effective-lattice-bound} with
\(h=0\), there exists \(C<\infty\) such that
\[
        \sup_{L,n}
        V_L
        \mathbf P_L^{(\kappa)}
        \left(
            G_L^{(\kappa)}=n
        \right)
        \le C .
\]
Therefore
\[
\begin{aligned}
        &
        \mathbf P_L^{(\kappa)}
        \left(
            \Pi_{L,\mathrm{bg}}^{(\kappa)}([0,1]\times[\delta,M]\times\mathsf M )>0,\,
            S_L^{(\kappa)}=N_L
        \right)
        \\
        &\qquad \le
        \frac{C}{V_L}
        \mathbf P_L^{(\kappa)}
        \left(
            \Pi_{L,\mathrm{bg}}^{(\kappa)}([0,1]\times[\delta,M]\times\mathsf M )>0
        \right).
\end{aligned}
\]
By~\cref{lem:grand-canonical-background-invisibility}, the last
probability tends to zero.  Hence the numerator is \(o(V_L^{-1})\).

On the other hand, \cref{thm:full-weighted-LLT} with \(h=0\) gives
\[
        \mathbf P_L^{(\kappa)}
        \left(
            S_L^{(\kappa)}=N_L
        \right)
        \sim
        \frac1{V_L}
        f_0^{(\kappa)}(\rho_{\mathrm{eff}}),
\]
and the limit density is positive by~\cref{ass:canonical-density-regime}.  Therefore the quotient tends
to zero.
\end{proof}
Now we present the proof of our main result and its corollary.
\begin{proof}[Proof of~\cref{thm:main-canonical-bridge-limit}]
\cref{lem:canonical-effective-bridge-convergence} gives
\[
        \Xi_{L,\mathrm{eff}}
        \Longrightarrow
        \Pi_{\rho_{\mathrm{eff}}}^{\mathrm{br}}
        \qquad
        \text{in }\mathcal N_\ell(E).
\]
\cref{lem:canonical-background-invisibility} gives, for every
\(0<\delta<M<\infty\),
\[
        \mathbb P_{L,N_L}^{\mathrm{can}}
        \left(
            \Xi_{L,\mathrm{bg}}
            ([0,1]\times[\delta,M]\times\mathsf M )>0
        \right)
        \longrightarrow0 .
\]
Let \(h\in\mathcal H\), and choose \(0<\delta_h<M_h<\infty\) such that
\(h\) vanishes outside the length window
\([0,1]\times[\delta_h,M_h]\times\mathsf M\).  Then
\[
\begin{aligned}
        &
        \left|
        \exp\{-\langle h,\Xi_L\rangle\}
        -
        \exp\{-\langle h,\Xi_{L,\mathrm{eff}}\rangle\}
        \right|
        \\
        &\qquad\le
        \mathbf 1_{\{
            \Xi_{L,\mathrm{bg}}
            ([0,1]\times[\delta_h,M_h]\times\mathsf M)>0
        \}} .
\end{aligned}
\]
The right-hand side tends to zero in probability and in expectation.  Hence
the Laplace functionals of \(\Xi_L\) and \(\Xi_{L,\mathrm{eff}}\) have the
same limit.  Therefore
\[
        \Xi_L
        =
        \Xi_{L,\mathrm{eff}}+\Xi_{L,\mathrm{bg}}
        \Longrightarrow
        \Pi_{\rho_{\mathrm{eff}}}^{\mathrm{br}}
        \qquad
        \text{in }\mathcal N_\ell(E).
\]
The rest of the statement are exactly~\cref{lem:canonical-background-invisibility} and~\cref{lem:canonical-mass-concentration}.
\end{proof}
\begin{proof}[Proof of~\cref{cor:ranked-effective-length-convergence}]
By~\cref{thm:main-canonical-bridge-limit},
\[
    \Xi_{L,N_L}^{\mathrm{eff}}
    \Longrightarrow
    \Pi_{\rho_{\mathrm{eff}}}^{\mathrm{br}}
    \qquad
    \text{in } \mathcal N_\ell(\mathsf E).
\]
Moreover, by the canonical effective/background decomposition,
\[
    T_L
    :=
    \sum_{i\ge1}\ell_i^L
    =
    \int_{\mathsf E} x\,
    \Xi_{L,N_L}^{\mathrm{eff}}(du,dx,dm)
    =
    \frac{G_L}{V_L}
    \xrightarrow{\mathbb P}
    \rho_{\mathrm{eff}} .
\]
The limiting bridge is conditioned to have total macroscopic mass
\(\rho_{\mathrm{eff}}\), hence
\[
    \sum_{i\ge1}\ell_i
    =
    \rho_{\mathrm{eff}}
    \qquad
    \text{a.s.}
\]

We first prove convergence of finite initial segments.  Fix \(m\ge1\).
By assumption, the limiting bridge has almost surely no ties in the length
coordinate.  On this event, choose
\[
    0<\delta<\ell_m
\]
such that \(\delta\) is not the length of an atom of
\(\Pi_{\rho_{\mathrm{eff}}}^{\mathrm{br}}\).  The restriction of the limiting
point measure to the window
 \[
    \text{}
    [0,1]\times[\delta,R]\times\mathsf M,
  \]
  where $R>\rho+1$ is fixed.  Every finite-volume cycle has rescaled
  length at most $N_L/V_L<R$ for all sufficiently large $L$, and every limiting
  atom has length at most $\rho_{\mathrm{eff}}<R$, so this restriction loses no
  atom relevant to the first $m$ ranks.  It
contains only finitely many atoms.  Since the length-bounded topology gives
convergence of the restricted point measures on such windows, and since
ranking finitely many atoms by distinct length coordinates is continuous, we
obtain
\[
    (\ell_1^L,\ldots,\ell_m^L)
    \Longrightarrow
    (\ell_1,\ldots,\ell_m).
\]
Together with \(T_L\to\rho_{\mathrm{eff}}\) in probability, Slutsky's theorem
gives
\[
    \left(
        T_L,\ell_1^L,\ldots,\ell_m^L
    \right)
    \Longrightarrow
    \left(
        \rho_{\mathrm{eff}},\ell_1,\ldots,\ell_m
    \right).
\]

For the tails, observe that for every fixed \(m\),
\[
    \sum_{i>m}\ell_i^L
    =
    T_L-\sum_{i=1}^m\ell_i^L .
\]
Therefore
\[
    \sum_{i>m}\ell_i^L
    \Longrightarrow
    \rho_{\mathrm{eff}}-\sum_{i=1}^m\ell_i
    =
    \sum_{i>m}\ell_i .
\]
By the Portmanteau theorem, for every \(\varepsilon>0\),
\[
    \limsup_{L\to\infty}
    \mathbb P\left(
        \sum_{i>m}\ell_i^L>\varepsilon
    \right)
    \le
    \mathbb P\left(
        \sum_{i>m}\ell_i\ge\varepsilon
    \right).
\]
Since
\[
    \sum_{i\ge1}\ell_i
    =
    \rho_{\mathrm{eff}}
    <
    \infty
    \qquad
    \text{a.s.},
\]
the right-hand side tends to \(0\) as \(m\to\infty\).  Hence
\[
    \lim_{m\to\infty}
    \limsup_{L\to\infty}
    \mathbb P\left(
        \sum_{i>m}\ell_i^L>\varepsilon
    \right)
    =
    0 .
\]

Let \(\pi_m\) denote truncation after the first \(m\) coordinates.  The
finite-dimensional convergence above gives
\[
    \pi_m\ell^L
    \Longrightarrow
    \pi_m\ell
    \qquad
    \text{in } \ell^1_\downarrow
\]
for every fixed \(m\).  The preceding tail estimate shows that
\[
    \|\ell^L-\pi_m\ell^L\|_1
    =
    \sum_{i>m}\ell_i^L
\]
is negligible uniformly in \(L\) as \(m\to\infty\), while
\[
    \|\ell-\pi_m\ell\|_1
    =
    \sum_{i>m}\ell_i
    \to0
    \qquad
    \text{a.s.}
\]
The standard truncation argument for weak convergence in \(\ell^1\) yields
\[
    \ell^L
    \Longrightarrow
    \ell
    \qquad
    \text{in } \ell^1_\downarrow .
\]
This proves the corollary.
\end{proof}